\tikzset{cross/.style={cross out, draw=black, fill=none, minimum size=2*(#1-\pgflinewidth), inner sep=0pt, outer sep=0pt}, cross/.default={2pt}}
\newtheorem{theorem}{Theorem}
\numberwithin{theorem}{section}
\newtheorem{lemma}[theorem]{Lemma}
\newtheorem{proposition}[theorem]{Proposition}
\newtheorem{conjecture}[theorem]{Conjecture}
\newtheorem{corollary}[theorem]{Corollary}
\newtheorem{question}[theorem]{Question}
\newtheorem{thmintro}{Theorem}
\newtheorem{lemmaintro}[thmintro]{Lemma}
\newtheorem{queintro}[thmintro]{Question}
\newtheorem{conjectureintro}[thmintro]{Conjecture}
\theoremstyle{remark}
\newtheorem{remark}[theorem]{Remark}
\theoremstyle{definition}
\newtheorem{definition}[theorem]{Definition}
\newtheorem{example}[theorem]{Example}
\newcounter{ccomments}
\newcommand{\carolyn}[1]{\textbf{\color{red}(C\arabic{ccomments})} \marginpar{\scriptsize\raggedright\textbf{\color{red}(C\arabic{ccomments})Carolyn: }#1}
	\addtocounter{ccomments}{1}}	
\newcounter{mcomments}
\newcommand{\R}{\mathbb{R}}
\newcommand{\AH}{\mathcal{AH}}
\newcommand{\Hyp}{\mathcal{HYP}}
\newcommand{\Vis}{\mathcal{VIS}}
\newcommand{\mc}{\mathcal}
\newcommand{\Ima}{\text{Im}}
\newcommand{\diam}{\textrm{diam}}
\DeclareMathOperator{\CAT}{CAT(0)}
\title[Hyperbolic projections and topological invariance]{Hyperbolic projections and topological invariance of sublinearly Morse boundaries}
\author{Carolyn Abbott}
\address{Brandeis University, USA}
\email{carolynabbott@brandeis.edu}
\author{Merlin Incerti-Medici}
\address{Karlsruher Institut f\"ur Technologie, Germany}
\email{merlin.medici@gmail.com}
\begin{document}

\begin{abstract}
We show that the sublinearly Morse boundary of a $\CAT$ cubical group with a factor system is well-defined up to homeomorphism with respect to the visual topology.
The key tool used in the proof is a new topology on sublinearly Morse boundaries that is induced by group actions on hyperbolic spaces that are sufficiently nice, 
 for example, largest acylindrical actions. Using the same techniques, we obtain a explicit description of this new topology on the sublinearly Morse boundary of any hierarchically hyperbolic group in terms of medians. Finally, we explicitly describe the sublinear Morse boundaries of graph manifolds using their actions on Bass-Serre trees.

\end{abstract}

\maketitle





\section{Introduction} \label{sec:Introduction}





Boundaries at infinity are a common and useful tool in the study of large-scale geometric properties of groups. Whenever a group acts geometrically on a metric space with suitable negative curvature properties, the boundary carries rich topological, dynamic, and metric structures that can be used to study the group. A particularly fruitful instance of this is Gromov-hyperbolic metric spaces and their visual boundaries. Crucially, any quasi-isometry between geodesic hyperbolic metric spaces induces a homeomorphism between their visual boundaries, which allows one to define the visual boundary of a hyperbolic \textit{group} (cf. \cite{Gromov-HypGps}).  

The situation becomes more complicated when  considering a group acting geometrically on non-hyperbolic spaces, even if the spaces still have some features of negative curvature. In particular, 
Croke and Kleiner provided an example of a group acting geometrically on two different, quasi-isometric $\CAT$ spaces which have non-homeomorphic visual boundaries \cite{CrokeKleiner00}, demonstrating that the visual boundary of a $\CAT$ group is not well-defined.  

Over the last two decades,  various methods have been developed to circumvent this issue, with the goal of applying tools from asymptotic geometry to $\CAT$ spaces and even more general metric spaces and groups. One fruitful approach has been is to define a boundary using only geodesic rays that satisfy a Morse property, rather than  considering all geodesic rays, as for the visual boundary.  The  Morse property is meant to ensure that these rays have properties similar to geodesic rays in hyperbolic spaces.  This approach was initially used to define the Morse boundary of a $\CAT$ space 
 by Charney and Sultan \cite{CharneySultan}, and then expanded to include  general metric spaces by Cordes \cite{Cordes}.   Qing, Rafi, and Tiozzo finally generalised this notion even further by relaxing the required Morse property to define the \textit{sublinearly Morse boundary}  \cite{QingRafiTiozzo, QingRafiTiozzoII}, which is the focus of this article.  For every geodesic metric space $Y$ and every non-negative sublinear function $\kappa$, one can define the  $\kappa$-boundary $\partial_{\kappa} Y$, also called the sublinearly Morse boundary; see Section~\ref{sec:Preliminaries} for the precise definition. As a set, the $\kappa$-boundary is invariant under quasi-isometry, meaning that every quasi-isometry $\Psi \colon Y \rightarrow Y'$ canonically extends  to a bijection $\partial \Psi \colon \partial_{\kappa} Y \rightarrow \partial_{\kappa} Y'$. Several topologies have been developed with respect to which  $\partial \Psi$ is  a homeomorphism, turning the (sublinearly) Morse boundary into a topological quasi-isometry invariant \cite{CharneySultan, Cordes, CashenMackay, QingRafiTiozzo, QingRafiTiozzoII}.


Since $\kappa$-boundaries can be represented as equivalence classes of geodesic rays, for a proper $\CAT$ space $Y$ there is a canonical inclusion $\partial_{\kappa} Y \hookrightarrow \partial_{\infty} Y$, where $\partial_{\infty} Y$ denotes the visual boundary of $Y$. While the Croke-Kleiner example mentioned above shows that $\partial_{\infty} Y$ equipped with the visual topology is not invariant under quasi-isometry, one may wonder whether the restriction of the visual topology to the $\kappa$-boundary is a quasi-isometry invariant. The answer to this is negative in general, as was shown by Cashen, who constructed two quasi-isometric $\CAT$ spaces  whose Morse boundaries are not homeomorphic when equipped with the visual topology. \cite{Cashen}. However, the constructed spaces have trivial isometry group, and any attempts to recreate the same phenomenon for $\CAT$ spaces that admit geometric group actions have failed. This paper shows that as long as a group $G$ acts geometrically on two sufficiently nice $\CAT$ spaces, their $\kappa$-boundaries are, in fact, homeomorphic with respect to the visual topology.  

\begin{thmintro} \label{thmintro:Visualtopologyinvariance}
Let $G$ act geometrically and by cubical isomtries on   $\CAT$ cube complexes $Y$ and $Y'$ admiting  factor systems, and let $\kappa$ be any non-negative, increasing, sublinear function. The induced $G$-equivariant bijection $\partial \Psi \colon \partial_{\kappa} Y \rightarrow \partial_{\kappa} Y'$ is a homeomorphism with respect to the visual topology.
\end{thmintro}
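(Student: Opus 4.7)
My plan is to sandwich the visual topology on $\partial_\kappa Y$ with the new topology announced in the abstract, which is induced by $G$-actions on hyperbolic spaces. Since $Y$ and $Y'$ both admit factor systems, they are hierarchically hyperbolic spaces, and the group $G$ therefore carries a canonical collection of actions on hyperbolic spaces (either the hyperbolic factors $\mc{C}U$ of the HHS structure or a largest acylindrical action), intrinsic to $G$ and independent of the choice of cube complex. Consequently, the canonical bijection $\partial\Psi$ automatically intertwines the two copies of the new topology on $\partial_\kappa Y$ and $\partial_\kappa Y'$. It therefore suffices to prove the following single, non-equivariant statement: on any $\CAT$ cube complex $Y$ admitting a factor system and a geometric $G$-action, the restriction of the visual topology to $\partial_\kappa Y$ coincides with the new, projection-based topology.

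For the easier direction, I would start from a sequence $\xi_n \to \xi$ in the visual topology, with $\kappa$-Morse representatives $\gamma_n, \gamma$ based at a common $o \in Y$. The rays $\gamma_n$ converge to $\gamma$ uniformly on compact sets, so coarse Lipschitzness of each HHS projection $\pi_U \colon Y \to \mc{C}U$ forces the shadows $\pi_U \circ \gamma_n$ to fellow-travel $\pi_U \circ \gamma$ on arbitrarily long initial segments. Combined with the $\kappa$-Morse property, which ensures that such shadows are unparameterised quasi-geodesics whenever they are unbounded, this would translate to convergence at infinity in every relevant hyperbolic factor, i.e., convergence in the new topology.

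The harder converse---projection convergence implies visual convergence---will be where the main effort lies. Assuming $\xi_n \to \xi$ in the new topology, and fixing $\kappa$-Morse representatives $\gamma_n, \gamma$, I would combine the HHS distance formula with the $\kappa$-Morse hypothesis to argue: if the shadows $\pi_U \circ \gamma_n$ track those of $\pi_U \circ \gamma$ up to scale $T$ on all relevant factors, then $\gamma_n(t)$ lies within sublinear error of $\gamma(t)$ for $t \lesssim T$; sending $T \to \infty$ would then yield visual convergence. The principal obstacle is making this ``projection-tracking implies cube-complex tracking'' step both uniform in $n$ and quantitatively compatible with the $\kappa$-gauge of sublinear control. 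I expect to need a characterisation of $\kappa$-Morse rays in $\CAT$ cube complexes with factor systems purely in terms of their HHS shadows---a characterisation which I anticipate is furnished by the hyperbolic-projection machinery developed earlier in the paper---together with hyperplane-separation arguments in $Y$ to promote coarse closeness of projections to genuine cubical closeness of the rays themselves.
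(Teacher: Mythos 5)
Your overall architecture matches the paper's: reduce Theorem~\ref{thmintro:Visualtopologyinvariance} to the single statement that on any cubulation with a factor system the visual topology on $\partial_\kappa Y$ coincides with a projection-induced topology that is intrinsic to $G$, and then let the $G$-equivariant bijection do the rest. But be careful about \emph{which} hyperbolic space you project to. The collection of factors $\mc C U$ of the HHS structure is \emph{not} intrinsic to $G$ --- it depends on the factor system, hence on the cubulation --- so a topology phrased in terms of ``all relevant factors'' does not automatically transfer between $Y$ and $Y'$. The paper uses only the top-level space: Genevois's hyperbolic metric $Y_L$ (Definition~\ref{def:GenevoisConstruction}), which by Theorem~\ref{thm:GenevoisABDequivalence} realises the largest acylindrical action of $G$; since largest acylindrical actions form a single equivalence class, $Y_L\sim Y'_L$, and Proposition~\ref{prop:CommutingDiagramsublinearcase} gives the intertwining you want. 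If you instead keep all the $\mc C U$'s, you owe an argument that the resulting topology is cubulation-independent, which you do not supply.

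This choice matters for your ``harder converse,'' which is where the genuine gap sits. With only the top-level projection available (as intrinsicness forces), convergence of shadows in $\partial_\infty\mc C(S)$ gives you control in a single factor, and the HHS distance formula cannot then be run ``over all relevant factors'' to recover tracking in $Y$; conversely, if you assume tracking in all factors you are back to a non-intrinsic topology. The paper's actual argument (Lemma~\ref{lem:Acylindricaltopologyequalsvisual}) avoids the distance formula entirely: it replaces $\Vis$ on $\partial_\kappa Y$ by the hyperplane topology $\Hyp$ (Theorem~\ref{thm:HYPVIS}), whose basic open sets $U_{o,k}$ record whether the geodesic representative crosses a fixed hyperplane $k$, and then exploits the fact that $d_L$ literally counts $L$-well-separated hyperplanes: a $\kappa$-Morse ray crosses a chain of $L$-well-separated hyperplanes (Theorem~\ref{thm:Morse iff contracting iff excursion}), so a sufficiently large Gromov-product neighbourhood of $\partial p(\xi)$ in $\partial_\infty Y_L$ forces any other $\kappa$-Morse ray in that neighbourhood to cross $k$, using the quasi-ruler property of projected rays and Lemma~\ref{lem:ControllingdistanceviaGromovproduct}. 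Your closing sentence correctly anticipates that hyperplane-separation arguments are needed, but the characterisation you want is exactly this well-separated-chain criterion in $Y_L$, not a shadow criterion across the full HHS; until that step is carried out, the converse direction is not established.
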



See Definition~\ref{def:factorsystem} for the definition of a factor system. Due to the examples by Cashen,  our proof of Theorem~\ref{thmintro:Visualtopologyinvariance} necessarily takes a  rather roundabout approach. The majority  of this paper is the development of the necessary terminology and results that are of interest in a far more general context. 

The central notion is that 
of a {\it hyperbolic projection}, denoted $(G, Y, X)$, which consists of a group $G$ acting geometrically on a geodesic metric space $Y$ and  coboundedly  on a hyperbolic space $X$ that admits a $D$-quasi-ruling (see Definition \ref{def:unparametrisedquasirulers}).   
The terminology comes from the fact that given such a triple $(G, Y, X)$, there is a natural projection map $p\colon Y\rightarrow X$; see Section \ref{subsec:Acylindricalactions} for details.  
When $(G,Y,X)$ satisfies a condition we call \textit{$\kappa$-injectivity} (see Definition~\ref{def:kappainjectivity}), we are able to extend $p$ to an injective map $\partial p\colon \partial_\kappa Y\hookrightarrow \partial_\infty X$.  We denote the pullback of the visual topology on $\partial_\infty X$ under $\partial p$ by  $\mc T(G,Y,X)$; this is a topology on $\partial_\kappa Y$.

This topology is well-behaved in the following sense. Two spaces $X$ and $X'$ admitting actions of a group $G$ are equivalent, written $X\sim X'$,  if there is a coarsely $G$-equivariant quasi-isometry $X \rightarrow X'$.  

\begin{lemmaintro} \label{lem:IntroMainObservation}
Let $(G, Y, X)$ and $(G, Y', X')$ be two hyperbolic projections such that $X \sim X'$, and let $\Psi \colon Y \rightarrow Y'$ be a coarsely $G$-equivariant quasi-isometry. Then $(G, Y, X)$ is $\kappa$-injective if and only if $(G, Y', X')$ is $\kappa$-injective. Furthermore, if they are $\kappa$-injective, then $\partial \Psi$ is a homeomorphism with respect to the topologies $\mc T(G,Y,X)$ and $\mc T(G,Y',X')$.
\end{lemmaintro}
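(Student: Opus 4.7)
The plan is to establish a commutative diagram at the level of boundaries and then exploit the fact that the topologies $\mc T(G,Y,X)$ and $\mc T(G,Y',X')$ are defined as pullbacks of the visual topologies on $\partial_\infty X$ and $\partial_\infty X'$. Let $\Phi \colon X \to X'$ be a coarsely $G$-equivariant quasi-isometry witnessing $X \sim X'$. Since $X$ and $X'$ are Gromov hyperbolic, $\Phi$ extends to a homeomorphism $\partial \Phi \colon \partial_\infty X \to \partial_\infty X'$. The core step is to verify that the square
\[
\begin{tikzcd}
\partial_\kappa Y \arrow[r, "\partial \Psi"] \arrow[d, "\partial p"'] & \partial_\kappa Y' \arrow[d, "\partial p'"] \\
\partial_\infty X \arrow[r, "\partial \Phi"'] & \partial_\infty X'
\end{tikzcd}
\]
commutes. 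This reduces to showing that $\Phi \circ p$ and $p' \circ \Psi$ are at bounded distance as maps $Y \to X'$, since two coarsely Lipschitz maps at bounded distance extend to the same map on the Gromov boundary. Both of these composite maps are coarsely $G$-equivariant and coarsely Lipschitz, and $G$ acts coboundedly on $X'$, so a standard argument using cocompactness of the $G$-action on $Y$ applies: fix $y_0 \in Y$, write an arbitrary $y \in Y$ as $g \cdot y_0$ up to bounded error, apply coarse equivariance of both maps, and bound the distance at $y$ by the (fixed) distance at $y_0$ plus coarse Lipschitz and equivariance errors.

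Given the commutative square, the equivalence of $\kappa$-injectivity is immediate: $\partial \Phi$ is injective (as a homeomorphism) and $\partial \Psi$ is a bijection (being the boundary map of a quasi-isometry, as noted in the introduction), so $\partial p$ is injective if and only if $\partial p'$ is.

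For the homeomorphism statement, I would check continuity of $\partial \Psi$ and of $(\partial \Psi)^{-1}$ directly from the pullback definitions. A basic open set in $\mc T(G,Y',X')$ has the form $(\partial p')^{-1}(V')$ for $V' \subseteq \partial_\infty X'$ open, and by commutativity
\[
(\partial \Psi)^{-1}\bigl((\partial p')^{-1}(V')\bigr) = (\partial p)^{-1}\bigl((\partial \Phi)^{-1}(V')\bigr),
\]
which is open in $\mc T(G,Y,X)$ since $\partial \Phi$ is continuous; the same manipulation applied with $V \subseteq \partial_\infty X$ open and $\partial \Phi(V) \subseteq \partial_\infty X'$ (open, because $\partial \Phi$ is a homeomorphism) gives continuity of $(\partial \Psi)^{-1}$. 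The main obstacle I expect is in the first step: carefully verifying that the projections $p$ and $p'$ built in Section~\ref{subsec:Acylindricalactions} are coarsely Lipschitz and coarsely $G$-equivariant, and then packaging the bounded-distance argument for $\Phi \circ p$ versus $p' \circ \Psi$ while tracking all the quasi-isometry and equivariance constants. Everything downstream is then formal diagram chasing at the level of the pulled-back topologies.
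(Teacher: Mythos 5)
Your commutative-square setup and the final pullback diagram chase for the homeomorphism statement match the paper's argument (Propositions~\ref{prop:CommutingDiagram} and~\ref{prop:CommutingDiagramsublinearcase}). However, there is a genuine gap in the step where you transfer $\kappa$-injectivity.

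You write that the equivalence of $\kappa$-injectivity is ``immediate'' because $\partial\Phi$ and $\partial\Psi$ are bijections, ``so $\partial p$ is injective if and only if $\partial p'$ is.'' This conflates $\kappa$-injectivity of the hyperbolic projection with injectivity of an already-existing map $\partial p'$. But $\kappa$-injectivity (Definition~\ref{def:kappainjectivity}) has two parts that must be verified \emph{before} $\partial p'$ exists as a map: (a) for every $\kappa$-Morse quasi-geodesic ray $\gamma'$ in $Y'$, the path $p'\circ\gamma'$ must \emph{define a point} of $\partial_\infty X'$ in the sense of Definition~\ref{def:Inducingapointatinfinity}, and (b) $\kappa$-close rays must project to paths defining the same point (this is the well-definedness on equivalence classes; as the paper stresses at the start of Section~\ref{subsec:sublinearcase}, $\kappa$-close rays need not be at finite Hausdorff distance, so this is not automatic). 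Your argument draws the boundary square with $\partial p'$ already in it and then reads off injectivity, which is circular: the square only makes sense once (a) and (b) are established. The paper's proof fills this in by taking $\gamma'$ in $Y'$, pulling back via a quasi-inverse $\overline\Psi$ to a $\kappa$-Morse ray in $Y$, using $\kappa$-injectivity of $(G,Y,X)$ to get a point of $\partial_\infty X$, pushing forward via $\Phi$ using Corollary~\ref{cor:quasiisometriesofquasiruledspacesextend}, and then using the bounded-distance commutativity of the square of spaces to conclude that $p'\circ\gamma'$ defines the same point; the ``$\kappa$-close iff same point'' condition is transferred by the same pull-back/push-forward mechanism.

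A secondary issue: you assert that $\Phi$ extends to a homeomorphism $\partial\Phi$ ``since $X$ and $X'$ are Gromov hyperbolic.'' The spaces $X$ in a hyperbolic projection are not assumed geodesic (e.g.\ $Y_L$), and quasi-isometries between non-geodesic hyperbolic spaces need not preserve hyperbolicity or extend to boundary homeomorphisms. This is exactly why the paper requires a $D$-quasi-ruling and proves Corollary~\ref{cor:quasiisometriesofquasiruledspacesextend} in the appendix; you should cite that result (or reprove it) rather than treat the extension as standard.
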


In particular, we obtain a well-defined topology on $\partial_{\kappa} G$ that is independent of the choice of geometric representation of $G$. We will often write $\mathcal{T}(G, [X])$ to denote this topology.

While we do not show that $\mc T(G,[X])$ is invariant under all quasi-isometries, the result above does extend to commensurable groups. Recall that two groups  are  {\it commensurable} if they have a common finite index subgroup.   If  $G$ and $ G'$ are commensurable and $(G, Y, X)$ and $(G', Y', X')$ are hyperbolic projections, then the actions of $G$ and $G'$ can be restricted to the common subgroup $H$.  Thus  we obtain hyperbolic projections $(H, Y, X)$ and $(H, Y', X')$, which are $\kappa$-injective if and only if the corresponding hyperbolic projections of $G$ and $G'$ are. We conclude the same statements as in Lemma \ref{lem:IntroMainObservation} for $(G, Y, X)$ and $(G', Y', X')$ and obtain that, if the actions of $H$ on $X$ and $X'$ are equivalent, then we have a homeomorphism $\partial_{\kappa}G \rightarrow \partial_{\kappa}G'$ with respect to $\mathcal{T}(G, [X])$ and $\mathcal{T}(G', [X'])$. In particular, we can think of the boundary $\partial_{\kappa} G$ with the topology $\mathcal{T}(G, [X])$ as a boundary of the commensurability class of $G$.

There are several interesting classes of groups and actions for which we can explicitly describe $\mc T(G,[X])$: cubulated groups, hierarchically hyperbolic groups, and graph manifold groups. These are all examples of groups that admit largest acylindrical actions, and we choose the equivalence classes of such actions as $[X]$.  In this case, we call the resulting topology  the {\it acylindrical topology}. While there is some overlap between these classes of groups, we  take advantage of some of their specific features and so  discuss each separately.


\subsection{Application to cubulated groups} \label{subsec:Applicationtocubulatedgroups}

We first consider the class of cubulations of groups that have factor systems (see Definition \ref{def:factorsystem}). 
Most known cubulations  admit a factor system \cite{HagenSusse20}, although Shepherd recently provided the first example of a $\CAT$ cube complex with a geometric group action that does not have a factor system \cite{Shepherd22}. If a cubulation of a group has a factor system, then there is a construction by Genevois in \cite{Genevois20b} that provides a $\kappa$-injective hyperbolic projection $(G,Y,Y_L)$, and $G\curvearrowright Y_L$  is a largest acylindrical action \cite{PetytSprianoZalloum22}.  See Section~\ref{subsec:CAT(0)spaces} for further discussion of these spaces and actions.

\begin{thmintro} \label{thmintro:Pullbacktopologyincubulatedcase}
Suppose $G$ acts geometrically and cubically on a $\CAT$ cube complex $Y$ that has a factor system. 
The topology $\mathcal{T}(G, Y, Y_L)$ coincides with the restriction of the visual topology on $\partial_{\infty} Y$ to $\partial_{\kappa} Y$.
\end{thmintro}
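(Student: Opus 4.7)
The plan is to prove that the identity map on $\partial_{\kappa} Y$ is a homeomorphism between $\mathcal{T}(G, Y, Y_L)$ and the restriction of the visual topology on $\partial_{\infty} Y$. Since both topologies are Hausdorff and first-countable on the separable space $\partial_{\kappa} Y$, it suffices to show that a sequence $\xi_n \to \xi$ converges in one topology exactly when it converges in the other. Throughout, fix a basepoint $o \in Y$ and let $\gamma, \gamma_n$ denote $\kappa$-Morse geodesic rays from $o$ representing $\xi, \xi_n$.

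For the forward direction (visual implies $\mathcal{T}$), suppose $\gamma_n \to \gamma$ uniformly on compact initial segments in $Y$. The hyperbolic projection $p \colon Y \to Y_L$ from Genevois' construction is coarsely Lipschitz, so $p \circ \gamma_n$ converges to $p \circ \gamma$ uniformly on compact sets. By $\kappa$-injectivity of $(G,Y,Y_L)$, the curves $p \circ \gamma$ and $p \circ \gamma_n$ are unparametrized quasi-rulers in the hyperbolic space $Y_L$ with endpoints $\partial p(\xi)$ and $\partial p(\xi_n)$ respectively. Their initial segments coinciding up to bounded error for increasingly long parameters forces the Gromov products $(\partial p(\xi_n) \mid \partial p(\xi))_{p(o)} \to \infty$, hence $\partial p(\xi_n) \to \partial p(\xi)$ in $\partial_{\infty} Y_L$, which is the desired convergence in $\mathcal{T}(G,Y,Y_L)$.

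For the reverse direction ($\mathcal{T}$ implies visual), suppose $\partial p(\xi_n) \to \partial p(\xi)$ in $\partial_{\infty} Y_L$. I need to show that for every $R > 0$ and large $n$, the segment $\gamma_n|_{[0,R]}$ lies close to $\gamma|_{[0,R]}$. The convergence in $\partial_{\infty} Y_L$ means the quasi-ruler images $p \circ \gamma_n$ and $p \circ \gamma$ fellow-travel on longer and longer initial intervals. The key is to lift this fellow-traveling in $Y_L$ back to fellow-traveling in $Y$. Here I would exploit the structure of the factor system: the Genevois construction builds $Y_L$ so that two points of $Y$ whose $Y_L$-images are close and whose projections to every factor in the factor system are uniformly close must themselves be uniformly close in $Y$. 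Since $\gamma$ is $\kappa$-Morse, its projections to any factor not crossed transversely by $\gamma$ stay bounded, while projections to crossed factors are controlled by the $\kappa$-deviation. Applying this to $\gamma_n$ and $\gamma$, the convergence of $Y_L$-images on compact initial segments together with the Morse control on non-hyperbolic coordinates forces the $Y$-segments themselves to converge.

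The main obstacle is this reverse direction: promoting closeness in the hyperbolic model $Y_L$ to closeness in the full cube complex $Y$. The difficulty is that a priori two rays whose $Y_L$-projections agree on a long initial segment could still diverge in $Y$ by "wandering" in directions that $p$ collapses. The resolution must use the sublinear Morse hypothesis crucially to rule out such wandering: any non-hyperbolic excursion of $\gamma$ or $\gamma_n$ is constrained to a $\kappa$-neighborhood, and the factor system guarantees that $Y$-distance is coarsely reconstructed from $Y_L$-distance plus sublinear factor-distance contributions. Making this precise will require invoking the projection lemmas from Genevois' construction, together with a quasi-ruler stability argument for $\kappa$-Morse rays in $Y_L$.
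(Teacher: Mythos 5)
Your forward direction (visual convergence implies convergence in $\mathcal{T}(G,Y,Y_L)$) is essentially right and is the content of the continuity statement in Theorem~\ref{thm:kappageodesicsinject}, which the paper imports from \cite{MediciZalloum21}. The one point you gloss over is that closeness of the initial segments of $p\circ\gamma_n$ and $p\circ\gamma$ only forces the Gromov products to diverge because $d_L(p(o),p\circ\gamma(t))\to\infty$, and this unboundedness is itself a consequence of the $\kappa$-Morse hypothesis (the ray crosses an infinite chain of $L$-well-separated hyperplanes); it is not automatic from coarse Lipschitzness of $p$.

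The reverse direction is where your argument has a genuine gap. You propose to reconstruct $d_Y(\gamma_n(R),\gamma(R))$ from $d_L$-closeness plus factor projections, with the factor contributions controlled ``by the $\kappa$-deviation.'' But the constants in that control depend on the Morse gauge of the ray in question, and a sequence $\xi_n\to\xi$ in $\mathcal{T}(G,Y,Y_L)$ carries no uniform bound on the Morse gauges of the $\gamma_n$: every neighbourhood in either topology contains points of arbitrarily bad gauge, which is precisely the phenomenon behind Cashen's counterexample discussed in Section~\ref{subsec:GeometricImplication}. So the ``sublinear factor-distance contributions'' cannot be summed uniformly in $n$, and the distance-reconstruction lemma you invoke is left entirely unproven and is, as stated, doubtful. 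The paper avoids this metric lifting altogether: by Theorem~\ref{thm:HYPVIS} the visual topology on $\partial_{\kappa}Y$ coincides with the hyperplane topology $\Hyp$, whose basic open sets $U_{o,k}$ consist of the rays crossing a fixed hyperplane $k$. The reverse direction then reduces to the combinatorial claim that if $\gamma$ crosses $k$ and $(\partial p(\xi)\mid\eta)_{p(o)}$ is sufficiently large, then the geodesic representing $\eta$ also crosses $k$; this is proven in Lemma~\ref{lem:Acylindricalactionincubulatedcaseiskappainjective} and Lemma~\ref{lem:Acylindricaltopologyequalsvisual} using the chain of $L$-well-separated hyperplanes crossed by $\gamma$, the quasi-ruler property of the projections, and Lemma~\ref{lem:ControllingdistanceviaGromovproduct}, with no control needed on the gauge of $\eta$. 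To repair your argument you would need to replace the metric lifting step by some such gauge-independent criterion for visual convergence.
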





We briefly sketch how Theorem \ref{thmintro:Pullbacktopologyincubulatedcase} implies Theorem \ref{thmintro:Visualtopologyinvariance}. In short, the visual topology on the sublinearly Morse boundaries of different cubulations coincide, because it always coincides with the acylindrical topology.

\begin{proof}[Sketch of proof]
Since $Y$ and  $Y'$ each admit  a factor system,there are  hyperbolic spaces $Y_L$ and $Y'_L$ such that $(G, Y, Y_L)$ and $(G, Y', Y'_L)$ are  $\kappa$-injective hyperbolic projections and such that the action of $G$ on each is a largest acylindrical action. Since all largest acylindrical actions are equivalent, Lemma~\ref{lem:IntroMainObservation} shows that  the $G$-equivariant bijection $( \partial_{\kappa}Y, \mathcal{T}(G, Y, Y_L) ) \rightarrow ( \partial_{\kappa} Y', \mathcal{T}(G, Y', Y'_L) )$ is a homeomorphism. By Theorem~\ref{thmintro:Pullbacktopologyincubulatedcase}, these topologies coincide with the visual topology on $\partial_{\kappa} Y$ and $\partial_{\kappa}Y'$ respectively. 
\end{proof}

\subsection{Application to hierarchically hyperbolic groups} \label{subsec:applicationtoHHGs}

The second class of examples  we study are hierarchically hyperbolic groups, which all admit largest acylindrical actions \cite{AbbottBehrstockDurham21}.  We denote this largest action by $\mc C(S)$ (see Section \ref{subsec:ThecaseofHHGs}). 
Cubulated groups admiting factors systems are all examples of hierarchically hyperbolic groups.  In the absence of a cubical structure,  we instead take advantage of the coarse median structure $[\mu]$ on a hierarchically hyperbolic group to describe this topology.  
See Section \ref{sec:Topologyforgroupswithacoarsemedianstructure} for details on coarse medians. 

We  define the \textit{coarse median topology}, denoted $\mc T_{med}$, on $\partial_\kappa G$ and  show that this topology coincides with the acylindrical topology $\mc T(G,[\mc C(S)])$. The coarse median topology  can be described in terms of  convergence of median rays as follows. 
 Fixing a base point $o \in G$, every point in the sublinearly Morse boundary of $G$ can be represented by a median ray starting at $o$.  Given median rays $h_n, h$ in $G$ that start at $o$ and represent points $\xi_n, \xi \in \partial_{\kappa} G$, respectively, we say that $\xi_n \rightarrow \xi$ in $\mathcal{T}_{med}$ if and only if
\[ \liminf_{n \rightarrow \infty} \sup \{ d_G (o, \mu( o, h_n(s), h(t)) ) \mid s, t \in \mathbb{R}_{\geq 0} \} = \infty. \]
See Definition \ref{def:mediantopology} and Lemma \ref{lem:mediantopologybasis} for a precise definition and more details. 

\begin{thmintro} \label{thmintro:PullbacktopologyinHHGcase}
Let $G$ be a hierarchically hyperbolic group with a coarse median structure $[\mu]$, and let $\kappa$ be a sublinear function. The canonical bijection $( \partial_{\kappa} G, \mathcal{T}_{med} ) \rightarrow ( \partial_{\kappa} G, \mathcal{T}(G,[ \mathcal{C}(S)]) )$ is a homeomorphism. 
\end{thmintro}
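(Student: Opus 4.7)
The plan is to show that the two topologies have the same convergent sequences, which suffices because both are first-countable. Indeed, $\mathcal{T}(G, [\mathcal{C}(S)])$ is the pullback via the injective map $\partial p$ of the metrizable visual topology on $\partial_{\infty} \mathcal{C}(S)$, and $\mathcal{T}_{med}$ admits a countable neighborhood basis by Lemma \ref{lem:mediantopologybasis}. Fixing a basepoint $o$ and median rays $h_n, h$ from $o$ representing $\xi_n, \xi \in \partial_{\kappa} G$, it suffices to prove that $\xi_n \to \xi$ in $\mathcal{T}_{med}$ if and only if $\partial p(\xi_n) \to \partial p(\xi)$ in $\partial_{\infty} \mathcal{C}(S)$.

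The central input is that the coarse median $\mu$ is compatible with the projection $\pi_S \colon G \to \mathcal{C}(S)$: since $\mathcal{C}(S)$ is hyperbolic, $\pi_S$ intertwines $\mu$ with a coarse median on $\mathcal{C}(S)$, and a coarse median of three points in a hyperbolic space is within uniform distance of a Gromov center of the corresponding triangle. The distance from the basepoint to such a center equals, up to additive error, the Gromov product of the other two points at the basepoint. In parallel, the $\kappa$-injectivity of the hyperbolic projection $(G,G,\mathcal{C}(S))$ ensures that every median ray $h$ representing a point of $\partial_{\kappa} G$ projects under $\pi_S$ to an unparameterized quasi-geodesic in $\mathcal{C}(S)$ accumulating at $\partial p(\xi)$; in particular $d_{\mathcal{C}(S)}(\pi_S(o), \pi_S(h(t)))$ tends to infinity with $t$.

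For the forward implication, assume $\partial p(\xi_n) \to \partial p(\xi)$ in the visual topology, so the Gromov products $(\partial p(\xi_n), \partial p(\xi))_{\pi_S(o)}$ diverge. Choose $s_n, t_n$ large enough that the finite Gromov products $(\pi_S(h_n(s_n)), \pi_S(h(t_n)))_{\pi_S(o)}$ approximate these, so the coarse centers of the triangles $\bigl(\pi_S(o), \pi_S(h_n(s_n)), \pi_S(h(t_n))\bigr)$ lie arbitrarily far from $\pi_S(o)$. By compatibility of $\mu$ with $\pi_S$, the projections $\pi_S\bigl(\mu(o, h_n(s_n), h(t_n))\bigr)$ are also arbitrarily far from $\pi_S(o)$; since $\pi_S$ is coarsely Lipschitz this forces $d_G(o, \mu(o, h_n(s_n), h(t_n)))$ to diverge, yielding $\mathcal{T}_{med}$-convergence.

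For the converse, assume $\xi_n \to \xi$ in $\mathcal{T}_{med}$ and choose $s_n, t_n$ with $d_G(o, \mu(o, h_n(s_n), h(t_n))) \to \infty$. The coarse median identity places $\mu(o, h_n(s_n), h(t_n))$ within bounded distance of a geodesic from $o$ to $h(t_n)$, and $\kappa$-Morseness of $h$ then forces this median to lie in a sublinear neighborhood of the image of $h$; consequently it is close to $h(t'_n)$ for some $t'_n \to \infty$. Projecting, $\pi_S\bigl(\mu(o, h_n(s_n), h(t_n))\bigr)$ is close to $\pi_S(h(t'_n))$, which escapes in $\mathcal{C}(S)$ by the unparameterized quasi-geodesic property of $\pi_S \circ h$. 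Reversing the Gromov-center identification of the second paragraph, this escape translates into divergence of $(\partial p(\xi_n), \partial p(\xi))_{\pi_S(o)}$, i.e. visual convergence. I expect this converse to be the main obstacle, since translating $G$-divergence of the median into $\mathcal{C}(S)$-divergence of its projection requires one to exploit $\kappa$-Morseness uniformly in $n$ and to carefully track sublinear error along the ray $h$.
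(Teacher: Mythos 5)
Your overall strategy is the same as the paper's: reduce to sequential convergence (both topologies are first countable/sequential), use that $\pi_S$ is a coarse median map, and use the hyperbolic-space identity $\vert d(o,\mu_S(o,a,b)) - (a\mid b)_o\vert \leq A$ to convert distances to medians into Gromov products. Your ``forward implication'' (visual convergence implies $\mathcal T_{med}$-convergence) is correct and is the easy direction, since $\pi_S$ is coarsely Lipschitz and hence $d_S(\pi_S(o),\pi_S(\mu(\cdot)))$ large forces $d_G(o,\mu(\cdot))$ large.

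The converse contains a genuine gap, precisely where you anticipated trouble. You need: if $d_G(o,\mu(o,h_n(s_n),h(t_n)))\to\infty$ then $d_S(\pi_S(o),\pi_S(\mu(o,h_n(s_n),h(t_n))))\to\infty$. Your route is to place the median $\mu_n$ in a sublinear neighborhood of $h$, find $t'_n$ with $d_G(o,h(t'_n))\to\infty$ and $d_G(\mu_n,h(t'_n))\leq n\,\kappa(d_G(o,\mu_n))$, and then ``project.'' But this last step does not go through: the bound $n\,\kappa(d_G(o,\mu_n))$ is unbounded, so after applying the coarsely Lipschitz map $\pi_S$ you only get $d_S(\pi_S(\mu_n),\pi_S(h(t'_n)))\leq K n\,\kappa(d_G(o,\mu_n))+C$, which also diverges. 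Since there is no lower bound on the rate at which $d_S(\pi_S(o),\pi_S(h(t)))$ grows relative to $\kappa(d_G(o,h(t)))$ (the projection of a sublinearly Morse ray to $\mathcal{C}(S)$ is merely an unbounded unparametrised quasi-geodesic, with no quantitative growth control), the error term can swamp the escape of $\pi_S(h(t'_n))$, and you cannot conclude that $d_S(\pi_S(o),\pi_S(\mu_n))\to\infty$. The paper does not attempt this estimate directly; it imports the equivalence between divergence of $d_G(o,\mu(o,h_n(s),h(t)))$ and divergence of $d_S(\pi_S(o),\pi_S(\mu(o,h_n(s),h(t))))$ from the argument of Durham--Zalloum \cite[Lemma~6.4]{DurhamZalloum22}, which uses the hierarchical machinery rather than a purely metric fellow-travelling argument. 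To repair your proof you would need to supply that input (or an equivalent statement) rather than the sublinear-neighborhood argument.
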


\subsection{Application to graph manifolds} \label{subsec:Determiningthetopologyforgraphmanifolds}

Finally, we explicitly describe the $\kappa$-boundary with the acylindrical topology for fundamental groups of non-positively curved graph manifolds. Given a non-positively curved graph manifold, its fundamental group $G$ has a natural decomposition as a graph of groups. As such, $G$ acts on its Bass-Serre tree, which we denote by $T_{\infty}$.  It has been shown recently in \cite{HagenRussellSistoSpriano22} that this is a largest acylindrical action. 


\begin{thmintro} \label{thmintro:Graphmanifoldtopology}
Let $M$ be a closed, non-positively curved graph-manifold with fundamental group $G$ and universal cover $Y$.  The hyperbolic projection $(G, Y, T_{\infty})$ is  $\kappa$-injective,  and $( \partial_{\kappa} Y, \mathcal{T}(G, [T_{\infty}]))$ is a Cantor chain. 
\end{thmintro}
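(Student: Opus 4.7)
The proof splits into two tasks: verifying $\kappa$-injectivity of the hyperbolic projection $(G, Y, T_{\infty})$, and then identifying the pulled-back topology $\mc T(G, [T_{\infty}])$ on $\partial_\kappa Y$. The universal cover $Y$ admits a tree of spaces decomposition dual to $T_\infty$: for each vertex $v \in V(T_\infty)$ there is a convex subspace $Y_v$ quasi-isometric to $\mathbb{H}^2 \times \mathbb{R}$ (a lift of a Seifert piece), and for each edge $e$ there is a separating $2$-flat $F_e$ along which two adjacent pieces are glued. The projection $p\colon Y \to T_\infty$ collapses each $Y_v$ to $v$ and each $F_e$ to the midpoint of $e$; this is the map I need to extend to the boundary.

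For $\kappa$-injectivity, the first step is to show that any $\kappa$-Morse ray $\gamma$ in $Y$ crosses infinitely many separating flats, so that $p \circ \gamma$ is an unbounded quasi-geodesic in $T_\infty$ defining an actual point of $\partial_\infty T_\infty$. The obstruction to rule out is stagnation of $\gamma$ in a bounded neighborhood of some single $Y_v$: here the $\mathbb{H}^2 \times \mathbb{R}$ quasi-isometry type provides large flat strips along fibers, as well as adjacent boundary flats $F_e$, from which one constructs sublinear-scale detours that would violate the $\kappa$-Morse condition on $\gamma$. Once the map is defined, injectivity follows from a separation argument: two $\kappa$-Morse rays with the same image in $\partial_\infty T_\infty$ cross the same tail of flats $F_{e_n}$ at comparable parameter values, and since each $F_{e_n}$ separates $Y$, the rays are trapped in bounded Hausdorff neighborhoods of each other at every crossing. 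Combined with the sublinear Morse control, this forces the two rays to represent the same point in $\partial_\kappa Y$.

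Once $\kappa$-injectivity is established, the topology $\mc T(G, [T_\infty])$ on $\partial_\kappa Y$ is by definition the subspace topology pulled back along $\partial p$ from the visual topology on $\partial_\infty T_\infty$. Since $M$ is a nontrivial non-positively curved graph manifold, $T_\infty$ is a locally finite infinite tree that branches at every vertex, so $\partial_\infty T_\infty$ is homeomorphic to a Cantor set. To conclude the Cantor chain description, I would characterise the image of $\partial p$ inside $\partial_\infty T_\infty$ — this requires showing that every end of $T_\infty$ arising from a path that enters infinitely many pieces is in fact realised by some $\kappa$-Morse ray in $Y$, which can be arranged by a diagonal construction selecting, at each crossing, a Morse direction in the next piece that is uniformly transverse to the next flat. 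Matching this image to the paper's definition of a Cantor chain will then complete the proof.

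The principal obstacle is the no-stagnation step in $\kappa$-injectivity. Because the sublinear Morse condition is strictly weaker than the classical Morse condition, the detour must be carefully calibrated: one needs detours whose amplitude grows faster than $\kappa$ but still sublinearly, exploiting both the fiber direction in the ambient Seifert piece and the geometry of the nearest boundary flats. A secondary subtlety is the surjectivity part of the Cantor chain identification: realising each candidate end of $T_\infty$ by an actual $\kappa$-Morse ray in $Y$ requires a uniform choice of Morse directions across pieces, for which the cocompact $G$-action and largest acylindrical nature of $G \curvearrowright T_\infty$ should be the key inputs.
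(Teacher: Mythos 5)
Your outline of $\kappa$-injectivity (sublinearly bounded excursion in each piece, plus separation by the boundary flats) is in the right spirit and roughly matches the paper's route, which phrases it via the equivalence of $\kappa$-Morse and $\kappa$-contracting in $\CAT$ spaces. The serious problem is in the second half. The Bass--Serre tree of a graph manifold is \emph{not} locally finite: each edge group is a $\Z^2$ of infinite index in the adjacent Seifert vertex groups, so every vertex has countably infinite valence --- this is exactly why the paper writes the tree as $T_\infty$, its fixed model of the countably-branching tree. Hence $\partial_\infty T_\infty$ is not a Cantor set, and your plan to identify $\partial_\kappa Y$ with (all of) $\partial_\infty T_\infty$ cannot work. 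It also cannot be repaired by surjectivity: the image of $\partial p$ is a \emph{proper} subset of $\partial_\infty T_\infty$. An end of $T_\infty$ whose itinerary forces the transit distance between the entering and exiting flats of consecutive pieces to grow superlinearly is not realised by any $\kappa$-Morse ray, so your proposed diagonal construction realising every end would be proving a false statement; and even if the image were everything, you would have concluded that $\partial_\kappa Y$ is a Cantor set, which is compact, whereas a Cantor chain (and the sublinearly Morse boundary of a graph manifold group) is not.

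What the theorem actually requires, and what the paper does, is to exhibit the image as a filtered union. For each contraction constant $C_i$, a geodesic ray based at $o$ that is $\kappa$-contracting with constant $C_i$ can exit a given piece only through finitely many of its boundary flats (those within the sublinear excursion bound), so the projections of all such rays span a \emph{locally finite} subtree $T_{C_i,\kappa}\subset T_\infty$. Increasing $C_i$ to $C_{i+1}$ by enough to permit a definite extra excursion $D$ in every piece makes each $T_{C_i,\kappa}$ strongly entwined in $T_{C_{i+1},\kappa}$, and the sequence fills $T_\infty$. The image of $\partial p$ is then $\bigcup_i \partial_\infty T_{C_i,\kappa}$, and the remaining --- and genuinely technical --- step is Lemma~\ref{lem:UniquenessofCantorchain}: any two filling chains of strongly entwined locally finite subtrees of $T_\infty$ give homeomorphic subspaces of $\partial_\infty T_\infty$, so that ``Cantor chain'' is a well-defined homeomorphism type. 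Your proposal contains no analogue of this stratification by Morse gauge or of the uniqueness lemma, and these are the core of the argument rather than a matter of ``matching to the paper's definition.''
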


See Section~\ref{sec:MorseboundariesofRAAGsandgraphmanifolds} for the definition of a Cantor chain.
 The properties of graph manifolds used in the proof of Theorem \ref{thmintro:Graphmanifoldtopology} are not  specific to graph manifolds, and the proof is likely to generalise to a class of groups introduced in \cite{CrokeKleiner02} known as CK-admissible groups. These groups are also graphs of groups and thus admit an action on  their Bass-Serre tree $T_{\infty}$, which is a largest acylindrical action by \cite{HagenRussellSistoSpriano22}. We  make the following conjecture, which we discuss in Section~\ref{subsec:Comparisonwithvisualtopology}.

\begin{conjectureintro}
Let $G$ be a CK-admissible group that acts geometrically on a Hadamard space $Y$. The space $(\partial_\kappa Y,\mathcal{T}(G, [T_{\infty}]))$ is a Cantor chain and coincides with $\partial_{\kappa}Y$ equipped with the visual topology.
\end{conjectureintro}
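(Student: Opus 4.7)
The plan is to follow the template of Theorem~\ref{thmintro:Graphmanifoldtopology} (for the Cantor chain property) and Theorem~\ref{thmintro:Pullbacktopologyincubulatedcase} (for agreement with the visual topology), adapted to CK-admissible groups via their graph-of-groups decomposition and its Bass--Serre tree action. First I would verify that $(G, Y, T_\infty)$ is a $\kappa$-injective hyperbolic projection. The action $G \curvearrowright T_\infty$ is a largest acylindrical action by~\cite{HagenRussellSistoSpriano22}, so the remaining tasks are to produce a $D$-quasi-ruling on $T_\infty$ (using axes of loxodromic elements, which lift to bi-infinite geodesics in $Y$) and to establish $\kappa$-injectivity. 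The latter should follow from the structural fact that each vertex space in a CK-admissible decomposition contains a large flat or Seifert fibre direction with which no $\kappa$-Morse ray can fellow-travel indefinitely; hence every $\kappa$-Morse ray crosses infinitely many edges of $T_\infty$ and determines a unique point in $\partial_\infty T_\infty$, giving an injective extension $\partial p \colon \partial_\kappa Y \hookrightarrow \partial_\infty T_\infty$.

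With these in place, the Cantor chain property should follow by repeating the proof of Theorem~\ref{thmintro:Graphmanifoldtopology}. The ingredients used there are that $T_\infty$ is an unbounded tree; that each vertex space has trivial Morse boundary, so the branching structure of $\partial_\kappa Y$ is inherited from the branching of $T_\infty$; and that edge stabilisers act coboundedly on the flat walls that glue adjacent pieces. All three hold in the CK-admissible setting, and the same combinatorial argument  produces a Cantor chain structure on $(\partial_\kappa Y, \mathcal{T}(G, [T_\infty]))$.

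The main obstacle is identifying this acylindrical topology with the visual topology. One direction is comparatively routine: the projection $p \colon Y \to T_\infty$ coarsely preserves long initial segments of geodesics, so visual convergence forces acylindrical convergence. The reverse direction requires a fellow-travelling statement asserting that if two $\kappa$-Morse rays in $Y$ have projections to $T_\infty$ sharing long initial segments, then the rays themselves share long initial segments in $Y$ up to sublinear error. For cubulated groups with factor systems such a statement is furnished by Genevois's hyperplane projections, but CK-admissible Hadamard spaces lack a cubical structure, so the argument must instead be assembled from the flat-wall geometry of Croke--Kleiner. The natural approach is to prove inductively, piece-by-piece along the shared initial segment in $T_\infty$, that the projections of the two rays to the exit wall of each piece remain within a uniformly bounded distance; accumulating these bounds across a long shared segment and controlling transverse excursions via the $\kappa$-Morse property should yield the required visual fellow-travelling and hence the coincidence of the two topologies.
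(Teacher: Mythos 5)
You should first be aware that the statement you are trying to prove is stated in the paper as a \emph{conjecture}, and the paper contains no proof of it. The closest the paper comes is (i) Theorem~\ref{thmintro:Graphmanifoldtopology} / Corollary~\ref{cor:Topologyforgraphmanifolds}, which establishes only the Cantor chain half, only for graph manifolds, and only via a sketch; and (ii) Conjecture~\ref{conjecture:Visualtopologyongraphmanifolds} together with the discussion in Section~\ref{subsec:Comparisonwithvisualtopology}, where the authors explicitly leave open the identification of $\mathcal{T}(G,[T_\infty])$ with the visual topology \emph{even for graph manifolds}, recording only the easy inclusion $\mathcal{T}(G,[T_\infty]) \subseteq \Vis$. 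So there is no proof to compare yours against; what you have written is a research plan, and it must be judged as such. The Cantor chain half of your plan (quasi-ruling on $T_\infty$, $\kappa$-injectivity via the fact that no $\kappa$-Morse ray can remain in a single piece, strong entwinement of the trees $T_{C_i,\kappa}$) is a plausible transcription of the graph-manifold argument and matches what the authors say "is likely to generalise."

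The genuine gap is in the reverse inclusion $\Vis \subseteq \mathcal{T}(G,[T_\infty])$, exactly where your argument becomes conditional ("should yield", "the natural approach is"). Your proposed induction — bounding, piece by piece along a shared initial segment of $T_\infty$, the distance between the projections of two rays to the exit wall of each piece, and then "accumulating these bounds" — runs directly into the obstruction the paper itself flags after Proposition~\ref{prop:ApproximationofMorsequasigeodesics}: for a sequence $\xi_n \to \xi$ one has no uniform control over the Morse gauges of the rays representing $\xi_n$, so the per-piece error bounds are not uniform over the sequence and need not accumulate to anything useful; this is precisely the mechanism behind Cashen's counterexample \cite{Cashen}. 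Concretely, a geodesic can fellow-travel a flat wall, or a flat sector crossing many pieces, for a long time before committing to an edge of $T_\infty$, and the projection to $T_\infty$ is blind to this; the authors state that handling "flat sectors in $X$ that cross infinitely many pieces" is the substantive difficulty. Their own proposed remedies are not elementary accumulation arguments but rather importing the curtain/$L$-separation machinery of \cite{PetytSprianoZalloum22} (and proving that curtains separated by sufficiently many pieces are $L$-separated) or routing through the coarse median topology of Section~\ref{sec:Topologyforgroupswithacoarsemedianstructure} via the hierarchically hyperbolic structure. Your proposal contains neither ingredient, so as written it does not close the conjecture; it reproduces the part the paper already knows how to do and leaves the open part open.
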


\subsection{Open questions} \label{subsec:openquestions}

We end the introduction with a series of questions that arise from these results and our methods of proof.

Our results provide us with topologies that are well-defined on the $\kappa$-boundary of a finitely generated group $G$ (and its commensurability class), but we do not know if these topologies are generally invariant under quasi-isometry.  In other words, given two quasi-isometric groups $G$ and $G'$, we do not obtain any comparison between hyperbolic projections of $G$ and $G'$. A key obstacle to a generalisation of our results to such a setting is the existence of an  quasi-isometry $\Phi \colon X \rightarrow X'$ which commutes with the actions of $G$ and $G'$. In other words, one needs a suitable notion of an equivalence relation $X \sim X'$ when the acting group is no longer the same.

Moreover, if one aims to strengthen Theorem \ref{thmintro:Visualtopologyinvariance}, one would need to show that the largest acylindrical actions of  two quasi-isometric groups $G$ and  $G'$ that admit cubulations with factor systems are equivalent in this new sense. This leads us to the difficult question of quasi-isometry invariance of acylindrical hyperbolicity, especially invariance of a largest acylindrical action.

Alternatively, one may decide to consider not only cobounded actions $G \curvearrowright X$ on hyperbolic spaces, but cobounded quasi-actions, where quasi-actions are a suitable generalisation of isometric actions that can be pushed-forward under quasi-isometries. Instead of considering two quasi-isometric groups $G$ and  $G'$, one can  fix the group $G$ and turn every cobounded quasi-action of $G'$ into a cobounded quasi-action of $G$. Under this viewpoint, strengthening Theorem \ref{thmintro:Visualtopologyinvariance} to apply to quasi-isometry classes of groups likely boils down to the question whether the largest acylindrical action of a cubulable (with factor system) group is also a largest acylindrical quasi-action.

In light of these points, we raise the following questions.

\begin{queintro}
Can we make sense of largest acylindrical quasi-actions? For which cubulated groups is the largest acylindrical quasi-action equivalent to the largest acylindrical action? 
\end{queintro}


\begin{queintro}
Apart from largest acylindrical (quasi-)actions, are there other natural choices for an equivalence class of (quasi-)actions?
\end{queintro}

We finally discuss one other possible generalization of this work. The proof of Theorem \ref{thmintro:Pullbacktopologyincubulatedcase} relies on the work done in \cite{Genevois20b, Medici19, MediciZalloum21} about properties of hyperplanes in $\CAT$ cube complexes and their relation to $\kappa$-Morse geodesics. In the work of Petyt-Spriano-Zalloum \cite{PetytSprianoZalloum22}, these ideas are being generalised to $\CAT$ spaces. The hyperbolic space $Y_L$ that was introduced by Genevois is then replaced by the $L$-separation space introduced in \cite{PetytSprianoZalloum22}, which we also denote by $Y_L$ for the purpose of the next question. We ask whether their techniques can be used to obtain a `canonical' $\kappa$-injective hyperbolic projection for more $\CAT$ groups.

\begin{queintro}
Let $G$ act geometrically on two $\CAT$ spaces $Y$ and $Y'$ such that the metric spaces $Y_L$ and $Y'_L$ stabilize.
\begin{enumerate}
    \item Is $(G, Y, Y_L)$ a $\kappa$-injective hyperbolic projection for sufficiently large $L$?
    
    \item If so, are the actions of $G$ on $Y_L$ and $Y'_L$ equivalent?
    
    \item Is the action of $G$ on $Y_L$ acylindrical? If so, is it largest? (This is a stronger statement than the previous question.)
    
    \item Does the induced topology $\mathcal{T}(G, Y, Y_L)$ on $\partial_{\kappa} Y$ coincide with the visual topology?
    
    \item Which $\CAT$ groups act on $\CAT$ spaces such that the $L$-separation spaces stabilize?
\end{enumerate}

\end{queintro}

\subsubsection*{Acknowledgments}

The authors are grateful to Abdul Zalloum for several discussions and explaining Theorem \ref{thm:GenevoisABDequivalence} to them. The second author thanks Fran\c{c}ois Dahmani, Alessandro Sisto, and Davide Spriano for several discussions and suggestions. The first author was partially supported by NSF Grant DMS-2106906. The second author was partially funded by the SNSF grant 194996.

\section{Preliminaries} \label{sec:Preliminaries}




\subsection{Hyperbolic spaces} \label{subsec:hyperbolicspaces}
Given a metric space $X$, a {\it geodesic} is an isometric embedding $\gamma \colon I \rightarrow X$, where $I$ is a (possibly unbounded) interval in $\mathbb{R}$. The embedding $\gamma$ is a {\it geodesic ray} if $I$ is unbounded on one side, and a {\it bi-infinite geodesic} if $I=\mathbb R$.  A map $f \colon X \rightarrow X'$ between metric spaces is a {\it coarsely Lipschitz embedding} if there are constants $K \geq 1$ and $C \geq 0$ such that for all $x, y \in X$, we have
\[ d(f(x), f(y)) \leq Kd(x,y) + C. \]
The map $f$ a {\it $(K,C)$-quasi-isometric embedding} if for all $x, y \in X$, we have
\[ \frac{1}{K} d(x,y) - C \leq d(f(x),f(y)) \leq Kd(x,y) + C. \]
If $f$ is a $(K, C)$-quasi-isometric embedding and for every $x' \in X'$, there exists $x \in X$ such that $d(f(x), x') \leq C$, then $f$ is a quasi-isometry. A \textit{$(K,C)$-quasi-geodesic} is a $(K, C)$-quasi-isometric embedding $\gamma \colon I \rightarrow X$ of an interval in $\mathbb{R}$.

In this paper, we will frequently work with non-geodesic and non-proper hyperbolic spaces.  For this reason, we use the most general definition of a hyperbolic metric space, which involves Gromov products.  For more details, see \cite{BH, BS}.

Let $(X, d)$ be a metric space and $\delta \geq 0$. Given three points $x, y, o \in X$, the {\it Gromov product of $x$ and $y$ with respect to $o$} is
\[ (x \mid y)_o := \frac{1}{2} (d(o,x) + d(o,y) - d(x,y) ). \]
The space $X$ is \textit{$\delta$--hyperbolic} if for all $x, y, z, o \in X$, we have
\[ (x \mid y)_o \geq \min\{ (x \mid z)_o, (z \mid y)_o \} - \delta. \]
The visual boundary of a $\delta$-hyperbolic space $X$ is defined as follows. A sequence  $(x_i)_i$ in $X$ \textit{converges to infinity} if
\[ \lim_{i, j \rightarrow \infty} (x_i \mid x_j)_o = \infty. \]
This property is independent of the choice of $o$. Two sequences $(x_i)_i$ and $(y_i)_i$ converging to infinity are {\it equivalent} if
\[ \lim_{i \rightarrow \infty} (x_i \mid y_i)_o = \infty. \]
The {\it boundary at infinity} or \textit{visual boundary} $\partial_{\infty}X$ of $X$, as a set, is the set of all equivalence classes of sequences converging to infinity.

In addition to sequences, we will often consider paths $\gamma \colon [a, \infty) \rightarrow X$ that represent a point in $\partial_{\infty} X$ in some sense. We make this precise here.

\begin{definition} \label{def:Inducingapointatinfinity}
Let $X$ be a $\delta$-hyperbolic metric space.  A map  $\gamma \colon [a, \infty) \rightarrow X$  {\it defines a point in $\partial_{\infty} X$} if there exists a point $\xi \in \partial_{\infty}X$ such that for every increasing sequence $a \leq t_1 \leq t_2 \leq \dots$ with $\lim_{i \rightarrow \infty} t_i = \infty$, the sequence $\gamma(t_i)$  converges to infinity and $(x_i)_i \in \xi$. We write $\xi = [\gamma]$.
\end{definition}

It is common to consider quasi-geodesic rays $\gamma$ when studying the boundary at infinity, as these are particularly well-behaved in geodesic hyperbolic spaces. However, there are non-quasi-geodesic paths that define a point in $\partial_{\infty}X$. For example, a path in a $3$-valent tree that follows a geodesic ray but keeps making larger and larger detours defines the same point in the boundary as the geodesic ray, but is not a quasi-geodesic. For our purposes, we will need to consider such paths.\\

The Gromov product can be extended from a hyperbolic space $X$ to its boundary at infinity. For any two equivalence classes $\xi, \eta \in \partial_{\infty}X$ and $o \in X$, we define
\[ (\xi \mid \eta)_o := \inf_{(x_i)_i \in \xi, (y_i)_i \in \eta} \liminf_{i \rightarrow \infty} ( x_i \mid y_i)_o. \]
A basic property of hyperbolic spaces is that for any choice of representatives $(x_i)_i \in \xi$, $(y_i)_i \in \eta$, we have $\liminf_{i \rightarrow \infty} (x_i \mid y_i)_o \leq \limsup_{i \rightarrow \infty} (x_i \mid y_i)_o \leq ( \xi \mid \eta)_o + 2\delta$.

The Gromov product allows us to equip the boundary at infinity with a topology called the visual topology. Fix a basepoint $o \in X$ and let $\xi \in \partial_{\infty} X$, and $R \geq 0$. We define
\[ U_{o,R}(\xi) := \{ \eta \in \partial_{\infty} X \mid (\xi \mid \eta)_o > R \}. \]
Associating to every $\xi$ the collection of sets $\{ U_{o, R}(\xi) \mid R \geq 0 \}$ yields a neighbourhood basis on $\partial_{\infty} X$. The topology induced by these neighbourhoods does not depend on the choice of $o$, and we call it the {\it visual topology}. A well-known fact about the visual topology is that it is metrizable by a family of metrics called visual metrics. From now on, we will always consider $\partial_{\infty} X$ equipped with this topology.




\subsection{Largest acylindrical actions} \label{subsec:Acylindricalactions}

The most important examples for hyperbolic projections will be given by largest acylindrical actions. In this section, we provide the basic definitions and some important results that we will use later on. Before we talk about acylindrical actions, we have some general remarks regarding group actions on metric spaces. Throughout this paper, we assume that all our actions are by isometries.

\begin{definition} \label{def:Coarselyequivariant}
Let $G$ be a group acting  on two metric spaces $X, Y$. We call a map $f \colon Y \rightarrow X$ {\it coarsely $G$-equivariant} if there exists a constant $C$ such that for all $g \in G$ and all $y \in Y$, we have $d( f(gy), gf(y)) \leq C$.
\end{definition}

\begin{remark} \label{rem:InducedProjection}
Suppose a group $G$ acts by isometries on two spaces $X, Y$ and suppose the action on $Y$ is geometric. We will frequently use the fact that there is a coarsely $G$-equivariant, coarsely Lipschitz map $p \colon Y \rightarrow X$ that is unique up to bounded distance. Indeed, since $G$ acts on $Y$ geometrically, there is a coarsely $G$-equivariant quasi-isometry from $Y$ to $(G,d_S)$, where $d_S$ is a word metric with respect to some finite generating set. Fixing a basepoint $x_0 \in X$, the orbit map $(G,d_S)\to X$ defined by $g \mapsto gx_0$ is a $G$-equivariant and Lipschitz. The composition of these two maps is the desired map $p$. It is straightforward to check that any two such maps have bounded distance from each other, using that the action of $G$ on $Y$ is  cocompact. We call $p$ a projection map induced by the action $G \curvearrowright X$. If the action is clear from the context, we refer to $p$ as the induced projection map.
\end{remark}

We now define acylindrical actions. The following definition goes back to \cite{Bowditch07} and \cite{Osin15}.

\begin{definition}
An isometric action of a group $G$ on a metric space $X$  is  {\it acylindrical} if for every $\epsilon > 0$, there exist $R, N \geq 0$ such that for all $x, y \in X$ with $d_X(x, y) \geq R$, we have
\[ \vert \{ g \in G \mid d_X( x, gx) \leq \epsilon,\, d_X( y, gy) \leq \epsilon \} \vert \leq N. \]
\end{definition}

The set of isometric actions of a given group admits a preorder, introduced in \cite{AbbottBalasubramanyaOsin19}.

\begin{definition} \label{def:Dominated}
Let $G$ be a group which acts coboundedly and by isometries on two metric spaces $X$ and $X'$. The action of $G$ on $X$ is {\it dominated} by the action of $G$ on $X'$ if there exists a coarsely $G$-equivariant, coarsely Lipschitz map $p \colon X' \rightarrow X$. We write $X \preceq X'$. We say two actions are {\it equivalent} if $X \preceq X'$ and $X' \preceq X$, and we write $X \sim X'$.
\end{definition}

It is straightforward to show that if  $G\curvearrowright X$ and $G\curvearrowright X'$ are  equivalent cobounded actions, then the coarsely $G$-equivariant, coarsely Lipschitz maps between them are quasi-isometries. Since $\preceq$ defines a preorder, it induces a partial order on the space of equivalence classes of cobounded actions of $G$.

\begin{definition}
Let $\AH(G)$ denote the set of $\sim$-equivalence classes of cobounded acylindrical actions of $G$ on $\delta$-hyperbolic spaces. An acylindrical action on a hyperbolic space is ${\it largest}$ if its equivalence class in $\AH(G)$ is largest with respect to the partial order defined by $\preceq$.
\end{definition}

Not every group admits a largest acylindrical action; the first author constructed the first such example in  \cite{Abbott16}. 
However, it follows from the definition that if a group has a largest acylindrical action, its equivalence class is unique. The following result guarantees existence of largest acylindrical actions for a large class of interesting examples; see Section~\ref{sec:Topologyforgroupswithacoarsemedianstructure} for further discussion of hierarchically hyperbolic groups.

\begin{theorem}[\cite{AbbottBehrstockDurham21}]\label{thm:HHGLargest}
Every hierarchically hyperbolic group admits a largest acylindrical action. This includes, in particular, groups that act properly, cocompactly, and cubically on a $\CAT$ cube complex that admits a factor system (see Definition \ref{def:factorsystem}).
\end{theorem}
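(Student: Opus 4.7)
The plan is to exhibit an explicit candidate for the largest acylindrical action and check the two properties that need verifying. Every HHG $G$ carries a distinguished $\delta$-hyperbolic space $\mc C(S)$, associated to the $\sqsubseteq$-maximal element of its index set, on which $G$ acts by cobounded isometries; this is the natural candidate. I would verify (a) that $G\curvearrowright \mc C(S)$ is acylindrical, and (b) that any cobounded acylindrical action $G\curvearrowright X$ on a hyperbolic space satisfies $X\preceq \mc C(S)$.

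For (a), I would deduce acylindricity from the HHG axioms in the standard way. Given $\epsilon > 0$, choose $R$ larger than the constants appearing in the bounded geodesic image and uniqueness axioms. For $x, y \in \mc C(S)$ with $d_{\mc C(S)}(x,y) \geq R$, any $g \in G$ moving both $x$ and $y$ by at most $\epsilon$ must coarsely fix a hierarchy geodesic between them. The uniqueness axiom together with cocompactness of the $G$-action on the full HHS then bounds the number of such $g$ in terms of $\epsilon$ alone.

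For (b), fix basepoints $s_0 \in \mc C(S)$ and $x_0 \in X$ and attempt to push forward the orbit map $g \mapsto g x_0$ using coboundedness of $\mc C(S)$ as in Remark~\ref{rem:InducedProjection}: define $f \colon \mc C(S) \to X$ by $f(s) := g x_0$ for any $g \in G$ with $g s_0$ close to $s$. This $f$ is coarsely $G$-equivariant by construction; the substantive issue is showing that it is coarsely well-defined and coarsely Lipschitz, which reduces to establishing a uniform bound of the form $d_X(x_0, g x_0) \leq K\,d_{\mc C(S)}(s_0, g s_0) + C$ for all $g \in G$. To obtain such a bound, I would appeal to the classification of generalised loxodromic elements in an acylindrically hyperbolic group: for an HHG, every such element acts loxodromically on some $\mc C(U)$, and one would promote this qualitative fact to the required quantitative comparison via the consistency, bounded geodesic image, and uniqueness axioms, invoking cocompactness of the $G$-action on the HHS to make all constants uniform.

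The main obstacle is exactly this quantitative comparison in (b): converting qualitative information about the loxodromic spectrum of an arbitrary acylindrical action into a coarse Lipschitz bound requires the full strength of the hierarchical structure, and this is the technical heart of the Abbott--Behrstock--Durham argument. The parenthetical second sentence of the theorem then follows immediately from the theorem of Behrstock--Hagen--Sisto that a $\CAT$ cube complex with a factor system, equipped with a proper and cocompact cubical action of $G$, endows $G$ with an HHG structure compatible with the cubical action, so that the first sentence applies.
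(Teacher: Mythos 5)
The paper does not prove this statement: it is imported wholesale from \cite{AbbottBehrstockDurham21} and used as a black box, so there is no internal proof to measure your attempt against. Treated as a standalone argument, your outline names the right two tasks but contains a genuine gap, beginning with the choice of candidate. The largest acylindrical action of an HHG is not, in general, the action on the top-level curve graph $\mc C(S)$ of the given hierarchical structure; it is the action on the top-level curve graph of a \emph{maximized} structure obtained by modifying the index set, as the paper itself notes at the start of Section~\ref{subsec:ThecaseofHHGs}. Without that modification, step (b) can fail outright: a hyperbolic group with a suitable splitting carries an HHG structure whose top-level curve graph is the Bass--Serre tree, and the (acylindrical) Cayley-graph action is not dominated by the tree action, since vertex-group loxodromics become elliptic. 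Your own sketch of (b) runs into exactly this phenomenon --- a generalized loxodromic may act loxodromically only on a proper domain $\mc C(U)$ and elliptically on $\mc C(S)$ --- and does not resolve it.

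Second, even granting the maximized structure, your treatment of (b) is a statement of intent rather than a proof: ``promote this qualitative fact to the required quantitative comparison via the axioms'' is precisely the content of the theorem, and you acknowledge as much. It is also not how the cited argument goes. Abbott--Behrstock--Durham do not extract a coarse Lipschitz bound on orbit maps from the loxodromic spectrum; they work in the poset of acylindrically hyperbolic structures of \cite{AbbottBalasubramanyaOsin19}, where every cobounded acylindrical action is dominated by a coned-off Cayley graph attached to a hyperbolically embedded collection of subgroups, and then show that the maximized top-level curve graph dominates all such coned-off spaces. The parts of your proposal that are sound are (a), the acylindricity of $G\curvearrowright\mc C(S)$, which does follow from the HHG axioms along the lines you indicate, and the final sentence reducing the cubical case with a factor system to the HHG case.
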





\subsection{Sublinearly Morse boundaries} \label{subsec:sublinearlyMorseboundaries}

In this section, we review the definition and properties of $\kappa$-Morse geodesic rays. For further details, see \cite{QingRafiTiozzo}. Let $\kappa \colon [0, \infty) \rightarrow [0, \infty)$ be a monotone increasing, concave, and sublinear function, that is
\[
\lim_{t \to \infty} \frac{\kappa(t)} t = 0. 
\]
Let $Y$ be a metric space with base point $o \in Y$, and denote for all $y \in Y$
\[ \kappa(y) := \kappa( d(o,y) ). \]

\begin{definition} \label{Def:Neighborhood} 
Given a constant $n$ and a closed set $Z \subset Y$ the \textit{$(\kappa,n)$--neighborhood of $Z$} is 
\[
\mathcal{N}_\kappa(Z, n) = \Big\{ y \in Y \vert  d(y, Z) \leq  n \cdot \kappa(y)  \Big\}.
\]
We will frequently omit the precise value of $n$ and speak of $\kappa$-neighbourhoods.
\end{definition} 

\begin{figure}[h!]
\begin{tikzpicture}
 \tikzstyle{vertex} =[circle,draw,fill=black,thick, inner sep=0pt,minimum size=.5 mm] 
[thick, 
    scale=1,
    vertex/.style={circle,draw,fill=black,thick,
                   inner sep=0pt,minimum size= .5 mm},
                  
      trans/.style={thick,->, shorten >=6pt,shorten <=6pt,>=stealth},
   ]

 \node[vertex] (a) at (0,0) {};
 \node at (-0.2,0) {$o$};
 \node (b) at (10, 0) {};
 \node at (10.6, 0) {$\gamma$};
 \node (c) at (6.7, 2) {};
 \node[vertex] (d) at (6.68,2) {};
 \node at (6.7, 2.4){$y$};
 \node[vertex] (e) at (6.68,0) {};
 \node at (6.7, -0.5){$y_{\gamma}$};
 \draw [-,dashed](d)--(e);
 \draw [-,dashed](a)--(d);
 \draw [decorate,decoration={brace,amplitude=10pt},xshift=0pt,yshift=0pt]
  (6.7,2) -- (6.7,0)  node [black,midway,xshift=0pt,yshift=0pt] {};

 \node at (7.8, 1.2){$n \cdot \kappa(y)$};
 \node at (3.6, 0.7){$d(o,y)$};
 \draw [thick, ->](a)--(b);
 \path[thick, densely dotted](0,0.5) edge [bend left=12] (c);
\node at (1.4, 1.9){$\mc N_{\kappa}(\gamma,n)$};
\end{tikzpicture}
\caption{A $\kappa$-neighbourhood of a geodesic ray $\gamma$ with multiplicative constant $n$, where $y_{\gamma}$ is a point on $\gamma$ closest to $y$.}
\end{figure}
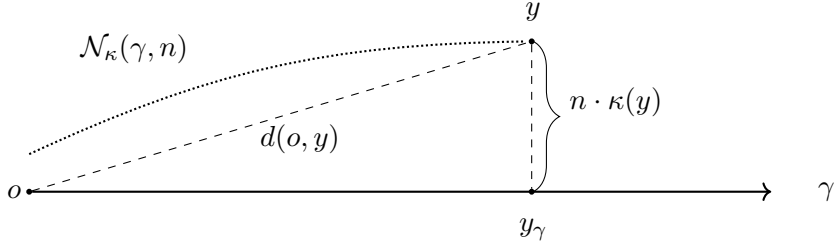

\begin{definition} \label{Def:Fellow-Travel}
Two quasi-geodesic rays $\alpha$ and $\beta$  in $Y$ \emph{$\kappa$--fellow travel} each other if $\alpha$ is contained in some 
$\kappa$--neighbourhood of $\beta$ and $\beta$ is contained in some 
$\kappa$--neighbourhood of $\alpha$.  This defines an equivalence
relation on the set of quasi-geodesic rays in $Y$.
\end{definition}

\begin{definition} \label{Def:Morse} 
A subset $Z\subseteq Y$ is \emph{$\kappa$--Morse} if there is a function
\[
\mu_Z \colon \mathbb{R}_+^2 \to \mathbb{R}_+
\]
so that for any $(K, C)$--quasi-geodesic $\beta \colon [s,t] \to Y$  with end points 
on $Z$,
\[
\beta([s,t])  \subset \mathcal{N}_{\kappa} \big(Z,  \mu_Z(K, C)\big). 
\]
We refer to $\mu_{Z}$ as the \emph{Morse gauge} for $Z$. For convenience, we may always assume that $\mu_Z(K,C)$ is the largest element in the set $\{K,C, \mu_Z(K,C) \}$. When $\kappa \equiv 1$, we recover the standard definition of a \emph{Morse set}. For a more detailed discussion of Morse sets, see \cite{Cordes}.
\end{definition}

We are most interested in the case where the set $Z$ is a quasi-geodesic ray. Such quasi-geodesic rays allow us to define the $\kappa$-Morse boundary.

\begin{definition} \label{def: SM}
Let $\kappa$ be a sublinear function. As a set, the \textit{$\kappa$-Morse boundary} of a geodesic metric space $Y$ is 
\[\partial_\kappa Y : = \{ \text{ all } \kappa\text{-Morse quasi-geodesic rays } \} / \kappa\text{-fellow travelling}\]
\end{definition}
Note that we do not require the $\kappa$-Morse rays in the above definition to begin at the same point, as two representatives with different starting points are contained in sufficiently large $\kappa$-neighborhoods of each other.

When the particular sublinear function $\kappa$ is not important, we simply call this boundary the \textit{sublinearly Morse boundary}.  Using this notation, the original Morse boundary defined by Cordes (see \cite{Cordes, CharneySultan, Murray}) is denoted $\partial_1 Y$.

The following two results are useful facts about $\kappa$-Morse boundaries.

\begin{lemma}[{\cite[Lemma 4.2]{QingRafiTiozzoII}}] \label{lem:GeodesicRepresentatives}
If $Y$ is a proper, geodesic metric space and $\xi \in \partial_{\kappa} Y$, then there exists a geodesic ray $\gamma \in \xi$.
\end{lemma}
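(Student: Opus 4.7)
The plan is a standard Arzelà-Ascoli construction followed by a fellow-traveling verification in two directions. Fix a $(K,C)$-quasi-geodesic ray $\alpha \in \xi$ that is $\kappa$-Morse with Morse gauge $\mu_\alpha$, and, after replacing $\alpha$ by its $\kappa$-fellow-traveling representative starting at (or within bounded distance of) the basepoint $o$, assume $\alpha(0) = o$. Choose $t_n \to \infty$ and let $\gamma_n \colon [0, \ell_n] \to Y$ be a geodesic segment from $o$ to $\alpha(t_n)$, where $\ell_n = d(o, \alpha(t_n))$. Each $\gamma_n$ is $1$-Lipschitz, and since $Y$ is proper, Arzelà-Ascoli produces a subsequence converging uniformly on compact subsets to a geodesic ray $\gamma \colon [0, \infty) \to Y$ with $\gamma(0) = o$.

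The easy containment $\gamma \subset \mc N_\kappa(\alpha, n_0)$ is immediate: each $\gamma_n$ is a $(1,0)$-quasi-geodesic with both endpoints on $\alpha$, so by $\kappa$-Morseness of $\alpha$, $\gamma_n \subset \mc N_\kappa(\alpha, n_0)$ with $n_0 := \mu_\alpha(1,0)$. Uniform convergence on compact subsets preserves this pointwise $\kappa$-closeness condition, so $\gamma$ is contained in the same $\kappa$-neighborhood. In particular, $\gamma$ is unbounded and represents some point of $\partial_\kappa Y$ (once we verify it is itself $\kappa$-Morse, which will follow from the fellow-traveling below together with stability of $\kappa$-Morseness under $\kappa$-fellow traveling).

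For the reverse containment $\alpha \subset \mc N_\kappa(\gamma, n_1)$, I would fix $t \geq 0$ and let $s^{*} := d(o, \alpha(t))$, so that $\gamma(s^*)$ lies at arclength $s^*$ along $\gamma$. Since $\gamma(s^*) \in \mc N_\kappa(\alpha, n_0)$, there is a parameter $t^*$ with $d(\gamma(s^*), \alpha(t^*)) \leq n_0 \kappa(s^*)$. The triangle inequality controls $\left| d(o, \alpha(t)) - d(o, \alpha(t^*)) \right|$ by $n_0 \kappa(s^*)$, and the $(K,C)$-quasi-geodesic inequality for $\alpha$ then controls $|t - t^*|$, hence $d(\alpha(t), \alpha(t^*))$, by a quantity of the form $K n_0 \kappa(d(o, \alpha(t))) + \text{const}$. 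Combining with the triangle inequality yields $d(\alpha(t), \gamma(s^*)) \leq n_1 \kappa(d(o, \alpha(t)))$ for some $n_1$ depending only on $K, C, n_0$, after absorbing the additive constants by enlarging $n_1$ (permissible because $\alpha$ leaves every bounded neighborhood of $o$, so the inequality need only be verified outside a compact initial segment).

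The main obstacle is this reverse direction: the $\kappa$-Morse hypothesis is naturally one-sided (quasi-geodesics with endpoints on $\alpha$ stay near $\alpha$), and one must promote it to two-sided control. The delicate bookkeeping lies in synchronizing the parameter $s^*$ on $\gamma$ with the corresponding parameter on $\alpha$ via the quasi-geodesic inequality, and in absorbing the additive constants into the multiplicative $\kappa$-control, which is possible precisely because $\kappa$ is only required to be sublinear rather than bounded.
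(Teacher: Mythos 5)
This lemma is not proved in the paper at all: it is quoted verbatim from \cite[Lemma 4.2]{QingRafiTiozzoII}, so the only comparison available is with the standard argument. Your skeleton --- geodesics $\gamma_n$ from $o$ to $\alpha(t_n)$, an Arzel\`a--Ascoli limit $\gamma$, then the two containments required by Definition~\ref{Def:Fellow-Travel} --- is exactly that standard skeleton, and the forward containment $\gamma\subset\mathcal{N}_\kappa(\alpha,\mu_\alpha(1,0))$ is handled correctly (each $\gamma_n$ has endpoints on $\alpha$, so Morseness applies, and the estimate passes to the locally uniform limit).

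The reverse containment, however, contains a genuine gap, precisely at the step you single out. From $d(\gamma(s^*),\alpha(t^*))\leq n_0\kappa(s^*)$ you correctly get $\lvert d(o,\alpha(t))-d(o,\alpha(t^*))\rvert\leq n_0\kappa(s^*)$, but you then claim the $(K,C)$-quasi-geodesic inequality converts this into a bound on $\lvert t-t^*\rvert$ of order $\kappa(s^*)$. It does not: the inequality $\tfrac{1}{K}\lvert t-t^*\rvert-C\leq d(\alpha(t),\alpha(t^*))$ bounds $\lvert t-t^*\rvert$ only in terms of $d(\alpha(t),\alpha(t^*))$, which is the very quantity you are trying to control, while the radial information alone only confines both parameters to a window of width linear in $s^*$ (namely $t,t^*\in\bigl[\tfrac{1}{K}(s^*-n_0\kappa(s^*)-C),\,K(s^*+n_0\kappa(s^*)+C)\bigr]$). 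Two points of a quasi-geodesic at essentially the same distance from $o$ can a priori be far apart both in parameter and in the space, and nothing in your argument excludes this, because the reverse direction as written never invokes the $\kappa$-Morse hypothesis on $\alpha$ beyond the already-established forward containment. The forward containment alone cannot imply the reverse one: $\alpha$ could make excursions of super-$\kappa$ size away from $\gamma$ and return without disturbing $\gamma\subset\mathcal{N}_\kappa(\alpha,n_0)$. What is missing is a ``surjectivity of the projection onto a $\kappa$-Morse set'' step: given $t$, first locate $s$ whose companion parameter $t_s$ on $\alpha$ exceeds $t$ (your radial estimate does give $t_s\to\infty$), then apply the Morse property of $\alpha$ to a quasi-geodesic with endpoints on, or $\kappa$-close to, $\alpha$ built from $\gamma\vert_{[0,s]}$ and a short connecting segment, combined with a connectedness/intermediate-value argument along that path, to force a point of $\gamma$ into a controlled $\kappa$-neighbourhood of $\alpha(t)$. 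This is the content of the corresponding lemma in \cite{QingRafiTiozzoII}, and without it the two-sided fellow-travelling is not established.
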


\begin{lemma}[{\cite[Theorem A]{QingRafiTiozzoII}}]
Every quasi-isometry $\Psi : Y \rightarrow Y'$ between proper geodesic metric spaces extends to a bijection $\partial \Psi : \partial_{\kappa} Y \rightarrow \partial_{\kappa} Y'$ for every sublinear $\kappa$.
\end{lemma}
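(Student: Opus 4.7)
My plan is to define the extension by composition and verify that both $\kappa$-Morseness and $\kappa$-fellow-travelling are preserved under quasi-isometries. Fix a $(K,C)$-quasi-isometry $\Psi\colon Y\rightarrow Y'$ and a quasi-inverse $\bar\Psi\colon Y'\rightarrow Y$ whose compositions with $\Psi$ lie within bounded distance of the identity. For a $\kappa$-Morse quasi-geodesic ray $\gamma$ in $Y$, I would set $\partial\Psi([\gamma]):=[\Psi\circ\gamma]$; the composition $\Psi\circ\gamma$ is a quasi-geodesic ray in $Y'$ by a standard computation. The crux of the argument is showing that $\Psi\circ\gamma$ is again $\kappa$-Morse, which comes down to controlling how $\kappa$-neighbourhoods transform under $\Psi$. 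Fixing basepoints $o\in Y$, $o'\in Y'$ with $d(o',\Psi(o))$ bounded, I would use the facts that $\kappa$ is monotone, concave, and vanishes at $0$, so $\kappa$ is subadditive and satisfies $\kappa(Kt+C')\leq K\kappa(t)+\kappa(C')$ for $K\geq 1$. Together with the quasi-isometric comparison $d(o,x)\leq Kd(o',\Psi(x))+\mathrm{const}$, this yields the key inequality $\kappa(d(o,x))\leq K\kappa(d(o',\Psi(x)))+\mathrm{const}$, so $\Psi$ sends a $(\kappa,n)$-neighbourhood of any set $Z\subseteq Y$ into a $(\kappa,n')$-neighbourhood of $\Psi(Z)\subseteq Y'$ for some $n'=n'(n,K,C)$, at least away from the basepoint (and a bounded additive error near the basepoint is absorbed into the Morse gauge).

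With that in hand, the Morse property transports as follows. Given a $(K',C')$-quasi-geodesic $\beta'\colon[s,t]\rightarrow Y'$ with endpoints on $\Psi\circ\gamma$, pull back to get a $(K'',C'')$-quasi-geodesic $\bar\Psi\circ\beta'$ in $Y$ whose endpoints lie within a bounded distance of $\gamma$; after pre- and post-pending short geodesic segments (using that $Y$ is geodesic), we obtain a quasi-geodesic with endpoints on $\gamma$. The $\kappa$-Morse property of $\gamma$ then confines this adjusted path to $\mathcal{N}_\kappa(\gamma,\mu_\gamma(K'',C''))$. Pushing forward through $\Psi$ and using the neighbourhood comparison above places $\beta'$ in a $\kappa$-neighbourhood of $\Psi\circ\gamma$ whose multiplicative constant depends only on $\mu_\gamma$, $K$, $C$, $K'$, $C'$. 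Thus $\Psi\circ\gamma$ admits a Morse gauge, so it is $\kappa$-Morse.

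Well-definedness on equivalence classes follows by the same neighbourhood-transport principle: if $\alpha$ and $\beta$ lie in $\kappa$-neighbourhoods of each other in $Y$, then $\Psi\circ\alpha$ and $\Psi\circ\beta$ lie in $\kappa$-neighbourhoods of each other in $Y'$. For bijectivity, I would apply the same construction to $\bar\Psi$; the compositions $\partial\bar\Psi\circ\partial\Psi$ and $\partial\Psi\circ\partial\bar\Psi$ agree with the identity because $\bar\Psi\circ\Psi$ and $\Psi\circ\bar\Psi$ are bounded perturbations of the identity, and any two quasi-geodesic rays at bounded Hausdorff distance trivially $\kappa$-fellow-travel.

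The main obstacle is the first paragraph: establishing that $\kappa$-neighbourhoods behave well under the quasi-isometry $\Psi$ despite the fact that $\kappa$ is evaluated at the distance from a basepoint rather than along the quasi-geodesic. The tension is that, near the basepoint, $\kappa(y)$ can be arbitrarily small, yet $\Psi$ can still displace points by up to $C$; resolving this requires exploiting the sublinearity and concavity of $\kappa$ carefully and absorbing the bounded near-basepoint slack into the Morse gauge. Once this technical comparison is in place, the rest of the proof is a bookkeeping exercise.
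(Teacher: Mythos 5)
The paper does not prove this lemma; it simply cites \cite[Theorem A]{QingRafiTiozzoII}, and your argument is essentially the one given there: push forward representatives, transport $\kappa$-neighbourhoods using concavity/subadditivity of $\kappa$ (so that $\kappa(Kt+C)\leq K\kappa(t)+\kappa(C)$), and invert via a quasi-inverse. Your proposal is sound, including your correct identification of the only delicate point --- the behaviour of $\kappa$-neighbourhoods near the basepoint, which is handled by the standard normalisation $\kappa\geq 1$ (replacing $\kappa$ by $\max\{\kappa,1\}$ changes nothing).
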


In \cite{QingRafiTiozzoII}, Qing-Rafi-Tiozzo define a topology on $\partial_{\kappa} Y$ that turns the bijection in the previous lemma into a homeomorphism. In other words, the $\kappa$-Morse boundary is a topological quasi-isometry invariant when equipped with that topology. This topology will not be necessary for this paper; we refer the interseted reader to \cite{QingRafiTiozzoII} for further details.




\subsection{$\CAT$ spaces} \label{subsec:CAT(0)spaces}

We  recall the following definitions and facts for $\CAT$ spaces, and refer the reader to \cite{BH} for the necessary definitions and properties.

Given a proper $\CAT$ space $Y$, two geodesic rays $\gamma : [a, \infty) \rightarrow Y$, $\gamma': [a', \infty) \rightarrow Y$ are {\it equivalent} if  there exists a constant $D$ such that
\[ \sup_{t \geq 0} d(\gamma(a + t), \gamma'(a' + t) ) < \infty. \]
The \textit{visual boundary} $\partial_{\infty} Y$ of $Y$ is the set of equivalence classes of geodesic rays. This boundary can be equipped with the visual topology,  defined as follows. Fix a base point $o$. Since $Y$ is proper, every point $\xi \in \partial_{\infty} Y$ has a unique representative $\gamma_{\xi} \in \xi$ that starts at $o$. For every $\epsilon, R > 0$ and $\xi \in \partial_{\infty} Y$, let 
\[ U_{o, \epsilon, R}(\xi) := \{ \eta \in \partial_{\infty}Y \vert d(\gamma_{\eta}(R), \gamma_{\xi}(R) ) < \epsilon \}. \]
For every $\xi$, the collection $\{ U_{o, \epsilon, R}(\xi) \}$ forms a neighbourhood basis. We define the visual topology on $\partial_{\infty} Y$ to be the topology defined by these neighbourhood bases. This topology is independent of the choice of $o$. From now on, we will always consider boundaries of $\CAT$ spaces equipped with this topology.\\

In contrast to the hyperbolic case, the visual boundary of a $\CAT$ space is not invariant under quasi-isometry. Indeed, Croke--Kleiner showed that there are groups acting geometrically on distinct $\CAT$ spaces (implying that these $\CAT$ spaces are coarsely $G$-equivariantly quasi-isometric), whose visual boundaries are not homeomorphic (see \cite{CrokeKleiner00}). This observation serves as the motivation for several boundary constructions, including the definition of the sublinearly Morse boundary.

If $Y$ is a proper, geodesic $\CAT$ space, we obtain a canonical inclusion $\partial_{\kappa} Y \hookrightarrow \partial_{\infty} Y$ as follows. Every equivalence class of $\kappa$-fellow traveling quasi-geodesic rays $[\beta]$ in $Y$ contains a geodesic representative \cite[Lemma 4.2]{QingRafiTiozzoII}, and any two such geodesic representatives are equivalent in the sense of the visual boundary $\partial_{\infty}Y$. The map thus send $[\beta]$ to the point in $\partial_{\infty} Y$ represented by all geodesic rays in $[\beta]$. One immediately checks from the definition that  geodesic rays in $Y$ that do not $\kappa$-fellow travel are not equivalent in $\partial_{\infty} Y$. This map is thus injective, yielding the desired inclusion. We may therefore consider the restriction of the visual topology on $\partial_{\infty} Y$ to $\partial_{\kappa} Y$, which we denote  by $\Vis$.

Cashen showed that $\Vis$ is not a quasi-isometry invariant by constructing two $\CAT$ spaces that are quasi-isometric, but whose $\kappa$-Morse boundaries are not homeomorphic \cite{Cashen}. His examples can easily be cubulated, so that quasi-isometry invariance does not even hold for $\CAT$ cube complexes. As we will see, this phenomenon disappears if one requires the metric spaces in question to admit certain properties; in particular they need to have sufficiently large isometry groups.




\section{The topology of hyperbolic projections} \label{sec:Thetopologyofhyperbolicprojections}
In this section we will define a topology on the $\kappa$--Morse boundary of a finitely generated group using a \textit{hyperbolic projection of $G$}. 

\begin{definition} \label{def:hyperbolicprojection}
Let $G$ be a finitely generated group, $Y$ a geodesic metric space and $X$ a hyperbolic metric space that admits a $D$-quasi-ruling. A {\it hyperbolic projection of $G$} is a triple $(G, Y, X)$ together with a geometric action $G \curvearrowright Y$ and a cobounded action $G \curvearrowright X$ by isometries.
\end{definition}

The condition that $X$ admits a quasi-ruling is a technical assumption on the space $X$.  Intuitively (though not precisely), a space with a $D$--quasi-ruling has the property that there is a path between any two points that behaves well under reparametrization. (Here, a reparametrisation is an orientation-preserving homeomorphism $\varphi \colon I \rightarrow I$ on the domain of the path.) This notion is closely related to the existence of an unparametrized quasigeodesic between any two points: a path $\gamma \colon I \rightarrow X$ is an {\it unparametrised quasi-geodesic}, if it can be reparametrised as a quasi-geodesic $\gamma \circ \varphi \colon I \rightarrow X$. See Appendix \ref{sec:unparametrisedquasigeodesicsandhyperbolicspaces} for a in-depth discussion of quasi-rulings, their properties, and their relationship with unparametrised quasi-geodesics.

Recall that there is a coarsely $G$-equivariant, coarsely Lipschitz map $p \colon Y \rightarrow X$ which is unique up to bounded distance, which we call the induced projection map (see Remark \ref{rem:InducedProjection}). If $G$ acts coboundedly on $X$, then $p$ is coarsely surjective; that is, there exists a constant $D$ such that $d_{Haus}(X, p(Y)) \leq D$.
 
The case $\kappa\equiv 1$ is more straightforward than the case for a general sublinear $\kappa$.  We first deal with $\kappa \equiv 1$ in Section \ref{subsec:Morsecase} and put a topology on the Morse boundary of $G$, and then we deal with the general case in Section \ref{subsec:sublinearcase}.

\subsection{The topology for Morse boundaries} \label{subsec:Morsecase}
In this section, we require the hyperbolic projection $(G,X,Y)$ to satisfy an additional condition.

\begin{definition} \label{def:morseinjectivity}
Let $(G, Y, X)$ be a hyperbolic projection of $G$ and $p \colon Y \rightarrow X$ an induced projection map. We say $(G, Y, X)$ is {\it Morse-injective} if for all unbounded Morse geodesics $\gamma$ in $Y$, the image $p \circ \gamma$ is an unbounded, unparametrised quasi-geodesic in $X$.
\end{definition}

It is straightforward to check that this definition is independent of the choice of $p$, since $p$ is unique up to bounded distance. 

\begin{lemma}\label{lem:inducedinclusion}
If $(G,Y,X)$ is a Morse-injective hyperbolic projection, there is a $G$-equivariant injective map $\partial p\colon \partial_{1} Y \hookrightarrow \partial_{\infty} X$.
\end{lemma}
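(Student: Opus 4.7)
The plan is to define $\partial p$ by mapping each class $\xi\in\partial_1 Y$ to the boundary point in $\partial_\infty X$ represented by a Morse geodesic representative of $\xi$. Since $G$ acts geometrically on $Y$, the space $Y$ is proper, so Lemma \ref{lem:GeodesicRepresentatives} yields a Morse geodesic ray $\gamma$ representing $\xi$. Morse-injectivity ensures that $p\circ\gamma$ is an unbounded unparametrised quasi-geodesic in $X$, and by the appendix's results on quasi-rulings in hyperbolic spaces such a path determines a unique point $[p\circ\gamma]\in\partial_\infty X$. Define $\partial p(\xi):=[p\circ\gamma]$.

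For well-definedness I would verify independence from the choice of geodesic representative and the choice of induced projection map. Two Morse geodesic representatives of $\xi$ are at bounded Hausdorff distance, and the coarsely Lipschitz map $p$ carries them to paths at bounded Hausdorff distance in $X$; two unbounded paths at bounded Hausdorff distance in a hyperbolic space define the same boundary point. The same reasoning, together with Remark \ref{rem:InducedProjection}, handles the dependence on $p$. For $G$-equivariance, the translate $g\cdot\gamma$ represents $g\cdot\xi$, and coarse $G$-equivariance puts $p(g\cdot\gamma)$ at bounded distance from $g\cdot p(\gamma)$, so $\partial p(g\cdot\xi)=g\cdot\partial p(\xi)$.

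The main obstacle is injectivity. Suppose $\xi_1\neq\xi_2$ in $\partial_1 Y$ with Morse geodesic representatives $\gamma_1,\gamma_2$ based at a common point $o\in Y$. I plan to show that the bi-infinite concatenation $\gamma_1^{-1}\cdot\gamma_2$ is a Morse quasi-geodesic in $Y$ whose $p$-image is a bi-infinite unparametrised quasi-geodesic in $X$ with two distinct endpoints in $\partial_\infty X$; those endpoints must be $[p\gamma_1]$ and $[p\gamma_2]$. The key step is a uniform bound $(\gamma_1(t)\mid\gamma_2(s))_o\leq B$ on the Gromov product, proved by contradiction: if the Gromov products were unbounded then the thin-triangle behaviour forced by the Morse gauges of $\gamma_1$ and $\gamma_2$ would place arbitrarily long initial segments of $\gamma_1$ and $\gamma_2$ within bounded Hausdorff distance of one another, contradicting $\xi_1\neq\xi_2$. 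Granted this bound, $\gamma_1^{-1}\cdot\gamma_2$ is a $(1,2B)$-quasi-geodesic and is Morse (inherited from its halves), so on each finite subsegment it lies at bounded Hausdorff distance from a geodesic to which Morse-injectivity applies. The stability of unparametrised quasi-geodesics under bounded perturbation then yields that $p(\gamma_1^{-1}\cdot\gamma_2)$ is a bi-infinite unparametrised quasi-geodesic in $X$, completing the argument.
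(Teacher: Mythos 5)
Your definition of $\partial p$, the well-definedness via bounded Hausdorff distance of representatives, and the equivariance argument all match the paper's proof. The divergence, and the problem, is in the injectivity step.

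The paper proves injectivity by invoking the standard fact that two distinct points of $\partial_1 Y$ (for $Y$ proper and geodesic) are joined by a genuine bi-infinite Morse \emph{geodesic}, and then applies Morse-injectivity directly to that geodesic to get a bi-infinite unparametrised quasi-geodesic in $X$, whose two ends are necessarily distinct in $\partial_\infty X$. You instead build the concatenation $\gamma_1^{-1}\cdot\gamma_2$ and argue it is a Morse quasi-geodesic. Even granting your Gromov-product bound (which is true for non-asymptotic Morse rays but is itself a nontrivial lemma, not the one-line contradiction you sketch), the last step fails as written: Morse-injectivity (Definition~\ref{def:morseinjectivity}) is a hypothesis only about \emph{unbounded Morse geodesics}, so it says nothing about the finite geodesic segments $[\gamma_1(n),\gamma_2(n)]$ you propose to compare against, and nothing about Morse quasi-geodesics. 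There is no uniform statement available that $p$ sends finite geodesics to unparametrised quasi-geodesics with controlled constants, so ``stability under bounded perturbation'' has nothing to latch onto. Note also that you cannot sidestep this by working in $X$ alone: $p$ is merely coarsely Lipschitz, so knowing $p\circ\gamma_1$ and $p\circ\gamma_2$ are quasi-geodesic rays does not prevent them from being asymptotic even when $\gamma_1,\gamma_2$ are not.

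The repair is essentially to finish the construction you started: in the proper space $Y$, the bounded Gromov product forces the geodesics $[\gamma_1(n),\gamma_2(n)]$ to pass uniformly close to $o$, so Arzel\`a--Ascoli produces a bi-infinite Morse geodesic at bounded Hausdorff distance from $\gamma_1^{-1}\cdot\gamma_2$; apply Morse-injectivity to \emph{that} geodesic. At that point you have reconstructed the paper's argument, so the detour through the concatenated quasi-geodesic buys nothing.
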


\begin{proof}
By Lemma \ref{lem:GeodesicRepresentatives}, every point $\xi \in \partial_{1} Y$  can be represented  by a Morse geodesic ray in $Y$, which we  denote  $\gamma$. By Morse-injectivity, $p \circ \gamma$ can be reparametrised to be an unbounded quasi-geodesic in the hyperbolic space $X$, and therefore $p\circ \gamma$ defines a point in $\partial_{\infty} X$. If $\sigma$ is a Morse quasi-geoedesic ray in $Y$ also representing $\xi$, then the Hausdorff distance between $\sigma$ and $\gamma$ is finite.  The map $p$ is coarsely Lipschitz, and so $\sigma$ and $\gamma$  project to subsets at finite Hausdorff distance in $X$. In particular, the quasi-geodesic $\sigma$ defines  the same point in $\partial_{\infty}X$ as $\gamma$. Thus there is a well-defined map $[\gamma] \mapsto [p \circ \gamma]$ from $\partial_{1} Y$ to $\partial_{\infty} X$. We denote this map by $\partial p$.

Any two distinct points in $\partial_{1} Y$ can be connected by a bi-infinite Morse geodesic, which projects to an  unparametrised quasi-geodesic in $X$ that is unbounded in both directions. After reparametrisation, this projection is a bi-infinite quasi-geodesic, and hence represents two distinct points in $\partial_{\infty} X$. Therefore, $\partial p$ has to be injective.  Finally, the map $\partial p$ is $G$-equivariant as $p$ is coarsely $G$-equivariant.
\end{proof}

We are now ready to define a topology on $\partial_1 G$.
\begin{definition}
Let $(G, Y, X)$ be a Morse-injective hyperbolic projection and $\partial p \colon \partial_{1} Y \hookrightarrow \partial_{\infty} X$ the induced inclusion as in Lemma \ref{lem:inducedinclusion}. The {\it topology of the hyperbolic projection}, denoted $\mathcal{T}(G, Y, X)$, is  the pull-back of the visual topology on $\partial_{\infty} X$ under $\partial p$ to $\partial_{1} Y$. 
\end{definition}

We emphasize that this topology depends not only on $G, Y,$ and  $X$, but also on the specific actions of $G$ on $Y$ and $X$.\\

We now show how this topology changes when we change the metric spaces $X$ and $Y$. Recall that  two cobounded actions of a group $G$ on spaces $X, X'$ are equivalent, denoted $X \sim X'$, if and only if there exists a coarsely $G$-equivariant quasi-isometry $\Phi \colon X \rightarrow X'$.

\begin{proposition} \label{prop:CommutingDiagram}
Let $(G, Y, X)$ and $(G, Y', X')$ be two hyperbolic projections such that $X \sim X'$, and let $p \colon Y \rightarrow X$ and $p' \colon Y' \rightarrow X'$ be the induced projection maps. There are coarsely $G$-equivariant quasi-isometries $\Psi \colon Y \rightarrow Y'$ and $\Phi \colon X \rightarrow X'$ such that the following statements hold:

\begin{enumerate}
    \item $(G, Y, X)$ is Morse-injective if and only if $(G, Y', X')$ is Morse-injective.
    
    \item If $(G, Y, X)$ is Morse-injective, then the following diagram commutes:
    \[
  \begin{tikzcd}
    \partial_{1} Y \arrow{r}{\partial \Psi} \arrow[hook, swap]{d}{\partial p} & \partial_{1} Y' \arrow[hook]{d}{\partial p'} \\
     \partial_{\infty} X \arrow{r}{\partial \Phi} & \partial_{\infty} X'
  \end{tikzcd}
\]

    \item The map
    \[ \partial \Psi \colon ( \partial_{1} Y, \mathcal{T}(G, Y, X)) \rightarrow (\partial_{1} Y', \mathcal{T}(G, Y', X') ) \]
    is a homeomorphism
\end{enumerate}
\end{proposition}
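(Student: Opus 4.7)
My plan is to first construct the maps $\Psi$ and $\Phi$ and establish the underlying commutative diagram up to bounded distance at the level of spaces, then bootstrap this to prove the three numbered statements more or less in turn, with the Morse-injectivity equivalence being the main work.

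\textbf{Constructing $\Psi, \Phi$ and the spatial diagram.} The map $\Phi$ is provided directly by the hypothesis $X\sim X'$. For $\Psi$, I note that since both actions $G\curvearrowright Y$ and $G\curvearrowright Y'$ are geometric, both $Y$ and $Y'$ are coarsely $G$-equivariantly quasi-isometric to $(G,d_S)$ via orbit maps, so composing gives a coarsely $G$-equivariant quasi-isometry $\Psi\colon Y\to Y'$. Then both $p'\circ\Psi$ and $\Phi\circ p$ are coarsely $G$-equivariant, coarsely Lipschitz maps from $Y$ (on which $G$ acts geometrically) to $X'$ (on which $G$ acts coboundedly by isometries), so by the uniqueness statement in Remark~\ref{rem:InducedProjection} they agree up to a uniformly bounded error. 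This is the key spatial commutativity that drives the rest of the argument.

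\textbf{Part (1): Morse-injectivity transfers.} By symmetry it is enough to prove one direction: assume $(G,Y,X)$ is Morse-injective, and let $\gamma'$ be an unbounded Morse geodesic in $Y'$. Picking a quasi-inverse of $\Psi$, the pullback is a Morse quasi-geodesic in $Y$, and by Lemma~\ref{lem:GeodesicRepresentatives} the class it represents in $\partial_1 Y$ contains an actual Morse geodesic $\gamma$ at bounded Hausdorff distance. By Morse-injectivity, $p\circ\gamma$ is an unbounded unparametrised quasi-geodesic in $X$. Since $\Phi$ is a quasi-isometry between hyperbolic spaces, the appendix results on quasi-rulings imply that $\Phi\circ p\circ\gamma$ is an unbounded unparametrised quasi-geodesic in $X'$. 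Finally, chaining the bounded estimates gives
\[
d_{X'}\bigl(p'\circ\gamma'(t),\,\Phi\circ p\circ\gamma(t)\bigr)\;\text{bounded uniformly in }t,
\]
using that $\gamma'$ is close to $\Psi\circ\gamma$, that $p'$ is coarsely Lipschitz, and that $p'\circ\Psi$ is close to $\Phi\circ p$. The last step, which I expect to be the main obstacle, is promoting this bounded closeness to the conclusion that $p'\circ\gamma'$ is itself an unparametrised quasi-geodesic; this is exactly the type of perturbation statement for which the quasi-ruling hypothesis on $X'$ was designed, and I would invoke the appropriate stability lemma from Appendix~\ref{sec:unparametrisedquasigeodesicsandhyperbolicspaces}.

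\textbf{Part (2): the boundary diagram commutes.} Fix $\xi\in\partial_1 Y$ and a Morse geodesic representative $\gamma$. Then $\partial\Psi(\xi)$ is represented by the Morse quasi-geodesic $\Psi\circ\gamma$, so $\partial p'\bigl(\partial\Psi(\xi)\bigr)$ is the boundary point of $X'$ defined by $p'\circ\Psi\circ\gamma$. On the other hand, $\partial\Phi\bigl(\partial p(\xi)\bigr)$ is the boundary point defined by $\Phi\circ p\circ\gamma$. Since the two paths lie at bounded distance in $X'$ by the spatial diagram, they determine the same point in $\partial_\infty X'$, which gives commutativity. (If one prefers to work with the Morse geodesic representative of $\partial\Psi(\xi)$ instead of $\Psi\circ\gamma$, the same argument applies with a further bounded perturbation absorbed into the Gromov product.)

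\textbf{Part (3): $\partial\Psi$ is a homeomorphism.} By definition, $\mathcal{T}(G,Y,X)$ and $\mathcal{T}(G,Y',X')$ are the pullbacks of the visual topologies via the injections $\partial p$ and $\partial p'$, so $\partial p$ and $\partial p'$ are topological embeddings onto their images in $\partial_\infty X$ and $\partial_\infty X'$, respectively. The map $\partial\Phi\colon\partial_\infty X\to\partial_\infty X'$ is a homeomorphism because $\Phi$ is a quasi-isometry between hyperbolic spaces. Combining this with the commutative diagram from part (2) expresses $\partial\Psi$ as a composition (modulo the embeddings) of a homeomorphism with its inverse embedding, hence a homeomorphism. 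The analogous argument applied to a coarse inverse of $\Psi$ yields a two-sided inverse, completing the proof.
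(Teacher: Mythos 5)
Your proposal is correct and follows essentially the same route as the paper's proof: a coarsely $G$-equivariant quasi-isometry $\Psi$ from the two geometric actions, coarse commutativity of the square $Y,Y',X,X'$ via cocompactness (you invoke the uniqueness clause of Remark~\ref{rem:InducedProjection}, while the paper runs the same cocompactness argument inline with a bounded fundamental domain), transfer of Morse-injectivity through a quasi-inverse of $\Psi$ together with Corollary~\ref{cor:quasiisometriesofquasiruledspacesextend}, pointwise commutativity of the boundary diagram from the bounded-distance estimate, and the subspace-topology argument identifying $\partial\Psi$ as a restriction of the homeomorphism $\partial\Phi$. The single step you defer to an unnamed ``stability lemma'' --- upgrading the fact that $p'\circ\gamma'$ lies at bounded distance from an unbounded unparametrised quasi-geodesic to the conclusion that it is one --- is precisely the point at which the paper is equally terse (it simply asserts the conclusion), so there is no substantive divergence there either.
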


Before proving this proposition, we show how it implies that $\mc T(G,Y,X)$ is, in fact, a topology on the Morse boundary of $G$.
\begin{corollary} \label{cor:TopologyonGroup}
The topology $\mathcal{T}(G, Y, X)$ does not depend on the geometric action $G \curvearrowright Y$. It thus defines a topology on the Morse boundary of the group $G$. Furthermore, it only depends on the equivalence class of $X$. We write $\mathcal{T}(G, X)$ or $\mathcal{T}(G, [X])$ for the topology on $\partial_{1} G$ induced by a Morse-injective hyperbolic projection.
\end{corollary}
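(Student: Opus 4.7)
The plan is to derive this corollary as a direct application of Proposition \ref{prop:CommutingDiagram} in two special cases. Throughout I will use the standard fact that for any geometric action $G \curvearrowright Y$, the orbit map induces a canonical $G$-equivariant bijection $\iota_Y\colon \partial_1 Y \to \partial_1 G$, and that any two coarsely $G$-equivariant quasi-isometries between spaces carrying geometric $G$-actions must sit at bounded distance from each other by cocompactness (as in Remark \ref{rem:InducedProjection}). As a consequence, the induced map on Morse boundaries does not depend on the choice of quasi-isometry, which will underlie the canonicity of everything below.

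To establish independence from the action on $Y$, I would fix a cobounded action $G \curvearrowright X$ and suppose $(G, Y_1, X)$ and $(G, Y_2, X)$ are both Morse-injective hyperbolic projections. Applying Proposition \ref{prop:CommutingDiagram} with $\Phi = \id_X$, part (3) yields that any coarsely $G$-equivariant quasi-isometry $\Psi\colon Y_1 \to Y_2$ induces a homeomorphism $\partial \Psi\colon (\partial_1 Y_1, \mathcal{T}(G, Y_1, X)) \to (\partial_1 Y_2, \mathcal{T}(G, Y_2, X))$. By the canonicity observed in the first paragraph, $\partial \Psi$ is independent of $\Psi$ and agrees with $\iota_{Y_2}^{-1} \circ \iota_{Y_1}$, so transporting both topologies to $\partial_1 G$ along the canonical identifications $\iota_{Y_i}$ produces the same topology.

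For dependence only on $[X]$, I would assume $X \sim X'$ and compare the triples $(G, Y, X)$ and $(G, Y, X')$ for a single fixed geometric action on $Y$. Apply Proposition \ref{prop:CommutingDiagram} with $\Psi = \id_Y$ and with $\Phi\colon X \to X'$ any coarsely $G$-equivariant quasi-isometry witnessing $X \sim X'$. Part (1) transfers Morse-injectivity between the two triples, and part (3) tells us that $\id_{\partial_1 Y}$ is a homeomorphism between $(\partial_1 Y, \mathcal{T}(G, Y, X))$ and $(\partial_1 Y, \mathcal{T}(G, Y, X'))$; that is, the two topologies coincide literally as topologies on $\partial_1 Y$, hence also after transport to $\partial_1 G$.

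The only real obstacle is verifying the canonicity of $\partial \Psi$ invoked in the second paragraph, namely that the bijection coming out of Proposition \ref{prop:CommutingDiagram}(3) truly is $\iota_{Y_2}^{-1}\circ\iota_{Y_1}$ and not some other bijection. This reduces to the two facts noted at the outset: any two coarsely $G$-equivariant quasi-isometries between geometric $G$-spaces sit at bounded distance from one another, and bounded perturbations of a quasi-isometry induce the same map on Morse boundaries since perturbed representatives still $1$-fellow travel. Once this is in place, the corollary follows from Proposition \ref{prop:CommutingDiagram} essentially by unwinding definitions.
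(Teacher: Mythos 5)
Your proof is correct and follows essentially the same route as the paper: both reduce the corollary to Proposition \ref{prop:CommutingDiagram}, using the canonical (unique) $G$-equivariant bijection between Morse boundaries of geometric $G$-spaces, which is exactly the bounded-distance/cocompactness observation you make explicit. If anything, you are slightly more thorough than the paper's written proof, which only spells out independence of $Y$ and leaves the dependence-only-on-$[X]$ step (your third paragraph, with $\Psi=\id_Y$) implicit.
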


\begin{proof}[Proof of Corollary \ref{cor:TopologyonGroup}]
Suppose $G$ acts geometrically on two geodesic metric spaces $Y$  and $Y'$, and suppose $(G, Y, X)$ and $(G, Y', X)$ are Morse-injective hyperbolic projections. There exists a unique $G$-equivariant bijection $\Psi \colon \partial_{1} Y \rightarrow \partial_{1} Y'$. By Proposition \ref{prop:CommutingDiagram}, this map is a homeomorphism with respect to $\mathcal{T}(G, Y, X)$ and $\mathcal{T}(G, Y', X)$. The Morse boundary of the group $G$ is, by definition, the collection of sets $\partial_{1} Y$ for all geometric actions $G \curvearrowright Y$ identified by these unique $G$-equivariant bijections. Since they are all homeomorphisms, $\mathcal{T}(G, Y, X)$ defines a topology on $\partial_{1} G$ independent of $Y$.
\end{proof}

We now prove the proposition.
\begin{proof}[Proof of Proposition \ref{prop:CommutingDiagram}]

Let $G$ be a group that acts geometrically on two geodesic metric spaces $Y$, and $Y'$ and coboundedly isometrically on two geodesic hyperbolic metric spaces $X$ and $ X'$. Denote the induced projection maps by $Y \xrightarrow{p} X$, $Y' \xrightarrow{p'} X'$. There exist coarsely $G$-equivariant quasi-isometries $\Psi \colon Y \rightarrow Y'$ and $\Phi \colon X \rightarrow X'$ due to the Milnor-Schwarz Lemma and the assumption that $X \sim X'$, respectively. We obtain the following diagram of coarsely $G$-equivariant maps:

\[
  \begin{tikzcd}
    Y \arrow{r}{\Psi} \arrow[swap]{d}{p} & Y' \arrow{d}{p'} \\
     X \arrow{r}{\Phi} & X'
  \end{tikzcd}
\]

Since all maps in this diagram are coarsely $G$-equivariant and $G$ acts cocompactly on $Y$, the diagram commutes up to bounded error. Indeed, choose a bounded set $K \subset Y$ such that $GK = Y$. Since $K$ is bounded and all maps in the diagram are coarsely Lipschitz, we see that $\Phi \circ p(K)$ and $p' \circ \Psi(K)$ are both bounded sets and thus $d( \Phi \circ p(y), p' \circ \Psi(y))$ is uniformly bounded for all $y \in K$. Since both maps are coarsely $G$-equivariant and $GK = Y$, we obtain a uniform bound for all $y \in Y$, making the diagram commute up to bounded error.

We first show statement (1). Suppose $(G, Y, X)$ is Morse-injective, that is, all unbounded Morse geodesics in $Y$  project to unbounded, unparametrised quasi-geodesics in $X$. Let $\gamma'$ be a Morse quasi-geodesic ray in $Y'$  respresenting a point $\xi \in \partial_{\kappa} Y'$. Choosing a quasi-inverse $\overline{\Psi}$ of $\Psi$, we obtain a Morse quasi-geodesic ray $\overline{\Psi} \circ \gamma'$ in $Y$ representing $(\partial \Psi)^{-1}(\xi)$.  This ray must  be finite Hausdorff distance from a Morse geodesic ray $\gamma$ representing $(\partial \Psi)^{-1}(\xi)$. Since $(G,Y,X)$ is Morse-injective, $p \circ \bar{\Psi} \circ \gamma'$ has bounded Hausdorff distance from the unbounded unparametrised quasi-geodesic $p \circ \gamma$ in $X$.  Since $p \circ \gamma$ represents the point $\partial p \circ (\partial \Psi)^{-1}(\xi)$,  the ray $p \circ \bar{\Psi} \circ \gamma'$ is also a representative of $\partial p \circ (\partial \Psi)^{-1}(\xi)$. Applying the quasi-isometry $\Phi$, we obtain an unbounded, unparametrised quasi-geodesic $\Phi \circ p \circ \bar{\Psi} \circ \gamma'$ in $X'$, which has uniformly bounded distance from $p' \circ \gamma'$. We conclude that $p' \circ \gamma'$ has to be an unbounded, unparametrised quasi-geodesic, and therefore defines a point in $\partial_{\infty} X'$. An analogous argument applies if $\gamma'$ is a bi-infinite Morse geodesic in $Y'$. We conclude that $(G, Y', X')$ is Morse-injective.

Next, we prove (2). Since $\Phi \circ p$ and $p' \circ \Psi$ differ by a uniformly bounded distance, their induced maps on boundaries commute point-wise, yielding the following commuting diagram:

\[
  \begin{tikzcd}
    \partial_{1} Y \arrow{r}{\partial \Psi} \arrow[hook, swap]{d}{\partial p} & \partial_{1} Y' \arrow{d}{\partial p'} \\
     \partial_{\infty} X \arrow{r}{\partial \Phi} & \partial_{\infty} X'
  \end{tikzcd}
\]

It remains to prove (3). By definition $\mathcal{T}(G, Y, X)$ and $\mathcal{T}(G, Y', X')$ are the topologies on $\partial_{1} Y$ and $\partial_{1} Y'$ obtained by viewing them as subspaces of $\partial_{\infty}X$ and $\partial_{\infty}X'$, respectively, equipped with the visual topology. Since the diagram in (2) commutes, we see that $\partial \Psi$ can be seen as the restriction of $\partial \Phi$ to subspaces. The map $\partial \Phi$ is a homeomorphism by Corollary \ref{cor:quasiisometriesofquasiruledspacesextend}, and thus its restriction to $\partial \Psi : \partial_{1} Y \rightarrow \partial_{1} Y'$ is a homeomorphism in the induced subspace topology. This proves the proposition.
\end{proof}




\subsection{The topology for sublinearly Morse boundaries} \label{subsec:sublinearcase}

When considering the $\kappa$-Morse boundary for some sublinear function different from the constant function, the function $\partial p$ from Lemma \ref{lem:inducedinclusion} is no longer well-defined.
The key issue is that, given a point $\xi \in \partial_{\kappa} Y$ and two $\kappa$-Morse quasi-geodesics in $Y$ representing $\xi$, these two quasi-geodesics are only $\kappa$-close to each other, but they may not have finite Hausdorff distance. It is not clear whether two $\kappa$-close quasi-geodesics project to unparametrised quasi-geodesics with finite Hausdorff distance, and thus we do not know if their projections define the same point in the visual boundary.  
We resolve this by replacing the definition of Morse-injectivity with a stronger requirement.

\begin{definition}
Let $\kappa$ be a sublinear function. A hyperbolic projection $(G, Y, X)$  is {\it $\kappa$-injective} if the following two conditions hold.  The projection $p\circ \gamma$ of every $\kappa$-Morse quasi-geodesic ray $\gamma$ in $Y$ defines a point in $\partial_\infty X$; and two $\kappa$-Morse quasi-geodesic rays in $Y$ are $\kappa$-close if and only if their projections define the same point in $\partial_\infty X$.
\end{definition}

Recall that a path in a hyperbolic space defines a point $\xi$ at infinity if every increasing sequence of points on the path is a representative of $\xi$ (see Definition \ref{def:Inducingapointatinfinity}).  If a hyperbolic projection $(G, Y, X)$ is $\kappa$-injective, define a map $\partial p \colon \partial_{\kappa} Y \hookrightarrow \partial_{\infty} X$  that sends any $\kappa$-Morse quasi-geodesic ray $\gamma$ in $Y$ to the point in $\partial_{\infty} X$ represented by its projection $p \circ \gamma$.
Using this stronger condition, an analogous argument as for Lemma ~\ref{lem:inducedinclusion} shows that this map is well-defined and injective.

\begin{lemma}\label{lem:kappainducedinclusion}
If a hyperbolic projection $(G, Y, X)$ is $\kappa$-injective, there is an injective map $\partial p \colon \partial_{\kappa} Y \hookrightarrow \partial_{\infty} X$.
\end{lemma}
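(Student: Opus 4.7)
The plan is to mimic the proof of Lemma \ref{lem:inducedinclusion}, with the key observation that the definition of $\kappa$-injectivity has been set up precisely so that the argument goes through even when two representatives of the same point in $\partial_\kappa Y$ may fail to have finite Hausdorff distance.

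First I would define the map on representatives. Given $\xi \in \partial_\kappa Y$, choose any $\kappa$-Morse quasi-geodesic ray $\gamma$ representing $\xi$. By the first clause of $\kappa$-injectivity, the path $p\circ\gamma$ defines a point in $\partial_\infty X$ (in the sense of Definition \ref{def:Inducingapointatinfinity}). Set $\partial p(\xi):=[p\circ \gamma]\in \partial_\infty X$.

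Next I would verify that this assignment is independent of the choice of representative of $\xi$. If $\gamma'$ is another $\kappa$-Morse quasi-geodesic ray representing $\xi$, then by definition $\gamma$ and $\gamma'$ $\kappa$-fellow travel. Applying the forward direction of the biconditional in the definition of $\kappa$-injectivity to the pair $(\gamma,\gamma')$, their projections $p\circ\gamma$ and $p\circ\gamma'$ define the same point in $\partial_\infty X$. Hence $\partial p$ is well defined on $\partial_\kappa Y$.

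Finally I would establish injectivity. Suppose $\xi\neq\eta$ in $\partial_\kappa Y$ with $\kappa$-Morse quasi-geodesic ray representatives $\gamma$ and $\gamma'$, respectively. By definition of the sublinearly Morse boundary, $\gamma$ and $\gamma'$ do not $\kappa$-fellow travel. The reverse (contrapositive) direction of the biconditional in $\kappa$-injectivity then forces $[p\circ\gamma]\neq [p\circ\gamma']$ in $\partial_\infty X$, i.e. $\partial p(\xi)\neq \partial p(\eta)$.

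There is no serious obstacle here: unlike Lemma \ref{lem:inducedinclusion}, where one could appeal to finite Hausdorff distance between quasi-geodesic and geodesic representatives of the same boundary point, the only subtlety in the $\kappa$-case is precisely that two representatives of $\xi$ need not be at finite Hausdorff distance in $Y$, and hence one cannot conclude \emph{a priori} that their projections are at finite Hausdorff distance in $X$. However, this subtlety is exactly what is built into the definition of $\kappa$-injectivity, so the proof reduces to unpacking the biconditional in both directions. Note also that we do not need $G$-equivariance of $\partial p$ for this lemma, although it follows from coarse $G$-equivariance of $p$ just as in Lemma \ref{lem:inducedinclusion}.
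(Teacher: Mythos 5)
Your proof is correct and matches the paper's approach: the paper itself only remarks that the map is well-defined and injective by "an analogous argument as for Lemma~\ref{lem:inducedinclusion}," and your unpacking of the two directions of the biconditional in the definition of $\kappa$-injectivity is exactly the intended argument.
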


Using the inclusion $\partial p$, we can pull back the visual topology on $\partial_{\infty} X$ to $\partial_{\kappa} Y$. As in the case $\kappa \equiv 1$, we denote this topology by $\mathcal{T}(G, Y, X)$. We can now prove an analogous result to Proposition \ref{prop:CommutingDiagram}.

\begin{proposition} \label{prop:CommutingDiagramsublinearcase}
Let $(G, Y, X)$ and $(G, Y', X')$ be two hyperbolic projections such that $X \sim X'$, and let $p \colon Y \rightarrow X$ and $p' \colon Y' \rightarrow X'$ be the induced projection maps. Let $\Psi \colon Y \rightarrow  Y'$ and $\Phi \colon X \rightarrow X'$ be coarsely $G$-equivariant quasi-isometries induced by the geometric actions on $Y, Y'$ and the fact that $X \sim X'$. The following statements hold.
\begin{enumerate}
    \item $(G, Y, X)$ is $\kappa$-injective if and only if $(G, Y', X')$ is $\kappa$-injective.
    
    \item If $(G, Y, X)$ is $\kappa$-injective, then the following diagram commutes:
    \[
  \begin{tikzcd}
    \partial_{\kappa} Y \arrow{r}{\partial \Psi} \arrow[hook, swap]{d}{\partial p} & \partial_{\kappa} Y' \arrow[hook]{d}{\partial p'} \\
     \partial_{\infty} X \arrow{r}{\partial \Phi} & \partial_{\infty} X'
  \end{tikzcd}
\]

    \item The map
    \[ \partial \Psi \colon ( \partial_{\kappa} Y, \mathcal{T}(G, Y, X)) \rightarrow (\partial_{\kappa} Y', \mathcal{T}(G, Y', X') ) \]
    is a homeomorphism.
    
\end{enumerate}
\end{proposition}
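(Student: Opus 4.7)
The plan is to follow the proof of Proposition~\ref{prop:CommutingDiagram} closely, replacing the finite Hausdorff distance arguments used there (which suffice when $\kappa \equiv 1$) by $\kappa$-scale estimates. First I would apply the Milnor--Schwarz lemma and the assumption $X \sim X'$ to obtain coarsely $G$-equivariant quasi-isometries $\Psi \colon Y \to Y'$ and $\Phi \colon X \to X'$, together with quasi-inverses $\bar\Psi$ and $\bar\Phi$. Exactly as in the proof of Proposition~\ref{prop:CommutingDiagram}, all four maps in the square formed by $\Psi, p, \Phi, p'$ are coarsely $G$-equivariant and coarsely Lipschitz, and $G$ acts cocompactly on $Y$, so $\Phi \circ p$ and $p' \circ \Psi$ agree up to uniformly bounded distance.

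For statement (1), I would take a $\kappa$-Morse quasi-geodesic ray $\gamma'$ in $Y'$ and pull it back along $\bar\Psi$ to a $\kappa$-Morse quasi-geodesic ray $\bar\Psi \circ \gamma'$ in $Y$; the $\kappa$-Morse property is invariant under quasi-isometries, as recorded in Section~\ref{subsec:sublinearlyMorseboundaries}. By $\kappa$-injectivity of $(G, Y, X)$, the path $p \circ \bar\Psi \circ \gamma'$ defines a point $\eta \in \partial_\infty X$. By Corollary~\ref{cor:quasiisometriesofquasiruledspacesextend}, $\Phi$ extends to a homeomorphism $\partial \Phi \colon \partial_\infty X \to \partial_\infty X'$, so $\Phi \circ p \circ \bar\Psi \circ \gamma'$ defines the point $\partial \Phi(\eta)$, and since this path has bounded distance from $p' \circ \gamma'$, the latter defines the same point. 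For the second clause of $\kappa$-injectivity, I would argue analogously: two $\kappa$-Morse rays $\gamma_1', \gamma_2'$ in $Y'$ are $\kappa$-close if and only if the pull-backs $\bar\Psi \circ \gamma_i'$ are $\kappa$-close in $Y$ (after possibly enlarging the sublinear multiplier), and by $\kappa$-injectivity of $(G,Y,X)$ this is equivalent to $p \circ \bar\Psi \circ \gamma_i'$ defining the same point in $\partial_\infty X$; applying $\partial \Phi$ and the bounded-distance commutativity translates this into the corresponding statement for $p' \circ \gamma_i'$.

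Statements (2) and (3) then follow as in the Morse case. The inclusions $\partial p$ and $\partial p'$ are now well-defined by Lemma~\ref{lem:kappainducedinclusion}; bounded-distance commutativity at the space level induces point-wise commutativity at the boundary level, giving (2). For (3), $\partial \Phi$ is a homeomorphism of visual boundaries, and $\partial \Psi$ is its restriction to the subspaces $\partial_\kappa Y \subset \partial_\infty X$ and $\partial_\kappa Y' \subset \partial_\infty X'$ cut out by $\partial p$ and $\partial p'$; since $\mathcal{T}(G, Y, X)$ and $\mathcal{T}(G, Y', X')$ are by definition the pull-back subspace topologies, $\partial \Psi$ is a homeomorphism.

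The main obstacle I anticipate is the preservation of $\kappa$-closeness of quasi-geodesic rays under the quasi-isometry $\bar\Psi$: unlike finite Hausdorff distance, the $\kappa$-neighborhood condition is scale-dependent, so transferring a bound of the form $d(\alpha(s), \beta) \leq n \, \kappa(d(o, \alpha(s)))$ through a $(K,C)$-quasi-isometry requires showing that $\kappa(d(o', \bar\Psi \alpha(s)))$ is comparable to $\kappa(d(o, \alpha(s)))$. This should follow from $\kappa$ being concave and non-decreasing together with the quasi-isometry inequalities, paying only a larger multiplicative constant in the $\kappa$-neighborhood, but it is the step where the sublinear case genuinely differs from, and is more delicate than, the Morse case handled in Proposition~\ref{prop:CommutingDiagram}.
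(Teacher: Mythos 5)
Your proposal is correct and follows essentially the same route as the paper's proof: pull back a $\kappa$-Morse ray via a quasi-inverse of $\Psi$, apply $\kappa$-injectivity of $(G,Y,X)$ together with Corollary~\ref{cor:quasiisometriesofquasiruledspacesextend}, and use the bounded-distance commutativity of the square to transfer the conclusion to $p'\circ\gamma'$, with (2) and (3) following as in Proposition~\ref{prop:CommutingDiagram}. The obstacle you flag (preservation of $\kappa$-closeness under quasi-isometries) is already supplied by the cited result of Qing--Rafi--Tiozzo that a quasi-isometry induces a well-defined bijection $\partial\Psi\colon\partial_\kappa Y\to\partial_\kappa Y'$, which is exactly the statement that $\kappa$-fellow-travelling classes are carried to $\kappa$-fellow-travelling classes, so no new estimate is needed there.
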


\begin{proof}
As in the proof of Proposition \ref{prop:CommutingDiagram}, we obtain that the following diagram commutes up to bounded distance.
\[
  \begin{tikzcd}
    Y \arrow{r}{\Psi} \arrow[swap]{d}{p} & Y' \arrow{d}{p'} \\
     X \arrow{r}{\Phi} & X'
  \end{tikzcd}
\]

Suppose $(G, Y, X)$ is $\kappa$-injective. Let $\gamma'$ be a $\kappa$-Morse quasi-geodesic ray in $Y'$, and let $\overline{\Psi}$ be a quasi-inverse of $\Psi$. Then $\overline{\Psi} \circ \gamma'$ is a $\kappa$-Morse quasi-geodesic ray in $Y$, and so $p \circ \overline{\Psi} \circ \gamma'$ defines a point in $\partial_{\infty} X$ by the definition of $\kappa$-injectivity.  It then follows from Corollary \ref{cor:quasiisometriesofquasiruledspacesextend} that $\Phi \circ p \circ \overline{\Psi} \circ \gamma'$ defines a point in $\partial_{\infty} X'$. Since the above diagram commutes up to bounded distance, $p' \circ \gamma'$ has finite Hausdorff distance from $\Phi \circ p \circ \overline{\Psi} \circ \gamma'$, and hence they represent the same point in $\partial_{\infty} X'$. Therefore $(G, Y', X')$ is $\kappa$-injective. Statements (2) and (3) then follow by the same argument as in Proposition \ref{prop:CommutingDiagram}.
\end{proof}

\begin{remark} \label{rem:InducedMetric}
The embedding of the $\kappa$-Morse boundary of a group $G$ into the visual boundary a $D$-quasi-ruled hyperbolic space $X$ has a geometric implication, as well. Namely, one can pull-back the visual metrics on $\partial_\infty X$ to the $\kappa$-Morse boundary $\partial_{\kappa} G$. Therefore the topology induced by a $\kappa$-injective hyperbolic projection is metrizable, and we have an explicit description of a family of metrics in terms of the $\kappa$-injective hyperbolic projection. In an analogous way, the $\kappa$-Morse boundary inherits a cross ratio from a $\kappa$-injective hyperbolic projection.
\end{remark}

\begin{definition} \label{def:kappainjectivity}
 Proposition~\ref{prop:CommutingDiagramsublinearcase} implies that when $G\curvearrowright X$ is a cobounded action on a hyperbolic space that admits a quasi-ruling, the $\kappa$-injectivity of $(G, Y, X)$ depends only on $G$ and $[X]$. Therefore, we say that \textit{$(G, [X])$ is a $\kappa$-injective hyperbolic projection} whenever $(G, Y, X)$ is for any choice of $Y$ and $X$.  Moreover, since we have shown the topology $\mc T(G,Y,X)$ does not depend on the choice of $Y$, when $G$ is clear from context, we write simply $\mc T([X])$.
\end{definition}

It is natural to ask whether there are particular equivalence classes of actions that are in some sense `canonical' or `best.' An important candidate for this turns out to be the class of the largest acylindrical action of a group $G$. This requires that the group $G$ admits a largest acylindrical action on a hyperbolic space $X$, and that $(G,[X])$ is $\kappa$-injective. When that happens, we frequently call the topology of Definition \ref{def:kappainjectivity} the {\it acylindrical topology}. In the next subsection, we will focus on one particular class of groups for which the acylindrical topology is defined: cubulable groups.




\subsection{The case of cubulated groups} \label{subsec:Thecaseofcubulatedgroups}

Suppose $G$ acts geometrically and cubically on a $\CAT$ cube complex $Y$ that admits a factor system (see Definition \ref{def:factorsystem}). Such a group is also hierarchically hyperbolic. However, the cubulation provides more structure than a general hierarchically hyperbolic group has.  Thus in the more general setting of hierarchical hyperbolicity, which we discuss in Section~\ref{sec:Topologyforgroupswithacoarsemedianstructure}, we need to add additional conditions on the projection maps in order to understand the topology.   In this section, we show that the largest acylindrical action of $G$ is $\kappa$-injective.  Moreover,  we can describe the induced topology on $\partial_\kappa G$ very concretely. 

We begin with some basic terminology and facts about  $\CAT$ cube complexes. For a more detailed introduction to $\CAT$ cube complexes, see \cite{Sageev14}.

\begin{definition}
For $n \geq 0$, let $[0,1]^n$ be an $n$-cube equipped with the Euclidean metric. A \emph{face} of a $n$--cube is obtained by choosing some indices in $\{ 1, \dots, n \}$ and considering the subset of all points where, for each chosen index $i$, we fix the $i$-th coordinate  to be either zero or one. A \emph{cube complex} is a topological space obtained by gluing cubes together along faces, i.e., every gluing map is an isometry between faces.
\end{definition}

\begin{definition}
Let $Y$ be a cube complex and $y \in Y^{(0)}$ a vertex. The \emph{degree} of $y$, denoted $\deg(y)$, is  the number of edges incident to $y$. The cube complex $Y$ is {\it finite dimensional} if there exists $N \in \mathbb{N}$ such that every cube in $Y$ has dimension at most $N$.
\end{definition}

Any cube complex can be equipped with a metric in the following manner. Each $n$--cube is equipped with the Euclidean metric.  This allows one to define the length of continuous paths inside the cube complex: Simply partition every path into segments that lie entirely within one cube and use the Euclidean metric of that cube. Then
\[ d(x,y) := \inf \{ length(\gamma) \mid \gamma \text{ a continous path from $x$ to $y$} \} \]
defines a metric on $Y$, which is sometimes called  the $\ell^2$-metric.

\begin{definition}
A cube complex $Y$ is a $\CAT$ cube complex if $Y$ equipped with the $\ell^2$-metric is a $\CAT$ space
\end{definition}

\begin{definition}
A \emph{midcube} of cube $[0,1]^n$ is a subspace obtained by restricting exactly one coordinate in $[0,1]^n$ to $\frac{1}{2}$.
\end{definition}

\begin{definition}
Let $Y$ be a $\CAT$ cube complex. A \emph{hyperplane} is a connected subspace $h \subset Y$ such that for each cube $c$ of $Y$, the intersection $h \cap c$ is either empty or a midcube of $c$.  For each hyperplane $h$, the complement $Y \setminus h$ has exactly two components, called \emph{halfspaces} associated to $h$. We usually denote these $h^+$ and $h^-$. A hyperplane $h$ is said to \emph{separate} the sets $U,V \subseteq Y$ if $U \subseteq h^+$ and $V \subseteq h^-.$
\end{definition}

Whenever $Y$ is a $\CAT$ cube complex, we refer to the $\ell^2$-metric  as the \emph{CAT(0)-metric}. In contrast to the $\CAT$ metric, we can also equip every $n$--cube with the restriction of the $\ell^1$ metric of $\mathbb{R}^n$ and consider the induced path metric $d^{(1)}(\cdot, \cdot)$. We refer to $d^{(1)}$ as the {\it combinatorial metric} (or {\it $\ell^1$--metric}). The following lemma is standard; see, for example, \cite{CapraceSageev11}.

\begin{lemma} \label{lem: quasi-isometry between combinatorial and CAT(0) metrics}
If $Y$ is a finite-dimensional $\CAT$ cube complex, then the $\CAT$ metric $d$ and the combinatorial metric $d^{(1)}$ are bi-Lipschitz equivalent and complete. In particular, if all cubes in $Y$ have dimension $\leq m$, then $d \leq d^{(1)} \leq \sqrt{m} d$. Furthermore, for two vertices $x,y \in Y^{(0)}$, we have
\[ d^{(1)}(x,y)=|\{\text{hyperplanes $h \subseteq Y$ which separate the vertices }x,y\}|. \]
\end{lemma}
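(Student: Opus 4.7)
The plan is to prove the three assertions (bi-Lipschitz equivalence with the stated constants, completeness, and the hyperplane count) in that order, since completeness will follow from the bi-Lipschitz comparison plus a standard fact about polyhedral complexes, and the hyperplane count is essentially an independent combinatorial argument.

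The key step for the metric comparison is to work one cube at a time. On the standard cube $[0,1]^n \subseteq \mathbb{R}^n$, one has the elementary inequalities $\|v\|_2 \leq \|v\|_1 \leq \sqrt{n}\,\|v\|_2$ for all $v \in \mathbb{R}^n$. Consequently, for any continuous path $\gamma$ contained in a single cube of dimension $n \leq m$, its $\ell^2$-length $L^{(2)}(\gamma)$ and $\ell^1$-length $L^{(1)}(\gamma)$ satisfy $L^{(2)}(\gamma) \leq L^{(1)}(\gamma) \leq \sqrt{m}\,L^{(2)}(\gamma)$. Since a general path in $Y$ can be decomposed into finitely many segments each contained in a single cube, the same inequalities hold for arbitrary path lengths in $Y$. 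Taking the infimum over paths joining $x$ and $y$ yields $d(x,y) \leq d^{(1)}(x,y) \leq \sqrt{m}\,d(x,y)$, which is claim (2) and immediately implies the bi-Lipschitz part of (1).

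For completeness, I would invoke Bridson's theorem that any $M_\kappa$-polyhedral complex built from only finitely many isometry types of cells is a complete geodesic metric space in its induced intrinsic metric. Since $Y$ is finite-dimensional, the cubes appearing in $Y$ belong to the finite list of isometry types $\{[0,1]^k : 0 \leq k \leq \dim Y\}$ with their Euclidean metrics, so Bridson's theorem applies to $(Y,d)$. Completeness of $(Y,d^{(1)})$ is then equivalent to completeness of $(Y,d)$ by the bi-Lipschitz equivalence just established, since Cauchy sequences and their limits agree for bi-Lipschitz equivalent metrics.

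For the hyperplane count (3), I would fix two vertices $x,y \in Y^{(0)}$ and work with a combinatorial geodesic $\gamma$ from $x$ to $y$, i.e., an edge path in $Y^{(1)}$ of minimal $\ell^1$-length equal to $d^{(1)}(x,y)$. Each edge of $\gamma$ crosses exactly one hyperplane (the midcube of the corresponding $1$-cube), so it suffices to show that $\gamma$ crosses each hyperplane at most once and that the set of hyperplanes crossed coincides with the set of hyperplanes separating $x$ from $y$. Both facts are standard consequences of Sageev's theory of CAT(0) cube complexes: if $\gamma$ crossed some hyperplane $h$ twice, one could perform a disk-diagram/shuffling argument to produce a strictly shorter edge path with the same endpoints, contradicting minimality; and a hyperplane $h$ is crossed by $\gamma$ if and only if the two endpoints lie in distinct halfspaces of $h$, which is exactly the definition of $h$ separating $x$ from $y$. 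Thus $d^{(1)}(x,y)$, the number of edges of $\gamma$, equals the number of hyperplanes separating $x$ from $y$.

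The main obstacle I expect is less about the arguments themselves than about citing them cleanly: the non-positive-curvature/no-backtracking lemma for combinatorial geodesics in step (3) and Bridson's completeness theorem for step (1) are the two nontrivial external ingredients, but both are well-documented in the cube complex literature (for instance in Sageev's lecture notes and in Bridson–Haefliger), so the proof reduces to pointing to the correct references.
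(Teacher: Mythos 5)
The paper does not prove this lemma at all: it is stated as a standard fact with a pointer to Caprace--Sageev, so there is no proof to compare yours against. Your argument is the standard one and is correct: the cube-by-cube comparison $\|v\|_2\leq\|v\|_1\leq\sqrt{n}\,\|v\|_2$ gives the bi-Lipschitz bounds for path lengths and hence for the induced length metrics, Bridson's theorem on polyhedral complexes with finitely many cell shapes gives completeness (which transfers across bi-Lipschitz equivalence), and the hyperplane count follows from the no-backtracking property of combinatorial geodesics. The only step you use implicitly and might state explicitly is that the $\ell^1$ path metric between two \emph{vertices} is realized by an edge path in the $1$-skeleton, so that $d^{(1)}(x,y)$ is indeed the length of a combinatorial geodesic; this is standard but is what licenses passing from the metric $d^{(1)}$ to edge counting.
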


Throughout this paper, when we refer to geodesics in a $\CAT$ cube complex, these are geodesics with respect to the $\CAT$ metric.


    


\begin{definition}\label{def: chain} A \emph{chain} in $Y$ is a (possibly infinite) collection of mutually disjoint hyperplanes $(h_i)_i$ which are associated to a collection of nested half-spaces $h_1^{+} \supset h_2^{+} \supset \dots$. 

\end{definition}

\begin{definition}[\cite{BehrstockHagenSisto2017a}]
\label{def:factorsystem}Let $Y$ be a finite-dimensional $\CAT$ cube complex. A \emph{factor system}, denoted $\mathfrak{F}$, is a collection of subcomplexes 
of $Y$ such that:

\begin{enumerate}
    \item $Y \in \mathfrak{F}$.
    \item Each $F \in \mathfrak{F}$ is a nonempty convex subcomplex of $Y$
    \item There exists $C_{1} \geq 1$ such that for all $y \in Y^{(0)}$, at most $C_{1}$ elements of $\mathfrak{F}$ contain $y$.
    \item Every nontrivial convex subcomplex parallel to a hyperplane of $Y$ is in $\mathfrak{F}$. (A subcomplex is parallel to a hyperplane if every other hyperplane intersects  both or neither.)
    \item There exists $C_{2}$ such that for all $F, F' \in \mathfrak{F}$, either $\pi_{F}(F') \in \mathfrak{F}$ or $\diam(\pi_{F}(F')) \leq C_{2}$, where $\pi_F$ denotes the combinatorial closest-point projection in a cube complex.
\end{enumerate}

\end{definition}

Since every $\CAT$ cube complex $Y$ is also a geodesic $\CAT$ space, its visual boundary is well-defined. In this case, there is a topology on $\partial_\infty Y$ that is defined in terms of hyperplanes.

\begin{definition}\label{def:HYP}
Let $Y$ be a $\CAT$ cube complex, fix a vertex $o \in Y$, and let $h_1, \dots, h_n$ be distinct hyperplanes in $Y$. Define
\begin{equation*}
\begin{split}
V_{o, h_1, \dots, h_n} := \{ \xi \in \partial_{\infty} Y | \text{ the uni} & \text{que geodesic representative of $\xi$ based}\\
& \text{ at $o$ crosses the hyperplanes $h_1, \dots, h_n$} \}.
\end{split}
\end{equation*}

The collection $B=\{ V_{o, h_1, \dots, h_n}|  n \in \mathbb{N}, \,h_1,h_2,..,h_n \text{ are hyperplanes} \}$ forms the basis of a topology on $\partial_{\infty} Y$ which we denote $\Hyp$.
\end{definition}

This hyperplane topology  provides a particularly useful description of the visual topology on $\partial_\kappa Y$.

\begin{theorem}[{\cite[Theorem 1.3]{MediciZalloum21}}] \label{thm:HYPVIS}
Let $Y$ be a finite dimensional $\CAT$ cube complex and $\kappa$ a sublinear function. Then the restrictions of $\Hyp$ and $\Vis$ to $\partial_{\kappa} Y$ coincide.
\end{theorem}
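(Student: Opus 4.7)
My plan is to verify both topology containments at the level of basic open neighborhoods. For each $\xi \in \partial_\kappa Y$, I take the unique $\CAT$ geodesic representative $\gamma_\xi$ based at $o$ provided by Lemma~\ref{lem:GeodesicRepresentatives}. The easy inclusion --- every basic $\Hyp$-neighborhood $V_{o, h_1, \dots, h_n}(\xi)$ contains a basic $\Vis$-neighborhood --- uses no Morse hypothesis: each $h_i$ is crossed by $\gamma_\xi$ at some finite time $t_i$, so for $R > \max_i t_i$ the point $\gamma_\xi(R)$ lies at positive distance $\delta_i$ from $h_i$ inside the halfspace opposite to $o$. Setting $\epsilon = \tfrac12 \min_i \delta_i$, any $\eta \in \partial_\infty Y$ with $d(\gamma_\eta(R), \gamma_\xi(R)) < \epsilon$ has $\gamma_\eta(R)$ in the same halfspace as $\gamma_\xi(R)$ with respect to every $h_i$, forcing $\gamma_\eta$ to cross each $h_i$ between $o$ and $\gamma_\eta(R)$.

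For the reverse inclusion, I fix a $\Vis$-neighborhood $U_{o,\epsilon,R}(\xi)$, pick $R' \gg R$ to be determined, and take $h_1, \dots, h_n$ to be all hyperplanes separating $o$ from $\gamma_\xi(R')$. For any $\eta \in \partial_\kappa Y$ whose ray $\gamma_\eta$ crosses every $h_i$, let $T$ be the first parameter by which $\gamma_\eta$ has realized all these crossings. Then $\gamma_\eta(T)$ and $\gamma_\xi(R')$ agree on each halfspace of $h_i$, so by Lemma~\ref{lem: quasi-isometry between combinatorial and CAT(0) metrics} they are combinatorially separated only by the ``extra'' hyperplanes crossed by $\gamma_\eta|_{[0,T]}$. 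I concatenate $\gamma_\eta|_{[0,T]}$ with the $\CAT$ geodesic $[\gamma_\eta(T), \gamma_\xi(R')]$ to obtain a path $\beta$ from $o$ to $\gamma_\xi(R')$ with both endpoints on $\gamma_\xi$. Assuming $\beta$ is a uniform-constant $(K,C)$-quasi-geodesic, the $\kappa$-Morse property of $\gamma_\xi$ forces $\beta \subseteq \mathcal{N}_\kappa(\gamma_\xi, \mu_{\gamma_\xi}(K,C))$, and a nearest-point projection argument (using $|s-R'| \leq d(\gamma_\xi(s), \gamma_\eta(R'))$ at the foot point) then yields $d(\gamma_\xi(R'), \gamma_\eta(R')) \leq 2\mu_{\gamma_\xi}(K,C)\cdot\kappa(R')$. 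The convexity of $t \mapsto d(\gamma_\xi(t), \gamma_\eta(t))$ on the $\CAT$ space $Y$, nondecreasing and vanishing at $t=0$, then gives $d(\gamma_\xi(R), \gamma_\eta(R)) \leq (R/R')\cdot 2\mu_{\gamma_\xi}(K,C)\cdot\kappa(R')$, which drops below $\epsilon$ once $R'$ is large, by sublinearity of $\kappa$.

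The technical core, and main obstacle, is showing that $\beta$ is a quasi-geodesic with constants $(K,C)$ independent of $\eta$: one needs a uniform bound on the number of extra hyperplane crossings, of the form $k_{\mathrm{extra}} \leq cR'$ for the relevant $\eta$. This reflects the geometric fact that $\kappa$-Morse rays in $Y$ cannot be shadowed laterally for long, so any $\gamma_\eta$ realizing the required hyperplane crossings cannot make arbitrarily large detours. Making this quantitative calls for a direct hyperplane-divergence characterization of $\kappa$-Morseness in $\CAT$ cube complexes along the lines of \cite{MediciZalloum21}, and is where the bulk of the work lies.
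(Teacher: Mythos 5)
This statement is quoted in the paper as \cite[Theorem 1.3]{MediciZalloum21} and is not proved there, so there is no in-paper argument to compare against; I can only assess your proposal on its own terms. Your easy direction (every basic $\Hyp$-neighbourhood contains a basic $\Vis$-neighbourhood) is correct and complete: since each $h_i$ separates $Y$ into two convex pieces, a point within $\epsilon<\min_i d(\gamma_\xi(R),h_i)$ of $\gamma_\xi(R)$ lies in the same halfspace as $\gamma_\xi(R)$ for every $i$, and a ray from $o$ reaching it must cross each $h_i$. The convexity endgame of the hard direction is also sound: $t\mapsto d(\gamma_\xi(t),\gamma_\eta(t))$ is convex and vanishes at $0$, so a bound of the form $2\mu\,\kappa(R')$ at time $R'$ does give $d(\gamma_\xi(R),\gamma_\eta(R))\le (R/R')\cdot 2\mu\,\kappa(R')<\epsilon$ for $R'$ large, by sublinearity.

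The gap is the step you flag yourself, and it is not a technicality --- it is the entire content of the theorem. To make $\beta=\gamma_\eta|_{[0,T]}\ast[\gamma_\eta(T),\gamma_\xi(R')]$ a $(K,C)$-quasi-geodesic with constants independent of $\eta$, you need $T\le KR'+C$, equivalently a linear bound on the number of extra hyperplanes crossed by $\gamma_\eta|_{[0,T]}$. Your justification --- ``$\kappa$-Morse rays cannot be shadowed laterally for long'' --- cannot be run as stated, because the rays $\gamma_\eta$ ranging over the putative neighbourhood have no uniform Morse gauge; only $\gamma_\xi$ has a fixed gauge, and the Morse property of $\gamma_\xi$ constrains quasi-geodesics \emph{with endpoints on $\gamma_\xi$}, which is circular here since you need the bound on $T$ \emph{before} you know $\beta$ is such a quasi-geodesic. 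The way out is precisely the well-separated-hyperplane characterization of $\kappa$-Morseness (Theorem~\ref{thm:Morse iff contracting iff excursion}): $\gamma_\xi$ crosses a chain of $c\kappa$-well-separated hyperplanes $(k_j)$, any ray crossing all hyperplanes separating $o$ from $\gamma_\xi(R')$ in particular crosses the $k_j$ up to that scale, and well-separation is what limits how far such a ray can stray between consecutive $k_j$. Note also a smaller fixable slip: $\mathcal N_\kappa(\gamma_\xi,\mu)$ controls $\gamma_\eta(t)$ only for $t\le T$, and $T$ need not equal $R'$, so the passage from ``$\beta$ lies in the $\kappa$-neighbourhood'' to ``$d(\gamma_\xi(R'),\gamma_\eta(R'))\le 2\mu\kappa(R')$'' needs an extra reparametrisation argument comparing footpoints. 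As it stands, the proposal is an outline that reduces the theorem to the main lemma of \cite{MediciZalloum21} without proving it.
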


We now introduce some notions that are helpful to detect $\kappa$-Morse geodesic rays in $\CAT$ cube complexes.

\begin{definition}
 Three hyperplanes  form a {\it facing triple}  if they are mutually disjoint and no one  separates the other two.
\end{definition}

Notice that if a geodesic $\gamma$ crosses three disjoint hyperplanes $h, k$, and $l$, in that order, then $k$ separates $h$ and $l$. In particular, a geodesic $\gamma$ cannot cross a facing triple.

\begin{definition} \label{def:wellseparatedhyperplanes}
Let $L \in \mathbb{N}$. Two hyperplanes $h, h'$ are {\it $L$-well-separated} if they are disjoint and any collection of hyperplanes intersecting both $h$ and $h'$ that contains no facing triple has cardinality at most $L$. We say $h, h'$ are {\it well-separated} if they are $L$-well-separated for some $L$.
\end{definition}

These definitions are motivated by the following result.

\begin{theorem}[{\cite[Theorem 2.35]{MediciZalloum21}}] \label{thm:Morse iff contracting iff excursion}
Let $Y$ be a proper $\CAT$ space and let $\zeta$ be an equivalence class of $\kappa$-fellow travelling quasi-geodesics in $Y$. If $\zeta$ contains a $\kappa$-Morse quasi-geodesic ray, then $\zeta$ contains a unique geodesic ray $\gamma$ starting at $\gamma(0)$. Furthermore, if $\zeta$ is an equivalence class of $\kappa$-fellow travelling quasi-geodesics, then the following are equivalent:

\begin{itemize}
    \item $\zeta$ contains a $\kappa$-Morse geodesic ray.

    \item Every quasi-geodesic ray in $\zeta$ is $\kappa$-Morse.
     \item There exists a quasi-geodesic ray in $\zeta$ which is $\kappa$-Morse.

\end{itemize}
Furthermore, if a geodesic ray $\gamma$ is $\kappa$-Morse, then there exists a constant $c$ such that $\gamma$ crosses a chain of hyperplanes $(h_i)_i$ in points $\gamma(t_i)$ such that $h_i, h_{i+1}$ are $c\kappa(t_{i+1})$-well-separated.

\end{theorem}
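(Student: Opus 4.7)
The statement is really three assertions bundled together: (i) a Morse class contains a unique geodesic ray based at any given point; (ii) the three Morse conditions for a $\kappa$-fellow traveling class are equivalent; and (iii) a $\kappa$-Morse geodesic in a $\CAT$ cube complex crosses a chain of well-separated hyperplanes with separation constant controlled by $c\kappa$. I would prove these in this order, since each part relies on the previous ones (and, in the case of (iii), on the cube complex structure that the first two parts do not need).

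For (i), I would use convexity of the distance function in a $\CAT$ space. If two geodesic rays $\gamma_1,\gamma_2$ starting at a common basepoint $\gamma(0)$ are $\kappa$-fellow travelers, then $t\mapsto d(\gamma_1(t),\gamma_2(t))$ is convex with value $0$ at $t=0$, so it is either identically zero or grows at least linearly; the latter contradicts the sublinear bound $d(\gamma_1(t),\gamma_2(t))\le n\kappa(t)$ coming from $\kappa$-fellow traveling. Existence of some geodesic representative when a $\kappa$-Morse quasi-geodesic is present is Lemma~\ref{lem:GeodesicRepresentatives}. For (ii), the implication ``$\kappa$-Morse geodesic $\Rightarrow$ there exists a $\kappa$-Morse quasi-geodesic'' is trivial, and ``every quasi-geodesic ray is $\kappa$-Morse $\Rightarrow$ some geodesic ray is $\kappa$-Morse'' follows from part (i) together with the fact that a geodesic is a $(1,0)$-quasi-geodesic. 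The heart of the equivalence is showing that if one quasi-geodesic ray $\alpha\in\zeta$ is $\kappa$-Morse then every $\beta\in\zeta$ is $\kappa$-Morse: given a $(K,C)$-quasi-geodesic $\sigma$ with endpoints on $\beta$, I would replace its endpoints by nearby points on $\alpha$ (using that $\alpha,\beta$ are $\kappa$-close), forming a new $(K',C')$-quasi-geodesic $\sigma'$ with endpoints on $\alpha$, and absorb these perturbations and the $\kappa$-fellow traveling of $\alpha,\beta$ into an updated Morse gauge for $\beta$ whose $\kappa$-neighborhood contains $\sigma$. The bookkeeping here is purely in terms of how Morse gauges transform under bounded perturbation and $\kappa$-fellow traveling; I expect no genuine obstacle beyond careful inequalities.

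The hard part is (iii), the existence of a chain of $c\kappa(t_{i+1})$-well-separated hyperplanes crossed by $\gamma$. The plan is to exploit the $\kappa$-contracting behavior of $\kappa$-Morse geodesics and the combinatorics of hyperplanes. First I would show that along $\gamma$ there is an unbounded sequence of times $t_i\to\infty$ at which $\gamma$ crosses hyperplanes $h_i$ forming a chain (using that a geodesic in a $\CAT$ cube complex crosses each hyperplane at most once and thus crossed hyperplanes come in nested half-space order). Then to upgrade this to well-separation, I would use a counting argument: if infinitely many hyperplanes crossing both $h_i$ and $h_{i+1}$ with no facing triple existed beyond a threshold depending on $\kappa(t_{i+1})$, they would pairwise project to a subset of $\gamma$ of diameter $\le L\kappa(t_{i+1})$ (by a contracting/projection estimate for $\kappa$-Morse geodesics), contradicting either the spacing $t_{i+1}-t_i$ or the $\kappa$-Morse control on large quasi-geodesic triangles. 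Quantitatively, I would pass through the contracting characterization of sublinear Morseness (as in Qing--Rafi--Tiozzo and the excursion estimates of Medici--Zalloum) to extract the constant $c$ from the Morse gauge.

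The principal obstacle will be the calibration in (iii): producing the explicit linear factor $c\kappa(t_{i+1})$ in the well-separation bound, rather than merely an abstract ``well-separated'' statement. This requires a careful interplay between the cube-complex combinatorics (facing triples, disjoint-hyperplane chains, convex subcomplex projections) and the quantitative sublinear Morse geometry. The rest of the theorem is soft $\CAT$ geometry, while (iii) is where the hyperplane structure and factor-system--type control really enter, and where the proof of the cited result from Medici--Zalloum concentrates its work.
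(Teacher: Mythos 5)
This statement is imported verbatim from Medici--Zalloum (their Theorem~2.35); the paper you are reading gives no proof of it and uses it as a black box, so there is no in-paper argument to compare yours against. I will therefore only assess your sketch on its own terms and against what the citation trail indicates.

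Your decomposition into (i) uniqueness of the geodesic representative, (ii) the three-way equivalence, and (iii) the well-separated hyperplane chain is the right one, and you correctly observe that (iii) is the only part that uses the cube-complex structure (the theorem's stated hypothesis ``proper $\CAT$ space'' is too weak for the final clause, which needs hyperplanes). Part (i) via convexity of $t\mapsto d(\gamma_1(t),\gamma_2(t))$ is correct: a convex function vanishing at $0$ that is positive somewhere grows linearly, contradicting the sublinear bound from $\kappa$-fellow travelling (after the small step of converting distance-to-image into distance between synchronized parameters). The one substantive difference in route is in (ii): the label of the result (``Morse iff contracting iff excursion'') reflects that the source establishes the equivalence by pivoting through the $\kappa$-contracting property (Morse $\Leftrightarrow$ sublinearly contracting, from Qing--Rafi--Tiozzo in the $\CAT$ setting, and then contracting $\Leftrightarrow$ a sublinear excursion condition on hyperplanes), whereas you propose a direct surgery argument transferring a Morse gauge from one quasi-geodesic in the class to another. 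Your route is workable --- the gauge bookkeeping under $\kappa$-perturbation of endpoints goes through using monotonicity and concavity of $\kappa$ --- but the contracting pivot is what makes (iii) fall out with an explicit constant $c$ depending only on the Morse gauge, which is exactly the calibration you flag as the principal obstacle. As written, your (iii) is a plausible outline rather than a proof: the step where ``too many'' hyperplanes crossing both $h_i$ and $h_{i+1}$ without a facing triple force a contradiction with the $\kappa$-contracting bound is precisely where the quantitative work of the cited paper lives, and you have deferred it rather than supplied it.
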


We now introduce Genevois's construction of a family of metrics on a $\CAT$ cube complex $Y$.

\begin{definition} [\cite{Genevois20b}]\label{def:GenevoisConstruction}
For any $L \in \mathbb{N}$,  define a metric $d_L$ on the set of vertices $Y^{(0)}$ by letting $d_L(x, y)$ be the size of the largest family of $L$-well-separated hyperplanes that separate $x$ from $y$.
\end{definition}

Genevois shows that, with each such metric, metric, $Y^{(0)}$ is a hyperbolic space.

\begin{proposition}[\cite{Genevois20b}]
Let $Y$ be a finite-dimensional $\CAT$ cube complex and $L \in \mathbb{N}$. Then $(Y^{(0)}, d_L)$ is $9(L+2)$-hyperbolic.
\end{proposition}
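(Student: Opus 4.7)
My plan is to verify the Gromov four-point condition for $d_L$ directly, namely that for all $x,y,z,o \in Y^{(0)}$,
\[ (x \mid y)_o \geq \min\{(x \mid z)_o,\ (z \mid y)_o\} - 9(L+2), \]
where Gromov products are computed with respect to $d_L$. The starting observation is a basic splitting principle: a hyperplane $h$ separates two vertices $a,b$ if and only if, given a third vertex $c$, it separates exactly one of the pairs $\{a,c\}$ and $\{c,b\}$. Consequently, any $L$-well-separated chain $\mathcal H$ separating $a$ from $b$ decomposes as a disjoint union $\mathcal H = \mathcal H_{ac} \sqcup \mathcal H_{cb}$ of sub-chains separating $a,c$ and $c,b$ respectively. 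Moreover, both sub-families are still $L$-well-separated because the defining property (every collection of mutually crossing hyperplanes without a facing triple has size at most $L$) is inherited by sub-collections. Applying this splitting yields the triangle inequality $d_L(a,b)\leq d_L(a,c)+d_L(c,b)$ for free.

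For hyperbolicity I need to control the overlap between maximal chains on different sides of a triangle. The key claim I would aim for is: if $\mathcal H_{xy}$ and $\mathcal H_{xz}$ are maximum-size $L$-well-separated chains, then the hyperplanes common to both sides (those separating $x$ from both $y$ and $z$) compare with the Gromov product $(y\mid z)_x$ up to an additive error of roughly $L+2$. The intuition is that a hyperplane $h$ separating $x$ from both $y,z$ does not separate $y$ from $z$; conversely, hyperplanes that separate $x$ from $y$ but not from $z$ must separate $z$ from $y$, and these can be assembled into a sub-chain of $\mathcal H_{xy}$ that contributes to $d_L(z,y)$. Running the analogous argument on all three pairs of sides and putting the bounds together should give the four-point condition with the stated constant; the factor $9 = 3 \cdot 3$ plausibly arises from accounting for three overlap regions, each incurring an $L+2$ defect from both the well-separation constant and at most two endpoint hyperplanes.

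The main obstacle I anticipate is exactly the step of combining sub-chains into new $L$-well-separated chains. Unlike the $\ell^1$-metric $d^{(1)}$, which counts every separating hyperplane and is tree-like under suitable disjointness, the constraint of being $L$-well-separated is \emph{not} closed under taking unions of chains from different sides. Even if $\mathcal H_{xz}$ and $\mathcal H_{zy}$ are each $L$-well-separated, their union need not be. To handle this, I would use Theorem \ref{thm:Morse iff contracting iff excursion} and the no-facing-triples characterization of well-separation to argue that, except for a bounded transition region near $z$ of size at most $L+2$, each hyperplane from $\mathcal H_{xz}$ is $L$-well-separated from each hyperplane of $\mathcal H_{zy}$, so the concatenation loses at most $O(L)$ hyperplanes. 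Making this argument precise, while tracking the constants, is the technical heart of the proof and drives the final constant $9(L+2)$.
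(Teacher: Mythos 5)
First, note that the paper does not prove this proposition at all: it is imported verbatim from \cite{Genevois20b}, so there is no in-paper argument to compare yours against. Your proposal does identify the right overall shape of Genevois's argument (verify the four-point condition for the Gromov product of $d_L$; split a separating chain at a third vertex to get the triangle inequality; the work is in the reverse direction), but as written it has a genuine gap at exactly the step you flag as ``the technical heart,'' and the two devices you offer to close it do not work.

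Concretely: to lower-bound $d_L(x,y)$ you must assemble, from maximal $L$-well-separated chains $\mathcal{H}_{xz}$ and $\mathcal{H}_{zy}$, a single family of pairwise $L$-well-separated hyperplanes separating $x$ from $y$, after discarding a number of hyperplanes controlled by $L$. Your justification for this is (a) an appeal to Theorem \ref{thm:Morse iff contracting iff excursion} and (b) the assertion that only a ``bounded transition region near $z$ of size at most $L+2$'' misbehaves. Item (a) is a non sequitur: that theorem characterises $\kappa$-Morse geodesic \emph{rays} via the existence of well-separated chains they cross; it says nothing about how two well-separated chains based at a common vertex interact, which is a purely combinatorial four-point statement. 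Item (b) is precisely the content of the proposition and is asserted rather than proved. Note that $L$-well-separation of a pair $h,h'$ bounds the size of facing-triple-free families crossing \emph{both} $h$ and $h'$; it does not by itself bound how many members of one chain a single hyperplane from the other chain can cross, nor does it guarantee that a hyperplane of $\mathcal{H}_{xz}$ and one of $\mathcal{H}_{zy}$ that are disjoint are automatically $L$-well-separated. What is needed (and what Genevois actually supplies) is a quantitative lemma analysing the possible configurations of $h\in\mathcal{H}_{xz}$ and $k\in\mathcal{H}_{zy}$ relative to $x,y,z$, showing that the ``bad'' pairs are confined to a sub-collection of cardinality $O(L)$ in each chain; the bookkeeping of these discards is what produces the constant $9(L+2)$. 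Your heuristic $9=3\cdot 3$ is not a substitute for that count. Until that lemma is stated and proved, the proposal is an outline rather than a proof.
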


Let $Y_L$ be the metric space $(Y^{(0)}, d_L)$. Since the set of $Y_L$ is the set of vertices of $Y$, it inherits an action by $G$, and one immediately checks that this action is by isometries. Moreover, Genevois shows that these actions have features of acylindricity \cite{Genevois20b}. Under stronger assumptions, Murray, Qing, and Zalloum show that these spaces $Y_L$ stabilize.  

\begin{lemma} [\cite{MurrayQingZalloum20}]
Let $Y$ be a cocompact $\CAT$ cube complex with a factor system. Then there exists a constant $L$ such that any two disjoint hyperplanes $h, k$ in $Y$ are either $L$-well-separated or not well-separated. In other words, the metrics $d_L$ stabilize as $L \rightarrow \infty$. We call the smallest such $L$ the {\it separation constant} of $Y$.
\end{lemma}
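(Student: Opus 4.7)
The plan is to leverage the factor system to obtain a uniform bound on the well-separation constant of any pair of disjoint, well-separated hyperplanes. The main ingredients are properties (3) and (5) of a factor system, together with the cocompactness of the $G$-action on $Y$.

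First, I would fix two disjoint hyperplanes $h$ and $k$ that are $L'$-well-separated for some finite $L'$, and let $\mathcal{H}$ be a maximal family of hyperplanes, each crossing both $h$ and $k$, that contains no facing triple. By property (4) of a factor system, the convex subcomplexes $h^\ast, k^\ast$ parallel to $h$ and $k$ (sitting as faces of their carriers) lie in $\mathfrak{F}$. A hyperplane $m$ crosses both $h$ and $k$ if and only if its parallel subcomplex $m^\ast$ meets both $h^\ast$ and $k^\ast$, which forces $\pi_{h^\ast}(m^\ast)$ to lie within a bounded neighborhood of $\pi_{h^\ast}(k^\ast)$.

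Next, I would apply property (5) of the factor system to the pair $(h^\ast, k^\ast)$. Either $\diam(\pi_{h^\ast}(k^\ast)) \leq C_2$, or $\pi_{h^\ast}(k^\ast) \in \mathfrak{F}$. In the first subcase, cocompactness of the $G$-action on $Y$ yields a uniform bound $N_0$ on the number of hyperplanes whose carriers can meet a set of diameter at most $C_2$ in $h^\ast$; hence $|\mathcal{H}| \leq N_0$. In the second subcase, I would replace the pair $(h^\ast, k^\ast)$ with the pair $(h^\ast, \pi_{h^\ast}(k^\ast))$ and iterate the dichotomy. Each iteration either lands in the bounded-diameter subcase, producing the desired uniform estimate, or produces a properly nested chain of elements of $\mathfrak{F}$.

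The main obstacle will be making this iteration precise and showing it terminates uniformly. The key is property (3) of a factor system: at most $C_1$ elements of $\mathfrak{F}$ contain any given vertex. Along the iterative chain produced above, consecutive factors share a vertex, and strict nesting then bounds the length of the chain by $C_1$. Thus the iteration terminates after at most $C_1$ steps, yielding a uniform separation constant $L$ depending only on $C_1$, $C_2$, and the cocompactness constant $N_0$. A secondary subtlety is controlling the coarse error introduced by iterated combinatorial projections between elements of $\mathfrak{F}$; I would address this by enlarging $h^\ast$ and $k^\ast$ by a uniformly bounded amount at each iteration so that the projection targets genuinely contain (rather than merely coarsely agree with) the relevant sub-projections, at the cost of a bounded additive loss in the final value of $L$.
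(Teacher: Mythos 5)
A preliminary remark: the paper does not prove this lemma — it is imported verbatim from \cite{MurrayQingZalloum20} — so there is no internal argument to compare against, and your proposal has to be judged on its own terms. You have correctly identified the relevant ingredients (properties (3) and (5) of a factor system, cocompactness, and the identification of the hyperplanes crossing both $h$ and $k$ with the hyperplanes crossing the combinatorial projection $\pi_{h^\ast}(k^\ast)$), but the central mechanism of your argument does not work.

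The iteration you propose is vacuous: since $\pi_{h^\ast}(k^\ast)$ is already a subcomplex of $h^\ast$, the combinatorial projection satisfies $\pi_{h^\ast}\bigl(\pi_{h^\ast}(k^\ast)\bigr)=\pi_{h^\ast}(k^\ast)$, so replacing $(h^\ast,k^\ast)$ by $(h^\ast,\pi_{h^\ast}(k^\ast))$ reproduces the same pair at every step. The resulting chain of factors is constant rather than properly nested, so property (3) gives neither termination nor a bound, and the case the iteration was meant to absorb is precisely the one the proof must confront directly: $\pi_{h^\ast}(k^\ast)\in\mathfrak{F}$, bounded, but of diameter larger than $C_2$. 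In that case $h$ and $k$ \emph{are} well-separated (only the finitely many hyperplanes crossing the bounded convex set $\pi_{h^\ast}(k^\ast)$ cross both), and what is needed is a \emph{uniform} bound on their number. The missing idea is that property (3) together with cocompactness forces a ($G$-invariant) factor system to have only finitely many $G$-orbits of elements — every factor contains a vertex, there are finitely many orbits of vertices, and each vertex lies in at most $C_1$ factors — whence the bounded elements of $\mathfrak{F}$ have uniformly bounded diameter. Your proposal also never establishes the other half of the dichotomy in the statement, namely that $h,k$ fail to be well-separated exactly when $\pi_{h^\ast}(k^\ast)$ is unbounded: an unbounded convex subcomplex contains pairs of vertices separated by arbitrarily many hyperplanes, and the hyperplanes separating a fixed pair of vertices contain no facing triple, so they furnish arbitrarily large facing-triple-free families crossing both $h$ and $k$. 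Without that equivalence you cannot conclude the alternative ``$L$-well-separated or not well-separated at all.'' These two points — the non-progressing iteration and the unproved boundedness/well-separation equivalence — are genuine gaps rather than expository ones.
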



The following result is due to Petyt, Spriano, and Zalloum in forthcoming work.  If $Y$ is a finite-dimensional $\CAT$ cube complex that has a factor system, and $G$ acts properly, cocompactly, and by cubical isometries on $Y$, then by \cite{AbbottBehrstockDurham21}, $G$ has a largest acylindrical action which we denote $G \curvearrowright \mathcal C(S)$.  (See the beginning of Section~\ref{subsec:ThecaseofHHGs} for a discussion of this notation.) 

\begin{theorem} [\cite{ZalloumPreprint}]
\label{thm:GenevoisABDequivalence}
Let $Y$ be a finite-dimensional $\CAT$ cube complex that has a factor system, and suppose $G$ acts properly, cocompactly, and by cubical isometries on $Y$.  If $L \in \mathbb{N}$ is sufficiently large such that $d_L$ has stabilised, then there exists a $G$-equivariant quasi-isometry $Y_L \rightarrow \mathcal C(S)$.
\end{theorem}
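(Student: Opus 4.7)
The plan is to show that $G \curvearrowright Y_L$ is itself a largest acylindrical action for $L$ sufficiently large; uniqueness of the largest acylindrical action in $\AH(G)$ will then imply that $Y_L \sim \mathcal{C}(S)$, from which a coarsely $G$-equivariant quasi-isometry follows, and finally one upgrades this to a genuinely $G$-equivariant one.

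First, I would collect the basic properties of $G \curvearrowright Y_L$. Coboundedness is immediate, since $Y^{(0)}$ is a single $G$-orbit up to bounded error (as $G \curvearrowright Y$ is cocompact) and $Y_L = (Y^{(0)}, d_L)$. For acylindricity, I would cite Genevois's analysis of the $d_L$ metric in the cubulated setting and invoke the stabilization of $d_L$ in the presence of a factor system (Murray--Qing--Zalloum), which ensures that for $L$ large enough, $Y_L$ captures every genuinely well-separated chain of hyperplanes.

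Second, since $\mathcal{C}(S)$ is a largest acylindrical action by Theorem~\ref{thm:HHGLargest}, once acylindricity of $Y_L$ is in hand, the definition of largest automatically provides a coarsely $G$-equivariant, coarsely Lipschitz map $\mathcal{C}(S) \to Y_L$. The real work is to produce a coarsely $G$-equivariant, coarsely Lipschitz map $\Psi \colon Y_L \to \mathcal{C}(S)$ in the reverse direction, because then both actions dominate one another and hence are equivalent, giving a coarsely $G$-equivariant quasi-isometry. The natural candidate for $\Psi$ is the top-level projection in the hierarchically hyperbolic structure of $Y$ induced by the factor system (in the sense of Behrstock--Hagen--Sisto), which is $G$-equivariant by construction.

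The main obstacle is verifying that this top-level projection $\Psi$ is coarsely Lipschitz with respect to $d_L$, rather than merely with respect to the $\CAT$ or combinatorial metric on $Y$. Concretely, one needs to show that if $x, y \in Y^{(0)}$ are separated by a family $\mathcal{H}$ of pairwise $L$-well-separated hyperplanes, then $d_{\mathcal{C}(S)}(\Psi(x), \Psi(y))$ is bounded linearly in $|\mathcal{H}|$. To do this I would exploit Theorem~\ref{thm:Morse iff contracting iff excursion}, which links $L$-well-separated chains to $\kappa$-Morse behaviour, and the characterisation of the top-level coordinate of an HHG in terms of factors of $\mathfrak{F}$: each $L$-well-separated pair of hyperplanes should correspond to unbounded separation of the associated factors in $\mathcal{C}(S)$, so that a chain of $n$ such hyperplanes forces progress of order $n$ in $\mathcal{C}(S)$, up to an additive constant depending on $L$ and the factor system constants $C_1, C_2$.

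Finally, with $\Psi$ and its quasi-inverse in hand, I obtain a coarsely $G$-equivariant quasi-isometry $Y_L \to \mathcal{C}(S)$. To upgrade to a genuinely $G$-equivariant map, I would fix a basepoint $y_0 \in Y_L$ and a basepoint $z_0 \in \mathcal{C}(S)$, then define $\tilde{\Psi}(gy_0) := gz_0$ on the $G$-orbit and extend boundedly to $Y_L$ using coboundedness; the standard argument, using that $G$ acts properly on $Y$ and hence with uniformly bounded point-stabilisers in the relevant sense, shows that $\tilde{\Psi}$ differs from $\Psi$ by a bounded amount and hence remains a quasi-isometry. This completes the proof.
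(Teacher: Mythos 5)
First, note that the paper does not actually prove Theorem~\ref{thm:GenevoisABDequivalence}: it is quoted from forthcoming work of Petyt--Spriano--Zalloum \cite{ZalloumPreprint}, so there is no in-paper argument to compare yours against. Judged on its own terms, your outline has two genuine gaps. The first is structural: you propose to first establish that $G \curvearrowright Y_L$ is a (largest) acylindrical action and then deduce the quasi-isometry from uniqueness of the largest class in $\AH(G)$. But the paper only records that Genevois's action has ``features of acylindricity,'' and in the paper's own logic the statement that $G \curvearrowright Y_L$ is a largest acylindrical action is Corollary~\ref{cor:LargestAcylindricityofGenevois}, deduced \emph{from} Theorem~\ref{thm:GenevoisABDequivalence} rather than the other way around. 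Citing \cite{Genevois20b} for full acylindricity of $G\curvearrowright Y_L$ is therefore not an available off-the-shelf input, and without acylindricity you cannot invoke largestness of $\mathcal C(S)$ (Theorem~\ref{thm:HHGLargest}) to produce the coarsely Lipschitz, coarsely equivariant map $\mathcal C(S)\to Y_L$ required by Definition~\ref{def:Dominated}.

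The second gap is in the step you correctly identify as the real work. To show that $\Psi=\pi_S\colon Y_L\to\mathcal C(S)$ is coarsely Lipschitz for $d_L$ you need the \emph{upper} bound $d_{\mathcal C(S)}(\Psi(x),\Psi(y))\leq K\,d_L(x,y)+C$, i.e., that a small maximal family of $L$-well-separated hyperplanes separating $x$ from $y$ forces $\Psi(x)$ and $\Psi(y)$ to be close in $\mathcal C(S)$. The mechanism you sketch --- each well-separated pair forces definite progress in $\mathcal C(S)$, so a chain of $n$ of them forces progress of order $n$ --- is the \emph{lower} bound $d_{\mathcal C(S)}\gtrsim \vert\mathcal H\vert$. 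That lower bound comes essentially for free once one has the coarsely Lipschitz map $\mathcal C(S)\to Y_L$ (compose the two maps and use that coarsely equivariant, coarsely Lipschitz self-maps of a cobounded action are boundedly close to the identity), so your argument addresses the direction that is automatic and leaves the substantive direction untouched. Finally, a smaller point: the orbit-map ``upgrade'' $gy_0\mapsto gz_0$ is not well-defined unless the stabiliser of $y_0$ fixes $z_0$, so it does not by itself yield a genuinely $G$-equivariant map; the natural fix is to use a map that is equivariant by construction, such as the hierarchically hyperbolic projection $\pi_S$ restricted to $Y^{(0)}$.
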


The following corollary is immediate.
\begin{corollary} \label{cor:LargestAcylindricityofGenevois}
Under the assumptions of Theorem \ref{thm:GenevoisABDequivalence}, the action of $G$ on $Y_L$ is a largest acylindrical action.
\end{corollary}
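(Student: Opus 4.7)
The plan is to transport largest acylindricity from $\mathcal{C}(S)$ to $Y_L$ through the $G$-equivariant quasi-isometry supplied by Theorem \ref{thm:GenevoisABDequivalence}. First, I would appeal to Theorem \ref{thm:HHGLargest}: since $G$ acts properly, cocompactly, and cubically on a $\CAT$ cube complex with a factor system, $G$ is hierarchically hyperbolic, so $G \curvearrowright \mathcal{C}(S)$ is largest acylindrical and $[\mathcal{C}(S)]$ is the unique largest element of $\AH(G)$.

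Next I would verify that $G \curvearrowright Y_L$ represents a class in $\AH(G)$. Hyperbolicity of $Y_L$ is the proposition stated just after Definition \ref{def:GenevoisConstruction}. Coboundedness follows because $G$ acts cocompactly on $Y$ and $Y_L$ has the same underlying set as $Y^{(0)}$, so a $G$-orbit in $Y^{(0)}$ is already $d^{(1)}$-cobounded, hence $d_L$-cobounded. The $G$-equivariant quasi-isometry $f \colon Y_L \to \mathcal{C}(S)$ of Theorem \ref{thm:GenevoisABDequivalence}, together with any coarsely $G$-equivariant quasi-inverse $\bar{f}$, certifies $Y_L \sim \mathcal{C}(S)$ in the sense of Definition \ref{def:Dominated}. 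Acylindricity is a standard invariant of such equivalences: given $\varepsilon > 0$ one applies acylindricity of $\mathcal{C}(S)$ at the inflated threshold $K\varepsilon + C$, where $K, C$ are quasi-isometry constants of $f$, to obtain parameters $R, N$, and then transports these back through $\bar{f}$ (using that $f, \bar{f}$ are coarsely $G$-equivariant, so group elements moving vertices in $Y_L$ a bounded amount also move their $f$-images a bounded amount) to produce the corresponding parameters for $Y_L$.

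With $[Y_L] \in \AH(G)$ and $[Y_L] = [\mathcal{C}(S)]$ established, the corollary follows because being largest is a property of the equivalence class in $\AH(G)$, not of any chosen representative. The bulk of the argument is bookkeeping once Theorem \ref{thm:GenevoisABDequivalence} is in hand; the only place requiring genuine care is the transport of the acylindricity constants along $f$ and $\bar{f}$, and this is a routine manipulation, so I do not expect any serious obstacle.
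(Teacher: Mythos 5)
Your proposal is correct and matches the paper's intent: the paper simply declares the corollary ``immediate'' from Theorem \ref{thm:GenevoisABDequivalence} and Theorem \ref{thm:HHGLargest}, and your argument is the standard unpacking of why (equivalence of cobounded actions on hyperbolic spaces preserves acylindricity, and largestness is a property of the class in $\AH(G)$). The only cosmetic remark is that the transfer of acylindricity constants only needs the forward map $f$, not the quasi-inverse $\bar{f}$; the quasi-inverse is needed only to certify $Y_L \sim \mathcal{C}(S)$.
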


Thus Genevois' construction yields a largest acylindrical action provided that we restrict ourselves to groups acting geometrically on $\CAT$ cube complexes that have factor systems. Under these assumptions, we will also show that the hyperbolic projection $(G, Y, Y_L)$ is $\kappa$-injective for any $L$ greater than the separation constant. To do so, we first recall a result from \cite{MediciZalloum21}.

\begin{theorem} [{\cite[Corollary 6.11, Corollary 6.19]{MediciZalloum21}}]
\label{thm:kappageodesicsinject}
Let $Y$ be a cocompact $\CAT$ cube complex with a factor system, and let $L$ be greater than its separation constant.  There exists a constant $D \geq 0$ and a coarsely $G$-equivariant, coarsely Lipschitz map $Y \rightarrow Y_L$ that projects every unbounded sublinearly Morse geodesic to an unbounded $D$-quasi-ruler (and unparametrised quasi-geodesic) in $Y_L$. The induced map $\iota\colon \partial_{\kappa} Y \rightarrow \partial_{\infty} Y_L$ is injective for every sublinear function $\kappa$. Furthermore, if $\partial_{\kappa} Y$ is equipped with $\Vis$ and $\partial_{\infty} Y_L$ is equipped with the visual topology, then $\iota$ is continuous.
\end{theorem}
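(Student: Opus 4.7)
The plan is to take $p \colon Y \to Y_L$ to be the vertex-identity, defined by sending each vertex of $Y$ to itself in $Y_L^{(0)} = Y^{(0)}$ and extended across edges and cubes by any Lipschitz choice (for instance, a geodesic in $Y_L$ along each edge, interpolated linearly inside cubes). Coarse $G$-equivariance is automatic since the $G$-action on vertices is identical on the two sides; in fact $p$ is strictly $G$-equivariant on vertices. For coarsely Lipschitz: on vertices, $d_L(x, y)$ is bounded above by the total number of hyperplanes separating $x$ from $y$, which equals $d^{(1)}(x, y)$ and is bi-Lipschitz to $d(x, y)$ by Lemma \ref{lem: quasi-isometry between combinatorial and CAT(0) metrics}; the extension to cubes introduces only an additive constant depending on the dimension of $Y$.

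The heart of the proof is showing that an unbounded $\kappa$-Morse geodesic $\gamma$ projects to an unbounded $D$-quasi-ruler. By Theorem \ref{thm:Morse iff contracting iff excursion}, $\gamma$ crosses a chain of hyperplanes $(h_i)_i$ at times $t_i$ with consecutive pairs $h_i, h_{i+1}$ being $c\kappa(t_{i+1})$-well-separated. Since $L$ exceeds the separation constant of $Y$, any well-separated pair is $L$-well-separated; moreover, a chain of hyperplanes whose consecutive pairs are $L$-well-separated remains pairwise $L$-well-separated over long sub-chains, because any facing triple witnessing failure of well-separation between widely-spaced $h_i, h_j$ projects to a facing triple between some consecutive pair $h_k, h_{k+1}$. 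Thus the hyperplanes $\{h_i, \dots, h_j\}$ are pairwise $L$-well-separated and separate $\gamma(t_i)$ from $\gamma(t_j)$, yielding $d_L(\gamma(t_i), \gamma(t_j)) \geq j - i$, which gives unboundedness. For the quasi-ruler property (Definition \ref{def:unparametrisedquasirulers}), one checks that for any triple $s < t < u$, the chains arising from the sub-segments $[s,t]$ and $[t,u]$ concatenate into a chain on $[s,u]$ up to a universally bounded loss, giving the near-additivity of $d_L$ required along $\gamma$.

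Injectivity of $\iota$ is established by the same hyperplane-separation technology: two $\kappa$-Morse rays $\gamma_1, \gamma_2$ from a common basepoint that do not $\kappa$-fellow travel must eventually be separated by $L$-well-separated hyperplanes crossed by only one of them, and these witness a bound on the Gromov product of $p\gamma_1$ and $p\gamma_2$ in $Y_L$, so the projections represent distinct points of $\partial_\infty Y_L$. For continuity of $\iota$, I would use Theorem \ref{thm:HYPVIS} to replace $\Vis$ by the hyperplane topology $\Hyp$. If $\xi_n \to \xi$ in $\Hyp$ and $h_1, \dots, h_N$ are the first $N$ hyperplanes in the chain associated to $\gamma_\xi$, then eventually $\gamma_{\xi_n}$ crosses all of $h_1, \dots, h_N$ in the same order as $\gamma_\xi$, by convexity of halfspaces. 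Consequently, in $Y_L$ both $p\gamma_{\xi_n}$ and $p\gamma_\xi$ leave every ball around $o$ only after crossing the same $N$ pairwise $L$-well-separated hyperplanes, forcing the Gromov product $(p\gamma_{\xi_n} \mid p\gamma_\xi)_o$ in $Y_L$ to be at least $N$ eventually; since $N$ is arbitrary, $\iota\xi_n \to \iota\xi$.

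The main obstacle I anticipate is the quasi-ruler property itself, which is more delicate than bare unboundedness: one has to verify that concatenating the well-separated chains supplied by Theorem \ref{thm:Morse iff contracting iff excursion} on sub-segments of $\gamma$ produces a single chain with uniformly controlled well-separation constants, without any degradation as the sub-segments grow. The stabilisation of the separation constant from \cite{MurrayQingZalloum20} is precisely the input that prevents such degradation and makes $D$ a single uniform constant depending only on $L$ and the ambient cube complex.
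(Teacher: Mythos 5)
First, a point of comparison: Theorem \ref{thm:kappageodesicsinject} is not proved in this paper at all; it is imported verbatim from \cite{MediciZalloum21} (Corollaries 6.11 and 6.19), so there is no internal argument to measure your proposal against. What the paper does contain are the downstream computations of Lemmas \ref{lem:Acylindricalactionincubulatedcaseiskappainjective} and \ref{lem:Acylindricaltopologyequalsvisual}, which use exactly the toolkit you deploy --- chains of $L$-well-separated hyperplanes supplied by Theorem \ref{thm:Morse iff contracting iff excursion}, the stabilisation of $d_L$ from \cite{MurrayQingZalloum20}, the observation that a hyperplane meeting two non-consecutive members of a chain must cross everything in between, and Gromov-product estimates in $Y_L$ --- so your outline is stylistically faithful to how these authors argue. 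Your treatment of unboundedness is essentially complete: consecutive pairs in the chain are well-separated, hence $L$-well-separated by stabilisation, and the ``intermediate hyperplane'' argument upgrades this to pairwise $L$-well-separation of the whole chain, giving $d_L(\gamma(t_i),\gamma(t_j))\geq j-i$.

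The genuine gap is the quasi-ruler property, and your own diagnosis understates the problem. Condition (1) of Definition \ref{def:unparametrisedquasirulers} is the \emph{reverse} triangle inequality for $d_L$ along $\gamma$, so you need a lower bound on $d_L(\gamma(s),\gamma(u))$ in terms of $d_L(\gamma(s),\gamma(t))+d_L(\gamma(t),\gamma(u))$. Concatenating maximal $L$-well-separated families for the two subsegments does not work as stated: a hyperplane $k$ separating $\gamma(s)$ from $\gamma(t)$ and a hyperplane $k'$ separating $\gamma(t)$ from $\gamma(u)$ may transversely intersect (a $\CAT$ geodesic happily crosses arbitrarily many pairwise-crossing hyperplanes), so the union need not be an $L$-well-separated family, and it is not clear that only a uniformly bounded number of members must be discarded. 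What is actually required is the matching upper bound, namely that $d_L(\gamma(s),\gamma(u))$ is at most a constant plus the number of chain hyperplanes $h_i$ crossed in $[s,u]$, i.e.\ that the chain from Theorem \ref{thm:Morse iff contracting iff excursion} coarsely \emph{computes} $d_L$ along $\gamma$; this is where the real content of \cite{MediciZalloum21} lies and it does not follow formally from anything you have written. A second, smaller gap is injectivity: you assert that two $\kappa$-Morse rays which do not $\kappa$-fellow-travel are eventually separated by $L$-well-separated hyperplanes each crossed by only one of them. That is the hard direction (the easy direction --- $\kappa$-close rays cannot be separated by two $L$-well-separated hyperplanes --- is what this paper quotes from Durham--Zalloum) and again needs the contracting characterisation of $\kappa$-Morse rays rather than the definitions alone. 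Your continuity argument via $\Hyp=\Vis$ is fine in outline and matches the Gromov-product computation the paper carries out in Lemma \ref{lem:Acylindricalactionincubulatedcaseiskappainjective}.
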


Using this result, we now show that  $(G,Y,Y_L)$ is a $\kappa$-injective hyperbolic projection.

\begin{lemma} \label{lem:Acylindricalactionincubulatedcaseiskappainjective}
Let $Y$ be a finite dimensional $\CAT$ cube complex that has a factor system, and suppose $G$ acts properly, cocompactly, and by cubical isometries on $Y$. Let $\kappa$ be a sublinear function and $L \in \mathbb{N}$ the separation constant of $Y$. Then $(G, Y, Y_L)$ is $\kappa$-injective, and the induced inclusion $\partial p \colon \partial_{\kappa} Y \hookrightarrow \partial_{\infty} Y_L$ coincides with the map $\iota$ from Theorem \ref{thm:kappageodesicsinject}.

\end{lemma}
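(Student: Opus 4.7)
The plan is to use the map $p \colon Y \to Y_L$ supplied by Theorem \ref{thm:kappageodesicsinject} as the induced projection for the hyperbolic projection $(G, Y, Y_L)$; this is legitimate because $p$ is coarsely $G$-equivariant and coarsely Lipschitz, and any two such maps agree up to bounded distance. Verifying $\kappa$-injectivity then amounts to checking the two bullet points in Definition \ref{def:kappainjectivity}, and once these are established the equality $\partial p = \iota$ is automatic, since on geodesic representatives both maps are defined by $[\gamma] \mapsto [p \circ \gamma]$ and every $\kappa$-class contains a geodesic by Lemma \ref{lem:GeodesicRepresentatives}. The strategy is to reduce all questions about $\kappa$-Morse quasi-geodesic rays to questions about their geodesic representatives, using Theorem \ref{thm:Morse iff contracting iff excursion}, and then invoke Theorem \ref{thm:kappageodesicsinject} to close the loop.

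For the first condition of $\kappa$-injectivity, I would start with an arbitrary $\kappa$-Morse quasi-geodesic ray $\beta$ in $Y$. By Theorem \ref{thm:Morse iff contracting iff excursion}, the equivalence class $[\beta]$ contains a unique geodesic representative $\gamma$ starting at $\gamma(0)$, and $\beta$ and $\gamma$ $\kappa$-fellow travel. Theorem \ref{thm:kappageodesicsinject} supplies a point $\xi := \iota([\gamma]) \in \partial_\infty Y_L$ defined by the projection $p \circ \gamma$, which is an unbounded $D$-quasi-ruler. To show $p \circ \beta$ defines the same point $\xi$, I would take any increasing sequence $t_n \to \infty$ and, for each $n$, choose $s_n$ so that $\gamma(s_n)$ is a nearest point of $\gamma$ to $\beta(t_n)$. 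Sublinear fellow-travelling gives $d_Y(\beta(t_n), \gamma(s_n)) \leq n_0\, \kappa(t_n)$; coarse Lipschitzness of $p$ then yields $d_{Y_L}(p(\beta(t_n)), p(\gamma(s_n))) \leq K n_0 \kappa(t_n) + C$. Since $\beta$ is a quasi-geodesic ray, $\beta(t_n) \to \infty$ in $Y$, hence so do $\gamma(s_n)$ and thus $s_n$, so $p(\gamma(s_n)) \to \xi$. The standard Gromov-product inequality in the $\delta$-hyperbolic space $Y_L$ gives
\[ (p(\beta(t_n)) \mid p(\beta(t_m)))_o \geq (p(\gamma(s_n)) \mid p(\gamma(s_m)))_o - K n_0(\kappa(t_n) + \kappa(t_m)) - 2C, \]
and because $p \circ \gamma$ is a $D$-quasi-ruler, the first term on the right is, up to additive constants depending only on $D$ and $\delta$, equal to $\min\{d_{Y_L}(o, p(\gamma(s_n))), d_{Y_L}(o, p(\gamma(s_m)))\}$. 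The main obstacle is therefore the growth estimate: one must show that the term $d_{Y_L}(o, p(\gamma(s_n)))$ dominates the sublinear correction $\kappa(t_n)$. This is where the $\kappa$-Morse hyperplane-chain produced by Theorem \ref{thm:Morse iff contracting iff excursion} enters: once $L$ exceeds the separation constant, each consecutive pair in the chain crossed by $\gamma$ is $L$-well-separated, so each contributes to $d_L$, forcing $d_{Y_L}(o, p(\gamma(s)))$ to grow faster than $\kappa$ along $\gamma$ and yielding the desired divergence of the Gromov product.

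The second condition is the easier half. If $\beta$ and $\beta'$ are $\kappa$-close, they lie in the same class $[\gamma] \in \partial_\kappa Y$, and by the argument above both $p \circ \beta$ and $p \circ \beta'$ define the point $\iota([\gamma])$; conversely, if they lie in distinct classes $[\gamma] \neq [\gamma']$, the injectivity of $\iota$ from Theorem \ref{thm:kappageodesicsinject} gives $\iota([\gamma]) \neq \iota([\gamma'])$, so their projections define distinct points. This establishes $\kappa$-injectivity, and the description of $\partial p$ on each class via its geodesic representative shows $\partial p = \iota$ on the nose.
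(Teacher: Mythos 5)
Your overall architecture (reduce to geodesic representatives, then quote Theorem \ref{thm:kappageodesicsinject} for injectivity and for the identification $\partial p=\iota$) is reasonable, and your treatment of the second condition and of the equality $\partial p=\iota$ matches the paper. But there is a genuine gap in your proof of the first condition, at the step where you transfer from $\gamma$ to $\beta$. You bound $d_{Y_L}(p(\beta(t_n)),p(\gamma(s_n)))\leq Kn_0\kappa(t_n)+C$ via coarse Lipschitzness of $p$ and then need the Gromov products along $p\circ\gamma$ to dominate this sublinear correction; you assert that the $L$-well-separated hyperplane chain forces $d_{Y_L}(o,p(\gamma(s)))$ to grow faster than $\kappa$. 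That is false in general. Theorem \ref{thm:Morse iff contracting iff excursion} only guarantees a chain of well-separated hyperplanes crossed at times $t_i$ whose spacing is controlled by $c\kappa(t_{i+1})$ from above in the contracting sense; heuristically $d_L(o,\gamma(t))$ grows like $t/\kappa(t)$, which for $\kappa(t)=t^{0.9}$ is about $t^{0.1}$ and is swallowed by the error term $Kn_0\kappa(t_n)$. So the inequality you write does not yield $(p(\beta(t_n))\mid p(\beta(t_m)))_o\to\infty$, and the same problem infects your claim that $p\circ\beta$ and $p\circ\gamma$ define the same boundary point.

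The paper's proof avoids any metric comparison of the form $d_{Y_L}(p(\beta(t)),p(\gamma(s)))\lesssim\kappa(t)$. Instead it works combinatorially with hyperplanes throughout: by \cite[Theorem~3.13]{DurhamZalloum22} an arbitrary $\kappa$-Morse \emph{quasi-geodesic} ray already crosses a chain of $L$-well-separated hyperplanes $(h_l)$, and the Gromov product $(p_i\mid p_j)_o$ in $Y_L$ is bounded below by counting: one decomposes a maximal $L$-well-separated family separating $p_i$ from $p_j$ and splices all but a controlled number ($L+1$) of its members onto the chain $(h_l)$ to get lower bounds on $d_L(o,p_i)$ and $d_L(o,p_j)$, giving $(p_i\mid p_j)_o\geq l_0-2-L$ with $l_0\to\infty$. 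For two $\kappa$-close rays, the key input is \cite[Lemma~3.2]{DurhamZalloum22}: $\kappa$-close rays cannot be separated by two $L$-well-separated hyperplanes, so they cross the \emph{same} chain $(h_l)$, and the identical counting argument applies to mixed Gromov products $(p\circ\gamma(t_i)\mid p\circ\gamma'(s_j))_o$. To repair your argument you would need to replace the Lipschitz estimate by this hyperplane-crossing comparison (or some other device that sees only well-separated hyperplanes, which are exactly what $d_L$ measures).
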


\begin{proof}
Let $\gamma$ and $\gamma'$ be two $\kappa$-Morse quasi-geodesic rays in $Y$ that are $\kappa$-close. Without loss of generality, we may assume that $\gamma$ and $\gamma'$ have the same initial point, which we denote by $o$. To prove the lemma, we  must show the following three things: 
    \begin{enumerate}
        \item the projection $p \circ \gamma$ defines a point in $\partial_{\infty} Y_L$;
        \item the projections $p \circ \gamma$ and $p \circ \gamma'$ define the same point in $\partial_{\infty} Y_L$; and 
        \item the map that sends a family of $\kappa$-close $\kappa$-Morse quasi-geodesic rays to the point defined by their projections coincides with $\iota$.
    \end{enumerate}  
    Injectivity of $\iota$ will then imply that $p \circ \gamma$ and $p \circ \gamma'$ define the same point only if $\gamma$ and $\gamma'$ are $\kappa$-close, which finishes the proof of $\kappa$-injectivity.

We start by proving that $p \circ \gamma$ defines a point in $\partial_{\infty} Y_L$. By \cite[Theorem~3.13]{DurhamZalloum22}, there is  a chain of $L$-well-separated hyperplanes $h_l$ such that $\gamma$ crosses all $h_l$ (that is, $\gamma$ crosses each $h_l$ finitely many times and $h_l$ separates $o$ from an unbounded part of $\gamma$). The $h_l$ are ordered such that $h_l$ separates $h_{l-1}$ from $h_{l+1}$.

Let $(t_i)_i$ be an unbounded, increasing sequence in $\mathbb{R}$ and let $p_i := p \circ \gamma(t_i)$. We first show that $( p_i \mid p_j )_o \rightarrow \infty$. Let $D \geq 0$, choose $l_0 \geq D + L + 2$, and let $I$ be such that $h_{l_0}$ separates $o$ from $p_i$ for all $i \geq I$. Since the set underlying $Y_L$ consists of the vertices of $Y$, we can think of $p_i$ as a vertex in $Y$. Separation of $o$ and $p_i$ means separation as vertices in $Y$.

Fix $i,j\geq I$.  By definition, $d_L( p_i, p_j)$ is the size of a largest family of $L$-well-separated hyperplanes separating $p_i$ from $p_j$. Choose such a family of hyperplanes. This family can be decomposed into two subsets: the hyperplanes $\{ k_1, \dots, k_{l_1} \}$ that separate $p_i$ from $o$ and $p_j$, and the hyperplanes $\{ k'_1, \dots , k'_{l_2}\}$ that separate $p_i$ and $o$ from $p_j$. In particular, $d_L(p_i,p_j)=l_1+l_2$. Without loss of generality, we assume that the hyperplanes $k_l$ are ordered such that $k_l$ separates $o$ from $k_{l+1}$.  See Figure~\ref{fig:hps}.

\begin{figure}
    \centering
    \def\svgwidth{5in}
    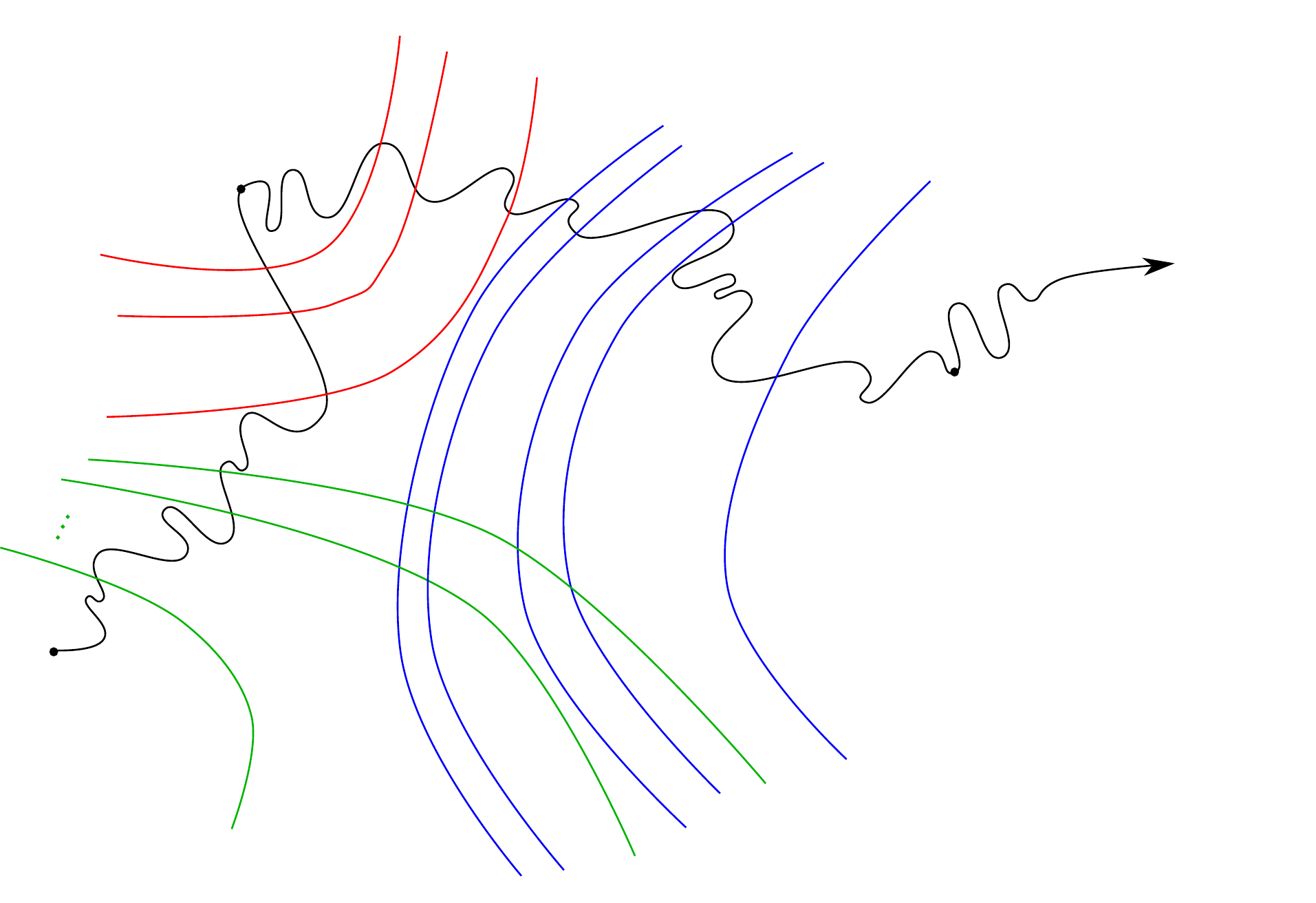
    \caption{The configuration of hyperplanes in the proof of Lemma~\ref{lem:Acylindricalactionincubulatedcaseiskappainjective}.}
    \label{fig:hps}
\end{figure}

Consider the family of hyperplanes $h_1, \dots, h_{l_0}, k_1, \dots, k_{l_1}$. We claim that the family $h_1, \dots, h_{l_0 - 1}, k_{L+2}, \dots, k_{l_1}$ is an $L$-well-separated family of hyperplanes separating $o$ from $p_i$. By the ordering on  the hyperplanes, it suffices to show that $h_{l_0 - 1}$ and  $k_{L+2}$ and disjoint and $L$-well-separated. We first show that $h_{l_0 - 1}$ and $k_{L+1}$ are disjoint, which immediately implies that $h_{l_0 - 1}$ and $k_{L+2}$ are disjoint, as well. If $h_{l_0 - 1}$ and $k_{L+1}$ intersect, then all the hyperplanes $k_1, \dots, k_L$ must intersect $h_{l_0 - 1}$, as well. Since $h_{l_0}$ separates $h_{l_0 - 1}$ from $p_i, p_j$ and all $k_l$ separate $p_i$ from $p_j$, it follows that  $k_1, \dots, k_{L+1}$ have to intersect both $h_{l_0 - 1}$ and $h_{l_0}$. Furthermore, the hyperplanes $k_1, \dots, k_{L+1}$ do not contain a facing triple, as they all separate $p_i$ from $p_j$. However, $h_{l_0 - 1}$ and $h_{l_0}$ are $L$-well-separated, which is a contradiction, and so $h_{l_0 - 1}$ and $k_{L+1}$ are disjoint.

We next prove that $h_{l_0 - 1}$ and $k_{L+2}$ are $L$-well-separated. Any hyperplane intersecting both $h_{l_0 - 1}$ and $k_{L+2}$ must also  intersect $k_{L+1}$. Thus, if $h_{l_0 - 1}$ and  $k_{L+2}$ are not $L$-well-separated, then neither are $k_{L+1}$ and $k_{L+2}$, which is a contradiction. Thus the family $h_1, \dots, h_{l_0 - 1}, k_{L+2}, \dots, k_{l_1}$ is a family of $L$-well-separated hyperplanes separating $o$ from $p_i$, and so $d_L(o, p_i) \geq l_0 - 1 + l_1 - (L+1)$.

An analogous argument shows that $d_L(o, p_j) \geq l_0 + l_2 - 2 - L$. Furthermore, since $d_L(p_i, p_j) = l_1 + l_2$, we have
\begin{equation*}
    \begin{split}
        (p_i \mid p_j)_o & = \frac{1}{2} ( d_L(o, p_i) + d_L(o, p_j) - d_L(p_i, p_j) )\\
        & \geq \frac{1}{2} ( 2l_0 + l_1 + l_2 - 4 - 2L - l_1 - l_2)\\
        & = l_0 - 2 - L \geq D.
    \end{split}
\end{equation*}
Therefore $\lim_{i, j \rightarrow \infty} (p_i \mid p_j)_o \geq D$ for all $D \geq 0$.

By an abuse of notation, we denote $o$ to be the projection of the the starting point of $\gamma$ under $p \colon Y \rightarrow Y_L$. By  \cite[Lemma~3.2]{DurhamZalloum22}, two $\kappa$-close quasi-geodesic rays cannot be separated by two $L$-well-separated hyperplanes. Therefore, if $(h_i)_i$ is a family of $L$-well-separated hyperplanes for $\gamma$, then $\gamma'$ crosses all $h_i$, as well. Now let $(t_i)_i, (s_j)_j$ be two unbounded, increasing sequences and $D > 0$. We can choose $l_0 \geq D + 2 + L$, $I_0$ such that for all $i, j \geq I_0$, $h_{l_0}$ separates $o$ from $p \circ \gamma(t_i)$ and $p \circ \gamma'(s_j)$.  The same argument as before for $p_i, p_j$ shows that $( p \circ \gamma(t_i) \vert p \circ \gamma'(s_j))_o \geq D$ for all $i, j \geq I_0$. Thus $( p \circ \gamma(t_i) \mid p \circ \gamma'(s_j))_o \xrightarrow{i, j \rightarrow \infty} \infty$, and so $p\circ \gamma$ and $p\circ \gamma'$ define the same point in $\partial_\infty Y_L$.

We have shown that any equivalence class of $\kappa$-Morse quasi-geodesic rays in $Y$ projects to a collection of paths in $Y_L$ that represents a unique point in $\partial_{\infty} Y_L$. Since every point in $\partial_{\kappa} Y$ can be represented by a geodesic ray, this map coincides with the map $\iota \colon \partial_{\kappa} Y \hookrightarrow \partial_{\infty} Y_L$ from Theorem \ref{thm:kappageodesicsinject}. In particular, this map is injective, and therefore $p$ is $\kappa$-injective.
\end{proof}

Given any cubulable group $G$ and any cubulation $G \curvearrowright Y$ with a factor system,  the largest acylindrical action on $Y_L$ is $\kappa$-injective and therefore the acylindrical topology, $\mathcal{T}(G, [Y_L])$, can be defined. The following lemma describes this topology.

\begin{lemma} \label{lem:Acylindricaltopologyequalsvisual}
If $L$ is greater than or equal to the separation constant, then the map $\partial p \colon (\partial_{\kappa} Y, \Vis) \hookrightarrow \partial_{\infty} Y_L$ is a homeomorphism onto its image. In particular, the topology on $\partial_{\kappa} Y$ inherited from the inclusion $\partial_{\kappa} Y \hookrightarrow \partial_{\infty} Y_L$ coincides with the topology inherited from the inclusion $\partial_{\kappa} Y \hookrightarrow \partial_{\infty} Y$.
\end{lemma}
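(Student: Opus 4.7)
The forward continuity of $\partial p \colon (\partial_\kappa Y, \Vis) \to \partial_\infty Y_L$ is essentially free: Theorem~\ref{thm:kappageodesicsinject} gives continuity of $\iota$, and Lemma~\ref{lem:Acylindricalactionincubulatedcaseiskappainjective} identifies $\iota = \partial p$. So the real task is to show $\partial p^{-1}$ is continuous on the image. By Theorem~\ref{thm:HYPVIS}, the restriction of $\Vis$ to $\partial_\kappa Y$ equals $\Hyp$, so I would use the hyperplane description. Fix $\xi \in \partial_\kappa Y$, a basic $\Hyp$-neighbourhood $V_{o, h_1, \dots, h_m}$ of $\xi$, and a sequence $(\xi_n) \subseteq \partial_\kappa Y$ with $\partial p(\xi_n) \to \partial p(\xi)$ in $\partial_\infty Y_L$. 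The goal is to show $\xi_n \in V_{o, h_1, \dots, h_m}$ eventually. Write $\gamma$ and $\gamma_n$ for the geodesic representatives of $\xi$ and $\xi_n$ from $o$.

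By Theorem~\ref{thm:Morse iff contracting iff excursion}, $\gamma$ crosses an infinite chain $(e_i)$ of well-separated (hence $L$-well-separated, since $L$ is at least the separation constant and $L$-well-separation is monotone in $L$) hyperplanes. The plan then rests on two claims. First, a geometric domination statement: for each $j$, all sufficiently large $e_i$ are disjoint from $h_j$ and satisfy $e_i^- \subseteq h_j^-$, where $h^-$ denotes the halfspace of $h$ not containing $o$. Second, a dynamical statement in $Y_L$: the hypothesis $\partial p(\xi_n) \to \partial p(\xi)$ forces $\gamma_n$ to cross every fixed $e_i$ for $n$ large enough. Granting both, I would choose $N$ so the first claim holds simultaneously for all $h_j$ at $i = N$, and apply the second claim to get $\gamma_n$ crossing $e_N$ eventually. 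Since $\gamma_n$ then passes from $o \in h_j^+$ into $e_N^- \subseteq h_j^-$, it must cross each $h_j$, placing $\xi_n$ in $V_{o, h_1, \dots, h_m}$. The second claim is a direct adaptation of the Gromov-product estimate in the proof of Lemma~\ref{lem:Acylindricalactionincubulatedcaseiskappainjective}: the chain $(e_i)$ yields lower bounds of the form $d_L(o, p \circ \gamma(t_i)) \gtrsim i$, and $(\partial p(\xi_n) \mid \partial p(\xi))_o \to \infty$ in the hyperbolic space $Y_L$ then forces, via standard hyperbolic geometry, the point $p \circ \gamma_n(s)$ to lie within a bounded $d_L$-ball of $p \circ \gamma(t_N)$ once $s, n$ are large, which (up to a bounded number) translates to $\gamma_n$ having crossed $e_1, \dots, e_N$.

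The main obstacle is the first (domination) claim, and specifically the disjointness assertion $h_j \cap e_i = \emptyset$ for large $i$; once disjointness is established, the containment $\gamma(t_i) \in e_i \cap h_j^-$ forces $e_i \subseteq h_j^-$, and then $o \in h_j^+ \cap e_i^+$ gives $e_i^- \subseteq h_j^-$ immediately by elementary halfspace considerations. Proving disjointness reduces to showing that a single hyperplane cannot intersect infinitely many hyperplanes in an $L$-well-separated chain crossed by a $\kappa$-Morse geodesic. Heuristically, such an intersection would produce a nested cofinal family $(h_j \cap e_i)$ inside the convex subcomplex $h_j$ moving in a direction coarsely parallel to $\gamma$, contradicting the contracting properties of the $\kappa$-Morse ray. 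Making this rigorous should be possible by combining the factor system structure with the hyperplane and contraction analysis of \cite{Genevois20b, MediciZalloum21, DurhamZalloum22}, and is the technical heart of the argument.
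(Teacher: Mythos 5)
Your architecture is essentially the paper's: one direction is quoted from Theorem~\ref{thm:kappageodesicsinject}, the other is run through the hyperplane topology via Theorem~\ref{thm:HYPVIS}, using a chain of $L$-well-separated hyperplanes crossed by $\gamma$ (Theorem~\ref{thm:Morse iff contracting iff excursion}) together with a Gromov-product/$d_L$-counting estimate to force $\gamma_n$ (resp.\ $\gamma'$) to cross the target hyperplane. The paper packages the hyperbolic-geometry step of your ``second claim'' as Lemma~\ref{lem:ControllingdistanceviaGromovproduct}, applied to the unparametrised quasi-rulers $p\circ\gamma$ and $p\circ\gamma'$ supplied by Theorem~\ref{thm:kappageodesicsinject}; the counting translation is exactly the estimate $d_L(p(y'),p(y))\geq j-J-2$ in the paper's proof. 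The remaining differences are cosmetic: the paper uses the single-hyperplane basis $U_{o,k}$ of \cite[Lemma 4.1]{MediciZalloum21} and argues with open sets rather than sequences (your sequential reduction is legitimate since the visual topology on $\partial_\infty Y_L$ is metrizable, but you should say so).

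The genuine gap is exactly where you locate it: the domination claim that, for each $h_j$, all sufficiently large $e_i$ are disjoint from $h_j$ with $e_i^-\subseteq h_j^-$. The paper does not re-derive this; it invokes \cite[Lemma 4.1]{MediciZalloum21} ``and its proof,'' which gives that for all $j\geq J$ the chain element $k_j$ separates $k$ from $\xi$ --- equivalent to your claim by the elementary halfspace argument you sketch, and used quantitatively (it yields at least $j-J$ many $L$-well-separated hyperplanes between $k$ and $k_j$, which is what converts the Gromov-product bound into a side-of-$k$ statement). Your proposed heuristic for proving it does not close as stated: $\gamma$ actually crosses each $h_j$ (that is the point of $\xi\in V_{o,h_1,\dots,h_m}$), so $h_j$ is not a disjoint convex set onto which a $\kappa$-contracting ray has bounded projection, and $L$-well-separation of consecutive $e_i$ is not contradicted by a single hyperplane crossing two (or all) of them --- that only bounds families without facing triples crossing a fixed pair. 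One would have to argue about the tail of $\gamma$ beyond its crossing of $h_j$, or about the family of hyperplanes dual to the carrier of $h_j$, which is genuinely the content of the cited lemma. With that claim imported from \cite{MediciZalloum21} rather than re-proved, your argument is complete and coincides with the paper's.
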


\begin{proof}
Continuity of $\partial p$ is proven in \cite{MediciZalloum21} (see Theorem \ref{thm:kappageodesicsinject}). It remains to show that $\partial p$ sends open sets to open sets. Fix a sublinear function $\kappa$. By Theorem~\ref{thm:HYPVIS}, the topology $\Vis$ inherited from the canonical inclusion $\partial_{\kappa} Y \hookrightarrow \partial_{\infty} Y$ coincides with the topology $\Hyp$ (see Definition~\ref{def:HYP}). Furthermore, by  \cite[Lemma 4.1]{MediciZalloum21}, the family $\{ U_{o,k} \mid k \text{ hyperplane } \}$ is a basis for this topology, where $o$ is a fixed vertex in $Y$ and
\[ U_{o,k} := \{ \eta \in \partial_{\kappa} Y \mid \gamma_{\eta} \text{ crosses } k \}, \]
with $\gamma_{\eta}$ denoting the $\CAT$ geodesic ray starting at $o$ and representing $\eta$.

Fix $\xi \in \partial_{\kappa} Y$, and let $\gamma$ be the unique $\CAT$ geodesic from $o$ to $\xi$. Let $U_{o,k}$ be an open set that contains $\xi$. In order to show that the image of $U_{o,k}$ is open in $\partial p( \partial_{\kappa} Y)\subseteq \partial_\infty Y_L$ with respect to the subspace topology, we need to find a constant $R \geq 0$ such that the 
basic neighbourhood $V = \{ \eta \in \partial_{\infty} Y_L \mid ( \partial p(\xi) \mid \eta)_{p(o)} \geq R \}$ of $\partial p( \xi)$ satisfies $\partial p(\xi) \in V \cap \partial p( \partial_{\kappa} Y ) \subset \partial p(U_{o,k})$.

Since $\gamma$ is $\kappa$-Morse and $Y$ has separation constant $L$, there exists a constant $c > 0$ and a chain $(k_j)_j$ of $L$-well-separated hyperplanes such that $\gamma$ crosses $k_j$ at time $t_j$ with $t_j < t_{j+1}$ (cf. \cite[Theorem B]{MurrayQingZalloum20} or Theorem~\ref{thm:Morse iff contracting iff excursion}). By  \cite[Lemma 4.1]{MediciZalloum21} and its proof, there exists $J$ such that for all $j \geq J$, the hyperplane $k_j$ separates $k$ from $\xi$. In particular, for all $j \geq J$, the collection of $L$-well-separated hyperplanes separating  $k_j$  from $k$ has cardinality at least $j - J$.

Let $p \colon Y \rightarrow Y_L$ be a $G$-equivariant projection such that for every $y \in Y$, $p(y)$ is a vertex of the least-dimensional cube in $Y$ containing $y$. Observe that $y$ and $p(y)$, thought of as a vertex in $Y$, can be separated by at most one hyperplane $k_j$, as the $k_j$ are mutually disjoint. Suppose now that $y \in Y$ such that $k_j$ separates $o$ from $y$ for some $j \geq J + 2$. We conclude that for all $y'$ that lie on $k$ or on the same side of $k$ as $o$, we have $d_L( p(y'), p(y) ) \geq j - J - 2$.

Let $\delta$ denote the hyperbolicity constant of $Y_L$ and $D \geq 0$ a constant such that $p$ projects every geodesic ray in $Y$ to an unparametrised $D$-quasi-ruler. By Theorem \ref{thm:kappageodesicsinject}, such a $D$ exists and both $D$ and $\delta$ depend only on $L$. Let $j_0 := \lceil J + 2D + 2\delta + 3 \rceil$, and let $R := d_L( p(o), p \circ \gamma(t_{j_0}) )$. Define the set $V := \{ \eta \in \partial_{\infty} Y_L \mid ( \partial p(\xi) \mid \eta)_{p(o)} > R \}$. We claim that $\partial p(\xi) \in V \cap \partial p( \partial_{\kappa} Y ) \subset \partial p( U_{o,k} )$. By definition, $\partial p(\xi) \in V$. Now suppose $\eta \in V \cap \partial p( \partial_{\kappa} Y )$. Then there exists a unique $\CAT$ geodesic $\gamma'$ starting at $o$ such that $p \circ \gamma'$ represents $\eta$. By Theorem \ref{thm:kappageodesicsinject}, both $p \circ \gamma$ and $p \circ \gamma'$ are $D$-quasi-rulers. Let $y' \in \gamma'$ be the first point on $\gamma'$ such that $d_L(p(o), p(y') ) \geq R$. Since $p \circ \gamma'$ is an unparametrised $D$-quasi-ruler, this implies that $d_L(p(o), p(y')) \leq R + D$. Applying Lemma \ref{lem:ControllingdistanceviaGromovproduct} with $x' = p \circ \gamma(t_{j_0})$, we conclude that $d_L(p(y'), p \circ \gamma(t_{j_0}) ) \leq \vert R + D - R \vert + D + 2\delta = 2D + 2 \delta$. 
If $k$ separates $y'$ and $\gamma(t_{j_0})$, then by the reasoning in the previous paragraph setting $y=\gamma(t_{j_0})$, we have $d_L(p(y'),p\circ \gamma(t_{j_0}))\geq j_0-J-2\geq 2D+2\delta+1$, where the second inequality follows from our choice of $j_0$.  However, this is a contradiction, and so $k$ must separate $y'$ and $\gamma(t_{j_0})$.  By our choice of $t_{j_0}$, $\gamma(t_{j_0})$ does not lie on the same side of $k$ as $o$, and thus neither does $y'$.

This implies that $\gamma'$ contains points on both sides of $k$, namely $o$ and $y'$, and thus $\gamma'$ crosses $k$. Therefore $\eta\in U_{o,k}$, and so $V \cap \iota( \partial_{\kappa} Y ) \subset U_{o,k}$.  This proves that the inclusion $\partial p \colon \partial_{\kappa} Y \hookrightarrow \partial_{\infty} Y_L$ sends open sets to open sets and completes the proof of the lemma.
\end{proof}

\begin{corollary} \label{cor:VisualTopologyInvariance}
Let $Y, Y'$ be two $\CAT$ cube complexes that admit factor systems, and let $G$ be a group that acts geometrically and cubically on $Y$ and $ Y'$. The unique $G$-equivariant map $\partial_{\kappa} Y \rightarrow \partial_{\kappa} Y'$ is a homeomorphism with respect to the visual topology $\Vis$.
\end{corollary}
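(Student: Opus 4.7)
The plan is to combine the machinery built up throughout the section to chain together several equivalences of topologies. Essentially, for each of $Y$ and $Y'$ one passes through the Genevois-style hyperbolic space $Y_L$ (for $L$ above the separation constant), identifies the induced topology with both the visual topology on the $\kappa$-boundary and the acylindrical topology, and then uses that all largest acylindrical actions of $G$ are equivalent to transfer between the two sides.

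First, I would fix sublinear $\kappa$ and let $L, L'$ be separation constants for $Y$ and $Y'$ respectively (these exist by the result of Murray--Qing--Zalloum cited after Definition~\ref{def:GenevoisConstruction}, since $Y, Y'$ admit factor systems). Then by Lemma~\ref{lem:Acylindricalactionincubulatedcaseiskappainjective}, both $(G, Y, Y_L)$ and $(G, Y', Y'_{L'})$ are $\kappa$-injective hyperbolic projections, so the topologies $\mathcal{T}(G, Y, Y_L)$ and $\mathcal{T}(G, Y', Y'_{L'})$ are well-defined on $\partial_\kappa Y$ and $\partial_\kappa Y'$ via pullback of the visual topologies on $\partial_\infty Y_L$ and $\partial_\infty Y'_{L'}$.

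Next, I would invoke Corollary~\ref{cor:LargestAcylindricityofGenevois} to see that both actions $G \curvearrowright Y_L$ and $G \curvearrowright Y'_{L'}$ are largest acylindrical actions of $G$. By uniqueness of the largest acylindrical action (noted after its definition in Section~\ref{subsec:Acylindricalactions}), these two actions are equivalent, i.e.\ $Y_L \sim Y'_{L'}$. The Milnor--Schwarz lemma applied to the geometric actions on $Y$ and $Y'$ gives a coarsely $G$-equivariant quasi-isometry $\Psi \colon Y \to Y'$, whose induced boundary map $\partial \Psi \colon \partial_\kappa Y \to \partial_\kappa Y'$ is the unique $G$-equivariant bijection in question. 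Proposition~\ref{prop:CommutingDiagramsublinearcase}(3) then immediately yields that $\partial \Psi$ is a homeomorphism
\[
\partial \Psi \colon \bigl( \partial_\kappa Y, \mathcal{T}(G, Y, Y_L) \bigr) \longrightarrow \bigl( \partial_\kappa Y', \mathcal{T}(G, Y', Y'_{L'}) \bigr).
\]

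Finally, to transfer this conclusion to the visual topology, I would apply Lemma~\ref{lem:Acylindricaltopologyequalsvisual} on each side: it identifies $\mathcal{T}(G, Y, Y_L)$ with the restriction of $\Vis$ on $\partial_\infty Y$ to $\partial_\kappa Y$, and analogously for $Y'$. Composing with the identity maps coming from these two identifications turns the homeomorphism above into a homeomorphism with respect to $\Vis$, which is exactly the content of the corollary. The entire argument is a bookkeeping exercise given the results already established; the genuine difficulty was entirely absorbed into the proofs of Lemma~\ref{lem:Acylindricalactionincubulatedcaseiskappainjective} and Lemma~\ref{lem:Acylindricaltopologyequalsvisual}, where one had to control how $\kappa$-Morse geodesics project to $Y_L$ and match the hyperplane topology with the visual topology on $\partial_\infty Y_L$. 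The only point to be careful about here is verifying that the $\Psi$ produced by Milnor--Schwarz induces precisely the claimed canonical $G$-equivariant bijection, which is standard.
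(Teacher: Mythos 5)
Your argument is correct and follows essentially the same route as the paper's own proof: pass to the Genevois spaces $Y_L$, $Y'_{L'}$ at the separation constants, use Corollary~\ref{cor:LargestAcylindricityofGenevois} plus uniqueness of the largest acylindrical action to get $Y_L \sim Y'_{L'}$, apply Proposition~\ref{prop:CommutingDiagramsublinearcase} to obtain a homeomorphism for the pullback topologies, and finish with Lemma~\ref{lem:Acylindricaltopologyequalsvisual} to identify these with $\Vis$ on each side. Your version is slightly more explicit than the paper's (in citing Lemma~\ref{lem:Acylindricalactionincubulatedcaseiskappainjective} for $\kappa$-injectivity and flagging the Milnor--Schwarz step), but the content is identical.
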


\begin{proof}
Let $X$ be the hyperbolic space obtained from $Y$ using Genevois' construction (Definition~\ref{def:GenevoisConstruction}) with the separation constant of $Y$, and let $X'$ be the space obtained from $Y'$ analogously. By Corollary \ref{cor:LargestAcylindricityofGenevois}, the actions of $G$ on $X$ and $X'$ are  acylindrical and largest, and thus $X \sim X'$. Therefore, the unique $G$-equivariant map $\partial_{\kappa} Y \rightarrow \partial_{\kappa} Y'$ is a homeomorphism with respect to the topologies induced by the inclusions $\partial_{\kappa} Y \hookrightarrow \partial_{\infty} X$ and $\partial_{\kappa} Y' \hookrightarrow \partial_{\infty} X'$, respectively. By Lemma \ref{lem:Acylindricaltopologyequalsvisual}, these topologies coincide with the visual topology on $\partial_{\kappa} Y$ and $\partial_{\kappa} Y'$ respectively.
\end{proof}

As outlined in the introduction, Corollary \ref{cor:VisualTopologyInvariance} extends to commensurable groups acting on cube complexes:

\begin{corollary} \label{cor:Invarianceundercommensurability}
Let $G, G'$ be two commensurable groups which act geometrically and cubically on $\CAT$ cube complexes $Y, Y'$ respectively, each of which admits a factor system. Then any shared finite index subgroup $H $ of $ G, G'$ induces an $H$-equivariant homeomorphism $\partial_{\kappa} Y \rightarrow \partial_{\kappa} Y'$ with respect to the visual topology.
\end{corollary}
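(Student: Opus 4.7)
The proof is essentially an application of Corollary~\ref{cor:VisualTopologyInvariance} after restricting from $G$ and $G'$ to their common finite-index subgroup $H$. The plan is to observe that the hypotheses of Corollary~\ref{cor:VisualTopologyInvariance} are stable under passing to a finite-index subgroup, so that $H$ acts geometrically and cubically on both $Y$ and $Y'$, and the cube complexes $Y,Y'$ still admit factor systems (since the factor system is a property of the cube complex alone, not of the acting group).

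More concretely, I would first note that since $H$ has finite index in $G$ and $G\curvearrowright Y$ is a geometric cubical action, the restricted action $H\curvearrowright Y$ is also a proper, cocompact action by cubical isometries; this is standard and uses only that a finite-index subgroup of a group acting properly and cocompactly inherits both properties. The same reasoning applied to $H\leq G'$ and $G'\curvearrowright Y'$ shows that $H\curvearrowright Y'$ is geometric and cubical. The factor system hypotheses on $Y$ and $Y'$ are unaffected by the change of group, since Definition~\ref{def:factorsystem} does not refer to any group action.

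At this point, $H$ is a group acting geometrically and cubically on two $\CAT$ cube complexes $Y$ and $Y'$, each of which admits a factor system. I would then apply Corollary~\ref{cor:VisualTopologyInvariance} directly to $H$, obtaining the unique $H$-equivariant bijection $\partial_\kappa Y\to \partial_\kappa Y'$ and showing that it is a homeomorphism with respect to the visual topology $\Vis$. Since the $G$-equivariant and $G'$-equivariant bijections $\partial_\kappa Y\to \partial_\kappa Y$ and $\partial_\kappa Y'\to \partial_\kappa Y'$ coming from the individual group actions are trivially $H$-equivariant, the $H$-equivariant bijection produced by the corollary coincides with the canonical one induced by any coarsely $H$-equivariant quasi-isometry $Y\to Y'$ (for instance, the one obtained by the Milnor--Schwarz lemma applied to $H$).

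There is no real obstacle here; the only mild point to verify is that passing to a finite-index subgroup preserves both the geometric nature of the action and the cubical nature of the action, both of which are routine. The bulk of the work, including the construction of the hyperbolic projections $(H,Y,Y_L)$ and $(H,Y',Y'_L)$, the identification of their equivalence classes as largest acylindrical actions, and the identification of $\mathcal{T}(H,Y,Y_L)$ with the visual topology, has already been carried out in Lemma~\ref{lem:Acylindricalactionincubulatedcaseiskappainjective} and Lemma~\ref{lem:Acylindricaltopologyequalsvisual}, and is packaged into Corollary~\ref{cor:VisualTopologyInvariance}.
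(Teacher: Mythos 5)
Your proposal is correct and follows essentially the same route the paper intends: the paper gives no separate proof of this corollary, only the remark that it "extends to commensurable groups" via the restriction-to-$H$ argument sketched in the introduction, which is precisely what you carry out by checking that $H\curvearrowright Y$ and $H\curvearrowright Y'$ remain geometric and cubical, that the factor systems are unaffected, and then invoking Corollary~\ref{cor:VisualTopologyInvariance} for $H$.
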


\begin{remark}
In the cubulated case, we expect the cross ratio obtained in Remark \ref{rem:InducedMetric} to be as close to the cross ratio defined in \cite{CCM} as possible.
\end{remark}




\subsection{Geometric implications for equivariant maps} \label{subsec:GeometricImplication}

The continuity of the map $\partial_{\kappa} Y \rightarrow \partial_{\kappa} Y'$ has an immediate consequence for how badly $G$-equivariant quasi-isometries can deform geodesics.

\begin{proposition} \label{prop:ApproximationofMorsequasigeodesics}
Let $G, Y, $ and $Y'$ be as in Corollary \ref{cor:VisualTopologyInvariance}, let $F \colon Y \rightarrow Y'$ be a coarsely $G$-equivariant $(K, C)$-quasi-isometry, and let $f \colon \partial_{\kappa} Y \rightarrow \partial_{\kappa} Y'$ be the induced homeomorphism between boundaries. Fix a basepoint $o \in Y$, and set $o' := f(o)$. Fix $\xi \in \partial_{\kappa} Y$ and $\xi' := f(\xi)$, and let $\gamma$ be the unique geodesic from $o$ to $\xi$, and let $\gamma'$ the unique geodesic from $o'$ to $\xi'$.

For any $R', \epsilon' > 0$, there exist $R \geq R'$ and $0 < \epsilon \leq \epsilon'$, depending only on the data above and $D$, depending only on $\epsilon', K, C$, $\kappa(R')$, and the Morse gauge of $\gamma$, such that the following holds. If $\eta$ is a point in  $\partial_{\kappa} Y$ whose unique geodesic representative $\beta$ starting at $o$ satisfies $d_Y( \beta(R), \gamma(R)) \leq \epsilon$, then
\[ d_{Haus}( F \circ \beta\vert_{[0, R']}, \beta'\vert_{[0, R']} ) \leq D, \]
where $\beta'$ is the unique geodesic ray from $o'$ to $f(\eta)$.

\end{proposition}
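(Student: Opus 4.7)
The plan is to combine three ingredients: the continuity of the boundary map $f$ in the visual topology (established in Corollary~\ref{cor:VisualTopologyInvariance}); the convexity of the distance function between two geodesics emanating from a common basepoint in a $\CAT$ space; and the fellow-traveling of $\kappa$-Morse quasi-geodesics that represent the same point at infinity. A basic visual neighborhood of $\xi' \in \partial_\kappa Y'$ has the form
\[ V_{o', \tilde\epsilon, R'}(\xi') = \{\eta' \in \partial_\kappa Y' : d_{Y'}(\gamma_{\eta'}(R'), \gamma'(R')) < \tilde\epsilon\}, \]
where $\gamma_{\eta'}$ is the unique geodesic from $o'$ representing $\eta'$. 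Fixing $\tilde\epsilon := \epsilon'/3$, continuity of $f$ guarantees that $f^{-1}(V_{o', \tilde\epsilon, R'}(\xi'))$ is an open visual neighborhood of $\xi$, and so contains a set $V_{o, \epsilon, R}(\xi)$ for some $R \geq R'$ and $0 < \epsilon \leq \epsilon'$. This produces the claimed $R$ and $\epsilon$: the hypothesis $d_Y(\beta(R), \gamma(R)) \leq \epsilon$ then automatically forces $d_{Y'}(\beta'(R'), \gamma'(R')) \leq \tilde\epsilon$.

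Next, I would promote these endpoint inequalities to estimates on the entire interval $[0, R']$ via $\CAT$ convexity, yielding $d_Y(\beta(t), \gamma(t)) \leq \epsilon$ and $d_{Y'}(\beta'(t), \gamma'(t)) \leq \tilde\epsilon$ for all $t \in [0, R']$. Applying the $(K,C)$-quasi-isometry $F$ to the first estimate produces $d_{Y'}(F\circ\beta(t), F\circ\gamma(t)) \leq K\epsilon + C$ on $[0, R']$. For the middle comparison, I observe that $F \circ \gamma$ is a $(K,C)$-quasi-geodesic representing $\xi'$ and thus $\kappa$-fellow travels with the $\kappa$-Morse geodesic $\gamma'$, whose Morse gauge is controlled by the Morse gauge of $\gamma$ together with $K, C$. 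Restricted to $[0, R']$, all relevant points lie within distance $KR' + C$ of $o'$ (up to the bounded error $d(F(o), o')$), so both $\kappa$-neighborhood inclusions collapse to a uniform bound $D_1$ depending only on $K, C, \kappa(R')$, and the Morse gauge of $\gamma$. Assembling the three comparisons by triangle inequality for Hausdorff distance gives
\[ d_{Haus}(F \circ \beta|_{[0,R']}, \beta'|_{[0,R']}) \leq (K\epsilon + C) + D_1 + \tilde\epsilon =: D, \]
with the dependencies claimed in the statement.

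The main obstacle is the middle step: verifying that \emph{both} directions of the Hausdorff comparison between $F\circ\gamma|_{[0,R']}$ and $\gamma'|_{[0,R']}$ can be bounded uniformly in terms of $\kappa(R')$ and the Morse gauge of $\gamma$. The inclusion of $F \circ \gamma$ into a $\kappa$-neighborhood of $\gamma'$ is immediate from the $\kappa$-Morse property of $\gamma'$, but the reverse inclusion---finding nearby points of $F\circ\gamma$ for points of $\gamma'|_{[0,R']}$---requires exploiting that both rays represent the same boundary point and parametrize outward at quasi-isometric rates, so that the initial segments of the two parametrizations remain in a common bounded region. A secondary technicality is that $F$ need not send $o$ exactly to $o'$, but since $F$ is coarsely $G$-equivariant and $o'$ is determined by $o$ and $F$, this discrepancy is bounded and can be absorbed into the final constant $D$.
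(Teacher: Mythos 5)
Your proposal follows essentially the same route as the paper's proof: use continuity of $f$ in the visual topology to obtain $R$ and $\epsilon$, upgrade the endpoint estimates to whole-segment estimates via $\CAT$ convexity, compare $F\circ\gamma$ with $\gamma'$ using the $\kappa$-Morse property (with the constant $D_1$ depending on $\kappa(R')$, the Morse gauge of $\gamma$, and $K,C$), compare $F\circ\beta$ with $F\circ\gamma$ using the quasi-isometry constants, and assemble by the triangle inequality. The only differences are cosmetic (your $\tilde\epsilon=\epsilon'/3$ versus the paper's $\epsilon'$), and your explicit discussion of the two-sided Hausdorff comparison in the middle step is, if anything, slightly more careful than the paper's.
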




In more classical proofs of continuity of the visual topology on boundaries (for example for geodesic hyperbolic spaces), one proves a result similar to the one above and concludes continuity of the boundary map (cf. \cite[Part III, Section H.3]{BH}). In our situation, this does not work, as  illustrated by Cashen's example \cite{Cashen} of quasi-isometric CAT(0) spaces with non-homeomorphic Morse boundaries w.r.t.\,visual topology. The issue arises when attempting to approximate the image of a $\kappa$-Morse geodesic ray under a quasi-isometry by a geodesic ray. While every point in the $\kappa$-Morse boundary of a proper geodesic metric space can be represented by a geodesic ray, the distance between this geodesic ray and a $\kappa$-close quasi-geodesic ray depends on the Morse gauge of the initial $\kappa$-Morse geodesic. Even if two $\kappa$-Morse geodesic rays stay close together for a long time, the Morse gauge of one cannot give us any information on the precision of approximation for the other, as can be seen in Cashen's example. Proposition \ref{prop:ApproximationofMorsequasigeodesics} shows that this issue does not occur when the quasi-isometry is $G$-equivariant and the spaces in questions are $\CAT$ cube complexes with factor systems. Somewhat unusually, this follows from continuity with respect to the visual topology, rather than the other way around.

\begin{proof}
Let $G, Y, Y', \kappa, F, f, o, o', \xi, \xi', \gamma, \gamma', R', \epsilon'$ be as in the statement of the proposition. Since $f$ is continuous with respect to the visual topology, there exist $R, \epsilon > 0$ such that if $\eta \in \partial_{\kappa} Y$ is  represented by a geodesic ray $\beta$ starting at $o$ such that $d_Y( \beta(R), \gamma(R)) \leq \epsilon$, then $d_{Y'}( \beta'(R'), \gamma'(R')) \leq \epsilon'$, where $\beta'$ is the unique geodesic ray from $o'$ to $f(\eta)$. Without loss of generality, we can choose $R \geq R'$ and $\epsilon \leq \epsilon'$. Since $Y$ and $Y'$ are $\CAT$ spaces, this implies that
\[ d_{Haus}( \beta\vert_{[0,R]}, \gamma\vert_{[0,R]}) \leq \epsilon \] and
\[ d_{Haus}( \beta'\vert_{[0,R']}, \gamma'\vert_{[0,R']}) \leq \epsilon'. \]
Since $\gamma$ is $\kappa$-Morse (with a particular, fixed Morse gauge), there exists a constant $D_1$, depending only on $\kappa(R')$, the Morse gauge of $\gamma$, and the quasi-isometry constants of $F$ such that
\[ d_{Haus}( \gamma'\vert_{[0, R']}, F \circ \gamma\vert_{[0, R']}) \leq D_1.  \]
Finally, since $F$ is a $(K, C)$-quasi-isometry, we have that
\[ d_{Haus}( F \circ \beta\vert_{[0, R]}, F \circ \gamma\vert_{[0, R]}) \leq K\epsilon + C \leq K \epsilon' + C. \]
Combining these three estimates, we obtain that
\[ d_{Haus}( \beta'\vert_{[0, \min(R, R')]}, F \circ \beta\vert_{[0, \min(R, R')]}) \leq \epsilon' + D_1 + K\epsilon' + C.\]
Setting $D := \epsilon' + D_1 + K\epsilon' + C$ proves the proposition.
\end{proof}




\section{Topology for groups with a coarse median structure} \label{sec:Topologyforgroupswithacoarsemedianstructure}




It can be difficult to check whether a given hyperbolic projection $(G, Y, X)$ is $\kappa$-injective, as one has to account for all $\kappa$-Morse quasi-geodesic rays in $Y$. Things become more manageable if we have some additional structure in the form of coarse medians. We first describe some basics about coarse median structures. In Section \ref{subsec:Definingthetopologyinthemediancase}, we define a topology on sublinearly Morse boundaries that takes into account a coarse median structure on the group $G$ and show that it has analogous properties to the topologies in the previous section. In Section \ref{subsec:ThecaseofHHGs}, we then give a concrete description of this topology in the case of hierarchically hyperbolic groups.


The definition of coarse medians goes back to Bowditch \cite{Bowditch13}. The following equivalent definition is introduced in \cite{NibloWrightZhang19}.

\begin{definition} \label{def:coarsemedian}
Let $Y$ be a metric space. A \textit{coarse median} on $Y$ is a map $\mu \colon Y^3 \rightarrow Y$ for which there exists a constant $C \geq 0$ such that for all $a, b, c, x \in Y$, we have
\begin{enumerate}
    \item $\mu(a,a,b) = a$, $\mu(a,b,c) = \mu(b,a,c) = \mu(b, c, a)$,
    
    \item $d( \mu( \mu(a,x,b),x,c) , \mu( a, x, \mu(b,x,c) ) \leq C$,
    
    \item $d( \mu(a, b, c), \mu(x, b, c) ) \leq Cd(a,x) + C$.

\end{enumerate}
\end{definition}


\begin{definition}
Two coarse medians $\mu, \mu' \colon Y^3 \rightarrow Y$ have {\it bounded distance} if there exists a constant $C \geq 0$ such that $d( \mu(a,b,c), \mu'(a,b,c) ) \leq C$ for all $a, b, c \in Y$. This defines an equivalence relation, and  an equivalence class $[\mu]$ of coarse medians is a {\it coarse median structure} on $Y$. The pair $(Y, [\mu])$ is a {\it coarse median space}.
\end{definition}

\begin{definition} \label{def:medianmap}
A map $\Psi \colon (Y, [\mu]) \rightarrow (Y', [\mu'])$ between coarse median spaces  is a {\it $D$-median map} if there is a constant $D\geq 0$ such that for all $a, b, c \in Y$, we have
\[ d( \Psi( \mu(a, b, c) ) , \mu'( \Psi(a), \Psi(b), \Psi(c) ) ) \leq D. \]
A  map is a {\it coarse median map} if it is a $D$-median map for some $D \geq 0$.
\end{definition}

It follows from the definition that the composition of two coarse median maps is again a coarse median map. 

\begin{definition} \label{def:coarsemediangroup}
A {\it coarse median group} is a pair $(G, [\mu])$, where $G$ is a finitely generated group and $[\mu]$ is a $G$-invariant coarse median structure on $G$ with respect to the word metric of a finite generating set.
\end{definition}

Given a quasi-isomery $\Psi \colon Y \rightarrow Y'$ with a quasi-inverse $\Psi^{-1}$ and a coarse median $\mu$ on $Y$, one can push-forward $\mu$ to a coarse median on $Y'$ defined by
\[ \Psi_*\mu(a,b,c) := \Psi( \mu( \Psi^{-1}(a), \Psi^{-1}(b), \Psi^{-1}(c) ) ) ). \]
Thus a coarse median structure on a finitely generated group can be thought of as a choice of coarse median structure on one of its geometric representations.  In particular, given a coarse median group $(G, [\mu])$, every geometric action $G \curvearrowright Y$ induces a $G$-invariant coarse median structure on $Y$. Abusing notation, we will frequently denote this coarse median structure by $[\mu]$ as well. Given a coarse median group $(G, [\mu])$ and two geometric actions $G \curvearrowright Y$ and $G\curvearrowright  Y'$, the induced coarsely $G$-equivariant quasi-isometry $\Psi \colon Y \rightarrow Y'$ is a coarse median map with respect to the coarse median structure $[\mu]$ on both $Y$ and $Y'$.

And interval $I \subset \mathbb{R}$ has a standard (coarse) median structure  that sends a triple of points $a, b, c \in I$ to the point in $\{ a, b, c\}$ that is in the middle of the three. We use this median to introduce the notion of a median quasi-geodesic.

\begin{definition}
A (quasi-)geodesic $\gamma \colon I \rightarrow Y$ in a coarse median space $(Y, [\mu])$  is a {\it median (quasi-)geodesic} if $\gamma$ is a coarse median map with respect to the median structure on $I$.
\end{definition}

It is straightforward to show that a median quasi-isometry sends median quasi-geodesics to median quasi-geodesics.

\subsection{Defining the topology in the coarse median case} \label{subsec:Definingthetopologyinthemediancase}

Given a geodesic metric space $Y$ with a coarse median $\mu$, we will  consider quasi-geodesic rays in $Y$ that are $\mu$-median and $\kappa$-Morse. In general, it is not obvious that every point $\xi \in \partial_{\kappa} Y$ admits a quasi-geodesic representative that is $\mu$-median. We say that $\partial_{\kappa} Y$ {\it has $\mu$-median representatives} if for every $\xi \in \partial_{\kappa} Y$ there exists a $\mu$-median quasi-geodesic ray representing $\xi$. By modifying the beginning of such a quasi-geodesic ray, one can ensure that such representatives start at any chosen basepoint in $Y$. Note that if  $\Psi \colon Y \rightarrow Y'$ is a quasi-isometry and $\mu$ is a coarse median on $Y$, then $\partial_{\kappa} Y$ has $\mu$-median representatives if and only if $\partial_{\kappa} Y'$ has $\Psi_*(\mu)$-median representatives. Thus we may say that the $\kappa$-boundary of  a group with a coarse median structure $[\mu]$  has $\mu$-median representatives if the $\kappa$-boundary of some (equivalently, any) space on which the group acts geometrically has this property.

If the $\kappa$-boundary of a pair $(G, [\mu])$ does not have $\mu$-median representatives, then the following results apply to a subset of $\partial_{\kappa} G$ consisting of all the $\kappa$-Morse points that admit a $\mu$-median representative. However, in all important examples for the results in this section, the entire $\kappa$-boundary has $\mu$-median representatives, which is why we simply assume this for the remainder of this section.

We now define the notion of a hyperbolic projection $(G,Y,X)$ being $(\kappa,\mu)$--injective.  The definition is almost the same as $\kappa$-injectivity, but we only consider $\kappa$-Morse quasigeodesic rays that are also $\mu$-median.

\begin{definition} \label{def:kappamuinjectivity}
Let $(G, [\mu])$ be a group with a coarse median structure and  $(G, Y, X)$  a hyperbolic projection. The $G$-equivariant quasi-isometry $G \to Y$ induces a coarse median structure on $Y$, which we also denote by $[\mu]$. The hyperbolic projection $(G, Y, X)$ is {\it $(\kappa, \mu)$-injective} if for every $\kappa$-Morse, $\mu$-median quasi-geodesic ray $\gamma$ in $Y$, the projection $p \circ \gamma$ defines a point in $\partial_{\infty} X$ and the projections of any two $\kappa$-Morse, $\mu$-median quasi-geodesic rays $\gamma, \gamma'$ in $Y$ define the same point in $\partial_{\infty} X$ if and only if $\gamma, \gamma'$ are $\kappa$-close.
\end{definition}

If a hyperbolic projection $(G, Y, X)$ is $(\kappa, \mu)$-injective and $\partial_{\kappa} G$ has $\mu$-median representatives, there is a well-defined map $\partial p \colon \partial_{\kappa} Y \hookrightarrow \partial_{\infty} X$ that sends any $\kappa$-Morse, $\mu$-median quasi-geodesic ray $\gamma$ in $Y$ to the point in $\partial_{\infty} X$ defined by $p \circ \gamma$. The pull-back of the visual topology on $\partial_{\infty} X$ induces a topology on $\partial_{\kappa} Y$ that we denote by $\mathcal{T}(G, [\mu], Y, X)$.

The following proposition is the analogue of Proposition~\ref{prop:CommutingDiagram} for $(\kappa,\mu)$-injective hyperbolic projections.
\begin{proposition} \label{prop:Commutingdiagrammediancase}
Let $(G, [\mu])$ be a group with a coarse median structure such that $\partial_{\kappa} G$ has $\mu$-median representatives, let $(G, Y, X)$ and $(G, Y', X')$ be two hyperbolic projections such that $X \sim X'$, and let $p \colon Y \rightarrow X$ and $p' \colon Y' \rightarrow X'$ be the induced projection maps. Let $\Psi \colon Y \rightarrow Y'$ and $\Phi \colon X \rightarrow X'$ be coarsely $G$-equivariant quasi-isometries induced by the geometric actions on $Y, Y'$ and the fact that $X \sim X'$. The following statements hold.
\begin{enumerate}
    \item $(G, Y, X)$ is $(\kappa, \mu)$-injective if and only if $(G, Y', X')$ is $(\kappa, \mu)$-injective.
    
    \item If $(G, Y, X)$ is $(\kappa, \mu)$-injective, then the following diagram commutes:
     \[
  \begin{tikzcd}
    \partial_{\kappa} Y \arrow{r}{\partial \Psi} \arrow[hook, swap]{d}{\partial p} & \partial_{\kappa} Y' \arrow[hook]{d}{\partial p'} \\
     \partial_{\infty} X \arrow{r}{\partial \Phi} & \partial_{\infty} X'
  \end{tikzcd}
\]

    \item The map
    \[ \partial \Psi \colon ( \partial_{\kappa} Y, \mathcal{T}(G, [\mu], Y, X)) \rightarrow (\partial_{\kappa} Y', \mathcal{T}(G, [\mu], Y', X') ) \]
    is a homeomorphism.
\end{enumerate}
\end{proposition}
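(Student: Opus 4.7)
The plan is to follow the template of Proposition~\ref{prop:CommutingDiagramsublinearcase}, replacing \emph{$\kappa$-Morse} with \emph{$\kappa$-Morse and $\mu$-median} throughout, and adding the extra checks needed to make this substitution legitimate. The first step will be exactly as in Propositions~\ref{prop:CommutingDiagram} and~\ref{prop:CommutingDiagramsublinearcase}: using coarse $G$-equivariance of $\Psi, \Phi, p, p'$ together with cocompactness of $G \curvearrowright Y$, the outer square commutes up to a uniform bounded error, i.e.\ $\Phi \circ p$ and $p' \circ \Psi$ are within bounded distance.

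The key new ingredient is that $\Psi$ and a quasi-inverse $\overline{\Psi}$ have to be coarse median maps, so that they exchange $\mu$-median quasi-geodesic rays. This will follow from the hypothesis that $[\mu]$ on $Y$ and on $Y'$ is by definition the pushforward of the $G$-invariant coarse median structure on $G$ along the coarsely $G$-equivariant quasi-isometries $G \to Y$ and $G \to Y'$. Since $\Psi$ is coarsely $G$-equivariant, it is at bounded distance from the composition $Y \to G \to Y'$, and the latter is a coarse median map for the chosen representatives of $[\mu]$. This forces $\Psi$ to satisfy Definition~\ref{def:medianmap}, and similarly for $\overline{\Psi}$. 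Because both $\Psi$ and $\overline{\Psi}$ are quasi-isometries, they preserve the $\kappa$-Morse property as well. Combining these two observations shows that the family of $\kappa$-Morse $\mu$-median quasi-geodesic rays in $Y$ is exchanged with the corresponding family in $Y'$ under $\Psi$ and $\overline{\Psi}$.

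With these facts in hand, (1) will follow by repeating the argument of Proposition~\ref{prop:CommutingDiagramsublinearcase}: for a $\kappa$-Morse $\mu$-median ray $\gamma'$ in $Y'$, the ray $\overline{\Psi} \circ \gamma'$ is $\kappa$-Morse and $\mu$-median in $Y$; $(\kappa,\mu)$-injectivity of $(G,Y,X)$ makes $p \circ \overline{\Psi} \circ \gamma'$ define a point of $\partial_\infty X$; Corollary~\ref{cor:quasiisometriesofquasiruledspacesextend} promotes this to a point of $\partial_\infty X'$ via $\Phi$; and the bounded-error commutativity yields the same conclusion for $p' \circ \gamma'$. The converse direction of $(\kappa,\mu)$-injectivity, i.e.\ that two non-$\kappa$-close median rays project to distinct points of $\partial_\infty X'$, is handled by the same argument applied to pairs of rays. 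Statement (2) is then immediate, since paths at bounded Hausdorff distance in a hyperbolic space define the same point at infinity, and (3) reduces to the observation that $\partial \Psi$ is the restriction of the homeomorphism $\partial \Phi$ to subspaces carrying the pullback topology. The main place where genuine care is needed is the median-preservation step in the second paragraph; the rest is strictly parallel to Proposition~\ref{prop:CommutingDiagramsublinearcase}.
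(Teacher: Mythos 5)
Your proposal is correct and follows essentially the same route as the paper's proof: establish bounded-error commutativity of the square of spaces, observe that $\Psi$ and its quasi-inverse are coarse median maps because the median structures on $Y$ and $Y'$ are both induced from the $G$-invariant structure via coarsely $G$-equivariant quasi-isometries, and then rerun the argument of Proposition~\ref{prop:CommutingDiagramsublinearcase} with ``$\kappa$-Morse'' replaced by ``$\kappa$-Morse and $\mu$-median.'' Your explicit attention to the converse direction of $(\kappa,\mu)$-injectivity is slightly more careful than the paper's write-up, but the substance is identical.
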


\begin{proof}
As in the proof of Proposition \ref{prop:CommutingDiagram}, we obtain that the following diagram commutes up to bounded distance:
\[
  \begin{tikzcd}
    Y \arrow{r}{\Psi} \arrow[swap]{d}{p} & Y' \arrow{d}{p'} \\
     X \arrow{r}{\Phi} & X'
  \end{tikzcd}
\]
Suppose $(G, Y, X)$ is $(\kappa, \mu)$-injective, and let $\gamma'$ be a $\kappa$-Morse, $\mu$-median quasi-geodesic ray in $Y'$. Let $\overline{\Psi}$ denote a quasi-inverse of $\Psi$. Since the coarse median structures on $Y$ and $Y'$ are the ones induced by the action of $G$ and $\Psi$ is coarsely $G$-equivariant, we have that $\Psi$ and $\overline{\Psi}$ are median maps. In particular, $\overline{\Psi} \circ \gamma'$ is $\mu$-median. Since $(G, Y, X)$ is $(\kappa, \mu)$-injective, we conclude that $p \circ \overline{\Psi} \circ \gamma'$ determines a point in $\partial_{\infty} X$. Since $\Phi$ is a quasi-isometry between hyperbolic spaces with $D$-quasi-rulings, Corollary \ref{cor:quasiisometriesofquasiruledspacesextend} implies that $\Phi \circ p \circ \overline{\Psi} \circ \gamma'$ determines a point in $\partial_{\infty} X'$. Moreover, since $p' \circ \gamma'$ has finite Hausdorff distance from $\Phi \circ p \circ \overline{\Psi} \circ \gamma'$, it determines the same point in $\partial_{\infty} X'$. Therefore that $(G, Y', X')$ is $(\kappa, \mu)$-injective. Statements (2) and (3) then follow by the same argument as in Proposition \ref{prop:CommutingDiagram}.
\end{proof}




\subsection{The case of hierarchically hyperbolic groups} \label{subsec:ThecaseofHHGs}

Similar to the cubulated case for $\kappa$-injective hyperbolic projections, we want to consider $(\kappa, \mu)$-injective hyperbolic projections that arise from largest acylindrical actions. A natural setting in which we find such projections is hierarchically hyperbolic groups.  Hierarchical hyperbolicity provides an axiomatic framework that unifies the geometry of the mapping class group, 3--manifold groups, many cubulated groups, and many Artin groups.  As the precise definition of a hierarchically hyperbolic group is not necessary for this paper, we omit it and refer the interested reader to \cite{BehrstockHagenSisto2017a}.

Hierarchically hyperbolic groups admit a coarse median structure \cite{BehrstockHagenSisto2017a}, which we denote $[\mu]$.  Furthermore, every hierarchically hyperbolic group admits a largest acylindrical action (Theorem~\ref{thm:HHGLargest}).  This is the action of the group on the top-level curve graph in any \textit{maximized} structure (see \cite{AbbottBehrstockDurham21} for details on the construction of such a structure).  We denote the largest acylindrical action by $G \curvearrowright \mathcal{C}(S)$.\footnote{Technically, $\mathcal C(S)$ is used to denote the top-level curve graph in \textit{any} hierarchical structure on $G$.  As we will not change the hierarchical structure on $G$, this notation will not cause any confusion.}


This action is cobounded, so given any geodesic metric space $Y$ on which $G$ acts geometrically, $(G,Y,\mc C(S))$ is a hyperbolic projection.  Since $\mc C(S)$ is a hyperbolic space, it also admits a coarse median structure $[\mu_S]$.  Moreover, there is a coarsely Lipschitz projection map $\pi_S\colon Y\to \mc C(S)$ that is coarsely median; that this map is coarsely median follows immediately from the construction of a median structure on $G$ (see \cite{BehrstockHagenSisto2017a}).  The map $\pi_S$ plays the role of the map $p$ in the previous sections; here we use $\pi_S$ as that is the standard notation for such a map in a hierarchically hyperbolic group.  Durham and Zalloum \cite{DurhamZalloum22} show that the map $\pi_S$ induces a continuous injection $\partial_\kappa G\to \partial \mc C(S)$, where $\partial \mc C(S)$ has the Gromov topology.  This was first proven in the case of the Morse boundary of $G$ by the first author, Behrstock, and Durham \cite{AbbottBehrstockDurham21}.  Rephrasing these results in the language of this paper, we obtain: 

\begin{theorem} [\cite{AbbottBehrstockDurham21, DurhamZalloum22}]
Let $G$ be a hierarchically hyperbolic group, $\kappa$ any sublinear function, and $Y$ a geodesic metric space on which $G$ acts geometrically.  Then $(G, Y, \mathcal C(S))$ is $(\kappa, \mu)$-injective.
\end{theorem}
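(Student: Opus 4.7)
The plan is to reduce the statement directly to the Durham-Zalloum theorem cited immediately above, which establishes that the top-level hierarchical projection $\pi_S^G\colon G\to \mathcal{C}(S)$ induces a continuous injection $\partial\pi_S\colon \partial_\kappa G\hookrightarrow \partial_\infty \mathcal C(S)$. The claim is essentially a repackaging of their result in the vocabulary of $(\kappa,\mu)$-injective hyperbolic projections.

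The first step is to transfer the setup from $G$ to $Y$. Using that $G\curvearrowright Y$ is geometric, I fix a coarsely $G$-equivariant quasi-isometry $\Psi\colon G\to Y$ given by an orbit map. By convention, the coarse median structure $[\mu]$ on $Y$ is the push-forward from $G$, so both $\Psi$ and a quasi-inverse $\overline{\Psi}$ are coarse median maps. Consequently, $\gamma$ is a $\kappa$-Morse $\mu$-median quasi-geodesic ray in $Y$ exactly when $\overline{\Psi}\circ\gamma$ is one in $G$, and the induced projection $\pi_S\colon Y\to \mathcal C(S)$ of Remark~\ref{rem:InducedProjection} agrees up to bounded distance with $\pi_S^G\circ\overline{\Psi}$. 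Every statement to verify for $Y$ therefore reduces to the analogous statement for $G$.

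Next, to show that each $\kappa$-Morse $\mu$-median ray $\gamma$ in $G$ has projection $\pi_S^G\circ\gamma$ defining a point in $\partial_\infty \mathcal C(S)$ in the sense of Definition~\ref{def:Inducingapointatinfinity}, I would invoke the Durham-Zalloum construction, which shows that $\pi_S^G\circ\gamma$ is an unbounded unparametrised quasi-geodesic in $\mathcal C(S)$. Any such unparametrised quasi-geodesic in a hyperbolic space defines a unique point at infinity to which every unbounded increasing sequence of parameters converges, and by construction this point is precisely $\partial\pi_S([\gamma])$. In particular, two $\kappa$-close rays lie in the same class of $\partial_\kappa G$ and so project to the same boundary point, which gives one direction of the biconditional in Definition~\ref{def:kappamuinjectivity}. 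For the converse, if two $\kappa$-Morse $\mu$-median rays $\gamma,\gamma'$ in $G$ project to the same point in $\partial_\infty \mathcal C(S)$, then $\partial\pi_S([\gamma])=\partial\pi_S([\gamma'])$, and injectivity of $\partial\pi_S$ forces $[\gamma]=[\gamma']$ in $\partial_\kappa G$; that is, $\gamma$ and $\gamma'$ are $\kappa$-close.

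The main obstacle, and the only nontrivial input, is the result of Durham-Zalloum that $\pi_S^G\circ\gamma$ is an unbounded unparametrised quasi-geodesic; this uses hierarchy paths and the behaviour of $\kappa$-Morse median rays under the coordinate projections of the HHG structure. Once that is taken as given, the verification of Definition~\ref{def:kappamuinjectivity} is a formal translation via $\Psi$, and there are no further geometric difficulties to overcome.
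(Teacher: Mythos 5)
Your proposal is correct and matches the paper's treatment: the paper gives no independent argument for this theorem, stating only that it is a rephrasing of the results of Abbott--Behrstock--Durham and Durham--Zalloum in the language of $(\kappa,\mu)$-injective hyperbolic projections, and your write-up simply makes that translation explicit (the transfer from $G$ to $Y$ via a median quasi-isometry, plus the observation that well-definedness and injectivity of $\partial\pi_S$ are exactly the two halves of Definition~\ref{def:kappamuinjectivity}). The only point worth keeping in mind is that the well-definedness on $\kappa$-close (not merely bounded-Hausdorff-distance) representatives is genuinely part of the cited Durham--Zalloum result rather than a formality, which is how you have attributed it.
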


Combining this theorem with Proposition~\ref{prop:CommutingDiagramsublinearcase} shows that for any hierarchically hyperbolic group $G$, we have the acylindrical topology $\mathcal{T}( G, \mathcal{C}(S))$ on $\partial_\kappa G$.  In this section, we give an alternative characterization of this topology.

Let $G$ be a hierarchically hyperbolic group, and fix $o\in G$. The median rays in $G$ are exactly the \emph{hierarchy rays}, as defined by Behrstock, Hagen, and Sisto \cite{BehrstockHagenSisto2017a}; see also \cite{RussellSprianoTran}. In this paper, we will work only with median rays, instead of hierarchy rays.   The following result of Durham and Zalloum show that every point in the $\kappa$-Morse boundar of $G$ is represented by a median ray whose projection to $\mc C(S)$ is unbounded. 

\begin{lemma}[{\cite[Lemma~4.1 \& 6.3]{DurhamZalloum22}}]
For each $\xi\in \partial_\kappa G$, there is a $D$--median ray $h$ from $o$ to $\xi$ such that $\diam_S(\pi_S(h))=\infty$. 
\end{lemma}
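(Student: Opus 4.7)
The plan is to construct the desired median ray by a limiting argument from hierarchy paths, and then invoke the known characterization of $\kappa$-Morseness in HHGs to see that its shadow in $\mc C(S)$ is unbounded.

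First, recall that $\xi\in\partial_\kappa G$ is represented by some $\kappa$-Morse quasi-geodesic ray $\gamma$ based at $o$. The hierarchically hyperbolic structure on $G$ provides \emph{hierarchy paths}, in the sense of Behrstock-Hagen-Sisto: between any two points of $G$ there is a path that is uniformly $(K,C)$-quasi-geodesic and, crucially, uniformly $D_0$-median, where all three constants depend only on the HHG structure. (The median property comes from the fact that the canonical coarse median $\mu$ on $G$ is built from hierarchical projections, and hierarchy paths are constructed so that their medians with endpoints lie uniformly close to their points.) Choose a sequence $t_n\to\infty$ and let $h_n$ be a hierarchy path from $o$ to $\gamma(t_n)$; reparametrise each $h_n$ by arclength. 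Since the $h_n$ are uniformly Lipschitz and since $G$ is proper, an Arzel\`a-Ascoli argument produces a subsequential limit $h$ converging uniformly on compact subsets to a $D$-median $(K,C)$-quasi-geodesic ray from $o$, where $D$ depends only on $D_0$ and the quasi-geodesic constants.

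Next, one has to check that $h$ represents $\xi$. Morse quasi-geodesics with common starting point and common endpoint at infinity are uniformly Hausdorff close, so each $h_n$ lies in a uniform $\kappa$-neighbourhood of $\gamma|_{[0,t_n]}$; passing to the limit, $h$ lies in a $\kappa$-neighbourhood of $\gamma$, and consequently $\kappa$-fellow travels $\gamma$, so $[h]=\xi$ in $\partial_\kappa G$.

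For the second assertion $\diam_S(\pi_S(h))=\infty$, invoke the characterization of $\kappa$-Morseness for hierarchy rays in HHGs established by Abbott-Behrstock-Durham (in the Morse case) and extended by Durham-Zalloum (in the sublinear case): a hierarchy ray in an HHG represents a point of $\partial_\kappa G$ if and only if its shadow to the top-level curve graph $\mc C(S)$ is an unbounded unparametrised quasi-geodesic. Applied to $h$, this immediately gives $\diam_S(\pi_S(h))=\infty$.

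The main obstacle is the limiting step: one must ensure that both the quasi-geodesic constants and the median constant $D_0$ along the sequence $\{h_n\}$ are truly uniform so that the limit inherits the $D$-median property, which rests on the uniformity built into the HHG construction of hierarchy paths. Once that is in place, the unboundedness of $\pi_S(h)$ is immediate from the quoted characterization and requires no further work.
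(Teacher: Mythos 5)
The paper does not actually prove this lemma: it is quoted verbatim from Durham--Zalloum, the only added content being the remarks that \cite[Lemma~4.1]{DurhamZalloum22} applies because hierarchically hyperbolic groups are LQC (``cubical approximation'') and that weak $\kappa$-Morseness coincides with $\kappa$-Morseness. Your proposal therefore goes further than the paper by attempting an argument, and for the first assertion (existence of a $D$-median representative) your route is genuinely different from Durham--Zalloum's: they obtain median rays through cubical approximation of median intervals, whereas you take an Arzel\`a--Ascoli limit of hierarchy paths $h_n$ from $o$ to $\gamma(t_n)$. That limiting step is sound: in a proper space the $h_n$ are eventually constant on compact sets along a subsequence, and since the median condition $d\bigl(h_n(b),\mu(h_n(a),h_n(b),h_n(c))\bigr)\leq D_0$ is a pointwise condition on finitely many points, it passes to the limit with the same constant.

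Two points need attention. First, to conclude $[h]=\xi$ you verify only one half of $\kappa$-fellow travelling: the definition of $\kappa$-Morseness gives $h_n\subset \mathcal N_\kappa(\gamma,\mu_\gamma(K,C))$ --- not Hausdorff closeness, as you write; in the sublinear setting the neighbourhoods genuinely grow --- and hence $h\subset\mathcal N_\kappa(\gamma,n)$, but $\kappa$-fellow travelling also requires $\gamma\subset\mathcal N_\kappa(h,n')$. This symmetrization is true, but it is a separate lemma of Qing--Rafi--Tiozzo about quasi-geodesic rays sublinearly close to a $\kappa$-Morse ray and should be cited rather than elided. Second, your final step --- a hierarchy ray representing a point of $\partial_\kappa G$ has unbounded shadow in $\mathcal C(S)$ --- is precisely \cite[Lemma~6.3]{DurhamZalloum22}, i.e., the same result the paper cites for that half of the statement. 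So your argument is an independent construction of the median representative but not an independent proof of $\diam_S(\pi_S(h))=\infty$; for that you, like the paper, ultimately defer to Durham--Zalloum. (To invoke their result you also need $h$ itself to be $\kappa$-Morse, which follows from $[h]=\xi$ together with the fact, quoted in the paper as Theorem~\ref{thm:Morse iff contracting iff excursion}, that every quasi-geodesic in the class of a $\kappa$-Morse ray is $\kappa$-Morse.)
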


We note that \cite[Lemma~4.1]{DurhamZalloum22} is about LQC spaces; by \cite[Theorem~F]{BehrstockHagenSisto2021}, hierarchically hyperbolic groups are LQC (in \cite{BehrstockHagenSisto2021} the LQC propery is called ``cubical approximation").  Additionally, \cite[Lemma~6.3]{DurhamZalloum22} is about weakly $\kappa$--Morse quasigeodesics;  this notion coincides with $\kappa$--Morse quasigeodesics by \cite[Theorem~D]{DurhamZalloum22}.

We are now ready to define a topology on $\partial_\kappa G$.  We first define a topology in terms of convergence and then give a description in terms of a neighborhood basis.

\begin{definition} \label{def:mediantopology}
Let $\xi_n,\xi\in \partial_\kappa G$, and let $h_n,h$ be $(D,D)$--median rays in $G$ representing $\xi_n$ and $\xi$, respectively, such that $\diam_S(h_n)=\diam_S(h)=\infty$.  The sequence  in   $\xi_n$ converges to $\xi$ in $\partial_M G$ if and only if 
\[
\liminf_{n\to\infty}\sup\{d_G(o,\mu(o,h_n(s),h(t)))\mid s,t\in \R_{\geq 0}\}=\infty.
\]
This defines a topology on $\partial_M G$, which we call the \emph{coarse median topology} and denote $\mc T_{med}$.
\end{definition}

For each $\zeta\in \partial _\kappa G$, let $h_\zeta$ denote a $D$--median ray from $o$ to $\zeta$ such that $\diam_S(\pi_S(h_\zeta))=\infty$.  For each $\xi\in \partial_M G$ and each $r\geq 0$, let 
\[
U_r(\xi):=\left\{\zeta\in \partial_M G \,\Bigg\vert \, \begin{aligned} \exists s_0,t_0&\in \mathbb R_{\geq 0} \textrm{ such that } \forall s\geq s_0,\forall t\geq t_0,  \\ &d_G(o,\mu(o, h_\zeta(s), h_\xi(t)))\geq r\end{aligned}\right\}.
\]

\begin{lemma} \label{lem:mediantopologybasis}
Fix $\xi\in \partial_\kappa G$.  The set $\{U_r(\xi)\mid r>0\}$ is a neighborhood basis of $\xi$ in $(\partial_\kappa G,\mc T_{med})$.  
\end{lemma}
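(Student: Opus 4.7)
The plan is to verify the two defining properties of a neighborhood basis of $\xi$: (I) each $U_r(\xi)$ is a neighborhood of $\xi$ in $\mc T_{med}$, and (II) every neighborhood of $\xi$ contains some $U_r(\xi)$. Since $\mc T_{med}$ is defined via sequential convergence (so open sets have sequentially closed complements), (I) reduces to showing that whenever $\xi_n \to \xi$ in $\mc T_{med}$, we have $\xi_n \in U_r(\xi)$ eventually. For (II), I would argue by contrapositive: if no $U_r(\xi)$ is contained in $V$, extract $\zeta_n \in U_n(\xi) \setminus V$ and show $\zeta_n \to \xi$, contradicting that $V$ is a neighborhood of $\xi$.

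The hard part will be (I), which hinges on the following key stability statement: for $D$-median quasi-geodesic rays $h_1, h_2$ from $o$ with $\diam_S(\pi_S(h_i)) = \infty$, there is a constant $C$ depending only on $D$ and the coarse median constants such that
\[
d_G(o, \mu(o, h_1(s'), h_2(t'))) \;\geq\; d_G(o, \mu(o, h_1(s), h_2(t))) - C
\]
for all $s' \geq s$ and $t' \geq t$. Intuitively, the median of a pair of median rays moves away from $o$ coarsely monotonically as the parameters increase. The cleanest route to establish this uses the cubical approximation (LQC) property of hierarchically hyperbolic groups from \cite{BehrstockHagenSisto2021}: any finite configuration of points $o, h_1(s), h_1(s'), h_2(t), h_2(t')$ admits a coarsely median-preserving map into a finite-dimensional $\CAT$ cube complex with bounded distortion. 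In such a cube complex, $d(o, \mu(o, a, b))$ equals the number of hyperplanes separating $o$ from both $a$ and $b$; enlarging $(s, t)$ to $(s', t')$ along a median ray only adds hyperplanes to this set, so the count is non-decreasing. Pulling back through the approximation yields the displayed inequality up to bounded additive error.

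Granted the stability lemma, (I) will follow as follows: if $\xi_n \to \xi$, then for a fixed $r > 0$, eventually $\sup_{s,t} d_G(o, \mu(o, h_n(s), h(t))) \geq r + C + 1$. Choose $(s_n, t_n)$ with $d_G(o, \mu(o, h_n(s_n), h(t_n))) \geq r + C$; the stability lemma then gives $d_G(o, \mu(o, h_n(s), h(t))) \geq r$ for every $s \geq s_n$ and $t \geq t_n$, so $\xi_n \in U_r(\xi)$. For (II), let $V$ be a neighborhood of $\xi$ and suppose, toward contradiction, that $U_n(\xi) \not\subseteq V$ for every positive integer $n$. Picking $\zeta_n \in U_n(\xi) \setminus V$, the defining condition of $U_n(\xi)$ forces $\sup_{s,t} d_G(o, \mu(o, h_{\zeta_n}(s), h(t))) \geq n$, hence $\liminf_n \sup = \infty$ and $\zeta_n \to \xi$ in $\mc T_{med}$. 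But then $\zeta_n$ must eventually lie in $V$, contradicting the choice of $\zeta_n$. This completes the verification that $\{U_r(\xi)\}_{r > 0}$ is a neighborhood basis at $\xi$.
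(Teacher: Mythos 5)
Your argument is correct and follows the same basic strategy as the paper --- unwinding the definition of convergence in $\mc T_{med}$ against the definition of $U_r(\xi)$ --- but you make explicit a key ingredient that the paper's proof leaves tacit. The paper reduces the lemma to the statement that $\xi_n\to\xi$ if and only if $(\xi_n)$ is eventually in every $U_r(\xi)$; one direction is your step (II) run forwards, and for the converse it asserts that $\xi_n\notin U_r(\xi)$ forces $d_G(o,\mu(o,h_n(s),h(t)))\leq r$ for \emph{all} $s,t$. That assertion is not the literal negation of membership in $U_r(\xi)$ (the negation only says that arbitrarily far out there exist parameters where the median stays within $r$ of $o$), so the paper's second direction is really using exactly the coarse monotonicity you isolate as your ``stability lemma'': along $D$-median rays, $d_G(o,\mu(o,h_1(s),h_2(t)))$ is coarsely non-decreasing in $s$ and $t$. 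Your route to that lemma via cubical approximation --- where the quantity becomes a count of hyperplanes separating $o$ from both points, which can only grow as the parameters increase along median rays --- is the right mechanism, and spelling it out is a genuine improvement in rigour over the paper's one-line treatment. The only caveat worth flagging is that reducing ``$U_r(\xi)$ is a neighborhood'' to ``every sequence converging to $\xi$ is eventually in $U_r(\xi)$'' establishes that $U_r(\xi)$ is a \emph{sequential} neighborhood, which in a general sequential space need not contain an open set around $\xi$; but this sequential reading is precisely the sense in which the paper itself interprets and proves the lemma, so your argument matches the intended statement.
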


\begin{proof}
We need to show that $\xi_n\to \xi$ in $(\partial_M G,\mc T_{med})$ if and only if for all $r>0$, there exists $N_r>0$ such that $\xi_n\in U_r(\xi)$ for all $n\geq N_r$.
Suppose first that for all $r>0$, there exists $N_r>0$ such that $\xi_n\in U_r(\xi)$ for all $n\geq N_r$. Fix $r$.  For all $n\geq N_r$, we have 
\[
d_G(o,\mu(o,h_n(s),h(t)))\geq r
\]
for all $s\geq s_0$ and $t\geq t_0$.  Taking $r\to \infty$, we see that 
\[
\liminf_{n\to\infty}\sup\{d_G(o,\mu(o, h_n(s),h(t)))\mid s,t\in \mathbb R_{\geq 0}\}=\infty.
\]
Thus $\xi_n\to\xi$ in $(\partial_M G,\mc T_{med})$.

For the other direction, suppose $\xi_n\not\in U_r(\xi)$.  Then $d_G(o,\mu(o,h_n(s),h(t)))\leq r$ for all $s,t\in \mathbb R_{\geq 0}$.  The definition of convergence then implies that $\xi_n\not\to \xi$ in $(\partial_M G,\mc T_{med})$. 
\end{proof}



Before proving the topologies $\mc T_{med}$ and $\mc T(G,\mc C(S))$ are equivalent, we 
recall that $\mathcal T(G,\mc C(S))$ is the pull-back of the visual topology on $\partial_{\infty} \mathcal{C}(S)$. In particular, a sequence $\xi_n \in \partial_{\kappa} G$ converges to $\xi$ if and only if $\partial \pi(\xi_n)$ converges to $\partial \pi(\xi)$ in $\partial_{\infty} \mathcal{C}(S)$, which occurs if and only if $(\partial \pi(\xi_n) \mid \partial \pi(\xi) )_o \xrightarrow{n \rightarrow \infty} \infty$.

\begin{theorem}
There is a homeomorphism $(\partial_\kappa G,\mc T_{med})$ to $(\partial_\kappa G,\mathcal T(G,\mathcal C(S)))$.
\end{theorem}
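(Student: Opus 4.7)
The bijection is the canonical identity on $\partial_\kappa G$, and we prove it is a homeomorphism by showing that a sequence converges in $\mc T_{med}$ if and only if it converges in $\mc T(G, \mc C(S))$. Fix $\xi\in\partial_\kappa G$ and a sequence $(\xi_n)$ in $\partial_\kappa G$, and choose $D$-median representatives $h,h_n\colon[0,\infty)\to G$ starting at the basepoint $o$ with unbounded $\pi_S$-projection, as provided by the Durham--Zalloum lemma above.

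The heart of the proof is the uniform estimate
\[d_S\bigl(\pi_S(o),\,\pi_S(\mu(o,h_n(s),h(t)))\bigr)=\bigl(\pi_S(h_n(s))\,\bigm|\,\pi_S(h(t))\bigr)_{\pi_S(o)}+O(1),\]
where the additive error depends only on the hyperbolicity constant $\delta$ of $\mc C(S)$ and the coarse-median constant of $\pi_S$. This follows from two facts: first, $\pi_S$ is a coarse median map (from the HHG structure), so the left-hand side is close to $d_S(\pi_S(o),\mu_S(\pi_S(o),\pi_S(h_n(s)),\pi_S(h(t))))$; second, in the $\delta$-hyperbolic space $\mc C(S)$ the distance from a basepoint to a coarse median equals the Gromov product of the other two vertices, up to $O(\delta)$. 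Since $\pi_S\circ h$ and $\pi_S\circ h_n$ are unbounded paths defining $\partial\pi_S(\xi)$ and $\partial\pi_S(\xi_n)$, convergence in $\mc T(G,\mc C(S))$ is equivalent, by definition of the visual topology via Gromov products, to: for every $R$, eventually in $n$ there exist $s_0,t_0$ such that $(\pi_S(h_n(s))\mid\pi_S(h(t)))_{\pi_S(o)}\geq R$ for all $s\geq s_0,t\geq t_0$. Combined with the key estimate, this is equivalent to eventually large $d_S(\pi_S(o),\pi_S(\mu(o,h_n(s),h(t))))$.

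The easy direction is now immediate: since $\pi_S$ is coarsely Lipschitz, $d_S(\pi_S(x),\pi_S(y))\leq Kd_G(x,y)+C$, so eventually large $d_S$ forces eventually large $d_G$, and hence $\mc T(G,\mc C(S))$-convergence implies $\mc T_{med}$-convergence.

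The converse is the main obstacle: we must upgrade eventually large $d_G(o,\mu(o,h_n(s),h(t)))$ to eventually large $d_S(\pi_S(o),\pi_S(\mu(o,h_n(s),h(t))))$. The plan is to exploit the median-tripod geometry. Because $h$ is a $D$-median ray from $o$, the subsegment $h|_{[0,t]}$ is a coarse median interval from $o$ to $h(t)$; the median characterization of $\mu:=\mu(o,h_n(s),h(t))$ then places $\mu$ within uniformly bounded distance of a point $h(t')$ with $t'\in[0,t]$, and since $h$ is a quasi-geodesic, $t'\asymp d_G(o,\mu)$. Now $\pi_S\circ h$ is an unbounded path in $\mc C(S)$ defining $\partial\pi_S(\xi)$, so $r\mapsto d_S(\pi_S(o),\pi_S(h(r)))$ tends to $\infty$; hence eventually large $d_G(o,\mu)$ forces eventually large $t'$, which forces eventually large $d_S(\pi_S(o),\pi_S(h(t')))$. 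Coarse-Lipschitzness of $\pi_S$ applied to the bounded $d_G(\mu,h(t'))$ then transfers this growth to $d_S(\pi_S(o),\pi_S(\mu))$. Combined with the key estimate, this yields $\mc T_{med}$-convergence implies $\mc T(G,\mc C(S))$-convergence, and thus both topologies agree. The delicate point requiring the $\kappa$-Morse hypothesis on $\xi$ is ensuring that the median $\mu$ lies genuinely close to the rays $h,h_n$ with uniform constants independent of $s,t,n$; this uses the coarse-median-interval stability for median rays starting at a common point, as set up in the previous subsection.
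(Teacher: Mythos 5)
Your setup (identity map, reduction to sequential convergence, and the key estimate $d_S(\pi_S(o),\pi_S(\mu(o,h_n(s),h(t))))=(\pi_S(h_n(s))\mid\pi_S(h(t)))_{\pi_S(o)}+O(1)$ via the coarse median property of $\pi_S$ and the hyperbolicity of $\mc C(S)$) matches the paper, as does your easy direction. But your argument for the converse direction --- upgrading ``$d_G(o,\mu)$ eventually large'' to ``$d_S(\pi_S(o),\pi_S(\mu))$ eventually large'' --- has a genuine gap. The claim that $\mu:=\mu(o,h_n(s),h(t))$ lies within \emph{uniformly} bounded distance of a point $h(t')$ is not justified and is false for general sublinear $\kappa$: what the median axioms give you is that $\mu$ lies in the coarse interval $[o,h(t)]$, and the image of a median path between two points is coarsely \emph{contained in} that interval but does not coarsely \emph{cover} it (think of the two sides of a large rectangle in $\mathbb Z^2$, or of a $\kappa$-Morse ray in the Croke--Kleiner group whose hull bulges out into flats of $\kappa$-growing size). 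The most you get from the $\kappa$-Morse hypothesis is $d_G(\mu,h)\lesssim\kappa(d_G(o,\mu))$, and then your final step collapses: pushing a $\kappa$-sized error through the coarsely Lipschitz map $\pi_S$ gives an error of order $\kappa(d_G(o,\mu))$ in $\mc C(S)$, while the only lower-bound information you have on $r\mapsto d_S(\pi_S(o),\pi_S(h(r)))$ is that it tends to infinity, possibly much more slowly than $\kappa$. So the inequality $d_S(\pi_S(o),\pi_S(\mu))\geq d_S(\pi_S(o),\pi_S(h(t')))-O(\kappa(\cdot))$ proves nothing. (Your appeal to ``coarse-median-interval stability \ldots as set up in the previous subsection'' points at nothing: no such lemma is established there.)

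The actual content of this direction is hierarchically hyperbolic, not merely coarse median: one must show that the portion of $d_G(o,\mu)$ that is invisible to $\pi_S$ --- the contributions of proper domains, e.g.\ excursions into product regions --- is itself controlled for points in the hull of a $\kappa$-Morse ray, so that $d_G(o,\mu)\to\infty$ forces $d_S(\pi_S(o),\pi_S(\mu))\to\infty$. This is exactly the argument of \cite[Lemma~6.4]{DurhamZalloum22}, which uses the distance formula together with the characterization of $\kappa$-Morse directions via their domain projections; the paper's proof invokes precisely that argument at this step. To repair your proof you should either cite that lemma or reproduce its distance-formula argument; the tripod/interval picture alone cannot do the job. (Two smaller points: you should also record why sequential convergence determines both topologies --- $\mc T_{med}$ is sequential by construction and $\mc T(G,\mc C(S))$ is first countable --- and note that the needed equivalence is with the full statement \eqref{eqn:projofmed}, which is stronger than the corollary stated in \cite{DurhamZalloum22}.)
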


\begin{proof}
Since $(\partial_\kappa G,\mc T_{med})$ is sequential by construction, any sequentially continuous map $(\partial_\kappa G, \mc T_{med}) \to Y$ is continuous.  Similarly, $(\partial_\kappa G,\mc T(G,\mc C(S)))$ is first countable because the Gromov topology on $\mc C(S)$ is, and so any sequentially continuous map $(\partial_\kappa G, \mc T(G,\mc C(S))) \to Y$ is continuous.

We will show that the identity map $(\partial_\kappa G, \mc T_{med}) \to (\partial_\kappa G,\mc T(G,\mc C(S)))$ is a homeomorphism.  Since the map is clearly bijective, it suffices to show that for any $\xi_n,\xi\in \partial_\kappa G$, we have 
$\xi_n\to \xi$ in $\mc T_{med}$ if and only if $\xi_n\to\xi$ in $\mc T(G,\mc C(S))$.   

Let $h_n,h$ be $D$--median rays representing $\xi_n$ and $\xi$, respectively, such that $\diam_S(h_n)=\diam_S(h)=\infty$.

By definition, $\xi_n\to\xi$ in $\mc T_{med}$ if and only if 
\[
\liminf_{n\to\infty}\sup\{d_G(o,\mu(o,h_n(s),h(t)))\mid  s,t\in \R_{\geq 0}\}=\infty.
\]
The argument of \cite[Lemma~6.4]{DurhamZalloum22} shows that this happens if and only if 
\begin{equation} \label{eqn:projofmed}
\liminf_{n\to\infty}\sup\{d_S(\pi_S(o), \pi_S(\mu(o,h_n(s),h(t))))\mid s,t\in \R_{\geq 0}\}=\infty.
\end{equation}
Note that this is a stronger statement than \cite[Corollary~6.5]{DurhamZalloum22}. 
Since $\pi_S$ is a coarse median map, this is equivalent to 
\[
\liminf_{n\to\infty}\sup\{d_S(\pi_S(o), \mu(\pi_S(o),\pi_S(h_n(s)),\pi_S(h(t))))\mid  s,t\in \R_{\geq 0}\}=\infty.
\]
Moreover, there is a constant $A$ depending only on the hyperbolicity constant of $\mc C(S)$ such that for any  points $a,b\in \mc C(S)$,
\[
\left\vert d_S(o,\mu_S(o,a,b)) - (a\mid b)_{o}\right\vert\leq A.
\]
 Therefore \eqref{eqn:projofmed} is equivalent to 
\[
\liminf_{n\to\infty}\sup\{(h_n(s)\mid h(t))_{o}\mid s,t\in \mathbb R_{\geq 0}\}=\infty.
\]
Finally, this is true if and only if $\partial \pi(\xi_n)$ converges to $\partial \pi(\xi)$ in $\partial\mc C(S)$, or
equivalently, $\xi_n\to\xi$ in $(\partial_\kappa G,\mc T(G,\mc C(S)))$.
\end{proof}

We end with an example that shows the coarse median topology is not equivalent to the sublinear topology.

\begin{example}
Consider the right-angled Artin group $A_{CK} := \langle a, b, c, d \mid [a,b] = [b,c] = [c,d] = 1 \rangle$; this group is known as the Croke--Kleiner group \cite{CrokeKleiner00}. Let $Y$ denote the universal covering of its Salvetti complex, which is a $2$-dimensional $\CAT$ cube complex. In \cite[Proposition~3.3]{Medici19} (see, in particular, \cite[Figure~4]{Medici19}), the second author constructed a Morse geodesic ray $\gamma$ with the following property. For any $n \in \mathbb{N}$ there exists a Morse $(8 \sqrt{2}, 1)$-quasi-geodesic ray $\gamma_n$ such that the first $n$ hyperplanes crossed by $\gamma$ are also crossed by $\gamma_n$ (in the sense that $\gamma_n$ crosses each of these hyperplanes exactly once). Furthermore, $\gamma_n$ can be constructed so that when $\kappa \equiv 1$, the intersection of $\gamma_n$  with the $\kappa$-neighbourhood $\mathcal{N}_{\kappa}(\gamma, 1)$ for  is contained in the ball $B_{1}(\gamma(0))$. 

Now fix any $\kappa$. A minor modification of this construction ensures that the ray $\gamma_n$ can be chosen so that its intersection with $\mathcal{N}_{\kappa}(\gamma, 1)$ is contained in the ball $B_{\kappa(0)}(\gamma(0))$. The quasi-geodesic rays $\gamma_n$ represent a sequence of points $\xi_n \in \partial_{\kappa} Y$, and $\gamma$ represents a point $\xi \in \partial_{\kappa} Y$. Since the $\gamma_n$ only intersect $\mathcal{N}_{\kappa}(\gamma, 1)$ in a fixed ball around $\gamma(0)$, the points $\xi_n$ do not converge to $\xi$ in the sublinear topology. However,  there are at least $n$  hyperplanes separating $\gamma(0)$ from both $\xi_n$ and $\xi$.  Thus the distance in $A_{CK}$ from $\gamma(0)$ to the median $\mu( \gamma(0), \xi_n, \xi)$ diverges to infinity, implying that $\xi_n \rightarrow \xi$ in $\mathcal{T}_{med}$.
\end{example}




\section{Morse boundaries of graph manifolds} \label{sec:MorseboundariesofRAAGsandgraphmanifolds}

In this section we provide an explicit description of the topology in the case of a particular type of hyperbolic projection of non-positively curved graph manifolds. A non-positively curved graph manifold $M$ can be decomposed into finitely many compact, non-positively curved manifolds $M_1, \dots, M_n$, each of which has totally geodesic boundary and a Seifert fibration such that the metric on each $M_i$ has a local product structure compatible with the fibration (cf. \cite{CrokeKleiner02}). Without loss of generality, our choice of decomposition is minimal, that is,  there is no collection of  $M_i$ whose union is connected and admits a Seifert fibration. We call the $M_i$ the {\it pieces} of $M$. Furthermore,  the universal cover $X$ of $M$ can be decomposed into the lifts of the various $M_i$; each such lift is called a \textit{piece} of $X$. Each piece of $X$ is a product of the universal covering of a compact $2$-orbifold with boundary with $\mathbb{R}$. Let $G$ denote the fundamental group of $M$. Fundamental groups of non-positively curved graph manifolds are the motivating example for the definition of CK-admissible groups (see \cite{CrokeKleiner02}). 

The decomposition of a graph manifold provides a description of $G$ as a graph of groups, and $G$ acts coboundedly and by isometries on the Bass-Serre tree of this decomposition. By \cite[Corollary 5]{HagenRussellSistoSpriano22}, the action of $G$ on its Bass-Serre tree is the largest acylindrical action of $G$. We will use this fact to fully describe the topology on $\partial_{\kappa}X$ induced by this largest acylindrical action of $G$.  The case of graph  manifolds is actually a special case of a more general statement about $\kappa$-injective projections to trees; see Theorem~\ref{thm:MorseboundarycharacterisationasCantorchain}.

In light of this, we begin with some general results about trees.  All trees in this section are assumed to have edges of length $1$. The \textit{degree} (or \textit{valence}) of a vertex is the number of incident edges.  Any two trees with the property that every  vertex has countably infinitely degree are isomorphic.  We denote this tree of countably infinite valence by $T_{\infty}$.

Consider a tree $T_2$ such that all its vertices have finite degree at least three, and let $T_1 \subset T_2$ be a subtree whose vertices all have degree at least three as well. The tree $T_1$ is {\it entwined in $T_2$} if every geodesic ray in $T_1$ contains a vertex whose degree in $T_2$ is strictly larger than in $T_1$. Equivalently, $T_1$ is entwined in $T_2$ if and only if the visual boundary of $T_1$ has empty interior when considered as a subset of the visual boundary of $T_2$ (see \cite{CharneyCordesSisto} for the origin of this definition).

Let $T_1 \subset T_2 \subset \dots \subset T_{\infty}$ be an infinite, nested family of locally finite trees inside $T_{\infty}$. The tree $T_i$ is {\it strongly entwined} in $T_{i+1}$ if for every vertex in $T_i$, its degree in $T_{i+1}$ is strictly greater than in $T_i$.
The visual boundaries $\partial_{\infty} T_i$ form a nested sequence of subspaces in the visual boundary $\partial_{\infty} T_{\infty}$. In particular, their union $\bigcup_{i = 1}^{\infty} \partial_{\infty} T_i$ forms a subspace of $\partial_{\infty} T_{\infty}$.
A sequence of strongly entwined subtrees $T_1 \subset T_2 \subset \dots \subset T_{\infty}$ {\it fills $T_{\infty}$} if every edge in $T_{\infty}$ is contained in some $T_i$, i.e., $\bigcup_{i=1}^{\infty} T_i = T_{\infty}$.  

The following lemma is the main technical result needed to describe the acylindrical topology $\mc T(G, T_\infty)$ on $\partial_\kappa G$.  We defer its proof to Section~\ref{subsec:UniquenessofCantorchain}.

\begin{lemma} \label{lem:UniquenessofCantorchain}
Let $T_1 \subset T_2 \subset \dots \subset T_{\infty}$ and $T'_1 \subset T'_2 \subset \dots \subset T_{\infty}$ be two families of strongly entwined subtrees of $T_{\infty}$  that each  fill $T_{\infty}$.  There exists a homeomorphism $\phi \colon \partial_{\infty} T_{\infty} \rightarrow \partial_{\infty} T_{\infty}$  that restricts to a homeomorphism $\bigcup_{i=1}^{\infty} \partial_{\infty} T_i \rightarrow \bigcup_{i=1}^{\infty} \partial_{\infty} T'_i$.

In other words, the subspace of $\partial_{\infty} T_{\infty}$ defined by a filling sequence of strongly entwined trees is unique up to homeomorphism, and this homeomorphism extends to $\partial_{\infty} T_{\infty}$. 
\end{lemma}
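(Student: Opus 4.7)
The plan is to produce a tree automorphism $\Phi \colon T_\infty \to T_\infty$ via a back-and-forth construction, arranged so that after passing to cofinal subsequences $(T_{i_n})$ and $(T'_{j_n})$, the map $\Phi$ restricts to isomorphisms $T_{i_n} \to T'_{j_n}$ for every $n$. Because tree automorphisms of $T_\infty$ are isometries, and isometries of a proper geodesic space extend to homeomorphisms of the visual boundary, $\Phi$ will induce a self-homeomorphism of $\partial_\infty T_\infty$; this extension automatically sends $\partial_\infty T_{i_n}$ onto $\partial_\infty T'_{j_n}$, so it restricts to the required homeomorphism on the unions of boundaries.

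By dropping finitely many initial terms of each filtration (which does not change the union of boundaries), I may assume a fixed vertex $v_0$ lies in both $T_1$ and $T'_1$, and use it as a common root. The back-and-forth will inductively produce strictly increasing sequences $i_1 < i_2 < \cdots$ and $j_1 < j_2 < \cdots$ together with tree isomorphisms $\phi_n \colon T_{i_n} \to T'_{j_n}$ fixing $v_0$, with $\phi_{n+1}$ extending $\phi_n$. Alternating between extending the domain and extending the image serves the standard role of forcing both surjectivity and exhaustion of the filtrations in the limit, so that $\Phi = \bigcup_n \phi_n$ is a tree automorphism of $T_\infty$ satisfying $\Phi(T_{i_n}) = T'_{j_n}$ for all $n$.

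The heart of the argument is the extension step: given $\phi_n \colon T_{i_n} \to T'_{j_n}$, produce $\phi_{n+1}$. At each vertex $w \in T_{i_n}$, the strong entwinement condition says that passing up the filtration adds new edges and attached subtrees at $w$, and symmetrically at $\phi_n(w)$ on the $T'_j$-side. To extend the current isomorphism, I would embed the new subtrees appearing at $w$ into $T_\infty$ along fresh branches at $\phi_n(w)$, using the countably infinite valence of $T_\infty$ to guarantee that the chosen branches are disjoint from the existing image and from each other as $w$ varies. The filling hypothesis on $\{T'_j\}$ then ensures that each finite local piece of the new image lies in $T'_{j^+}$ for some $j^+ > j_n$; iterating this procedure, alternating with the symmetric move on the other side, produces the desired $\phi_{n+1}$.

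The main obstacle I anticipate is the combinatorial matching of local branching data: the specific isomorphism types of subtrees added to $T_{i_n}$ at various $w$ are in general not available as subtrees of $T'_{j_n+1} \setminus T'_{j_n}$ at $\phi_n(w)$. My plan is to absorb these mismatches into later filtration stages, by not insisting that a single back-and-forth step match $T_{i_n+1}$ with $T'_{j_n+1}$, but rather allowing $\phi_{n+1}$ to be defined on some much larger $T_{i_{n+1}}$ with image in some much larger $T'_{j_{n+1}}$. Since both filtrations are filling and both have strictly increasing vertex degrees at every vertex by strong entwinement, every finite local configuration of new branches required in the image will eventually appear in some $T'_j$, and symmetric reasoning handles the reverse direction; with careful bookkeeping, the iterative amalgamation closes up to yield $\Phi$.
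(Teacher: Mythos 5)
Your plan rests on producing tree isomorphisms $\phi_n \colon T_{i_n} \to T'_{j_n}$ between levels of the two filtrations, and this is where the argument breaks down: two strongly entwined, filling filtrations need not contain \emph{any} pair of isomorphic trees, so the back-and-forth cannot get started. Concretely, one can arrange the first filtration so that every vertex of every $T_i$ has even degree in $T_i$ (say the root has $2i+2$ neighbours and every other vertex has $2i+1$ children) and the second so that every vertex of every $T'_j$ has odd degree in $T'_j$; both filtrations are strongly entwined and filling, yet no $T_i$ is isomorphic to any $T'_j$. Your proposed fix of ``absorbing mismatches into later filtration stages'' does not help, because the parity obstruction persists at every level. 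The same example shows the global map cannot be a tree automorphism of $T_\infty$ carrying one filtration to the other (even after passing to cofinal subsequences and sandwiching), since an automorphism preserves all local degree data while the branching data of the two filtrations are completely unrelated. Even in cases where some $T_i$ and $T'_j$ happen to be abstractly isomorphic, you would further need isomorphisms of nested \emph{pairs} $(T_{i_{n+1}},T_{i_n})\cong(T'_{j_{n+1}},T'_{j_n})$ for the $\phi_n$ to extend one another, a constraint your sketch does not address.

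This degree mismatch is exactly the difficulty the paper's proof is designed to overcome, and it does so by abandoning isometries altogether: it builds a bijection $\Phi$ on vertex sets only, using ``auxiliary subtrees.'' A vertex $v$ with many children on one side is matched not with a single vertex but with a finite subtree on the other side having the same number of ``open ends,'' so that the children of $v$ correspond to open ends scattered over that finite subtree; the construction is then iterated down the tree and up the filtration. The resulting $\Phi$ distorts distances and is nothing like a tree automorphism, so the paper must verify by hand (via the listed properties (1)--(4) of $\Phi$) that it still induces a well-defined map on $\partial_\infty T_\infty$ which is a bijection sending basic open sets to open sets and restricts correctly to $\bigcup_i \partial_\infty T_i$. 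To salvage your approach you would have to replace ``tree isomorphism'' by some such coarser matching of branch data and then prove continuity and openness of the boundary map directly, at which point you have essentially reconstructed the paper's argument.
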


We call any topological space homeomorphic to a subspace of $\partial_\infty T_\infty$ defined by a filling chain of strongly entwined trees a {\it Cantor chain}.

The $\kappa$-Morse boundary of a finitely generated group $G$ with identity element $e$ can be described as the union over all Morse gauges $N$ of sets $\partial^N_{e,\kappa} G$, each of which consists of all of equivalence classes of $\kappa$-Morse quasi-geodesic rays based at $e$  with Morse gauge $N$. The following theorem follows immediately from Lemma~\ref{lem:UniquenessofCantorchain}.

\begin{theorem} \label{thm:MorseboundarycharacterisationasCantorchain}
Let $G$ be a finitely generated group that acts coboundedly on $T_{\infty}$ such that $(G, T_{\infty})$ is $\kappa$-injective for a sublinear function $\kappa$, and let $\partial p \colon \partial_M G \hookrightarrow \partial_{\infty} T_{\infty}$ be the unique $G$-equivariant inclusion. For every Morse gauge $N$, let $T_{N,\kappa}$
denote the convex hull of $\partial p( \partial_{e, \kappa}^N G )$ in $T_\infty$; this is a subtree of $T_{\infty}$.

Suppose there is a sequence of Morse gauges $N_1 \leq N_2 \leq \dots$ such that the sequence $T_{N_1,\kappa} \subset T_{N_2,\kappa} \subset \dots$ is strongly entwined and filling, and such that $\bigcup_{i=1}^{\infty} \partial_{e, \kappa}^{N_i} G = \partial_{\kappa} G$. Then $\partial_{\kappa} G$ equipped with $\mathcal{T}(G, T_{\infty})$ is a Cantor chain.
\end{theorem}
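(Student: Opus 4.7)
The plan is to use the $\kappa$-injectivity of $(G, T_\infty)$ to identify $(\partial_\kappa G, \mathcal{T}(G, T_\infty))$ with a concrete subspace of $\partial_\infty T_\infty$, and then to recognize that subspace as $\bigcup_{i=1}^\infty \partial_\infty T_{N_i, \kappa}$, which is a Cantor chain by hypothesis. By the definition of $\mathcal{T}(G, T_\infty)$ as the pullback of the visual topology on $\partial_\infty T_\infty$ under the injection $\partial p$, the map $\partial p$ is a homeomorphism from $(\partial_\kappa G, \mathcal{T}(G, T_\infty))$ onto its image $\partial p(\partial_\kappa G)$ equipped with the subspace topology. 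So it suffices to prove the set-theoretic equality
\[ \partial p(\partial_\kappa G) \;=\; \bigcup_{i=1}^\infty \partial_\infty T_{N_i, \kappa} \]
as subsets of $\partial_\infty T_\infty$; the conclusion will then follow directly from the definition of a Cantor chain.

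For the forward inclusion, I would argue as follows. Given $\xi \in \partial_\kappa G$, the hypothesis $\bigcup_i \partial_{e,\kappa}^{N_i} G = \partial_\kappa G$ yields some $i$ with $\xi \in \partial_{e,\kappa}^{N_i} G$. The point $\partial p(\xi) \in \partial_\infty T_\infty$ is represented by the unique geodesic ray in $T_\infty$ starting at the projection of the basepoint; since $T_{N_i, \kappa}$ is by definition the convex hull of $\partial p(\partial_{e,\kappa}^{N_i} G)$, this ray lies entirely in $T_{N_i, \kappa}$, and so $\partial p(\xi) \in \partial_\infty T_{N_i, \kappa}$.

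For the reverse inclusion, I would use the general tree-theoretic fact that the visual boundary of the convex hull of $A \subseteq \partial_\infty T_\infty$ in $T_\infty$ coincides with the closure $\overline{A}$ in the visual topology: every geodesic ray in the hull is a subray of a bi-infinite geodesic (or a ray from the basepoint) to a point of $A$, and must accumulate on some point of $\overline{A}$. It then remains to show that $\partial p(\partial_{e,\kappa}^{N_i} G)$ is already closed in $\partial_\infty T_\infty$. For this I would invoke the standard compactness of bounded-Morse-gauge level sets of sublinearly Morse boundaries (as used in \cite{CharneySultan, Cordes, QingRafiTiozzoII}), together with the continuity of $\partial p$ with respect to the relevant topologies, to conclude that $\partial p(\partial_{e,\kappa}^{N_i} G)$ is the continuous image of a compact set, hence compact and therefore closed. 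Consequently $\partial_\infty T_{N_i, \kappa} = \partial p(\partial_{e,\kappa}^{N_i} G) \subseteq \partial p(\partial_\kappa G)$ for every $i$.

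The hard part is reconciling the compactness and continuity references with the setup here: one must verify that the pullback topology $\mathcal{T}(G, T_\infty)$ makes each $\partial_{e,\kappa}^{N_i} G$ compact and that $\partial p$ is continuous enough to preserve this. A secondary subtlety is the precise identification of $\partial_\infty(\text{convex hull}(A))$ with $\overline{A}$, which, although intuitive, should be stated carefully in the setting of trees with countably infinite valence. Once both inclusions are in hand, the equality exhibits $(\partial_\kappa G, \mathcal{T}(G, T_\infty))$ as the subspace of $\partial_\infty T_\infty$ defined by the filling chain of strongly entwined subtrees $(T_{N_i, \kappa})_i$, so it is a Cantor chain by definition; Lemma~\ref{lem:UniquenessofCantorchain} ensures this conclusion is independent of the particular filling chain chosen.
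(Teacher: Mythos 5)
Your overall route coincides with the paper's: the paper offers no written proof, stating only that the theorem ``follows immediately from Lemma~\ref{lem:UniquenessofCantorchain}'', and what that really amounts to is exactly the identification you propose --- $\partial p$ is a homeomorphism onto its image because $\mathcal{T}(G,T_\infty)$ is \emph{defined} as the pullback topology, and one then wants the set equality $\partial p(\partial_\kappa G)=\bigcup_i\partial_\infty T_{N_i,\kappa}$ so that the definition of a Cantor chain (with Lemma~\ref{lem:UniquenessofCantorchain} guaranteeing independence of the chain) applies. Your forward inclusion is correct, and your identification of the ends of the convex hull of a set $A\subseteq\partial_\infty T_\infty$ with the closure $\overline{A}$ is also correct in a tree (for any point $r(n)$ of a ray $r$ in the hull, one of the two endpoints $\xi_n,\eta_n\in A$ of a bi-infinite geodesic through $r(n)$ satisfies $(\xi_n\mid [r])_{r(0)}\geq n$, so $[r]\in\overline{A}$). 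In making this explicit you have gone further than the source.

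The one genuine issue is the reverse inclusion, and you have put your finger on it yourself: you need $\partial p(\partial_{e,\kappa}^{N_i}G)$ to be closed in $\partial_\infty T_\infty$, and your proposed proof via compactness mixes two topologies. The compactness of the strata $\partial_{e,\kappa}^{N_i}G$ that one can cite from \cite{Cordes,QingRafiTiozzoII} is compactness with respect to the Cordes/Qing--Rafi--Tiozzo topology, whereas the only continuity of $\partial p$ that comes for free is continuity with respect to the pullback topology $\mathcal{T}(G,T_\infty)$ --- and with respect to that topology the strata are not known to be compact, so ``continuous image of a compact set'' does not yet apply. To close the argument you would need either (i) continuity of $\partial p$ from the QRT topology to the visual topology on $\partial_\infty T_\infty$, which the paper establishes only in the cubical and hierarchically hyperbolic settings and not for an arbitrary $\kappa$-injective projection onto a tree, or (ii) a direct argument that a visual-topology limit of points of $\partial p(\partial_{e,\kappa}^{N_i}G)$ is again represented by an $N_i$-Morse ray. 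The paper does not address this point at all, so your write-up is more honest than the source; but as it stands the reverse inclusion, and hence the surjectivity of $\partial p$ onto $\bigcup_i\partial_\infty T_{N_i,\kappa}$, is not yet proved.
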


Theorem~\ref{thm:MorseboundarycharacterisationasCantorchain} applies, for example, to suitable graphs of groups whose action on their Bass-Serre-tree is largest acylindrical and which are $\kappa$-injective. This includes graph manifolds, which yields the following corollary.  This result should be compared with  \cite[Corollary 5(4)]{HagenRussellSistoSpriano22}, where the Morse boundary of graph manifolds, equipped with a different topology, is described. The  phenomena that occur are quite similar with the notable difference that Cantor chains are embedded in $\partial_{\infty} T_{\infty}$, whereas $\omega$-Cantor chains are not.

\begin{corollary} \label{cor:Topologyforgraphmanifolds}
Let $G$ be the fundamental group of a non-positively curved graph manifold that consists of at least two pieces. Then $\partial_{\kappa} G$ equipped with the acylindrical topology is a Cantor chain.
\end{corollary}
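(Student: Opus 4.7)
The plan is to apply Theorem \ref{thm:MorseboundarycharacterisationasCantorchain} to the hyperbolic projection $(G, Y, T_\infty)$, where $Y$ denotes the universal cover of $M$. The first step is to verify the structural hypotheses of that theorem. Since $M$ has at least two pieces, the Bass--Serre decomposition of $G$ is nontrivial; each vertex stabilizer of $T_\infty$ is the fundamental group of a Seifert fibered piece, and each edge stabilizer is $\mathbb{Z}^2$ of infinite index in both adjacent vertex groups. Consequently every vertex of $T_\infty$ has countably infinite valence, so $T_\infty$ is the canonical tree of infinite valence appearing in Theorem \ref{thm:MorseboundarycharacterisationasCantorchain}. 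The action $G \curvearrowright T_\infty$ is cobounded and largest acylindrical by \cite{HagenRussellSistoSpriano22}, and the $\kappa$-injectivity of $(G, Y, T_\infty)$ asserted in Theorem \ref{thmintro:Graphmanifoldtopology} places us squarely in the setting of the theorem.

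The second step is to produce a sequence of Morse gauges $N_1 \leq N_2 \leq \dots$ whose convex hulls $T_{N_i, \kappa}$ form a strongly entwined, filling sequence with $\bigcup_i \partial_{e,\kappa}^{N_i} G = \partial_\kappa G$. For the last equality it suffices to choose any cofinal sequence of Morse gauges, for example $N_i(K,C) := i(K+C+1)$, so that every $\kappa$-Morse quasi-geodesic ray has its Morse gauge bounded by some $N_i$. Nesting $T_{N_i, \kappa} \subset T_{N_{i+1}, \kappa}$ is then automatic from $\partial_{e,\kappa}^{N_i} G \subset \partial_{e,\kappa}^{N_{i+1}} G$. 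To check filling, I would fix an arbitrary edge $e$ of $T_\infty$, corresponding to a lift $T^2_e \subset Y$ of a boundary torus between two adjacent pieces. Using the product structure of each piece as the universal cover of a hyperbolic surface times a line, and maintaining transversality to the fiber directions, one can construct a bi-infinite geodesic in $Y$ that crosses $T^2_e$ and is Morse with some bounded Morse gauge; its two endpoints then lie in $\partial_{e,\kappa}^{N_i} G$ for sufficiently large $i$, forcing $e \subset T_{N_i, \kappa}$.

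The third step is strong entwinement. The key quantitative claim is that, for each fixed Morse gauge $N$, only finitely many edges incident to any given vertex $v$ of $T_{N_i, \kappa}$ can themselves belong to $T_{N_i, \kappa}$: every such edge requires a $\kappa$-Morse geodesic ray of gauge at most $N_i$ crossing the corresponding wall of the piece indexed by $v$, and a bounded Morse gauge restricts how close such a ray can come to the fiber direction of that piece, forcing geodesics with gauge $\leq N_i$ to exit through only a uniformly bounded set of boundary walls. Consequently, raising the gauge from $N_i$ to $N_{i+1}$ strictly enlarges the set of admissible adjacent edges at each vertex, yielding strong entwinement. Combining the three steps with Theorem \ref{thm:MorseboundarycharacterisationasCantorchain} gives that $(\partial_\kappa Y, \mathcal{T}(G, [T_\infty]))$ is a Cantor chain.

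I expect the main obstacle to be the quantitative control underpinning strong entwinement: showing that a bounded Morse gauge genuinely forces a $\kappa$-Morse ray passing through a single piece to exit through only a uniformly bounded number of that piece's infinitely many boundary walls. This demands a careful analysis of the interaction between the Morse gauge and the hyperbolic-times-line structure of each piece, in particular ruling out pathological scenarios where a $\kappa$-Morse ray of bounded gauge wanders near many distinct walls within a single piece before exiting.
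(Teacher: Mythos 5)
Your overall route is the same as the paper's: apply Theorem~\ref{thm:MorseboundarycharacterisationasCantorchain} to the hyperbolic projection $(G,Y,T_\infty)$, build the subtrees $T_{N_i,\kappa}$ from gauge-bounded strata of the boundary, and verify local finiteness, filling, and strong entwinement. Your verifications of the setup, of cofinality, and of filling match the paper's sketch in substance.

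The genuine gap is exactly where you flag it: strong entwinement. You assert that ``raising the gauge from $N_i$ to $N_{i+1}$ strictly enlarges the set of admissible adjacent edges at each vertex,'' but with a sequence of gauges fixed in advance (your $N_i=i(K+C+1)$) there is no argument that a given increment opens up at least one new exit wall at \emph{every} vertex of $T_{N_i,\kappa}$ simultaneously; a priori the set of reachable exit flats of some piece could be the same for gauge $N_i$ and gauge $N_{i+1}$. The paper closes this with a concrete geometric input you are missing: a uniform constant $D$ such that any two boundary flats of a single piece that bound that piece come within distance $D$ of each other (this uses compactness of the pieces of $M$ and the finitely many ``types'' of bounding tori). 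Given a farthest exit flat $F'$ reachable with contracting constant $C_i$ and a point $p\in F'$, there is a flat $F'(p)$ within distance $D$ that is not reachable with constant $C_i$ but becomes reachable once the ray is permitted to spend $D$ more time in the piece. The sequence $C_{i+1}$ is then chosen \emph{adaptively} so that the allowance $C_{i+1}\kappa(\cdot)$ exceeds $C_i\kappa(\cdot)$ by at least $D$ on the relevant range, which forces a strictly larger degree at every vertex and hence strong entwinement. Relatedly, your claim that a bounded gauge forces exit through a ``uniformly bounded'' set of walls is stronger than what is true or needed: the number of admissible exit flats of a piece grows (sublinearly) with the distance of that piece from the basepoint; only finiteness at each vertex, not uniformity, is required for local finiteness.
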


\begin{proof}[Sketch of proof ]
Let $M$ be a non-positively curved graph manifold, $X$ its universal cover, and $G = \pi_1(M)$. Since $M$ is a graph manifold, it consists of finitely many pieces, each of which is a compact circle bundle over a compact 2-orbifold. Let $D \geq 0$ be such that for any two points in two boundary components of a piece, their distance in the piece is at most $D$. The decomposition of $M$ into pieces yields a decomposition of $X$ into pieces, which are separated by flat copies of $\mathbb{R}^2$ in $X$. To each bounding torus in the decomposition of $M$, we associate the family of bounding flats in $X$ that are lifts of this torus. Two bounding flats are \emph{of the same type} if they are lifts of the same bounding torus. By our choice of $D$, for each point $p$ in $X$ that lies in a flat bounding a piece, there exists a flat of every other type bounding the same piece at distance at most $D$ from  $p$.

Since $M$ is a graph manifold, the action of $G$ on  its Bass-Serre tree $T_{\infty}$ is a largest acylindrical action. This action corresponds to a projection from $X$ to $T_{\infty}$ which maps every piece of $X$ to its corresponding vertex in the Bass-Serre tree.

Since $X$ is non-positively curved, the Morse gauge of a $\kappa$-Morse quasi-geodesic ray corresponds to a constant $C$ such that the quasi-geodesic ray is $\kappa$-contracting with respect to the constant $C$ (see \cite{QingRafiTiozzo}). Our goal is to find constants $C_1 \leq C_2 \leq \dots$ such that, using the notation of Theorem \ref{thm:MorseboundarycharacterisationasCantorchain}, the family of trees $( T_{C_i, \kappa} )_i$ is strongly entwined and filling. Fix a basepoint $o \in X$, which corresponds to a basepoint $\overline{o} \in T_{\infty}$. Consider a geodesic ray $\gamma$ in $X$ that starts at $o$. In order to be $\kappa$-contracting with respect to $C_i$, the time $\gamma$ spends in every piece of $X$ has to be bounded in terms of $C_i \kappa(d(o, \gamma(t))$, that is, the time spent in each piece can grow at most sublinearly in the distance to the starting point. In particular, if $\gamma$ enters a piece $P$, the distance from the entrance point to $o$ determines a finite set of flats bounding $P$, and $\gamma$ has to leave $P$ through one of these flats. Thus,  projecting the set of all  geodesic rays based at $o$ that are $\kappa$-contracting with respect to $C_i$  to $T_{\infty}$ results in a locally finite tree.  Call this tree $T_{C_i, \kappa}$.

Since  $\gamma$ begins at $o$, it can only enter each piece $P$ of $X$  through a specific bounding flat $F$. Moreover, since $\gamma$ is $\kappa$-contracting with respect to $C_i$, there exist a collection of bounding flats that are the farthest away from $F$ through which $\gamma$ can exit.  For each point $p$ on such a farthest flat $F'$, there exists a flat $F'(p)$ bounding $P$ such that $d(p, F'(p)) \leq D$, and such that if  $\gamma$ is $\kappa$-contracting with respect to $C_i$, then it cannot leave $P$ through $F(p)$.

In order for $T_{C_{i}, \kappa}$ to be entwined in $T_{C_{i+1}, \kappa}$, we need to make sure that  every piece in $X$ has a bounding flat with the property that: (1) no  geodesics based at $o$ that are $\kappa$-contracting with respect to $C_i$  cross this bounding flat; but (2) there exists a  geodesic based at $o$ that is $\kappa$-contracting with respect to $C_{i+1}$  and does cross this bounding flat. We can accomplish this by increasing $C_i$ enough so that $\gamma$ is allowed to spend $D$ more time in every piece in $X$. By the reasoning in the previous paragraph, this increase will allow $\gamma$ to exit through a new flat (the flat $F'(p)$).  
The contracting property and properties of closest projection in graph manifolds ensures that it is always possible to choose such a $C_{i+1}$.  Thus  
we obtain a sequence of strongly entwined trees $T_{C_i, \kappa}$. These trees are filling, as any edge in $T_{\infty}$ is part of the projection of some $\kappa$-contracting geodesic ray for sufficiently large constant $C$. By Theorem \ref{thm:MorseboundarycharacterisationasCantorchain}, the inclusion $\partial_{\kappa}X \hookrightarrow \partial_{\infty} T_{\infty}$ induces the topology of a Cantor chain on $\partial_{\infty} X$.
\end{proof}

\subsection{Proof of Lemma \ref{lem:UniquenessofCantorchain}}\label{subsec:UniquenessofCantorchain}

We first fix the notation and terminology that will be used throughout the proof.  Given a tree $T$, we can fix a vertex $v^0$ to turn this into a rooted tree. We can picture this tree as a `downwards oriented family tree' by placing the vertex $v^0$ at the top and orienting all edges of $T$ to point away from $v^0$. If there is an oriented edge from a vertex $v$ to a vertex $w$, we say $w$ is a {\it child} of $v$. A vertex $w$ is a {\it descendant} of $v$ if there exists an oriented path from $v$ to $w$. The vertex $v^0$ is {\it generation zero}, its children are {\it generation one}, and a vertex $v$ is in generation $d(v^0,v)$, where $d$ denotes the distance in the tree.

Let $(T, v^0)$ be a rooted tree as above. Let $S \subset T$ be a subtree and $v$  a vertex in $S$.  A child $w$ of $v$ in $T$ is an {\it occupied child of $v$ with respect to $S$} if $w$ is in $S$. Otherwise, we call $w$  an {\it unoccupied child of $v$ with respect to $S$}. A finite subtree $S$ of $T$ containing $v^0$  is  an {\it auxiliary subtree} if:
\begin{enumerate}
    
    
    \item all children of $v^0$ are occupied with respect to $S$, and 
    
    \item every vertex in $S$ except $v^0$ has at least one unoccupied child. We call these vertices the {\it open ends of $S$}.
\end{enumerate}

Let $T_1 \subset T_2 \subset \dots$ and $T'_1 \subset T'_2 \subset \dots$ be two strongly entwined, filling families of subtrees of $T_{\infty}$. We will inductively construct a map from the set of vertices of $T_{\infty}$ to itself that induces the desired homeomorphism on the boundary. For ease of notation, we denote the set of vertices of a tree $T$ by $V(T)$.

\subsubsection*{Step 1: Constructing a map $\Phi\colon V(T_1) \rightarrow V(T'_1)$}

\begin{figure}
 
  \begin{tikzpicture}

\draw[orange] (0,0) -- (-2, -1);
\draw[orange] (0,0) -- (-1, -1);
\draw[orange] (0,0) -- (0, -1);
\draw[orange] (0,0) -- (1, -1);
\draw[orange] (0,0) -- (2, -1);

\draw[orange] (5, 0) -- (4, -1);
\draw[orange] (5, 0) -- (5, -1);
\draw[orange] (5, 0) -- (6, -1);
\draw[orange] (6, -1) -- (7, -2);
\draw[orange] (7, -2) -- (8, -3);

\draw[green] (-2, -1) -- (-2.25, -2);
\draw[green] (-2, -1) -- (-1.75, -2);
\draw[green] (-2.25, -2) -- (-2.5, -3);
\draw[fill, green] (-2.25, -2) circle (1pt);

\draw[dashed] (-2.25, -2) -- (-2.1, -3);
\draw[dashed] (-1.75, -2) -- (-1.9, -3);
\draw[dashed] (-1.75, -2) -- (-1.6, -3);
\draw[dashed] (-2.5, -3) -- (-2.6, -4);
\draw[dashed] (-2.5, -3) -- (-2.4, -4);

\draw[green] (4, -1) -- (3.75, -2);
\draw[green] (4, -1) -- (4, -2);
\draw[green] (4, -1) -- (4.25, -2);

\draw[dashed] (3.75, -2) -- (3.65, -3);
\draw[dashed] (3.75, -2) -- (3.85, -3);
\draw[dashed] (4, -2) -- (3.9, -3);
\draw[dashed] (4, -2) -- (4.1, -3);
\draw[dashed] (4.25, -2) -- (4.15, -3);
\draw[dashed] (4.25, -2) -- (4.35, -3);

\draw[blue] (-1, -1) -- (-1.25, -2);
\draw[blue] (-1, -1) -- (-1, -2);
\draw[blue] (-1, -1) -- (-0.75, -2);

\draw[dashed] (-1.25, -2) -- (-1.35, -3);
\draw[dashed] (-1.25, -2) -- (-1.15, -3);
\draw[dashed] (-1, -2) -- (-1.1, -3);
\draw[dashed] (-1, -2) -- (-0.9, -3);
\draw[dashed] (-0.75, -2) -- (-0.85, -3);
\draw[dashed] (-0.75, -2) -- (-0.65, -3);

\draw[blue] (5, -1) -- (4.75, -2);
\draw[blue] (5, -1) -- (5, -2);
\draw[blue] (5, -1) -- (5.25, -2);

\draw[dashed] (5.25, -2) -- (5.35, -3);
\draw[dashed] (5.25, -2) -- (5.15, -3);
\draw[dashed] (5, -2) -- (5.1, -3);
\draw[dashed] (5, -2) -- (4.9, -3);
\draw[dashed] (4.75, -2) -- (4.85, -3);
\draw[dashed] (4.75, -2) -- (4.65, -3);

\draw[teal] (0, -1) -- (-0.25, -2);
\draw[teal] (0, -1) -- (0.25, -2);

\draw[dashed] (-0.25, -2) -- (-0.35, -3);
\draw[dashed] (-0.25, -2) -- (-0.15, -3);
\draw[dashed] (0.25, -2) -- (0.35, -3);
\draw[dashed] (0.25, -2) -- (0.15, -3);

\draw[teal] (6, -1) -- (5.75, -2);
\draw[teal] (5.75, -2) -- (5.5, -3);
\draw[fill, teal] (5.75, -2) circle (1pt);

\draw[dashed] (5.75, -2) -- (5.85, -3);
\draw[dashed] (5.5, -3) -- (5.6, -4);
\draw[dashed] (5.5, -3) -- (5.4, -4);

\draw[purple] (1, -1) -- (0.75, -2);
\draw[purple] (1, -1) -- (0.9, -2);
\draw[purple] (1, -1) -- (1.1, -2);
\draw[purple] (1, -1) -- (1.25, -2);

\draw[dashed] (0.75, -2) -- (0.7, -3);
\draw[dashed] (0.75, -2) -- (0.8, -3);
\draw[dashed] (0.9, -2) -- (0.85, -3);
\draw[dashed] (0.9, -2) -- (0.95, -3);
\draw[dashed] (1.1, -2) -- (1.05, -3);
\draw[dashed] (1.1, -2) -- (1.15, -3);
\draw[dashed] (1.25, -2) -- (1.2, -3);
\draw[dashed] (1.25, -2) -- (1.3, -3);

\draw[purple] (7, -2) -- (6.5, -3);
\draw[purple] (7, -2) -- (6.75, -3);
\draw[purple] (7, -2) -- (7, -3);
\draw[purple] (6.5, -3) -- (6.25, -4);
\draw[fill, purple] (6.5, -3) circle (1pt);

\draw[dashed] (6.5, -3) -- (6.55, -4);
\draw[dashed] (6.75, -3) -- (6.65, -4);
\draw[dashed] (6.75, -3) -- (6.85, -4);
\draw[dashed] (7, -3) -- (6.9, -4);
\draw[dashed] (7, -3) -- (7.1, -4);
\draw[dashed] (6.25, -4) -- (6.2, -5);
\draw[dashed] (6.25, -4) -- (6.3, -5);

\draw[red] (2, -1) -- (1.75, -2);
\draw[red] (2, -1) -- (2.25, -2);
\draw[red] (2.25, -2) -- (2.5, -3);
\draw[fill, red] (2.25, -2) circle (1pt);

\draw[dashed] (1.75, -2) -- (1.65, -3);
\draw[dashed] (1.75, -2) -- (1.85, -3);
\draw[dashed] (2.25, -2) -- (2, -3);
\draw[dashed] (2.25, -2) -- (2.25, -3);
\draw[dashed] (2.5, -3) -- (2.4, -4);
\draw[dashed] (2.5, -3) -- (2.6, -4);

\draw[red] (8, -3) -- (7.75, -4);
\draw[red] (8, -3) -- (8, -4);
\draw[red] (8, -3) -- (8.25, -4);

\draw[dashed] (7.75, -4) -- (7.65, -5);
\draw[dashed] (7.75, -4) -- (7.85, -5);
\draw[dashed] (8, -4) -- (7.9, -5);
\draw[dashed] (8, -4) -- (8.1, -5);
\draw[dashed] (8.25, -4) -- (8.15, -5);
\draw[dashed] (8.25, -4) -- (8.35, -5);

\end{tikzpicture}
    \caption{Construction of the map in Step 1. We first send the orange subtree to the orange subtree. We then send green to green, blue to blue, etc. We proceed inductively to the next (dashed) layer. Observe that for every colour, the corresponding subtree on the left has equally many vertices with unoccupied children as the corresponding subtree on the right.}
    \label{fig:treemap}
\end{figure}
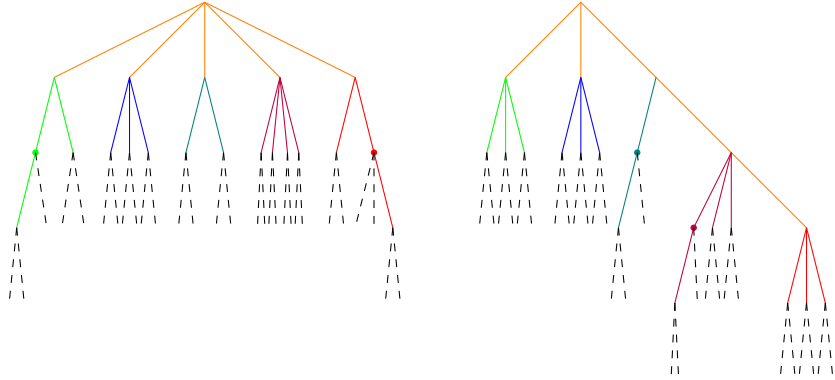

The construction of this map will also be inductive. Choose vertices $v^0 \in T_1$ and $v'^0 \in T'_1$ and consider $T_1$ and $T'_1$ as rooted trees with $v^0$ and $v'^0$ being their respective generation zero. Define $\Phi(v^0) := \Phi(v'^0)$. There are now  three cases to consider, depending on the respective degrees of $v^0$ and $v'^0$.

If $\deg(v^0) = \deg(v'^0)$,  label the children of $v^0$ by $a_1, \dots, a_N$, the children of $v'^0$ by $b_1, \dots, b_N$, and define $\Phi( a_n ) := b_n$.

If $\deg(v^0) > \deg(v'^0)$, let $S$ to be the (finite) subtree of $T_1$ which contains $v^0$ and all its children. Note that $S$ is an auxiliary subtree and it has $\deg(v^0)$ many open ends. Choose an auxiliary subtree $S'$ of $T'_1$ which also has $\deg(v^0)$ many open ends. Since $\deg(v^0) \geq \deg(v'^0)$, such an auxiliary subtree exists. Choose a bijection $\sigma$ from the open ends in $S$ to the open ends in $S'$, and define $\Phi := \sigma$ on these open ends.

If $\deg(v^0) < \deg(v'^0)$,  define $S'$ to be the subtree of $T'_1$ which contains $v'^0$ and all its children and choose an auxiliary subtree $S$ of $T_1$ which has $\deg(v'^0)$ many open ends. Note that $S$  contains all vertices in generation one of $T_1$, by property (2) of the definition of an auxiliary subtree.  As before,  choose a bijection $\sigma$ from the open ends of $S$ to the open ends of $S'$, and define $\Phi := \sigma$ on the set of vertices that appear as open ends in $S$. 

So far, in all cases we have defined $\Phi$ on $v^0$ and all its descendants that appear in the auxiliary subtree $S$.  (If $\deg(v^0) = \deg(v'^0)$, we can take $S$ to be $v^0$ together with its children.) In order to extend $\Phi$ to all vertices of $T_1$, we cut $T_1$ into several new rooted trees. Namely, for every open end $v$ of $S$, define a tree $T_v$ whose vertices are $v$ and all its descendants that do not require passing through an occupied child. This tree is rooted at $v$, and $\Phi$ is not defined for any vertex in $T_v$ except  the root. 
In the same way, we obtain trees $T_{v'}$ for every open end $v'$ of $S'$ in $T'_1$. The definition of $\Phi$ so far provides a bijection between the collection of trees $\{ T_v \mid v \text{ open end of } S\}$ and $\{ T_{v'} \mid v' \text{ open end of } S'\}$. Repeat the construction above on each $T_v$ to obtain auxiliary subtrees of $T_v$ and $T_{\Phi(v)}$, and extend the map $\Phi$ to the vertices contained in these auxiliary subtrees. Continuing inductively, we obtain a map $\Phi\colon V(T_1) \rightarrow V(T'_1)$ that has the following properties.
\begin{enumerate}
    \item $\Phi$ is bijective.
    
    \item Every vertex of $T_1$ except  $v^0$ is an open end of exactly one auxiliary subtree used in the construction. The same is true for every vertex in $T'_1$ except $v'^0$. Whenever we refer to an auxiliary subtree below, we mean one of these, viewed  as a subtree of $T_1$ or $T'_1$.

    \item Let $v \in V(T_1) \setminus \{ v^0 \}$, let $S$ be the auxiliary subtree that has $v$ as an open end, and let $v_1, \dots, v_k$ be the  descendants of $v$  that are also open ends of $S$. Under the map $\Phi$, each descendant of $v$ is sent to either one of $\Phi(v_1),\dots \Phi(v_k)$ or  one of the descendants of  $\Phi(v), \Phi(v_1),\dots, \Phi(v_k)$.
    
    \item Let $v,S$ be as in (3), and let $w$ be a descendant of $v$ such that the geodesic from $v$ to $w$ contains an unoccupied child of $v$ with respect to $S$.  The geodesic from $\Phi(v)$ to $\Phi(w)$ passes through an unoccupied child of $\Phi(v)$ with respect to the unique auxiliary subtree $S'$ that has $\Phi(v)$ as an open end. In particular, the geodesic from $v'^0$ to $\Phi(w)$ passes through $\Phi(v)$ and leaves $S'$ at $\Phi(v)$.
    
\end{enumerate}

\subsubsection*{Step 2: Extending the map $V(T_i) \rightarrow V(T'_i)$ to $V(T_{i+1}) \rightarrow V(T'_{i+1})$.}
Suppose we have a map $\Phi \colon V(T_i) \rightarrow V(T'_i)$ which satisfies properties (1)-(4) listed above. In particular, the map $\Phi$ comes with a family of auxiliary subtrees used in its construction. Since by assumption  $T_i$ and $T'_i$ are strongly entwined in $T_{i+1}$ and $T'_{i+1}$, respectively,  the degree in $T_{i+1}$  of every vertex $v \in V(T_i)$  is strictly greater than its degree in $T_i$, and similarly for every vertex in $V(T_i')$. If $v \in V(T_i)$ and $w \in V(T_{i+1}) \setminus V(T_i)$ are connected by an edge in $T_{i+1}$, then $w$ has to be a child of $v$ (with respect to the root $v^0$), as  $v$ is  a child of a vertex in $T_i$ and so cannot be a child of $w$.

For every $v \in V(T_i)$, let $T_v^{ext}$  be the tree that contains $v$ and all vertices $w$ in $T_{i+1}$ such that the geodesic from $v$ to $w$ intersects $T_i$ exactly in the point $v$; that is, $T_v^{ext}$ contains $v$ and all its descendants that can be reached without passing through another vertex in $T_i$. Define $T_{v'}^{ext}$ analogously for all $v' \in V(T'_i)$. The map $\Phi$ induces a bijection from $\{ T_v^{ext} \mid v \in V(T_i) \}$ to $\{ T_{v'}^{ext} \mid v' \in V(T'_i) \}$ that maps $T_v^{ext}$ to $T_{\Phi(v)}^{ext}$. Rooting $T_v^{ext}$ at $v$ and $T_{v'}^{ext}$ at $v'$, we can use the construction from Step 1 to obtain  a bijective map $V( T_v ) \rightarrow V(T_{\Phi(v)})$ that extends $\Phi$. Doing this for  every $v \in V(T_i)$ yields an extension of $\Phi$ to $V(T_{i+1}) \rightarrow V(T'_{i+1})$, which we continue to denote by $\Phi$. By construction, this extension still satisfies properties (1)-(4) described in Step~1.

Since the families $T_1 \subset T_2 \subset \dots$ and $T'_1 \subset T'_2 \subset \dots$ are  filling,  every vertex in $T_{\infty}$ is contained in $T_i$ and $T'_i$ for $i$ sufficiently large. We thus obtain a map $\Phi \colon V(T_{\infty}) \rightarrow V(T_{\infty})$ which still has the properties (1)-(4) and restricts to a bijection $V(T_i) \rightarrow V(T'_i)$ for every $i$.

\subsubsection*{Step 3: The induced map on boundaries.}

The construction of $\Phi$ in Steps 1 and 2 provides two families of auxiliary subtrees $\mathcal{S}$, $\mathcal{S}'$ such that every vertex in $T_{\infty}$ is an open end of an element $S \in \mathcal{S}$ and an open end of an element $S' \in \mathcal{S}'$ except for $v^0$ and $v'^0$. The construction also provides a bijection $\mathcal{S} \rightarrow \mathcal{S}'$ that associates an auxiliary subtree $S$ of $T_i$ with the auxiliary subtree $S'$ of $T'_i$ that it was paired with when extending $\Phi$ to $S$. Abusing notation, we denote this map by $\Phi$ as well and obtain that for every $S \in \mathcal{S}$, the map $\Phi\colon V(T_{\infty}) \rightarrow V(T_{\infty})$ restricts to a bijection between the vertices of $S$ and the vertices of $\Phi(S)$.

The map $\Phi$ has the following useful property. Let $v$ be a vertex in $T_{\infty}\setminus\{v^0\}$, and let $S \in \mathcal{S}$ be the auxiliary subtree containing $v$ as an open end. Let $w$ be a descendant of $v$ such that the geodesic from $v$ to $w$ does not contain any vertex in $S$ apart from $v$. Then $\Phi(w)$ is a descendant of $\Phi(v)$ and the geodesic between them does not contain any vertex of $\Phi(S)$ apart from $\Phi(v)$.

Let $\gamma$ be a geodesic ray in $T_{\infty}$ starting at $v^0$. This geodesic ray induces a sequence of vertices $v^0 = v_0, v_1, v_2, \dots$ which it crosses in this order. We start by showing that the sequence $(\Phi(v_i))_i$ has the property that for every $R > 0$, there exists an $N$ such that for all $i \geq N$, the initial segment of length $R$ of the geodesic from $v'^0$ to $\Phi(v_i)$ is identical. Indeed, suppose without loss of generality that $R \in \mathbb{N}$, and consider the first $R$ auxiliary subtrees $S_1, \dots S_R$ that $\gamma$ leaves through one of their open ends. Let $v_{i_j}$ denote the vertex through which $\gamma$ leaves $S_j$. For all $i \geq i_j$, the geodesic from $v^0$ to $v_i$ leaves $S_j$ through the open end $v_{i_j}$ of $S_j$. By property (4) above, this implies that for all $i \geq i_j$, the geodesic from $v'^0$ to $\Phi(v_i)$ crosses $\Phi(v_{i_j})$ and leaves the auxiliary subtree that contains $\Phi(v_{i_j})$ as an open end at $\Phi(v_{i_j})$. We conclude that for all $i \geq i_R$, the geodesic from $v'^0$ to $\Phi(v_i)$ has to pass through $\Phi(v_{i_j})$ for all $1 \leq j \leq R$ and leaves the auxiliary subtree containing $\Phi(v_{i_j})$ as an open end at $\Phi(v_{i_j})$. In particular, the geodesic from $v'^0$ to $\Phi(v_{i_R})$ has length at least $R$ and for every $i \geq i_R$, the initial segment of length $R$ of the geodesic from $v'^0$ to $\Phi(v_i)$ is fixed. Therefore the sequence of geodesic segments from $v'^0$ to $\Phi(v_i)$ converges on compact sets to a geodesic ray $\gamma'$ in $T_{\infty}$. Equivalently, the sequence $(\Phi(v_i))_i$ defines a point in the visual boundary of $T_{\infty}$. Define $\partial \Phi( [\gamma] ) := [\gamma']$.

We claim that $\partial \Phi$ is a homeomorphism that restricts to a homeomorphism from $\bigcup_{i = 1}^{\infty} \partial_{\infty} T_i$ to $\bigcup_{i=1}^{\infty} \partial_{\infty}T'_i$. We start by proving bijectivity. For this we observe that $\Phi^{-1} \colon V(T_{\infty}) \rightarrow V(T_{\infty})$ is a map that can be constructed by applying Steps 1 and 2 above to the families $T_1 \subset T_2 \dots$ and $T'_1 \subset T'_2 \dots$, where the roles of $T_i$ and $T'_i$ have been switched and the same auxiliary subtrees are chosen as in the construction of $\Phi$ at every step. This implies that $\Phi^{-1}$ also satisfies properties (1)-(4) and induces a map on the boundary. Since $\partial \Phi$ is defined by sending the sequence $(v_i)_i$ that represents a point in $\partial_{\infty}T_{\infty}$ to the point represented by $(\Phi(v_i))_i$ it is straightforward to check that $\partial \Phi^{-1} = (\partial \Phi)^{-1}$.

Next, we prove that $\partial \Phi$ is a homeomorphism. By definition, the visual topology on $\partial_{\infty} T_{\infty}$ is generated by the sets $\{U_v \mid v \in V(T_{\infty}) \}$, where $U_v := \{ [\gamma] \mid \gamma(0) = v^0, \gamma \text{ crosses } v \}$. The visual topology on $\partial_{\infty} T_{\infty}$ is  generated by the sets $U'_{v'} := \{ [\gamma'] \mid \gamma'(0) = v'^0, \gamma' \text{ crosses } v'\}$.

Let $v \in V(T_{\infty})$, and let $S$ be the auxiliary subtree in $\mathcal{S}$ that contains $v$ as an open end. Define the set $D_v$ to be the set of open ends of $S$ that are descendants of $v$, including $v$ itself. Furthermore,  define $A_v \subset \mathcal{S}$ to be the set of auxiliary subtrees that contain one of the vertices in $D_v$, except for $S$ itself. Finally,  define $C_v$ to be the set of all open ends of auxiliary subtrees in $A_v$. We claim that $\partial \Phi( U_v) = \bigcup_{w \in C_v} U'_{\Phi(w)}$. Indeed, let $\xi \in U_v$, and let $\gamma$ be the geodesic ray starting at $v^0$ representing $\xi$. Since $\gamma$ crosses $v$, it leaves $S$ at a vertex contained in $D_v$. When it does so, it enters one of the auxiliary subtrees in $A_v$ and leaves this auxiliary subtree through a vertex that is contained in $C_v$. In other words, $\gamma$ crosses (exactly) one of the vertices in $C_v$. Let $(v_i)_i$ be the sequence of vertices crossed by $\gamma$. By property (3) and the construction of the boundary map discussed above, there exists an element $w \in C_v$ such that all but finitely many elements of the sequence $(\Phi(v_i))_i$ are descendants of $\Phi(w)$. In particular, $\partial \Phi(\xi) \in U'_{\Phi(w)}$.

Now suppose that $\eta \in U'_{\Phi(w)}$ for some $w \in C_v$. Let $\gamma'$ be the geodesic ray starting at $v'^0$ representing $\eta$. Since $\eta \in U'_{\Phi(w)}$, we know that $\gamma'$ crosses $\Phi(w)$. In particular, there exists some $S_o \in A_v$ that contains $w$ as an open end, and therefore, $\Phi(S_o)$ contains $\Phi(w)$ as an open end. We conclude that $\gamma'$ enters the auxiliary subtree $\Phi(S_o)$ and has to leave it through one of its open ends. Since the open ends of $\Phi(S_o)$ are the images of the open ends of $S_o$ under $\Phi$, there exists some $\tilde{w} \in C_v$ such that $\gamma'$ leaves $\Phi(S_o)$ through $\Phi(\tilde{w})$. In particular, since $\Phi^{-1}$ satisfies property (3), every vertex crossed by $\gamma'$ after crossinge $\Phi(\tilde{w})$ is the image of a descendant of $\tilde{w}$. We conclude that $\Phi^{-1}(\eta) \in U_{\tilde{w}} \subset U_v$. Therefore, $\partial \Phi(U_v) = \bigcup_{w \in C_v} U'_{\Phi(w)}$.

This implies that $\partial \Phi$ sends open sets to open sets. Since $(\partial \Phi)^{-1}$ is the boundary map of $\Phi^{-1}$, which has the same properties as $\Phi$, we conclude that $(\partial \Phi)^{-1}$ sends open sets to open sets as well. We conclude that $\partial \Phi$ is a homeomorphism.

Finally, we check what $\partial \Phi$ does to the subspace $\bigcup_{i=1}^{\infty} \partial_{\infty} T_i$. Since the sequences $(T_i)_i$ and $(T'_i)_i$ are nested and the restriction of $\Phi$ to $T_i$ is a bijection $V(T_i) \rightarrow V(T'_i)$, we conclude that $\Phi$ sends $\partial_{\infty} T_i$ into $\partial_{\infty} T'_i$. Since the restriction of $\Phi$ to any $T_i$ is bijective, $\Phi^{-1}$ sends $V(T_i')$ to $V(T_i)$, and therefore, $\partial \Phi$ sends $\partial_{\infty} T_i$ onto $\partial_{\infty} T'_i$. We conclude that the restriction of $\partial \Phi$ induces a bijection $\bigcup_{i=1}^{\infty} \partial_{\infty} T_i \rightarrow \bigcup_{i=1}^{\infty} \partial_{\infty} T'_i$. This completes the proof.

\subsection{Comparison with the visual topology} \label{subsec:Comparisonwithvisualtopology}
Since $X$ is a non-positively curved manifold, its $\kappa$-Morse boundary injects into the visual boundary. We can thus compare the visual topology on $\partial_{\kappa} X$ with the topology $\mathcal{T}(G, [T_{\infty}])$. It is easy to see that convergence in visual topology implies convergence in $\mathcal{T}(G, [T_{\infty}])$, implying that $\mathcal{T}(G, [T_{\infty}]) \subseteq \Vis$. We conjecture that the converse is true as well.

\begin{conjecture} \label{conjecture:Visualtopologyongraphmanifolds}
Let $G$ be the fundamental group of a non-positively curved graph manifold and $\mathcal{T}(G, X, T_{\infty})$ the topology induced by the action of $G$ on its Bass-Serre tree. Then $\mathcal{T}(G, X, T_{\infty})$ coincides with the visual topology on $\partial_{\kappa} X$.
\end{conjecture}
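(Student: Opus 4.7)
The plan is to establish the nontrivial direction, namely that $\mathcal{T}(G,X,T_\infty)$-convergence implies visual convergence; the reverse inclusion is already noted in the text. Let $\xi_n\in\partial_\kappa X$ with $\xi_n\to\xi$ in $\mathcal{T}(G,X,T_\infty)$, and let $\gamma_n,\gamma$ denote the unique $\CAT$ geodesic representatives based at a common basepoint $o\in X$. Unpacking the definition of $\mathcal{T}(G,X,T_\infty)$ as the pullback of the visual topology on $\partial_\infty T_\infty$, the hypothesis is that for each $k$, there exists $N_k$ such that for all $n\geq N_k$, the geodesic $\gamma_n$ crosses the same initial sequence of bounding flats $F_1,\dots,F_k$ as $\gamma$.

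I would proceed by contradiction using an Arzel\`a--Ascoli extraction. If $\xi_n\not\to\xi$ in $\Vis$, then after passing to a subsequence we obtain $R,\varepsilon>0$ with $d(\gamma_n(R),\gamma(R))\geq\varepsilon$. Since $X$ is a proper $\CAT$ space and each $\gamma_n$ is a $1$-Lipschitz ray from $o$, a further subsequence $\gamma_{n_j}$ converges uniformly on compact sets to a geodesic ray $\gamma^\ast$ from $o$, necessarily satisfying $d(\gamma^\ast(R),\gamma(R))\geq\varepsilon$. The next step is to show that $p\circ\gamma^\ast$ defines the same point in $\partial_\infty T_\infty$ as $p\circ\gamma$. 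This requires controlling where the $\gamma_{n_j}$ cross each fixed flat $F_k$; one must rule out the possibility that the crossings $q_k^{n_j}$ escape to infinity in $X$, by using the $\kappa$-contracting property of each $\gamma_{n_j}$, together with the convergence of their tree projections, to pin $q_k^{n_j}$ near the crossing $q_k$ of $\gamma$ with $F_k$, uniformly in $j$. Once this is done, $\gamma^\ast$ crosses each $F_k$, and hence $p\circ\gamma^\ast$ traverses the same sequence of pieces of $X$ as $\gamma$.

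The final step is to derive a contradiction by concluding that $\gamma^\ast=\gamma$. Two $\CAT$ geodesic rays from the same basepoint that stay sublinearly close satisfy $\limsup_{t\to\infty} d(\gamma(t),\gamma^\ast(t))/t=0$, so the angle between them at $o$ vanishes and the rays coincide; in particular, it suffices to show $\gamma^\ast$ and $\gamma$ are $\kappa$-close. Given the $\kappa$-injectivity of $(G,X,T_\infty)$ from Theorem~\ref{thmintro:Graphmanifoldtopology} and the equality $\partial p([\gamma^\ast])=\partial p(\xi)$ established above, this reduces to proving that $\gamma^\ast$ is itself $\kappa$-Morse.

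The main obstacle is this last step. The Morse gauges of the $\gamma_{n_j}$ need not be uniformly bounded, so the $\kappa$-Morse property does not automatically pass to the limit. A promising approach exploits the filtration constructed in the sketched proof of Corollary~\ref{cor:Topologyforgraphmanifolds}, namely the locally finite subtrees $T_{C,\kappa}\subset T_\infty$ parameterized by a contracting constant $C$. Since $\partial p(\xi)\in\partial_\infty T_{C,\kappa}$ for some $C$, one would aim to show that any $\CAT$ geodesic ray from $o$ whose tree-projection equals $\partial p(\xi)$ must itself be $C'$-contracting for some $C'$ depending only on $C$ and the graph manifold geometry, exploiting the product structure $\Sigma\times\mathbb{R}$ of each piece, the total geodesicity of the bounding flats, and contraction estimates for closest-point projections onto these flats.
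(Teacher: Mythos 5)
The statement you are addressing is Conjecture~\ref{conjecture:Visualtopologyongraphmanifolds}: the paper does not prove it. Section~\ref{subsec:Comparisonwithvisualtopology} records only the easy inclusion $\mathcal{T}(G,[T_{\infty}])\subseteq\Vis$ and then sketches two possible strategies for the converse (via the curtains of \cite{PetytSprianoZalloum22}, or via the coarse median topology of Section~\ref{sec:Topologyforgroupswithacoarsemedianstructure}), explicitly flagging that the difficulty in either route is controlling flat sectors that cross infinitely many pieces. So there is no proof in the paper to compare yours against; what can be assessed is whether your compactness argument closes the conjecture, and it does not.

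The gap you name at the end is real, but it infects an earlier step as well. In your second paragraph you propose to pin the crossings $q_k^{n_j}$ near the crossing $q_k$ of $\gamma$ with $F_k$, uniformly in $j$, ``using the $\kappa$-contracting property of each $\gamma_{n_j}$.'' The contracting (equivalently, Morse) constants of the $\gamma_{n_j}$ are exactly the data you do not control uniformly --- the same non-uniformity you correctly identify as the main obstacle in the final step --- so the pinning argument is circular as stated. Without it, the Arzel\`a--Ascoli limit $\gamma^\ast$ need not cross the flats $F_k$ at all, since each piece has a product structure $\Sigma\times\mathbb{R}$ in which the crossing points can drift off to infinity in the fiber direction; and even if $\gamma^\ast$ does cross them, a geodesic ray with the correct tree projection can spend linearly growing time in the pieces and hence fail to be $\kappa$-Morse, which blocks the appeal to $\kappa$-injectivity and to the identity $\partial p([\gamma^\ast])=\partial p(\xi)$. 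Your proposed repair --- that any geodesic ray from $o$ whose tree projection lies in $\partial_{\infty}T_{C,\kappa}$ is automatically $C'$-contracting --- is the genuinely open content of the conjecture in disguise: membership of the projection in a locally finite subtree constrains only which flats are crossed, not the length of the excursions between consecutive flats, and it is precisely the flat sectors crossing infinitely many pieces that make this implication nontrivial. The sound parts of your plan are the reduction to one direction, the Arzel\`a--Ascoli extraction in a proper $\CAT$ space, and the observation that two geodesic rays from a common basepoint that stay sublinearly close must coincide, by convexity of $t\mapsto d(\gamma(t),\gamma^\ast(t))$.
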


We make a few comments on the potential proof of this conjecture. There are two main approaches, which are based on two different auxiliary topologies that can be defined on $\partial_{\kappa} X$. In \cite{PetytSprianoZalloum22}, Petyt, Spriano, and Zalloum introduce the notion of a \textit{curtain} that allows one to define a \textit{curtain topology} on the Morse boundary $\partial_{1} X$ that coincides with the visual topology. (Petyt, Spriano, and Zalloum have informed us that there is an ongoing work to generalise these results to $\kappa$-Morse boundaries.) One is thus left to show that the curtain topology coincides with $\mathcal{T}(G, X, T_{\infty})$ on the $\kappa$-Morse boundary. Proving this relies on a careful understanding how curtains intersect different pieces of $X$, and specifically if certain curtains that are separated by sufficiently many pieces are $L$-separated in the sense introduced in \cite{PetytSprianoZalloum22}.

The other approach replaces the curtain topology with the median topology introduced in Section \ref{sec:Topologyforgroupswithacoarsemedianstructure}. Since the fundamental groups of graph manifolds are hierarchically hyperbolic groups (as shown in \cite{HagenRussellSistoSpriano22}), we can equip $X$ with the coarse median structure induced by the hierarchical structure. Since the hyperbolic projection $(G, X, T_{\infty})$ is $\kappa$-injective, it is also $(\kappa, \mu)$-injective for this median, and the induced inclusion and topology are the same. (Note that this is true because for hierarchically hyperbolic groups, $\partial_{\kappa} X$ has $\mu$-median representatives, as discussed at the beginning of section \ref{subsec:Definingthetopologyinthemediancase}.) As we have shown in this paper, the median topology coincides with $\mathcal{T}(G, X, T_{\infty})$, and so one is left to show that the median topology coincides with the visual topology on $\partial_{\kappa} X$. Similarly to the treatment via curtains, one has to be careful when dealing with the existence of flat sectors in $X$ that cross infinitely many pieces.

Either of these arguments is likely to generalise to all CK-admissible groups, which is why we conjecture that for all geometric actions of CK-admissible groups on a Hadamard space X (geodesically complete $\CAT$ spaces), the $\kappa$-Morse boundary of $X$ with the visual topology is homeomorphic to a Cantor chain.




\appendix

\section{Unparametrised quasi-rulers and hyperbolic spaces} \label{sec:unparametrisedquasigeodesicsandhyperbolicspaces}

Throughout this paper, we frequently consider hyperbolic spaces that are not necessarily geodesic. One complication with such spaces is that hyperbolicity is not necessarily preserved under quasi-isometries (cf. \cite{BS}). Similarly, it is also not clear whether quasi-isometries extend to homeomorphisms between boundaries. The purpose of this appendix is to prove the following result, which deals with this problem for a nice class of hyperbolic spaces that includes all the spaces we deal with in this paper.

\begin{corollary} \label{cor:quasiisometriesofquasiruledspacesextend}
Let $f \colon X \rightarrow X'$ be a quasi-isometry between metric spaces that admit a $D$-quasi-ruling, and suppose $X$ is hyperbolic. Then $X'$ is hyperbolic, and $f$ naturally extends  to a homeomorphism $\partial f \colon \partial_{\infty} X \rightarrow \partial_{\infty} X'$. Furthermore, for every path $\gamma$ in $X$ defining a point in $\partial_{\infty} X$, the composition $f \circ \gamma$ defines a point in $\partial_{\infty} X'$ and $[f \circ \gamma] = \partial f([\gamma])$.
\end{corollary}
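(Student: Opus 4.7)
The plan is to factor the proof through the classical Gromov-product estimate: a $(K,C)$-quasi-isometry $f$ distorts the Gromov product by a bounded additive amount, giving a constant $C' = C'(K,C)$ with
\[
\tfrac{1}{K}(x\mid y)_o - C' \;\leq\; (f(x)\mid f(y))_{f(o)} \;\leq\; K(x\mid y)_o + C'
\]
for all $x,y,o\in X$. From this, convergence to infinity is preserved, equivalent sequences map to equivalent sequences, so the map $\partial f$ defined by $[(x_i)]\mapsto[(f(x_i))]$ is well-defined on $\partial_\infty X$. Applying the same construction to a quasi-inverse $\bar f$ produces a two-sided inverse, so $\partial f$ is a bijection, and continuity in both directions is immediate because the Gromov-product neighborhoods $U_{o,R}$ form a basis for the visual topology. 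All of this is formal and requires nothing beyond the estimate above.

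The non-routine content is verifying that $X'$ is itself hyperbolic, and this is where I expect the main obstacle to lie, since hyperbolicity is famously not a quasi-isometry invariant in the non-geodesic setting. The hypothesis that both spaces admit a $D$-quasi-ruling is exactly what rescues the argument. I would proceed in two steps. First, show that $D$-quasi-rulers transport under quasi-isometries: given a $D$-quasi-ruler between two points of $X'$, its image under $\bar f$, after minor modification if needed, is a $D''$-quasi-ruler in $X$ for some $D''$ depending only on $K$, $C$, and $D$. Second, verify the Gromov four-point inequality in $X'$ by testing it against these $D$-quasi-rulers and transporting the comparison through $\bar f$ to $X$, where the inequality holds with constant $\delta$. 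Assembled carefully, this yields a hyperbolicity constant $\delta'$ for $X'$ depending only on $\delta$, $K$, $C$, and $D$. The delicate part is the bookkeeping on quasi-rulers and ensuring that reparametrized pullbacks remain uniformly quasi-ruled; the appendix is the natural place to gather the technical lemmas about quasi-rulers, including a stability statement in the spirit of the Morse lemma.

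For the final assertion about paths, the work is already done: if $\gamma\colon[a,\infty)\to X$ defines $\xi\in\partial_\infty X$ in the sense of Definition \ref{def:Inducingapointatinfinity}, then for every increasing sequence $t_i\to\infty$ the sequence $(\gamma(t_i))_i$ converges to infinity and represents $\xi$. Applying the Gromov-product estimate, $(f\circ\gamma(t_i))_i$ converges to infinity in $X'$ and represents $\partial f(\xi)$. Since this holds for every such sequence, $f\circ\gamma$ defines the point $\partial f([\gamma]) \in \partial_\infty X'$, which is what was to be shown.
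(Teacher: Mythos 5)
There is a genuine gap at the very first step, and it is precisely the step you label ``formal.'' The two-sided estimate $\frac{1}{K}(x\mid y)_o - C' \leq (f(x)\mid f(y))_{f(o)} \leq K(x\mid y)_o + C'$ is \emph{not} a consequence of $f$ being a $(K,C)$-quasi-isometry between arbitrary metric spaces: the Gromov product is a difference of distances, so controlling each distance up to multiplicative error $K$ does not control the product. (For instance, if $d(o,x)=d(o,y)=d(x,y)=R$, then $(x\mid y)_o = R/2$, while the naive distance bounds only give $(f(x)\mid f(y))_{f(o)} \leq KR - \frac{R}{2K} + \frac{3C}{2}$, which exceeds $K\cdot\frac{R}{2}+C'$ for $K>1$ and large $R$; and the estimate genuinely fails for quasi-isometries with non-geodesic targets --- this is the same phenomenon as the failure of quasi-isometry invariance of hyperbolicity that you correctly flag in your second paragraph.) The estimate you want is a theorem of Bonk--Schramm (Lemma~\ref{lem:GromovProductcoarselypreserved} combined with Proposition~\ref{prop:quasiisometriesarePQisometries}) whose hypotheses are that \emph{both} spaces are geodesic and hyperbolic, and whose proof runs through stability of quasi-geodesics. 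Since the hyperbolicity of $X'$ is one of the conclusions to be proved, and you defer it to your second step, the argument as organized is circular: step one already presupposes the conclusion of step two together with some rough geodesicity of $X'$. The only genuinely formal version is the additive bound for $K=1$, namely $\lvert (f(x)\mid f(y))_{f(o)} - (x\mid y)_o\rvert \leq 3C$, and that is exactly what the paper exploits.

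The paper's route uses the quasi-ruling hypothesis \emph{first}, not only for hyperbolicity: Lemma~\ref{lem:Makingquasiruledspacesgeodesic} builds from the $D$-quasi-ruling a genuine geodesic space $X_g$ that is $(1,4D)$-quasi-isometric to $X$ (and likewise $X'_g$ for $X'$). Because $(1,C)$-quasi-isometries preserve the Gromov product up to an additive constant, $X_g$ is hyperbolic; the composite $\iota'\circ f\circ\iota^{-1}$ is then a quasi-isometry of \emph{geodesic} spaces, so $X'_g$ and hence $X'$ are hyperbolic, and only now does the Bonk--Schramm machinery yield the Gromov-product estimate for the relevant map, after which the boundary homeomorphism and the statement about paths follow exactly as in your last two paragraphs. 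Your proposed second step (transporting quasi-rulers through $\bar f$ and testing the four-point inequality) points in a workable direction, but all of the content of the corollary is hidden there: carrying it out amounts to re-proving either the geodesification lemma or the stability argument, so it cannot be postponed behind a ``formal'' first step.
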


The notion of $D$-quasi-rulings, which we make precise below, is inspired by work of Blach\`ere, Ha\"issinsky, and Mathieu in \cite{BlachereHaissinskyMathieu11}. In fact, they proved the first statement in the corollary using their notion of `parametrised' quasi-rulers, which is dependent on a choice of parametrisation. This does not suit our purposes, as we frequently consider projections of quasi-geodesics, for which we need properties that are invariant under reparametrisation. For this reason we will modify their definition. We start by giving their original definition of parametrised quasi-rulers and discuss how it relates to the notion of unparametrised quasi-rulers that we will be using.

\begin{definition}[\cite{BlachereHaissinskyMathieu11}]
Let $X$ be a metric space, and fix $\tau \geq 0$. A \textit{parametrised $\tau$-quasi-ruler} is a quasi-geodesic $\gamma \colon I \rightarrow X$ such that for all $t  < s < r$, we have
\[ d( \gamma(t), \gamma(s) ) + d( \gamma(s), \gamma(r) ) \leq d( \gamma(t), \gamma(r)) + \tau. \]
Given constants $K \geq 1$, $C, \tau \geq 0$, a {\it quasi-ruling structure} on $X$ is a set $\mathcal{G}$ of $(K, C)$-quasi-geodesics that are parametrised $\tau$-quasi-rulers such that any two points in $X$ can be connected by an element of $\mathcal{G}$. 
We say that $X$ is {\it $(K,C)$-parametrised $\tau$-quasi-ruled} if 
for any $x, y \in X$, there exists a $(K, C)$-quasi-geodesic from $x$ to $y$ and every $(K, C)$-quasi-geodesic in $X$ is a parametrised $\tau$-quasi-ruler.
\end{definition}

The following modification of this definition  is independent of the parametrisation.

\begin{definition} \label{def:unparametrisedquasirulers}
Let $X$ be a metric space, $D \geq 0$, and $I \subset \mathbb{R}$ a (possibly unbounded) closed interval. A path $\gamma \colon I \rightarrow X$ is an {\it unparametrised $D$-quasi-ruler} if it satisfies the following conditions.
\begin{enumerate}
    \item $\forall t < s < r$, we have $d( \gamma(t), \gamma(s) ) + d( \gamma(s), \gamma(r) ) \leq d( \gamma(t), \gamma(r) ) + D$.
    
    \item $\forall t_0 \in I$, we have $\limsup_{ \vert t - t_0 \vert \rightarrow 0} d( \gamma(t), \gamma(t_0) ) < D$.
\end{enumerate}

If $I$ is a bounded interval, we say $\gamma$ is {\it finite}. If $I$ is unbounded in one direction and the image of $\gamma$ is unbounded, we say $\gamma$ is {\it unbounded}. If $I = \mathbb{R}$ and the images of $(-\infty, 0]$ and $[0, \infty)$ are unbounded, we say $\gamma$ is {\it unbounded in both directions}.

A space {\it admits a $D$-quasi-ruling} if there exists $D \geq 0$ such that there is an unparametrised $D$-quasi-ruler between any two points.
\end{definition}

We sometimes refer to property (2) in Defintion~\ref{def:unparametrisedquasirulers} by saying that $\gamma$ does not make any jumps of size $D$ or larger. Observe that any reparametrisation of an unparametrized $D$-quasi-ruler is again an unparametrized $D$-quasi-ruler, as any reparametrisation preserves the order of all $t < s < r$. (Here, a reparametrisation is an orientation-preserving homeomorphism $\varphi \colon I \rightarrow I$ on the domain of the quasi-ruler.)


\if0

\carolyn{I suggest we introduce the original definition of $\tau$-quasiruler first, since it's a bit easier to understand, and then give the definition of an unparametrized quasiruler.}
Throughout this paper, we frequently had to consider paths in hyperbolic spaces that define points in the visual boundary, but do not come with a suitable parametrisation. Furthermore, some of the most interesting examples for our results involve non-geodesic hyperbolic spaces. We deal with this by working with paths that have a `coarse geodesic' property that is independent of a choice of parametrisation. In this section, we give all the definitions and basic properties of these paths that we need.


\begin{definition} \label{def:unparametrisedquasirulers}
Let $X$ be a metric space, $D \geq 0$, and $I \subset \mathbb{R}$ a (possibly unbounded) closed interval. A path $\gamma \colon I \rightarrow X$ is an {\it unparametrised $D$-quasi-ruler} if it satisfies:
\begin{enumerate}
    \item $\forall t < s < r$, we have $d( \gamma(t), \gamma(s) ) + d( \gamma(s), \gamma(r) ) \leq d( \gamma(t), \gamma(r) ) + D$.
    
    \item $\forall t_0 \in I$, we have $\limsup_{ \vert t - t_0 \vert \rightarrow 0} d( \gamma(t), \gamma(t_0) ) < D$.
\end{enumerate}

If $I$ is a bounded interval, we say $\gamma$ is {\it finite}. If $I$ is unbounded in one direction and the image of $\gamma$ is unbounded, we say $\gamma$ is {\it unbounded}. If $I = \mathbb{R}$ and the images of $(-\infty, 0]$ and $[0, \infty)$ are unbounded, we say $\gamma$ is {\it unbounded in both directions}.

A space {\it admits a $D$-quasi-ruling} if there exists $D \geq 0$ such that there is an unparametrised $D$-quasi-ruler between any two points.
\end{definition}

We sometimes refer to property (2) above by saying that $\gamma$ does not make any jumps of size $D$ or larger. Observe that any reparametrisation of a $D$-coarse geodesic\carolyn{What is a $D$-coarse geodesic? Just a $(D,D)$--quasigeodesic?} is again a $D$-coarse geodesic, as any reparametrisation preserves the order of all $t < s < r$. (Here, a reparametrisation is an orientation-preserving homeomorphism $\varphi \colon I \rightarrow I$ on the domain of the quasi-ruler.) Also note that there are unparametrised $D$-quasi-rulers that are neither finite nor unbounded. (Specifically, any path with bounded image is an unparametrised $D$-quasi-ruler for sufficiently large $D$, but not necessarily finite.)\carolyn{Maybe move these last two sentences into Remark A.4, or just remove them since it's stated there.} We also remark that every finite unparametrised $D$-quasi-ruler has bounded image. Indeed, if $\gamma \colon [a, b] \rightarrow X$ is an unparametrised $D$-quasi-ruler, then for every $t \in [a,b]$, we have $d( \gamma(a), \gamma(t)) \leq d(\gamma(a), \gamma(b)) + D$, which shows the image of $\gamma$ has finite diameter.\carolyn{Maybe move this last bit into the proof of Lemma A.3?}

The notion of quasi-rulers was introduced by Blach\`ere, Ha\"issinsky, and Mathieu in the context of understanding when a quasi-isometry between (not necessarily geodesic) metric spaces preserves hyperbolicity  \cite{BlachereHaissinskyMathieu11}.  However, their original notion depends on a particular parametrisation.  We introduce their terminology here and show equivalence in certain situations.

\begin{definition}
Let $X$ be a metric space, $\tau \geq 0$. A $\tau$-quasi-ruler is a quasi-geodesic $\gamma \colon I \rightarrow X$ such that for all $t  < s < r$, we have
\[ d( \gamma(t), \gamma(s) ) + d( \gamma(s), \gamma(r) ) \leq d( \gamma(t), \gamma(r)) + \tau. \]
Given constants $K \geq 1$, $C, \tau \geq 0$, a {\it quasi-ruling structure} on $X$ is a set $\mathcal{G}$ of $(K, C)$-quasi-geodesics that are $\tau$-quasi-rulers such that any two points in $X$ can be connected by an element of $\mathcal{G}$.

We say $X$ is {\it $\tau$-quasi-ruled} if there exist constants $K, C\geq 0$ such that for any $x, y \in X$, there exists a $(K, C)$-quasi-geodesic from $x$ to $y$ and every $(K, C)$-quasi-geodesic in $X$ is a $\tau$-quasi-ruler.
\end{definition}
\fi


The connection between $\tau$-quasi-rulers and unparametrised $D$-quasi-rulers is given by the following result.

\begin{lemma} \label{lem:UniformQuasigeodesicParametrisation}
Let $D \geq 0$. Then there exist constants $K, C \geq 0$ depending only on $D$ such that every unparametrised $D$-quasi-ruler that is either finite, unbounded, or unbounded in both directions can be parametrised as a $(K, C)$-quasi-geodesic.
\end{lemma}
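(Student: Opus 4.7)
The plan is to construct a discrete sequence $(t_i)_i$ in $I$ along which $\gamma$ moves by a roughly fixed amount, show that $(\gamma(t_i))_i$ is a coarse quasi-geodesic, and then interpolate to obtain a reparametrisation of $\gamma$ as a $(K, C)$-quasi-geodesic with constants depending only on $D$.

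To construct the sequence, I would fix a starting time $t_0 \in I$ — namely $\min I$ in the finite case and the one-sided unbounded case, or any interior point in the case of a quasi-ruler unbounded in both directions — and prove a coarse intermediate value statement as follows. Setting $f_i(t) := d(\gamma(t_i), \gamma(t))$, condition (2) of Definition~\ref{def:unparametrisedquasirulers} implies that $f_i$ has one-sided oscillations strictly bounded by $D$ at every point of $I$. Provided the image of $\gamma|_{[t_i,\infty)\cap I}$ has diameter at least $3D$, I define
\[
t_{i+1} \;:=\; \inf\{t > t_i : f_i(t) \geq 3D\}.
\]
A short case analysis — distinguishing whether this infimum is attained or merely approached from the right — together with the oscillation bound on $f_i$ at $t_{i+1}$ yields $f_i(t_{i+1}) \in (2D, 4D)$. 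Iterating (in both directions for the bi-infinite case) produces the desired sequence, which can terminate only in the finite case, when the remaining image has diameter less than $3D$.

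With this sequence in hand, iterating property (1) of Definition~\ref{def:unparametrisedquasirulers} over triples $(t_i, t_k, t_j)$ yields
\[
d(\gamma(t_i), \gamma(t_j)) \;\geq\; \sum_{k=i}^{j-1} d(\gamma(t_k), \gamma(t_{k+1})) - (j-i-1)D \;\geq\; (j-i)D,
\]
while the triangle inequality gives $d(\gamma(t_i), \gamma(t_j)) \leq 4D(j-i)$. To upgrade this to an honest reparametrisation, let $\varphi$ be the piecewise linear increasing homeomorphism onto its image in $I$ sending $i \mapsto t_i$. For $s \in [i, i+1]$, applying property (1) to the triple $t_i \leq \varphi(s) \leq t_{i+1}$ gives $d(\gamma(t_i), \gamma(\varphi(s))) \leq d(\gamma(t_i), \gamma(t_{i+1})) + D \leq 5D$, so $\gamma \circ \varphi$ stays within $5D$ of the discrete quasi-geodesic at every time. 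Combining this with the discrete bounds gives an explicit $(K,C)$-quasi-geodesic estimate depending only on $D$. In the finite case, any short tail of $I$ beyond the final $t_N$ has image of diameter less than $3D$ and is handled by parametrising it over a unit-length interval.

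The main technical obstacle will be the coarse intermediate value argument: since condition (2) of Definition~\ref{def:unparametrisedquasirulers} only bounds oscillation rather than providing continuity, one must treat separately the case where the infimum defining $t_{i+1}$ is attained and the case where it is only approached from the right, using the appropriate one-sided oscillation bound to pin down $f_i(t_{i+1})$ in either scenario. Once this is done, the remainder of the argument reduces to bookkeeping with explicit constants.
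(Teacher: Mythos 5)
Your proposal is correct and follows essentially the same strategy as the paper's proof: both select a discrete sequence of points along the quasi-ruler at roughly uniform separation (using the no-jump condition and a case analysis on whether the defining infimum is attained), iterate the quasi-ruler inequality to get the discrete upper and lower distance bounds, and then interpolate, with the finite case handled by a termination/bounded-tail argument. The only substantive difference is cosmetic: you space your points in $(2D,4D)$ while the paper uses $[D+\epsilon,2D+\epsilon]$ for an auxiliary $\epsilon\in(0,D)$, which changes the constants but not the argument.
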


\begin{remark}
The requirement that the unparametrised $D$-quasi-ruler  be finite, unbounded, or unbounded in both directions is necessary. The reason is that any bounded path $\gamma \colon [0, \infty) \rightarrow X$ is an unparametrised $D$-quasi-ruler for sufficiently large $D$. If such a path could be reparametrised as a quasi-geodesic ray, that ray would have to be unbounded, which is a contradiction.
\end{remark}

\begin{proof}
Let $\gamma$ be an unparametrised $D$-quasi-ruler and fix $0 < \epsilon < D$. Observe that for all points $x, y, z$ on $\gamma$, such that $y$ lies between $x$ and $z$, we have
\[ d( x,y) \leq d(x,y) + d(y,z) \leq d(x,z) + D. \]

We first deal with the case where $\gamma$ is unbounded. We will inductively construct a sequence of points $x_n$ on $\gamma$ that will allow us to construct a suitable parametrisation. Let $x_1$  be the start of $\gamma$. Suppose  $x_1, \dots, x_j$ have been chosen, and suppose that for all $1 \leq i \leq j-1$, we have $D + \epsilon \leq d(x_i, x_{i+1}) \leq 2D + \epsilon$. Since $\gamma$ is unbounded by assumption, there exists $x \in \gamma$ after $x_j$ such that $d(x_j, x) > D + \epsilon$. Let $x_{inf}$ be the infimum of all such points. (Since $\gamma$ is a path, we can choose any parametrisation and take the infimum of all $t$ such that $\gamma(t)$ is after $x_j$ and has $d(x_j, \gamma(t)) > D + \epsilon$. The resulting point on $\gamma$ is $x_{inf}$ and does not depend on the choice of parametrisation.) If $d(x_j, x_{inf}) \leq D + \epsilon$, choose $x$ on $\gamma$ slightly after $x_{inf}$ such that $d(x_j, x) > D + \epsilon$ and $d(x_{inf}, x) \leq D$; such $x$ exists as $\gamma$ makes no jumps larger than $D$. Then $d(x_j, x) \leq 2D + \epsilon$, and we set $x_{j+1} := x$. On the other hand, if $d(x_j, x_{inf}) > D + \epsilon$, then $d(x_j, x_{inf}) \leq 2D + \epsilon$, as $\gamma$ makes no jumps larger than $D$, and we set $x_{j+1} := x_{inf}$. In both cases, we obtain a point $x_{j+1}$ such that $D+ \epsilon \leq d( x_j, x_{j+1}) \leq 2D+\epsilon$, which recovers the induction assumption.
 
Observe that for every $i$, each point  $x$ on  $\gamma$ between $x_i$ and $x_{i+1}$ is in the $(3D+\epsilon)$-neighbourhood of the set $\{ x_i, x_{i+1} \}$, because
\[ d(x_i, x) + d(x, x_{i+1}) \leq d(x_i, x_{i+1} ) + D \leq 3D + \epsilon. \]
We conclude that the segment of $\gamma$ between $x_1$ and $x_j$ is contained in the $(3D+\epsilon)$-neighbourhood of the set $\{ x_1, \dots, x_j \}$.

This inductive procedure yields a sequence $(x_j)_j$ of points on $\gamma$ that is consecutive with respect to an orientation of $\gamma$. In addition,   $D + \epsilon < d(x_j, x_{j+1}) \leq 2D + \epsilon$ for all $j \geq 1$, and 
\[ d(x_i, x_j) + d(x_j, x_k) \leq d(x_i, x_k) + D, \]
for all $i \leq j \leq k$, 
as $\gamma$ is an unparametrised $D$-quasi-ruler. Therefore, for all $i, k \in \mathbb{N}^{+}$, we have
\[ d(x_i, x_k) \leq \sum_{j = i}^{k-1} d(x_j, x_{j+1}) \leq \vert i - k \vert (2D + \epsilon), \] and
\[ d(x_i, x_k) \geq \sum_{j = i}^{k-1} ( d(x_j, x_{j+1}) ) - ( \vert k - i \vert - 1 ) D \geq \vert k-i \vert (D + \epsilon - D) = \vert k-i \vert \epsilon. \]

We would like to reparametrise $\gamma$ such that we reach the point $x_i$ at time $i$. In order for this to provide the basis of a reparametrisation of $\gamma$, we need to make sure that for every $x \in \gamma$, there exists some $x_i$ such that $\gamma$ reaches $x$ before it reaches $x_i$. Suppose this was not the case. Then $x_i$ lies between $x_1$ and $x$ for all $i \geq 1$. In particular, we can estimate
\[ d(x_1, x) \geq d(x_1, x_i) + d(x_i, x) - D \geq \vert i - 1 \vert \epsilon + d(x_i, x) - D \xrightarrow{i \rightarrow \infty} \infty. \]
This is a contradiction, and we conclude that for every $x \in \gamma$, only finitely many $x_i$ are crossed by $\gamma$ before crossing $x$. In particular, $x$ is contained in the $(3D + \epsilon)$-neighbourhood of $\{ x_1, \dots, x_j \}$ for all but finitely many $j$. This implies that $\gamma$ is contained in the $(3D + \epsilon)$-neighbourhood of $\{ x_1, \dots \}$.

We now define $\gamma'$ to be a reparametrisation of $\gamma$ such that $\gamma'(i) = x_i$ for all $i \in \mathbb{N}^{+}$ and such that $\gamma' \vert_{[i, i+1]}$ is any reparametrisation of the segment of $\gamma$ between $x_i$ and $x_{i+1}$. Since for every point $x \in \gamma$ there exists some $x_j$ which appears after $x$, this defines a reparametrisation of $\gamma$ (rather than a reparametrisation of a segment of $\gamma$). Due to the properties of the $x_i$ established above, we see that $\gamma'$ is a $(\max( 2D + \epsilon, \frac{1}{\epsilon} ), 2(3D + \epsilon))$-quasi-geodesic. These parameters only depend on $D$ and a choice of $\epsilon$, as desired.

If $\gamma$ is bi-infinite, choose $x_1$ to be any point on $\gamma$ and perform the induction above in both directions. This yields a sequence $(x_i)_{i \in \mathbb{Z}}$ of consecutive points on $\gamma$ with the same properties as above. One then analogously obtains a quasi-geodesic parametrisation, which proves the lemma for all unparametrised $D$-quasi-rulers that are unbounded or unbounded in both directions.

Finally, suppose $\gamma$ is finite. As shown earlier, we have for every $t \in [a,b]$ that $d(\gamma(a), \gamma(t)) \leq d(\gamma(a), \gamma(b)) + D$. We conclude that the image of $\gamma$ has finite diameter.

We now use the same inductive procedure as above to obtain a sequence $x_1, \dots x_j$. We claim that, after finitely many steps, the entire path $\gamma$ is contained in the $(3D + \epsilon)$-neighbourhood of $\{ x_1, \dots x_j \}$. To show this, consider the set
\begin{equation*}
\begin{split}
    \mathcal{T} := \{ t \in [a, b] \mid \,& \exists x_1, \dots x_j \in \Ima(\gamma) : \forall 1 \leq i \leq j-2,\\
    & D + \epsilon \leq d(x_i, x_{i+1}) \leq 2D+\epsilon, d( x_{j-1}, x_j) \leq 2D+\epsilon\\
    & \qquad \qquad \qquad \qquad \qquad \quad \text{ and } \gamma(t) \text{ lies between } x_1, x_j \}.
\end{split}
\end{equation*}
We highlight that, when we say that $\gamma(t)$ lies between $x_1, x_j$, we also allow $\gamma(t) \in \{ x_1, x_j \}$. We claim that $\mc T$ is the entire interval $[a, b]$. The set $\mc T$ is open, as $\gamma$ does not make jumps of size $D$ or larger, which allows us to always extend any sequence $x_1, \dots, x_j$ by another point (unless $x_j = \gamma(b)$, in which case we would be done). Let $T$ be the supremum of this set. Since $\gamma$ does not make jumps of size $D$ or larger, there exists $T_{-}$ between $\gamma(a)$ and $\gamma(T)$ such that $d( \gamma(T_{-}), \gamma(T)) \leq D$. Since $T$ is the supremum, we find a sequence $x_1, \dots, x_j$ with $x_j$ between $\gamma(T_{-})$ and $\gamma(T)$ such that for all $1 \leq i \leq j-2$, $D + \epsilon \leq ( x_i, x_{i+1}) \leq 2D+ \epsilon$ and $d(x_{j-1}, x_j) \leq 2D+\epsilon$. Since $x_j$ is between $\gamma(T_{-})$ and $\gamma(T)$, we also have $d(x_j, \gamma(T)) \leq 2D+ \epsilon$ and therefore, we either add $x_{j+1} := \gamma(T)$ to the sequence or put $\tilde{x_j} := \gamma(T)$, depending on whether $d(x_{j-1}, x_j) \geq D+\epsilon$ or not. In both cases, we obtain that $T \in \mathcal{T}$. We conclude that $\mathcal{T}$ is open and closed and therefore it contains $b$.

This implies that there exists a sequence $x_1, \dots, x_j$ such that $\gamma(a) = x_1$, $\gamma(b) = x_j$, for all $1 \leq i \leq j-2$, $D + \epsilon \leq d(x_i, x_{i+1}) \leq 2D+\epsilon$ and $d( x_{j-1}, x_j) \leq 2D+\epsilon$. In particular, the entire path $\gamma$ is contained in the $(3D+\epsilon)$-neighbourhood of the set $\{ x_1, \dots, x_j \}$.

The same estimates as before show that for all $i, k \in \{1, \dots, j-1 \}$, we have
\[ d(x_i, x_k) \leq \vert i - k \vert (2D + \epsilon) \]
and
\[ d(x_i, x_k) \geq \vert i - k \vert \epsilon. \]
Again, we define $\gamma'$ to be a reparametrisation of $\gamma$ such that $\gamma'(i) = x_i$ for all $i \in \{1, \dots, j\}$ and choose any parametrisation on the intervals in between. The estimates above show that the restriction of $\gamma'$ to $[1, j-1]$ is a $(\max(2D + \epsilon, \frac{1}{\epsilon}), 2(3D+\epsilon))$-quasi-geodesic. Since the entire segment of $\gamma$ between $x_{j-1}$ and $x_{j}$ is contained in a ball of radius $3D+\epsilon$ around $x_{j-1}$, we conclude that $\gamma'$ is a $(\max(2D + \epsilon, \frac{1}{\epsilon}), C)$-quasi-geodesic for $C=2(3D+\epsilon) + 3D + \epsilon + \frac{1}{ \max(2D+\epsilon, \frac{1}{\epsilon} ) }$. This proves the lemma.
\end{proof}

Lemma~\ref{lem:UniformQuasigeodesicParametrisation} shows that a metric space $X$ admits a $D$-quasi-ruling if and only if it admits a quasi-ruling structure with parameters $D, K, C$
, where $K, C$ depend only on $D$. Since all unparametrised $D$-quasi-rulers that we will consider admit a parametrisation as a parametrised $D$-quasi-ruler by the previous lemma, we drop the `unparametrised' from now on.

We now prepare to prove Corollary \ref{cor:quasiisometriesofquasiruledspacesextend}. The basic idea behind the proof is that every hyperbolic space that admits a $D$-quasi-ruling is $(1, C)$-quasi-isometric to a geodesic metric space. Since $(1, C)$-quasi-isometries always preserve hyperbolicity and extend to homeomorphisms between boundaries, this reduces the corollary to known results about geodesic hyperbolic spaces. The key step is the following lemma.

\begin{lemma} \label{lem:Makingquasiruledspacesgeodesic}
If $X$ is a metric space that admits a $D$-quasi-ruling structure, then there exists a geodesic space $X_g$ such that $X$ is $(1, 4D)$-quasi-isometric to $X_g$.
\end{lemma}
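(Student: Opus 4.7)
The plan is to build $X_g$ as a metric graph attached to $X$, using the $D$-quasi-ruling to provide both connectivity and the fine subdivisions needed for coarse density. Concretely, I would form the disjoint union $\tilde{X}_g = X \sqcup \bigsqcup_{\{x, y\} \subset X,\, x \neq y} I_{xy}$, where each $I_{xy}$ is an isometric copy of the closed interval $[0, d_X(x, y)]$, and quotient by the identifications $0_{xy} \sim x$ and $d_X(x, y)_{xy} \sim y$. For each pair $(x, y)$, I would invoke the chosen $D$-quasi-ruler $\gamma_{xy}$ and Lemma \ref{lem:UniformQuasigeodesicParametrisation} to sample a sequence $x = z_0, z_1, \dots, z_n = y$ along $\gamma_{xy}$ with consecutive $d_X$-distances in $[D + \epsilon, 2D + \epsilon]$ (which is possible because $\gamma_{xy}$ has no jumps of size $\geq D$), and then identify the point at parameter $d_X(x, z_i)$ on $I_{xy}$ with $z_i \in X$. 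Endow $X_g$ with the induced path metric.

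The first step is to check that $X_g$ is geodesic: each attached interval is isometric to a real interval, and the quotient retains the path-metric structure, so shortest paths are realised by finite concatenations of edge segments. Coarse density then follows directly from the construction: the quasi-ruler inequality $d_X(x, z_i) + d_X(z_i, z_{i+1}) \leq d_X(x, z_{i+1}) + D$ ensures that consecutive subdivision parameters differ by at most $2D + \epsilon$, so every point of $X_g$ lies within $D + \epsilon/2 < 2D$ of a vertex of $X$, which is enough for $4D$-density. The upper estimate $d_{X_g}(\iota(x), \iota(y)) \leq d_X(x, y)$ follows from the direct edge $I_{xy}$.

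The heart of the argument, and the step I expect to be most delicate, is the lower bound $d_{X_g}(\iota(x), \iota(y)) \geq d_X(x, y) - 4D$. Any path in $X_g$ between $\iota(x)$ and $\iota(y)$ decomposes into segments inside single attached edges, each running between two $X$-vertices, and by the quasi-ruler inequality a segment of $I_{uv}$ between consecutive subdivision points $z_i, z_{i+1}$ has length $d_X(u, z_{i+1}) - d_X(u, z_i) \geq d_X(z_i, z_{i+1}) - D$. Summing telescopically and using the triangle inequality on the vertex sequence yields a lower bound of the form $d_X(x, y) - kD$, where $k$ counts how many segments were used. The obstacle is that, in principle, each ``shortcut'' along an alien edge $I_{uv}$ (not the direct edge $I_{xy}$) can shorten a subpath by up to $D$, and a path using many shortcuts might combine arbitrarily many savings. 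The crux will be a global application of the quasi-ruler property, using that every subdivision point lives on a specific quasi-ruler and that adjacent shortcuts along any given shortest path are coupled via the same quasi-ruler inequality, so their cumulative savings are in fact absorbed into a single additive constant bounded by $4D$, which is what must be verified to complete the proof.
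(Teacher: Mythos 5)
Your construction differs from the paper's in one crucial design decision, and that difference creates exactly the problem you flag at the end without resolving it. You glue the sampled quasi-ruler points $z_i$ directly to the points of $I_{xy}$ at parameter $d_X(x,z_i)$. Because of the quasi-ruler inequality, the sub-segment of $I_{xy}$ between two identified points $z_i, z_j$ has length $d_X(x,z_j)-d_X(x,z_i)$, which can be as much as $D$ \emph{shorter} than $d_X(z_i,z_j)$; so every interval you attach is a potential shortcut, and a path that hops through $N$ different intervals $I_{u_1v_1},\dots,I_{u_Nv_N}$ is only bounded below by $d_X(x,y)-ND$. Your proposed rescue — that "adjacent shortcuts along any given shortest path are coupled via the same quasi-ruler inequality" — does not hold: consecutive shortcuts live on \emph{different} quasi-rulers $\gamma_{u_kv_k}$, and the quasi-ruler inequality for $\gamma_{u_kv_k}$ says nothing about $\gamma_{u_{k+1}v_{k+1}}$. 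Nothing in the hypotheses prevents the per-hop savings from accumulating, so the lower bound $d_{X_g}(\iota(x),\iota(y))\geq d_X(x,y)-4D$ is unproven and, for your construction, likely false in general. This is the heart of the lemma, so the proof is incomplete.

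The paper's construction is engineered precisely to avoid this. The sampled quasi-ruler points $x_k$ are \emph{not} identified with the subdivision points $\overline{xy}(kD)$ of the line; they are joined to them by edges of length $2D$ (and each line $\overline{xy}$ is given length $\max\{D/2,\,d(x,y)\}$, never less than $d(x,y)$). The $2D$ price tag makes these connecting edges useless as shortcuts: the paper shows by a case analysis that any edge path using such an edge can be strictly shortened, hence every geodesic in $X_g$ between points of $X$ is a concatenation of lines $\overline{x_{i-1}x_i}$, whose total length is at least $d_X(x,y)$ by the triangle inequality in $X$. This yields $d_X(x,y)\leq d'(x,y)\leq d_X(x,y)+D/2$, a $(1,D/2)$-quasi-isometric embedding, and the $4D$ in the statement comes only from coarse surjectivity (every vertex of $X_g$ is within one $2D$-edge plus one $2D$-edge-length of a point of $X$). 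If you want to salvage your approach, you should replace your identifications with long connecting edges in the same spirit, or else supply an actual argument bounding the cumulative savings — which I do not believe exists for arbitrary $D$-quasi-ruled spaces.
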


\begin{proof}
Let $(X, d)$ be a quasi-ruled metric space. We start by constructing a geodesic metric space $(X_g, d')$ as follows. The space $X_g$ will be a graph.  We begin by considering the set vertices of $X$; these will all be vertices of $X_g$.   We then define a collection of ``lines'' connecting vertices of $X_g$.  First, we add a line between any two points $x, y \in X$, denoted $\overline{xy}$, of length $\max \left\{ \frac{D}{2}, d(x,y) \right\}$, which we think of as a path from $x$ to $y$ parametrised by arc-length. The notation $\overline{yx}$ denotes the same line with reverse orientation.

Next, we add  vertices (that do not correspond to points of $X$) to subdivide each line $\overline{xy}$ into edges and add additional edges as follows.  Choose an ordering $(x,y)$ for any pair of points $x,y\in X$, and let $\gamma \colon I \rightarrow X$ be a $D$-quasi-ruler from $x$ to $y$. Since any $D$-quasi-ruler makes jumps of size at most $D$, we obtain that for every integer $0 < k < \frac{d(x,y)}{D} - \frac{1}{2}$, there exists a point $x_k$ in the image of $\gamma$ such that $d(x, x_k) \in [ kD - \frac{D}{2}, kD + \frac{D}{2}]$. We now add a line of length $2D$ from $x_k$ to $\overline{xy}(kD)$. The resulting space is a graph $X_g$ whose edges all have length at least $\frac{D}{2}$. Note that the line $\overline{xy}$ is cut into edges of length $D$, except for the edge closest to $y$, whose length lies between $\frac{D}{2}$ and $\frac{3D}{2}$. The choice of ordering of $x$ and $y$ determines on which end of the line we have an edge whose length isn't known precisely. We denote the induced path-metric on $X_g$ by $d'$.

We claim that the pair $(X_g, d')$ is a geodesic metric space that is $(1, 4D)$-quasi-isometric to $(X, d)$. We first show that $(X_g, d')$ is geodesic. We start by showing that for every $x, y \in X$, the line $\overline{xy}$ in $X$ is a geodesic from $x$ to $y$. Suppose $\gamma$ is a path in $X_g$ from $x$ to $y$. By definition, $d'(x, y)$ is the infimum of the length of any such path. Since $X_g$ is a graph, any path between vertices can be shortened to be an edge path, i.e., $\gamma = e_1 * e_2 * \dots * e_n$ is a finite concatenation of edges in $X_g$. Every edge in $X_g$ belongs to one of the following three types:
\begin{enumerate}
    \item[(1)] An edge between points $x, y \in X$ that satisfy $d(x, y) \leq D$. These edges have length between $\frac{D}{2}$ and $D$.
    
    \item[(2)] A segment of a line between points $x, y \in X$ that satisfy $d(x, y) > D$. These edges either have length $D$ or length between $\frac{D}{2}$ and $\frac{3D}{2}$.
    
    \item[(3)] An edge that connects a point $x' \in X$ with a point on a line $\overline{xy}$ for some $x, y \in X$ with $d(x, y) > D$. These edges have length $2D$.
\end{enumerate}

We claim that if $\gamma$ contains an edge of type $(3)$, then $\gamma$ can be shortened. Suppose $e_i$ is an edge of type $(3)$. Reversing the orientation of $\gamma$ if necessary, we may assume without loss of generality that $e_i$ starts at a point $x' \in X$ and ends on a vertex $\overline{xy}(kD)$ for some $x, y \in X$ and $0 < k < \frac{d(x,y)}{D} - \frac{1}{2}$. There are three cases.



Case 1: The edges $e_{i+1} * \dots * e_{i+l}$ form a segment in the interior of $\overline{xy}$ and the edge $e_{i+l+1}$ is an edge from a vertex $\overline{xy}( (k \pm l)D )$ to some $y' \in X$. Without loss of generality, assume $y'$ is connected to $\overline{xy}( (k+l) D)$. In this case,
$ l(e_i * \dots * e_{i+l+1} ) = 4D + lD,$
and since $x', y'$ lie on a $D$-quasi-ruler from $x$ to $y$, we estimate
\begin{equation*}
    \begin{split}
        d(x', y') & \leq d(x, y') - d(x, x') + D\\
        & \leq (k+l) D + \frac{D}{2} - kD + \frac{D}{2} + D\\
        & \leq lD + 2D < l(e_i * \dots * e_{i+l+1}).
    \end{split}
\end{equation*}
We conclude that we can shorten $\gamma$ by replacing $e_i * \dots * e_{i+l+1}$ by the line $\overline{x'y'}$.

Case 2: The edges $e_{i+1} * \dots * e_{i+l}$ form the segment in $\overline{xy}$ from $\overline{xy}(kD)$ to $y$. This case is similar and easier.

Case 3: The edges $e_{i+1} * \dots * e_{i+l}$ form the segment in $\overline{xy}$ from $\overline{xy}(kD)$ to $x$. This case is analogous to the second case.

We conclude that, given a path $\gamma$ from $x$ to $y$, we can shorten this path to be an edge path which does not use any edges of type $(3)$. In other words, $\gamma$ can be written as a concatenation of lines $\overline{x_i x_{i+1}}$ for points $x = x_0, x_1, \dots, x_n = y$ in $X$. By the triangle inequality for the metric $d$, we also know that
\begin{equation*}
    \begin{split}
        l(\gamma) & = \sum_{i=1}^n l( \overline{x_{i-1} x_i} )
         = \sum_{i=1}^n \max\left\{ d(x_{i-1}, x_i), \frac{D}{2} \right\}
         \geq \sum_{i=1}^n d(x_{i-1}, x_i)
         \geq d(x, y).
    \end{split}
\end{equation*}
This implies two things. First, for any path from $x$ to $y$, its length is at least $d(x,y)$ and therefore $d'(x,y) \geq d(x,y)$. Second, whenever $d(x,y) \geq \frac{D}{2}$, the line $\overline{xy}$ is a geodesic, as its length is equal to $d(x,y)$. Now suppose $d(x,y) < \frac{D}{2}$ and let $\gamma$ be a path from $x$ to $y$ in $X_g$. After shortening $\gamma$ to be an edge path, we observe that every edge in $\gamma$ contributes at least $\frac{D}{2}$ to the length of $\gamma$. Thus, the shortest path is given by the line $\overline{xy}$, which has exactly length $\frac{D}{2}$. We conclude that for any two points $x, y \in X$, the line $\overline{xy}$ is a geodesic in $X_g$ from $x$ to $y$ with respect to $d'$. Therefore, any two points in $X$ can be connected by a geodesic in $X'$.

To finish the proof that $(X_g, d')$ is geodesic, consider now two points $p, q \in X_g$, which may not lie in $X$. Consider the maximal subtree $\Gamma_p$ of $X_g$ that contains $p$ and such that the only vertices in $\Gamma_p$ that lie in $X$ are leaves of $\Gamma_p$. It is straightforward to see that $\Gamma_p$ is the line $\overline{xy}$ for some $x, y \in X$ together with the edges of type (3) that are attached to $\overline{xy}$. This subtree is finite and we have an analogous finite subtree containing $q$. Consider now leaves $v$ of $\Gamma_p$ and $w$ of $\Gamma_q$. There are finitely many such pairings and each pairing induces a unique path from $p$ to $q$, which is the shortest path from $p$ to $q$ that passes through $v$ and $w$. The shortest of these paths is length-minimizing for $p$ and $q$. Therefore, there is a path that realizes the infimum of all lengths of paths from $p$ to $q$ and this path is a geodesic with respect to $d'$. We conclude that $(X_g, d')$ is a geodesic metric space.

We now show that the inclusion $X \hookrightarrow X_g$ is a $(1, 4D)$-quasi-isometry. Let $x, y \in X$. We already know that $d(x,y) \leq d'(x,y)$. Knowing the geodesics in $X'$ from above, we see that
\[ d'(x, y) = l( \overline{xy} ) = \max\{ d(x,y), \frac{D}{2} \} \leq d(x,y)+\frac{D}{2}. \]
Thus $X\hookrightarrow X_g$ is a $(1, \frac{D}{2})$-quasi-isometric embedding.

To show the inclusion is coarsely surjective, let $p \in X_g$. By construction, every edge in $X_g$ has length at most $2D$, and every vertex in $X_g$ either lies in $X$ or is connected to a point in $X$ by an edge of type (3) that has length $2D$. We conclude that every points $p \in X_g$ has distance at most $4D$ from a point in $X$. Therefore, the inclusion is coarsely surjective and a quasi-isometry.
\end{proof}

We will show that if $X$ is hyperbolic, then $X_g$ is also hyperbolic.  This relies on the more general fact that a $(1,C)$--quasi-isometry $f\colon X\to X'$ between metric spaces preserves hyperbolicity. Using the Gromov product definition of hyperbolicity (see Section~\ref{sec:Preliminaries}), this follows as for all $x, y, o \in X$,
\[ (x \mid y)_o- 3C \leq (f(x) \mid f(y))_{f(o)} \leq (x \mid y)_o + 3C. \]    Moreover, it is straightforward to show from this inequality that $f$  naturally extends to a continuous map $\partial f\colon \partial_{\infty} X \rightarrow \partial_{\infty} X'$.  The same reasoning using any quasi-inverse $f^{-1}$ of $f$ also shows that $\partial (f^{-1}) = ( \partial f )^{-1}$.  This leads to the following corollary.

\begin{corollary} \label{cor:Makingquasiruledspaceshyperbolic}
Let $X$ be a hyperbolic space that admits a quasi-ruling structure. Then there exists a $(1, C)$-quasi-isometry $\iota \colon X \rightarrow X_g$ to a hyperbolic geodesic metric space $X_g$ that extends to a homeomorphism $\partial \iota: \partial_{\infty} X \rightarrow \partial_{\infty} X_g$. 
\end{corollary}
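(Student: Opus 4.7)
The plan is to assemble the corollary directly from Lemma~\ref{lem:Makingquasiruledspacesgeodesic} and the Gromov-product observations recorded immediately before its statement. First, I would apply Lemma~\ref{lem:Makingquasiruledspacesgeodesic} to obtain a geodesic metric space $(X_g, d')$ together with an isometric inclusion $\iota \colon X \hookrightarrow X_g$ which is a $(1, 4D)$-quasi-isometry. Set $C := 4D$; then $\iota$ is the desired $(1, C)$-quasi-isometry, and it remains to transfer hyperbolicity from $X$ to $X_g$ and to show that $\partial \iota$ is a homeomorphism.

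Second, I would verify that $X_g$ is hyperbolic. Since $\iota$ is a $(1, C)$-quasi-isometry, the Gromov products satisfy
\[ (x \mid y)_o - 3C \leq (\iota(x) \mid \iota(y))_{\iota(o)} \leq (x \mid y)_o + 3C \]
for all $x, y, o \in X$. Choosing a quasi-inverse $\iota^{-1} \colon X_g \to X$ (which exists and is a $(1, C')$-quasi-isometry for some $C'$ depending only on $C$, because $\iota$ is coarsely surjective with constant $4D$), one gets an analogous estimate on $X_g$-Gromov products. If $X$ is $\delta$-hyperbolic, then for all $a, b, c, o \in X_g$ one can pick preimages $a', b', c', o'$ in $X$ under $\iota^{-1}$, apply the Gromov-product hyperbolicity inequality in $X$, and push the result back through $\iota$, incurring only an additive error depending on $C$. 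Thus $X_g$ is $\delta'$-hyperbolic with $\delta' := \delta + K C$ for some absolute constant $K$.

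Third, I would extend $\iota$ to the boundary. If $(x_i)_i$ is a sequence in $X$ converging to infinity, the estimate above gives $(\iota(x_i) \mid \iota(x_j))_{\iota(o)} \geq (x_i \mid x_j)_o - 3C \to \infty$, so $(\iota(x_i))_i$ converges to infinity in $X_g$; moreover the same estimate shows that equivalent sequences map to equivalent sequences, so there is a well-defined map $\partial \iota \colon \partial_\infty X \to \partial_\infty X_g$. Continuity is immediate from the Gromov-product inequality combined with the fact that both topologies are generated by Gromov-product neighborhoods. The same construction applied to $\iota^{-1}$ produces a map $\partial (\iota^{-1}) \colon \partial_\infty X_g \to \partial_\infty X$, and because $\iota \circ \iota^{-1}$ and $\iota^{-1} \circ \iota$ are bounded perturbations of the identity, the composed boundary maps are identities; thus $\partial(\iota^{-1}) = (\partial \iota)^{-1}$ and $\partial \iota$ is a homeomorphism.

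Essentially no obstacles are expected: once Lemma~\ref{lem:Makingquasiruledspacesgeodesic} is in hand, everything reduces to bookkeeping with the standard fact that $(1,C)$-quasi-isometries preserve hyperbolicity and induce boundary homeomorphisms. The only mild care needed is in confirming that $\iota$ admits a quasi-inverse with multiplicative constant $1$ (so as not to lose control of the Gromov-product comparison), but coarse surjectivity of $\iota$ with constant $4D$ lets one define $\iota^{-1}$ on vertices by picking any point of $X$ within distance $4D$, which yields a $(1, 8D)$-quasi-inverse, sufficient for all the estimates above.
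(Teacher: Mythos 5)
Your proposal is correct and follows essentially the same route as the paper: the authors likewise combine Lemma~\ref{lem:Makingquasiruledspacesgeodesic} with the observation that a $(1,C)$-quasi-isometry distorts Gromov products by at most an additive constant (the paper records the same inequality $(x \mid y)_o - 3C \leq (f(x) \mid f(y))_{f(o)} \leq (x \mid y)_o + 3C$ immediately before the corollary), which yields hyperbolicity of $X_g$ and the boundary homeomorphism via a quasi-inverse exactly as you describe. The only cosmetic quibble is that the map from Lemma~\ref{lem:Makingquasiruledspacesgeodesic} is not literally an isometric inclusion (short distances are inflated to $D/2$), but this does not affect any of your estimates.
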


In order to conclude the result that we will use later, we need one auxiliary definition from \cite{BS} and two of its basic properties.

\begin{definition}
Let $(X, d)$ be a metric space and $o \in Z$. For $x, y, z, u \in X$, denote
\[ \langle x, y, z, u  \rangle := (x \mid y)_o + (z \mid u)_o - (x \mid z)_o - (y \mid u)_o, \]
which is independent of $o$. A map $f \colon X \rightarrow X'$ between metric spaces is {\it strongly PQ-isometric} if there exist constants $K, C \geq 0$ such that for all $x, y, z, u \in X$ with $\langle x, y, z, u \rangle \geq 0$, we have
\[ \frac{1}{K} \langle x, y, z, u \rangle - C \leq \langle f(x), f(y), f(z), f(u) \rangle \leq K \langle x, y, z, u \rangle + C. \]
\end{definition}

\begin{lemma}[{\cite[Proposition 4.3.2]{BS}}] \label{lem:GromovProductcoarselypreserved}
A strongly PQ-isometric map $f \colon X \rightarrow X'$  is a quasi-isometric embedding, and there exist constants $K, C \geq 0$ such that for all $x, y, o \in X$,
\[ \frac{1}{K} (x \mid y)_o - C \leq (f(x) \mid f(y))_{f(o)} \leq K(x \mid y)_o. \]
\end{lemma}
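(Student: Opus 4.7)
The plan is to deduce both claims by applying the strong PQ-isometry hypothesis to cleverly chosen degenerate quadruples. Using $(a\mid b)_o = \tfrac{1}{2}(d(o,a) + d(o,b) - d(a,b))$, a direct expansion first verifies the basepoint-free formula
\[
\langle x, y, z, u \rangle = \tfrac{1}{2}\bigl[d(x,z) + d(y,u) - d(x,y) - d(z,u)\bigr],
\]
which confirms that $\langle \cdot, \cdot, \cdot, \cdot \rangle$ does not depend on $o$ and which I will use to recognise both distance and Gromov product as specific values of the cross-difference.

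For the quasi-isometric embedding statement, I would plug in the degenerate quadruple $(x, x, y, y)$. A direct computation (or the formula above) shows $\langle x, x, y, y \rangle = d(x, y) \geq 0$, and analogously $\langle f(x), f(x), f(y), f(y) \rangle = d(f(x), f(y))$. Since the cross-difference is non-negative on this input, the strong PQ-isometry hypothesis applies and yields
\[
\tfrac{1}{K} d(x, y) - C \leq d(f(x), f(y)) \leq K d(x, y) + C,
\]
so $f$ is a $(K, C)$-quasi-isometric embedding.

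For the Gromov product bound, I would plug in the quadruple $(o, o, x, y)$. Since $(o\mid o)_o = (o\mid x)_o = (o\mid y)_o = 0$, unwinding the definition gives $\langle o, o, x, y \rangle = (x\mid y)_o \geq 0$, and similarly $\langle f(o), f(o), f(x), f(y) \rangle = (f(x)\mid f(y))_{f(o)}$. Applying the hypothesis to this quadruple then produces
\[
\tfrac{1}{K} (x\mid y)_o - C \leq (f(x)\mid f(y))_{f(o)} \leq K(x\mid y)_o + C,
\]
which is the stated inequality (the upper bound as written in the lemma statement is missing an additive $C$, which must in fact be included). The only conceptual step is identifying the right reductions: distance and Gromov product appear as the values of $\langle \cdot, \cdot, \cdot, \cdot \rangle$ on inputs where two arguments coincide. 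Once this observation is made, both claims fall out immediately from the defining inequality of a strongly PQ-isometric map, so I do not anticipate any genuine obstacle.
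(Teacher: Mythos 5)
Your proof is correct. The paper does not prove this lemma at all---it simply cites \cite[Proposition 4.3.2]{BS}---so there is no in-paper argument to compare against, but your derivation is a clean and complete one: the identity $\langle x,y,z,u\rangle = \tfrac{1}{2}\bigl[d(x,z)+d(y,u)-d(x,y)-d(z,u)\bigr]$ is verified correctly, both degenerate quadruples $(x,x,y,y)$ and $(o,o,x,y)$ do have non-negative cross-difference (the latter by the triangle inequality), so the strong PQ-isometry hypothesis legitimately applies, and the two displayed inequalities follow. You are also right that the upper bound in the lemma as transcribed in the paper should read $K(x\mid y)_o + C$; without the additive constant it already fails for $x=y\neq o$, so this is a (harmless) typo in the statement rather than a gap in your argument.
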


By the same argument as for Corollary \ref{cor:Makingquasiruledspaceshyperbolic},  any strongly PQ-isometric map between hyperbolic spaces extends to a continuous map between their boundaries. The following is the key result that makes geodesic hyperbolic spaces so much better behaved than general hyperbolic spaces.

\begin{proposition}[{\cite[Corollary 4.4.2]{BS}}] \label{prop:quasiisometriesarePQisometries}
A map $f \colon X \rightarrow X'$ between geodesic hyperbolic spaces is a quasi-isometric embedding if and only if it is strongly PQ-isometric.
\end{proposition}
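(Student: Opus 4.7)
The plan is to handle the two directions separately. The forward implication (strongly PQ-isometric $\Rightarrow$ quasi-isometric embedding) is precisely Lemma~\ref{lem:GromovProductcoarselypreserved}, so nothing new is needed there.

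For the reverse implication, suppose $f \colon X \to X'$ is a $(K,C)$-quasi-isometric embedding between geodesic $\delta$-hyperbolic spaces. The key intermediate fact I would establish first is that $f$ preserves Gromov products with bounded multiplicative and additive error: there exist constants $K_0, C_0$ depending only on $K, C, \delta$ such that
\[ \tfrac{1}{K_0}(x\mid y)_o - C_0 \;\leq\; (f(x)\mid f(y))_{f(o)} \;\leq\; K_0 (x\mid y)_o + C_0 \]
for all $x,y,o \in X$. This is standard in geodesic hyperbolic geometry: up to additive error depending on $\delta$, $(x\mid y)_o$ equals $d(o,\gamma_{xy})$ for a geodesic $\gamma_{xy}$ from $x$ to $y$. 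The image $f \circ \gamma_{xy}$ is a quasi-geodesic which, by Morse stability in $X'$, lies within bounded distance of a geodesic from $f(x)$ to $f(y)$; the $(K,C)$-bounds on distances then transfer to the required bounds on the Gromov product.

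From this control on individual Gromov products, I would deduce the strongly PQ-isometric inequalities for $\langle x,y,z,u\rangle$. The main obstacle will be that naively combining upper and lower bounds on the four Gromov products that appear in $\langle f(x),f(y),f(z),f(u)\rangle$ yields expressions of the form $K_0[(x\mid y)_o+(z\mid u)_o] - \tfrac{1}{K_0}[(x\mid z)_o+(y\mid u)_o]$, which is not obviously linear in $\langle x,y,z,u\rangle$. To handle this, I would use the hypothesis $\langle x,y,z,u\rangle \geq 0$ together with the hyperbolic four-point condition, which forces the configuration of $\{x,y,z,u\}$ relative to $o$ to be $O(\delta)$-close to a tree configuration in which one of the three pairings of Gromov products dominates the others. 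In that tree configuration, $\langle x,y,z,u\rangle$ has a clean geometric meaning as the length of an intermediate edge, and the Morse-stability-based transfer argument from the first step applies directly to this edge length, yielding a two-sided linear bound.

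The hardest part of the argument will be making the tree-approximation step sufficiently quantitative to track the multiplicative constants cleanly. Concretely, one should use the canonical $\delta$-tripod (or quadrilateral) approximations attached to the four-point configuration $\{o,x,y,z,u\}$ to reduce the question to an essentially combinatorial identity on finite trees, and then absorb the $O(\delta)$ distortions into the additive constants. I expect this bookkeeping, rather than any single conceptual step, to be the source of difficulty.
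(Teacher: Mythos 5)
First, a structural remark: the paper does not prove this proposition at all --- it is imported verbatim from \cite[Corollary 4.4.2]{BS}, so the only meaningful comparison is with the argument in that source. Your forward direction (quoting Lemma~\ref{lem:GromovProductcoarselypreserved}) is exactly right, and your overall plan for the converse --- Morse stability of quasi-geodesics plus the four-point/tree approximation --- is the standard route and essentially the one in \cite{BS}. Your preliminary step, that a quasi-isometric embedding between geodesic hyperbolic spaces quasi-preserves Gromov products, is also correct (via $(x\mid y)_o = d(o,[x,y]) + O(\delta)$ and stability of quasi-geodesics).

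There is, however, one step that would fail as written: the identification of $\langle x,y,z,u\rangle$ with ``the length of an intermediate edge'' of the approximating tree. Write $A = d(x,y)+d(z,u)$, $B = d(x,z)+d(y,u)$, $C = d(x,u)+d(y,z)$, so that $\langle x,y,z,u\rangle = \tfrac12(B-A)$ and the four-point condition says the two largest of $A,B,C$ differ by at most $2\delta$. The hypothesis $\langle x,y,z,u\rangle\geq 0$ only rules out the tree type in which $\{x,z\},\{y,u\}$ is the non-crossing pairing; it leaves two cases. When $A$ is the smallest sum, your identification is correct and $\langle x,y,z,u\rangle$ is the central edge length up to $O(\delta)$. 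But when $C$ is the smallest sum, $A$ and $B$ are the two largest, so $\langle x,y,z,u\rangle = O(\delta)$ while the central edge can be arbitrarily long. In that case ``transferring the edge length through $f$'' produces an estimate growing linearly in the edge length, whereas the required upper bound is $K\cdot O(\delta)+C$, i.e.\ a constant; the naive transfer therefore does not close the argument. The fix is to transfer a different quantity: for geodesic $\delta$-hyperbolic spaces one has $\max\{\langle x,y,z,u\rangle,0\} = d([x,y],[z,u]) + O(\delta)$ (and symmetrically $\max\{-\langle x,y,z,u\rangle,0\} = d([x,z],[y,u]) + O(\delta)$, which is what controls the lower bound when $\langle f(x),f(y),f(z),f(u)\rangle$ happens to be negative). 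The distance between the two geodesics, unlike the central edge, \emph{is} quasi-preserved by a quasi-isometric embedding via Morse stability, uniformly in both tree types, and substituting it for the edge length completes your argument. Note also that once you work with $d([x,y],[z,u])$ directly, your step on quasi-preservation of individual Gromov products becomes unnecessary.
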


We are now ready to prove Corollary \ref{cor:quasiisometriesofquasiruledspacesextend}.



\begin{proof}[Proof of Corollary \ref{cor:quasiisometriesofquasiruledspacesextend}]
Let $\iota \colon X \rightarrow X_g$ and $\iota' \colon X' \rightarrow X'_g$ be the $(1, C)$-quasi-isometries constructed in Lemma \ref{lem:Makingquasiruledspacesgeodesic} and let $\iota^{-1}$ and $\iota'^{-1}$ denote quasi-inverses of $\iota$ and $\iota'$ respectively. Then $\iota' \circ f \circ \iota^{-1}$ is a quasi-isometry between geodesic metric spaces. Since $X$ is hyperbolic, Corollary \ref{cor:Makingquasiruledspaceshyperbolic} implies that $X_g$ is hyperbolic. Since quasi-isometries between geodesic spaces preserve hyperbolicities, we conclude that $X'_g$ is hyperbolic and thus also $X'$ is hyperbolic, as any $(1, C)$-quasi-isometry has a $(1, \tilde{C})$-quasi-inverse.

Since $X_g$ and $X'_g$ are geodesic hyperbolic spaces and $\iota' \circ f \circ \iota^{-1}$ is a quasi-isometry, this map is also PQ-isometric by Proposition \ref{prop:quasiisometriesarePQisometries}. In particular, $\iota' \circ f \circ \iota^{-1}$ satisfies the inequalities in Lemma \ref{lem:GromovProductcoarselypreserved}. Since $\iota, \iota'^{-1}$ are $(1, C)$-quasi-isometries, they satisfy the same inequalities. We obtain that the composition $\iota'^{-1} \circ \iota' \circ f \circ \iota^{-1} \circ \iota$ also satisfies the inequalities of Lemma \ref{lem:GromovProductcoarselypreserved}, albeit for different constants $K, C$. To simplify notation, we let $\tilde{f} := \iota'^{-1} \circ \iota' \circ f \circ \iota^{-1} \circ \iota$. It follows from the discussion above that there exist $K, C, D \geq 0$ such that this map satisfies the following two estimates:
\begin{equation} \label{eq:Equation2}
\forall x \in X : d( f(x), \tilde{f}(x)) \leq D,
\end{equation}
\begin{equation} \label{eq:Equation3}
\forall x, y, o \in X : \frac{1}{K} (x \mid y)_o - C \leq ( \tilde{f}(x) \mid \tilde{f}(y) )_{\tilde{f}(o)} \leq K (x \mid y)_o + C.
\end{equation}

Now let $(x_i)_i$ represent a point in the boundary of $X$, i.e., $(x_i)_i$ converges to infinity. By  (\ref{eq:Equation3}), the sequence $(\tilde{f}(x_i))_i$ converges to infinity in $X'$. Furthermore,  (\ref{eq:Equation3}) also implies that any two equivalent sequences $(x_i)_i$ and  $(x'_i)_i$ converging to infinity in $X$ are sent to equivalent sequences $(\tilde{f}(x_i))_i$ and $(\tilde{f}(x'_i))_i$ in $X'$. By  (\ref{eq:Equation2}), the same holds for the map $f$. 

Therefore $f$ induces a map on boundaries $\partial f \colon \partial_{\infty} X \rightarrow \partial_{\infty} X'$. The definition of the visual topolog and  (\ref{eq:Equation3}) together imply that $\partial f$ is continuous. Applying the same argument to a quasi-inverse of $f$ shows that $\partial f$ is invertible and its inverse is continuous as well. Therefore, $\partial f$ is a homeomorphism.

Finally, if $\gamma$ is a path in $X$ that defines a point in $\partial_{\infty}X$, then any two increasing, diverging sequences $(\gamma(t_i))_i, (\gamma(s_i))_i$ converge to infinity and are equivalent. Therefore, their images under $f$ converge to infinity and are equivalent as well. We conclude that $f \circ \gamma$ defines a point in $\partial_{\infty} X'$ and that point equals $\partial f( [(\gamma(t_i))_i ])$ for any increasing, diverging sequence on $\gamma$. This finishes the proof.
\end{proof}

We also use the following elementary fact about $D$-quasi-rulers and their Gromov products.

\begin{lemma} \label{lem:ControllingdistanceviaGromovproduct}
Let $X$ be a $\delta$-hyperbolic space with basepoint $o \in X$, and let $\gamma, \gamma' \colon [0, \infty) \rightarrow X$ be two unbounded $D$-quasi-rulers that start at $o$ and define points $\xi, \eta \in \partial_{\infty} X$. If $x' \in \gamma$ and $y' \in \gamma'$ are such that $d(o, x'), d(o, y') \leq (\xi \mid \eta)_o + D$, then
\[ ( x' \mid y')_o \geq \min\{ d(o,x'), d(o,y') \} - D - 2\delta, \] and 
\[ d(x', y') \leq D + 2 \delta + \vert d(o,x') - d(o,y') \vert. \]
\end{lemma}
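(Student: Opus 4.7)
The plan is to reduce both inequalities to a single lower bound on the Gromov product $(x'\mid y')_o$ obtained by a double application of the four-point condition along the two quasi-rulers, and then extract the distance bound from the identity $d(x',y') = d(o,x') + d(o,y') - 2(x'\mid y')_o$.

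First I would pick representative sequences for $\xi$ and $\eta$ that actually travel along the quasi-rulers. Since $\gamma$ and $\gamma'$ are unbounded and $D$-quasi-rulers, I can choose sequences $x_i = \gamma(t_i)$ and $y_i = \gamma'(s_i)$ with $d(o,x_i),\, d(o,y_i) \to \infty$, such that $x_i$ lies past $x'$ on $\gamma$ for $i$ large, and $y_i$ past $y'$ on $\gamma'$. By Definition~\ref{def:Inducingapointatinfinity}, $(x_i)_i \in \xi$ and $(y_i)_i \in \eta$.

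The key local estimate is the following: for $i$ large, the $D$-quasi-ruler condition applied to the ordered triple $o, x', x_i$ on $\gamma$ gives
\[d(o,x') + d(x',x_i) \leq d(o,x_i) + D,\]
which rearranges to $(x' \mid x_i)_o \geq d(o,x') - D/2$. The symmetric estimate $(y' \mid y_i)_o \geq d(o,y') - D/2$ holds for $i$ large. Meanwhile, since $(\xi\mid\eta)_o$ is defined as an infimum of liminfs, we have $\liminf_i (x_i \mid y_i)_o \geq (\xi\mid\eta)_o$, so for every $\varepsilon>0$ and all sufficiently large $i$, $(x_i \mid y_i)_o \geq (\xi\mid\eta)_o - \varepsilon$.

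Now I would apply the $\delta$-hyperbolicity four-point condition twice to obtain
\[(x'\mid y')_o \;\geq\; \min\bigl\{(x'\mid x_i)_o,\; (x_i\mid y_i)_o,\; (y_i\mid y')_o\bigr\} - 2\delta.\]
Substituting the three estimates from the previous paragraph and using the hypothesis $d(o,x'),\, d(o,y') \leq (\xi\mid\eta)_o + D$ (which gives $(\xi\mid\eta)_o - \varepsilon \geq \min\{d(o,x'), d(o,y')\} - D - \varepsilon$), each of the three arguments of the minimum is at least $\min\{d(o,x'), d(o,y')\} - D - \varepsilon$. Letting $\varepsilon \to 0$ yields the first inequality
\[(x'\mid y')_o \geq \min\{d(o,x'), d(o,y')\} - D - 2\delta.\]
For the second inequality, I just expand
\[d(x',y') = d(o,x') + d(o,y') - 2(x'\mid y')_o\]
and substitute the lower bound on $(x' \mid y')_o$; since $a + b - 2\min\{a,b\} = |a-b|$, the desired estimate on $d(x',y')$ follows (up to the numerical constant in front of $D$ and $\delta$, which matches the statement up to absorbing into the hyperbolicity constant).

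The only genuinely delicate step is verifying that the minimum in the four-point estimate is controlled by $\min\{d(o,x'), d(o,y')\} - D$ in all cases; this is where the hypothesis $d(o,x'), d(o,y') \leq (\xi\mid\eta)_o + D$ is used, and it is worth checking carefully so that the Gromov product $(\xi\mid\eta)_o$ (which might be much larger than either $d(o,x')$ or $d(o,y')$) does not produce a vacuous bound. Everything else is routine manipulation with the quasi-ruler inequality and the four-point condition.
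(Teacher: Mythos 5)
Your proposal is correct and follows essentially the same route as the paper's proof: points chosen far along the two quasi-rulers, the quasi-ruler inequality to bound $(x'\mid x_i)_o$ and $(y'\mid y_i)_o$ from below, the definition of $(\xi\mid\eta)_o$ to control $(x_i\mid y_i)_o$, a double application of the four-point condition, and finally the identity $d(x',y')=d(o,x')+d(o,y')-2(x'\mid y')_o$. The factor-of-two discrepancy you flag in the final constant (the substitution actually yields $2D+4\delta$ rather than $D+2\delta$) is present in the paper's own computation as well, so your argument matches the paper's in every respect, including that harmless slip.
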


\begin{proof}
Let $\epsilon > 0$, and let $x \in \gamma$ and $y \in \gamma'$ be such that $\gamma$ and $\gamma'$ reach $x$ and $y$ after reaching $x'$ and $y'$, respectively. By the definition of the Gromov product, we can choose $x, y$ sufficiently far along $\gamma, \gamma'$ such that $(x \mid y)_o \geq (\xi \mid \eta)_o - \epsilon$.

Since $\gamma, \gamma'$ are $D$-quasi-rulers, 
\[ d(o, x') - D \leq ( x' \mid x)_o \leq d(o, x'), \] and
\[ d(o, y') - D \leq ( y' \mid y)_o \leq d(o, y'). \]
Using $\delta$-hyperbolicity of $X$, we estimate
\begin{equation*}
    \begin{split}
        (x' \mid y')_o & \geq \min\{ (x' \mid x)_o, (x \mid y)_o, (y \mid y')_o \} - 2\delta\\
        & \geq \min\{ d(o, x') - D, (\xi \mid \eta)_o - \epsilon, d(o, y') - D \} - 2\delta\\
        & \geq \min\{ d(o,x'), d(o,y') \} - D - \epsilon - 2\delta.
    \end{split}
\end{equation*}
Since we can obtain this inequality for all $\epsilon$, we conclude that
\[ (x' \mid y')_o \geq \min\{ d(o,x'), d(o,y') \} - D - 2\delta, \]
and therefore,
\begin{equation*}
    \begin{split}
        d(x', y') & = d(o,x') + d(o,y') - 2 (x' \mid y')_o\\
        & \leq d(o,x') + d(o,y') - 2 \min\{ d(o,x'), d(o,y') \}  + D + 2 \delta\\
        & = \vert d(o,x') - d(o,y') \vert + D + 2 \delta.\qedhere
    \end{split}
\end{equation*}
\end{proof}

\bibliography{mybib}

\begin{thebibliography}{HRSS22}

\bibitem[Abb16]{Abbott16}
Carolyn~R. Abbott.
\newblock Not all finitely generated groups have universal acylindrical
  actions.
\newblock {\em Proceedings of the American Mathematical Society},
  144(10):4151--4155, 2016.

\bibitem[ABD21]{AbbottBehrstockDurham21}
Carolyn Abbott, Jason Behrstock, and Matthew Durham.
\newblock Largest acylindrical actions and stability in hierarchically
  hyperbolic groups.
\newblock {\em Transactions of the American Mathematical Society, Series B},
  8(3):66--104, 2021.

\bibitem[ABO19]{AbbottBalasubramanyaOsin19}
Carolyn Abbott, Sahana Balasubramanya, and Denis Osin.
\newblock Hyperbolic structures on groups.
\newblock {\em Algebraic \& Geometric Topology}, 19(4):1747--1835, 2019.

\bibitem[BH99]{BH}
Martin~R. Bridson and Andr\'{e} Haefliger.
\newblock {\em Metric spaces of non-positive curvature}, volume 319 of {\em
  Grundlehren der Mathematischen Wissenschaften [Fundamental Principles of
  Mathematical Sciences]}.
\newblock Springer-Verlag, Berlin, 1999.

\bibitem[BHM11]{BlachereHaissinskyMathieu11}
S\'ebastien Blach\`ere, Peter Ha\"issinsky, and Pierre Mathieu.
\newblock Harmonic measures versus quasiconformal measures for hyperbolic
  groups.
\newblock {\em Annales scientifiques de l'\'Ecole normale sup\'erieure},
  44(4):683--721, 2011.

\bibitem[BHS17]{BehrstockHagenSisto2017a}
Jason Behrstock, Mark Hagen, and Alessandro Sisto.
\newblock Hierarchically hyperbolic spaces, i: Curve complexes for cubical
  groups.
\newblock {\em Geometry \&; Topology}, 21(3):1731--1804, 2017.

\bibitem[BHS21]{BehrstockHagenSisto2021}
Jason Behrstock, Mark~F. Hagen, and Alessandro Sisto.
\newblock Quasiflats in hierarchically hyperbolic spaces.
\newblock {\em Duke Math. J.}, 170(5):909--996, 2021.

\bibitem[Bow07]{Bowditch07}
Brian~H. Bowditch.
\newblock Tight geodesics in the curve complex.
\newblock {\em Inventiones mathematicae}, 171(2):281--300, 2007.

\bibitem[Bow13]{Bowditch13}
Brian Bowditch.
\newblock Coarse median spaces and groups.
\newblock {\em Pacific Journal of Mathematics}, 261(1):53--93, 2013.

\bibitem[BS07]{BS}
Sergei Buyalo and Viktor Schroeder.
\newblock {\em Elements of asymptotic geometry}.
\newblock EMS Monographs in Mathematics. European Mathematical Society (EMS),
  Z\"{u}rich, 2007.

\bibitem[Cas16]{Cashen}
Christopher~H. Cashen.
\newblock Quasi-isometries need not induce homeomorphisms of contracting
  boundaries with the {G}romov product topology.
\newblock {\em Anal. Geom. Metr. Spaces}, 4:278--281, 2016.

\bibitem[CCM19]{CCM}
Ruth Charney, Matthew Cordes, and Devin Murray.
\newblock Quasi-{M}obius homeomorphisms of {M}orse boundaries.
\newblock {\em Bull. Lond. Math. Soc.}, 51(3):501--515, 2019.

\bibitem[CCS19]{CharneyCordesSisto}
Ruth Charney, Matthew Cordes, and Alessandro Sisto.
\newblock Complete topological descriptions of certain morse boundaries, 2019.

\bibitem[CK00]{CrokeKleiner00}
Christopher~B. Croke and Bruce Kleiner.
\newblock Spaces with nonpositive curvature and their ideal boundaries.
\newblock {\em Topology}, 39(3):549--556, 2000.

\bibitem[CK02]{CrokeKleiner02}
C.B. Croke and B.~Kleiner.
\newblock The geodesic flow of a nonpositively curved graph manifold.
\newblock {\em Geometric And Functional Analysis}, 12(3):479--545, 2002.

\bibitem[CM19]{CashenMackay}
Christopher~H. Cashen and John~M. Mackay.
\newblock A metrizable topology on the contracting boundary of a group.
\newblock {\em Transactions of the American Mathematical Society},
  372(3):1555--1600, 2019.

\bibitem[Cor17]{Cordes}
Matthew Cordes.
\newblock Morse boundaries of proper geodesic metric spaces.
\newblock {\em Groups Geom. Dyn.}, 11(4):1281--1306, 2017.

\bibitem[CS11]{CapraceSageev11}
Pierre-Emmanuel Caprace and Michah Sageev.
\newblock Rank rigidity for {CAT}(0) cube complexes.
\newblock {\em Geom. Funct. Anal.}, 21(4):851--891, 2011.

\bibitem[CS15]{CharneySultan}
Ruth Charney and Harold Sultan.
\newblock Contracting boundaries of {$\rm CAT(0)$} spaces.
\newblock {\em J. Topol.}, 8(1):93--117, 2015.

\bibitem[DZ22]{DurhamZalloum22}
Matthew~Gentry Durham and Abdul Zalloum.
\newblock The geometry of genericity in mapping class groups and teichm\"uller
  spaces via cat(0) cube complexes.
\newblock {\em \href{https://arxiv.org/pdf/2207.06516.pdf}{arXiv:2207.06516}},
  2022.

\bibitem[Gen20]{Genevois20b}
Anthony Genevois.
\newblock Hyperbolicities in $\mathrm{CAT(0)}$ cube complexes.
\newblock {\em L'Enseignement Math\'ematique}, 65(1):33--100, 2020.

\bibitem[Gro87]{Gromov-HypGps}
M.~Gromov.
\newblock Hyperbolic groups.
\newblock In {\em Essays in group theory}, volume~8 of {\em Math. Sci. Res.
  Inst. Publ.}, pages 75--263. Springer, New York, 1987.

\bibitem[HRSS22]{HagenRussellSistoSpriano22}
Mark Hagen, Jacob Russell, Alessandro Sisto, and Davide Spriano.
\newblock Equivariant hierarchically hyperbolic structures for 3--manifold
  groups via quasimorphisms.
\newblock {\em arXiv:2206.12244}, 2022.

\bibitem[HS20]{HagenSusse20}
Mark~F. Hagen and Tim Susse.
\newblock On hierarchical hyperbolicity of cubical groups.
\newblock {\em Israel J. Math.}, 236(1):45--89, 2020.

\bibitem[IM20]{Medici19}
Merlin Incerti-Medici.
\newblock Comparing topologies on the morse boundary and quasi-isometry
  invariance.
\newblock {\em Geometriae Dedicata}, 212(1):153--176, 2020.

\bibitem[IMZ21]{MediciZalloum21}
Merlin Incerti-Medici and Abdul Zalloum.
\newblock Sublinearly morse boundaries from the viewpoint of combinatorics.
\newblock {\em \href{https://arxiv.org/pdf/2001.11472.pdf}{ArXiv:2101.01037}},
  2021.

\bibitem[MQZ20]{MurrayQingZalloum20}
Devin Murray, Yulan Qing, and Abdul Zalloum.
\newblock Sublinearly morse geodesics in cat(0) spaces: Lower divergence and
  hyperplane characterization.
\newblock {\em \href{https://arxiv.org/abs/2008.09199}{ArXiv:2008.09199}},
  2020.

\bibitem[Mur18]{Murray}
Devin Murray.
\newblock Topology and dynamics of the contracting boundary of cocompact
  $\rm{CAT(0)}$ spaces, 2018.

\bibitem[NWZ19]{NibloWrightZhang19}
Graham Niblo, Nick Wright, and Jiawen Zhang.
\newblock A four point characterisation for coarse median spaces.
\newblock {\em Groups, Geometry, and Dynamics}, 13(3):939--980, 2019.

\bibitem[Osi15]{Osin15}
D.~Osin.
\newblock Acylindrically hyperbolic groups.
\newblock {\em Transactions of the American Mathematical Society},
  368(2):851--888, 2015.

\bibitem[PSZ]{ZalloumPreprint}
Harry Petyt, Davide Spriano, and Abdul Zalloum.
\newblock Private communication.

\bibitem[PSZ22]{PetytSprianoZalloum22}
Harry Petyt, Davide Spriano, and Abdul Zalloum.
\newblock Hyperbolic models for $\mathrm{CAT(0)}$ spaces.
\newblock {\em \href{https://arxiv.org/abs/2207.14127}{ArXiv:2207.14127}},
  2022.

\bibitem[QRT19]{QingRafiTiozzo}
Yulan Qing, Kasra Rafi, and Giulio Tiozzo.
\newblock Sublinearly morse boundary i: $\mathrm{CAT(0)}$ spaces, 2019.

\bibitem[QRT20]{QingRafiTiozzoII}
Yulan Qing, Kasra Rafi, and Giulio Tiozzo.
\newblock Sublinearly morse boundary ii: Proper geodesic spaces, 2020.

\bibitem[RST18]{RussellSprianoTran}
Jacob Russell, Davide Spriano, and Hung~Cong Tran.
\newblock Convexity in hierarchically hyperbolic spaces.
\newblock {\em \href{https://arxiv.org/abs/1809.09303}{ArXiv:1809.09303}},
  2018.

\bibitem[Sag14]{Sageev14}
Michah Sageev.
\newblock $\mathrm{CAT(0)}$ cube complexes and groups.
\newblock In {\em Geometric group theory}, volume~21 of {\em IAS/Park City
  Math. Ser.}, pages 7--54. Amer. Math. Soc., Providence, RI, 2014.

\bibitem[She22]{Shepherd22}
Sam Shepherd.
\newblock A cubulation with no factor system.
\newblock {\em \href{https://arxiv.org/abs/2208.10421}{ArXiv:2208.10421}},
  2022.

\end{thebibliography}
\bibliographystyle{alpha}

\end{document}